\documentclass[11pt,letterpaper]{article}
\usepackage[T1]{fontenc}
\usepackage[symbol]{footmisc}
\usepackage{amsmath,amssymb,amsfonts,amsthm}
\usepackage{fullpage,authblk,mathabx}
\usepackage{hyperref,cite,color}
\usepackage[capitalise,nameinlink,noabbrev]{cleveref}
\usepackage{enumitem}
\usepackage{graphicx}
\usepackage{tikz}
\usepackage{lineno}
\usepackage{caption}
\usepackage{thm-restate}
\usetikzlibrary{shapes}
\usetikzlibrary{backgrounds}
\usetikzlibrary{positioning}
\usetikzlibrary{decorations.pathreplacing}
\hypersetup{
    colorlinks,
    linkcolor = {red!60!black},
    citecolor = {green!60!black},
    urlcolor  = {blue!60!black},
    pdftitle  = {Erdos-Posa property for induced packings of long S-cycles},
    pdfauthor = {Jungho Ahn, O-joung Kwon}
}
\newtheorem{theorem}{Theorem}[section]
\newtheorem{lemma}[theorem]{Lemma}
\newtheorem{corollary}[theorem]{Corollary}
\newtheorem{proposition}[theorem]{Proposition}
\newtheorem{conjecture}[theorem]{Conjecture}

\newtheorem{observation}[theorem]{Observation}
\crefname{observation}{Observation}{Observations}
\newtheorem{claim}{Claim}
\crefname{claim}{Claim}{Claims}

\newenvironment{subproof}[1][\proofname]{
    
    \begin{proof}[Proof of Claim.]}{\end{proof}
}
\newcommand\abs[1]{\lvert #1\rvert}
\newcommand{\dist}{\mathrm{dist}}

\begin{document}
\date{\today}
\title{Erd\H{o}s--P\'{o}sa property for induced packings of long $S$-cycles}
\author[1]{Jungho~Ahn\thanks{Supported by the National Research Foundation of Korea (NRF) grant funded by the Ministry of Science and ICT (No. RS-2026-25496280).}}
\author[2,3]{O-joung~Kwon\thanks{Supported by the Institute for Basic Science (IBS-R029-C1) and the National Research Foundation of Korea (NRF) grant funded by the Ministry of Science and ICT (No. RS-2023-00211670).}}
\affil[1]{Department of Computer Engineering, Inha University, Incheon, South~Korea}
\affil[2]{Department of Mathematics, Hanyang University, Seoul, South~Korea}
\affil[3]{Discrete Mathematics Group, Institute for Basic Science (IBS), Daejeon, South~Korea}
\affil[ ]{\small\textit{Email addresses:}
    \texttt{junghoahn@inha.ac.kr},
    \texttt{ojoungkwon@hanyang.ac.kr}
}

\maketitle

\begin{abstract}
    The Erd\H{o}s--P\'{o}sa theorem states that for every integer $k\geq1$, every graph contains either $k$ vertex-disjoint cycles or a set of $\mathcal{O}(k\log k)$ vertices meeting all cycles.
    This fundamental min--max duality has been extended to numerous settings, including long cycles, $S$-cycles, that is, cycles containing a vertex in a prescribed set $S$, and cycles satisfying various additional constraints.
    In contrast, much less is known when the packing itself is required to be \emph{induced}, namely, when distinct cycles are vertex-disjoint and have no edges between them.

    We prove that long $S$-cycles admit an induced version of the Erd\H{o}s--P\'{o}sa-type duality.
    More precisely, we show that there exists a polynomial function $f(k,\ell)$ such that for all integers $k\geq1$ and $\ell\geq3$, every graph contains either an induced packing of $k$ $S$-cycles of length at least $\ell$ or a set of at most $f(k,\ell)$ vertices whose closed neighbourhood intersects all $S$-cycles of length at least $\ell$.
    The proof introduces a new ear-decomposition technique based on fragile ears and yields a polynomial-time algorithm for every fixed $\ell$.
\end{abstract}

\section{Introduction}

The classical theorem of Erd\H{o}s and P\'{o}sa~\cite{ErdosP1965} asserts that there exists a function $f(k)=O(k\log k)$ such that every graph contains either~$k$ pairwise vertex-disjoint cycles or a set of at most $f(k)$ vertices intersecting all cycles.
Since its introduction, the Erd\H{o}s--P\'{o}sa theorem has become one of the cornerstones of structural graph theory, inspiring numerous min--max theorems for cycles satisfying additional constraints, such as long cycles, cycles with modularity constraints, labelled cycles, and cycles through prescribed vertices~\cite{Thomassen1988,MoussetNSW17,Reed99,HuynhJW19,PW2012,BruhnJS18,GollinHKOY2025}.

A natural way to strengthen the packing side is to require the packed subgraphs not to be connected by an edge.
For cycles, this means that not only must the cycles be vertex-disjoint, but also no edge is allowed between distinct cycles.
We call such a collection an \emph{induced packing of cycles}.
Induced packings have recently attracted increasing attention through induced analogues of Menger's theorem and related packing problems~\cite{Hendrey_2024,AlbrechtsenHTJKW2024,HickingbothamJ2025,NguyenSS2025,DistelGHLM2026}.
Nevertheless, they remain significantly less understood than ordinary packings.

This additional induced condition fundamentally changes the nature of the problem.
The classical proof of Simonovits~\cite{Simonovits1967}, as well as many subsequent Erd\H{o}s--P\'{o}sa arguments, relies on iteratively extending an ear decomposition.
Each augmentation enlarges the decomposition while preserving all previously constructed cycles.
For induced packings, however, this monotonicity completely disappears.
Adding a new ear may introduce edges towards cycles constructed earlier, preventing them from participating in a future large induced packing.
Consequently, one can no longer argue that a growing ear decomposition automatically yields a growing packing.

In this paper, we prove that this stronger packing notion still satisfies an Erd\H{o}s--P\'{o}sa-type duality for long cycles containing prescribed vertices.
For an integer $\ell\geq3$, a graph~$G$, and a set $S\subseteq V(G)$, an \emph{$(\ell,S)$-cycle} in~$G$ is a cycle of~$G$ of length at least~$\ell$ containing a vertex in~$S$.

\begin{restatable}{theorem}{mainone}\label{thm:main1}
    There exists a function $f(k,\ell)=\mathcal{O}(\ell\cdot k^{13}+\ell^3\cdot k^2)$ such that for all integers $k\geq1$ and $\ell\geq3$, every graph~$G$, and every set $S\subseteq V(G)$, one can find, in $\abs{V(G)}^{\mathcal{O}(\ell)}$ time, either an induced packing of~$k$ $(\ell,S)$-cycles or a set~$X$ of at most $f(k,\ell)$ vertices such that $G-B_G(X,1)$ has no $(\ell,S)$-cycle.
\end{restatable}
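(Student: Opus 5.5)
We argue by induction on $k$; the case $k=1$ is immediate (take $X=\emptyset$ or a single cycle). Suppose $G$ has no induced packing of $k$ $(\ell,S)$-cycles; we must find a small $X$ with $G-B_G(X,1)$ free of $(\ell,S)$-cycles.

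\textbf{Step 1: a shortest cycle and reduction to a local structure.} Take an $(\ell,S)$-cycle $C$ of minimum length. If $\abs{V(C)}$ is small (say at most some polynomial $p(k,\ell)$), then we put $V(C)$ into $X$. Every $(\ell,S)$-cycle of $G-B_G(V(C),1)$ is anticomplete to $C$. So if $G-B_G(V(C),1)$ had $k-1$ induced $(\ell,S)$-cycles, adding $C$ would give $k$ of them; hence induction applies to $G-B_G(V(C),1)$, and we obtain a bound of the shape $f(k,\ell)\le f(k-1,\ell)+p(k,\ell)$. The real work is the case where every $(\ell,S)$-cycle is long.

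\textbf{Step 2: long shortest cycles via fragile ears.} Assume $C$ is a minimum $(\ell,S)$-cycle with $\abs{V(C)}>p(k,\ell)$. Minimality forces $C$ to be nearly geodesic: any chord, or any short path between two vertices of $C$ that are far apart on $C$, would create a shorter cycle through the $S$-vertex with length still at least $\ell$. Here, because segments shorter than $\ell$ may be bypassed, we lose an additive $O(\ell)$. Hence $C$ behaves like an isometric cycle up to $O(\ell)$.

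We then build a structure $H$ starting from $C$ by repeatedly attaching ears, that is, paths whose ends lie in $H$ and whose interiors lie outside $B_G(H,1)$ or attach sparsely. We call an ear \emph{fragile} if adding it destroys a previously available packing through its neighbourhood edges. The key claim is that if $H$ acquires many (about $k^{c}$) robust, pairwise far-apart ears, then we can pick $k$ of them together with subpaths of $C$ so as to form $(\ell,S)$-cycles that are pairwise anticomplete. Here we use the distance control from minimality to keep them separated. Fragile ears are charged to a bounded set of "bad" vertices whose neighbourhoods they touch.

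\textbf{Step 3: hitting set.} If the process stops with fewer than $k^{c}$ ears, take $X$ to be the branch vertices of $H$ together with $O(\ell)$-length windows around the attachment points (contributing the $\ell^3k^2$ term), plus the charged vertices. Then we argue that any $(\ell,S)$-cycle avoiding $B_G(X,1)$ would yield a further ear, contradicting termination. Every step (finding shortest $(\ell,S)$-cycles through each $s\in S$, and testing ears of length up to $O(\ell)$) runs in $\abs{V(G)}^{O(\ell)}$ time.

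\textbf{Main obstacle.} The main obstacle is Step 2: handling the loss of monotonicity, since a new ear may have edges to cycles found earlier. I would first try to prove that a fragile ear must have its attachments within distance $O(\ell)$ on $C$ of an earlier ear, so that fragility costs only polynomially many extra vertices. The exponent $13$ would come from iterating this Ramsey-type selection a few times.
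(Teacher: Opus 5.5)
Your Step~1 matches the paper's reduction, which uses threshold $3(\ell-1)$. The gap is in Step~3, and it is where most of the paper's work lies. For ``many ears'' to produce disjoint $(\ell,S)$-cycles at all, every cycle of your structure must be an $(\ell,S)$-cycle. The paper guarantees this by adding only shortest $(\ell,S)$-ears, earrings, and detours that replace their base (\cref{lem:all cycles}). With ears restricted this way, your claim that an $(\ell,S)$-cycle avoiding $B_G(X,1)$ ``would yield a further ear'' is false. Take a long $S$-free segment $L$ of $H$ between two vertices of $S$, far from all branch vertices. Attach a short path $R$ through a vertex of $S$ at nearby vertices $u,v$ of $L$ with $\abs{E(R)}+\dist_H(u,v)<\ell$, and a long $S$-free $H$-path $R'$ with ends on $L$ on either side of $\{u,v\}$. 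Neither $R$ nor $R'$ is an $(\ell,S)$-ear of $H$. Yet $R\cup R'$ plus two segments of $L$ is an $(\ell,S)$-cycle, and your $X$ misses it. If you instead admit such ears, their number is no longer tied to the packing number, so your dichotomy ``many far-apart ears or fewer than $k^c$ ears'' breaks.

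The paper needs a whole second phase for exactly these cycles. After deleting neighbourhoods of the few branch vertices, each relevant component has an $(\ell,S)$-central path. On it one builds $(\ell,S)$-subframes from \emph{admissible pairs} $(R,R')$: a short strict $S$-ear $R$ together with a path $R'$ that becomes an $(\ell,S)$-ear only after the base of $R$ is replaced by $R$. One then shows all chords are local (\cref{prop:subframe1}) and greedily picks cycles along the central path (\cref{prop:subframe2}). The $\ell^3$ term comes from windows of length $O(\ell^2)$ around $O(\ell k)$ greedy cycles in each of fewer than $k$ components, not from $O(\ell)$-windows around attachments.

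The locality lemma you plan for Step~2 is also false. Consider an ear $P$ that is an $(\ell,S)$-ear only because it contains vertices of $S$, its ends lying in one component of $H-S$; these are the paper's \emph{fragile} ears. Such a $P$ can have chords or short paths joining vertices on opposite sides of the minimal subpath $P^*$ containing $V(P)\cap S$. These can be at arbitrary distance along $P$ and far from every attachment vertex. Shortcutting along such a chord removes all $S$-vertices of $P$, so minimality of $P$ gives no contradiction (\cref{lem:short path1}\ref{item:short path1-2}). Later ears can interact non-locally through such sides too (the second alternative of \cref{prop:induced2}\ref{item:prop2-1}). So fragility cannot be charged to $O(\ell)$-neighbourhoods of earlier attachments, and ``pairwise far-apart'' ears can still be joined by edges of $G$. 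What is true is the chord classification of \cref{prop:induced1,prop:induced2}. Turning it into an induced packing requires selecting cycles that use only one side of each fragile ear, via the Dilworth-type arguments of \cref{lem:jump1,lem:jump2,lem:jump3,lem:aux2,lem:aux3} culminating in \cref{prop:final}. Your proposal has no substitute for this.
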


Our result generalises the classical Erd\H{o}s--P\'{o}sa theorem~\cite{ErdosP1965}, the Erd\H{o}s--P\'{o}sa theorem for long $S$-cycles~\cite{BruhnJS18}, and the Erd\H{o}s--P\'{o}sa theorem for induced packings of cycles~\cite{coarseEP}, with a weaker bound.
Moreover, our algorithm is constructive and runs in polynomial time for every fixed~$\ell$.

The main challenge in proving \cref{thm:main1} is to control edges between cycles arising from an ear decomposition.
Such edges may prevent vertex-disjoint cycles in the decomposition from forming an induced packing in the original graph.
Our main contribution is a new ear-decomposition framework that restricts the possible positions of these edges.
The framework consists of two related structures, called \emph{$(\ell,S)$-frames} and \emph{$(\ell,S)$-subframes}, which are used at different stages of the proof.
Together, they allow us to obtain either a large induced packing or a bounded set whose closed neighbourhood meets every $(\ell,S)$-cycle.
We describe the main ideas below.

\subsection{Proof overview}

We give an overview of the proof of \cref{thm:main1}.
We omit the precise bounds in this overview and focus on the roles of the main structures.
If~$G$ contains an $(\ell,S)$-cycle of length at most $3(\ell-1)$, we remove its closed neighbourhood and apply induction on $k-1$.
We may therefore assume that every $(\ell,S)$-cycle has length greater than $3(\ell-1)$.
Throughout this paper, we fix an integer $\ell\geq3$.
We say that a \emph{good pair} is a pair $(G,S)$ of a graph~$G$ and a set $S\subseteq V(G)$ such that~$G$ has no $(\ell,S)$-cycle of length at most $3(\ell-1)$.

\paragraph{Frames.}
We first construct a maximal $(\ell,S)$-frame~$\mathcal{H}$, which is an ear decomposition designed to preserve $(\ell,S)$-cycles.
We start with a shortest $(\ell,S)$-cycle and repeatedly add shortest possible $(\ell,S)$-ears.
Here, an $(\ell,S)$-ear is a path with ends in the current frame that contains a vertex of~$S$ or joins distinct components of the frame after deleting~$S$, and its length plus the distance between its ends in the frame is at least~$\ell$.
When no such ear is available, we add a shortest $(\ell,S)$-cycle meeting the current frame in at most one vertex.
If there is no further choice to add, we add a short ear containing a vertex of~$S$ by replacing the subpath of the frame between its ends.
These possibilities form Steps~\ref{item:ear1}--\ref{item:ear4} in \cref{sec:frame}, illustrated in \cref{fig:frame-construction}.

The construction produces two related graphs.
The frame~$\mathcal{H}$ contains every path and cycle added during the construction.
The graph~$\mathcal{H}^-$ is obtained by performing the path replacements used when short ears containing a vertex in~$S$ are added.
We extract cycles from~$\mathcal{H}^-$ rather than directly from~$\mathcal{H}$.
Indeed, \cref{lem:all cycles} shows that every cycle of~$\mathcal{H}^-$ is an $(\ell,S)$-cycle.
Thus, it remains to find vertex-disjoint cycles of~$\mathcal{H}^-$ with no edges between them in~$G$.

\paragraph{Fragile ears and chords.}
The graph~$\mathcal{H}$ need not be an induced subgraph of~$G$.
We call an edge of~$G$ joining two vertices of~$\mathcal{H}$ but not belonging to~$\mathcal{H}$ a \emph{chord} of~$\mathcal{H}$.
A chord may join two cycles that are vertex-disjoint in~$\mathcal{H}^-$ and thereby prevent them from forming an induced packing.
The main purpose of choosing shortest possible ears is to restrict the positions of these chords.

Most chords are local, in the sense that both ends lie within distance~$\ell-1$ from an attachment vertex of an ear.
The main exception occurs on what we call a \emph{fragile ear}.
An ear is fragile when it is an $(\ell,S)$-ear because it contains vertices of~$S$, rather than because its attachment vertices are separated by~$S$ in the preceding frame.
For a fragile ear~$P_{i,j}$, let~$P^*_{i,j}$ be the shortest subpath containing all vertices of~$S$ on~$P_{i,j}$.
We regard~$P^*_{i,j}$ as the central part of the ear and the remaining parts between~$P^*_{i,j}$ and the attachment vertices as its two sides.

The reason for this definition can be seen by considering a short path outside the frame whose ends lie on the same ear.
If its ends are far apart along the ear, replacing the subpath between them produces a shorter candidate ear.
This contradicts the choice of a shortest ear unless the replacement removes every vertex of~$S$ from the ear.
In the exceptional case, the original ear is fragile, its vertices of~$S$ lie between the ends of the short path, and these ends lie on opposite sides of~$P^*_{i,j}$.
This observation is formalised in \cref{lem:short path1}, illustrated in \cref{fig:fragile-ear}.

The resulting description of all chords is given in \cref{prop:induced1,prop:induced2}.
A chord with both ends originating from the same ear is either local to an attachment vertex, or joins the two sides of a fragile ear.
A chord whose ends originate from different ears has a similar form: it is either local to an attachment vertex or involves one side of a fragile ear.
Therefore, every chord that may interfere with cycles extracted from~$\mathcal{H}^-$ is associated with a small neighbourhood of an attachment vertex or with one of the two sides of a fragile ear.

\paragraph{Frames with many branch vertices.}
Suppose that~$\mathcal{H}^-$ has many branch vertices.
The theorem of Simonovits then provides many vertex-disjoint cycles in~$\mathcal{H}^-$.
By \cref{lem:all cycles}, all these cycles are $(\ell,S)$-cycles.
They do not necessarily form an induced packing because chords of~$\mathcal{H}$ may join them.

We select a subcollection of these cycles using the chord description above.
For each selected cycle, the relevant conflicts occur only near its attachment vertices or along the sides of fragile ears that it uses.
The distance conditions in the frame construction ensure that one ear can conflict in this way with only a limited part of the selection.
The selection arguments in \cref{sec:large packings} remove these conflicts while retaining~$k$ cycles.
Consequently, \cref{prop:final} shows that if~$\mathcal{H}^-$ has sufficiently many branch vertices, then~$G$ contains an induced packing of~$k$ $(\ell,S)$-cycles.

\paragraph{From frames to subframes.}
We may now assume that~$\mathcal{H}^-$ has only a bounded number of branch vertices.
We remove the $(\ell-1)$-neighbourhoods of the branch vertices and bounded neighbourhoods of the ends of certain path components of~$\mathcal{H}^-$.
The number of removed vertices is bounded by a polynomial function of~$k$ and~$\ell$ because the frame has few branch vertices.
After this removal, the relevant components of the frame are pairwise disjoint paths.

The maximality of~$\mathcal{H}$ separates these paths from one another.
If a path outside the frame joined one relevant path to another component, it could be extended along the frame to form a new $(\ell,S)$-ear.
This would contradict the maximality of~$\mathcal{H}$.
It follows that every remaining $(\ell,S)$-cycle meets exactly one path component~$F$ of the cleaned frame.
Moreover, distinct path components that meet $(\ell,S)$-cycles lie in distinct components of the graph remaining after the deletion.
We analyse each such component together with its path~$F$ separately.

The path~$F$ has three useful properties within this component.
It is a shortest path between its ends, it admits no $(\ell,S)$-ear, and every $(\ell,S)$-cycle in the component contains at least two vertices of~$F$.
We call a path with these properties an \emph{$(\ell,S)$-central path}.
The first property prevents shortcuts along~$F$, while the other two force every $(\ell,S)$-cycle to repeatedly leave and return to~$F$.
We introduce \emph{$(\ell,S)$-subframes} to describe the cycles arranged around such a central path.

An $(\ell,S)$-subframe starts with an $(\ell,S)$-central path~$F_1$ and records how $(\ell,S)$-cycles intersect this path.
We extend the subframe using short ears containing a vertex of~$S$ whose ends lie on the same path segment of the current subframe.
When such an ear~$R$ is added, we replace the path segment between its ends by~$R$ in an auxiliary graph.
This replacement may allow another path~$R'$ to form an $(\ell,S)$-ear of the modified auxiliary graph.
If such a path~$R'$ exists, we add~$R$ and~$R'$ together and call $(R,R')$ an \emph{admissible pair}.
Otherwise, we add only the short ear~$R$.
The subframe contains all paths added in this process, while the auxiliary graph records the corresponding replacements.
Every cycle of the auxiliary graph is an $(\ell,S)$-cycle by \cref{lem:all cycles-sub}.

The chords of a subframe have a simpler structure than those of a general frame.
By \cref{prop:subframe1}, the two ends of every chord lie within distance~$\ell-1$ from one branch vertex of the subframe.
Therefore, cycles remaining after the deletion of these neighbourhoods have no edges between them in~$G$.
If the subframe has many branch vertices, Simonovits' theorem yields a large induced packing.
If it has few branch vertices, the remaining $(\ell,S)$-cycles are arranged along the central path.
A greedy choice along this path gives either~$k$ pairwise non-adjacent cycles or a bounded set whose closed neighbourhood meets every $(\ell,S)$-cycle.
This argument is summarised in \cref{prop:subframe2}.

In the final proof, we apply \cref{prop:subframe2} to every component associated with a central path.
If at least~$k$ such components contain an $(\ell,S)$-cycle, then choosing one cycle from each component gives an induced packing.
Otherwise, there are fewer than~$k$ components, and we take the union of the hitting sets obtained from \cref{prop:subframe2}.
Together with the vertices removed from the original frame, this union gives the desired set~$X$.

\subsection{Organisation}

The remainder of the paper is organised as follows.
\cref{sec:prelim} introduces notation and preliminary results.
In \cref{sec:frame}, we define $(\ell,S)$-frames and establish their basic properties.
This section also introduces fragile ears.
\cref{sec:chords} analyses the structure of chords and fragile ears in maximal $(\ell,S)$-frames and \cref{sec:large packings} develops several algorithms to find a large induced packing of $(\ell,S)$-cycles.
In \cref{sec:subframe}, we define $(\ell,S)$-subframes which are analogues of $(\ell,S)$-frames under the assumption that an input graph has a specific path intersecting all $(\ell,S)$-cycles.
We show that \cref{thm:main1} holds for the graphs having no short $(\ell,S)$-cycles but admitting $(\ell,S)$-subframes.
\cref{sec:proof} combines these ingredients to prove \cref{thm:main1}.
We leave open problems in \cref{sec:last}.

\section{Preliminaries}\label{sec:prelim}

For an integer~$n$, we denote by~$[n]$ the set of positive integers at most~$n$.
Note that if~$n$ is at most~$0$, then~$[n]$ is empty.
For pairs $(i,j)$ and $(i',j')$ of non-negative integers, we write $(i,j)<_L(i',j')$ if $(i,j)$ is lexicographically smaller than $(i',j')$, and $(i,j)\leq_L(i',j')$ if $(i,j)<_L(i',j')$ or $(i,j)=(i',j')$.
Similarly, we define the relations~$>_L$ and~$\geq_L$.

Every graph in this paper is finite and simple.
For a graph~$G$, we denote by~$V(G)$ and~$E(G)$ its vertex set and edge set, respectively.
A graph is \emph{null} if its vertex set is empty.
For a set $X\subseteq V(G)$, we denote by ${G-X}$ the graph obtained from~$G$ by removing all vertices in~$X$ and all edges incident with vertices in~$X$.
If $X=\{v\}$, then we may write $G-v$ for $G-X$.
The \emph{subgraph of~$G$ induced by~$X$}, denoted by~$G[X]$, is the graph $G-(V(G)\setminus X)$.
For graphs~$G$ and~$H$, the \emph{union} of~$G$ and~$H$, denoted by $G\cup H$, is the graph $(V(G)\cup V(H),E(G)\cup E(H))$.
Similarly, the \emph{intersection} of~$G$ and~$H$, denoted by $G\cap H$, is the graph $(V(G)\cap V(H),E(G)\cap E(H))$.
A \emph{cycle component} of~$G$ is a component of~$G$ which is a cycle.

The \emph{degree} of a vertex~$v$ in~$G$ is denoted by $\deg_G(v)$.
A \emph{branch vertex} of~$G$ is a vertex of degree at least~$3$.
We denote by $V_{\geq3}(G)$ the set of branch vertices of~$G$.
The \emph{average degree} of~$G$ is $\big(\sum_{v\in V(G)}\deg_G(v)\big)/\abs{V(G)}$.

A path is \emph{trivial} if its vertex set is a singleton.
We consider a null graph as a path.
For a path~$P$ in~$G$ and vertices~$a$ and~$b$ of~$P$, we denote by~$aPb$ the subgraph of~$P$ between~$a$ and~$b$.
For sets $X,Y\subseteq V(G)$, an \emph{$(X,Y)$-path} in~$G$ is a path in~$G$ between a vertex in~$X$ and a vertex in~$Y$ such that no internal vertex is contained in $X\cup Y$.
If $X=\{x\}$ or $Y=\{y\}$, then we may write an $(x,Y)$-path, an $(X,y)$-path, or an $(x,y)$-path, as appropriate.
For a subgraph~$H$ of~$G$, an \emph{$H$-path} is a $(V(H),V(H))$-path of length at least one whose edge set is disjoint from~$E(H)$.
A \emph{chord} of~$H$ is an edge $e\in E(G)\setminus E(H)$ between two vertices of~$H$.
We denote by $H+e$ the graph obtained from~$H$ by adding~$e$.
Two paths are \emph{internally disjoint} if no internal vertex of one path is contained in the other path.

An \emph{independent set} of~$G$ is a set of pairwise non-adjacent vertices.
For an integer~$d\geq0$, a graph~$G$ is \emph{$d$-degenerate} if every subgraph of~$G$ has a vertex of degree at most~$d$.
It is well known that every $d$-degenerate graph~$G$ has an independent set of size at least $\abs{V(G)}/(d+1)$, which can be found in linear time.

For vertices~$v$ and~$v'$ of~$G$, the \emph{distance between~$v$ and~$v'$ in~$G$}, denoted by $\dist_G(v,v')$, is the length of a shortest $(v,v')$-path of~$G$.
If~$v$ and~$v'$ are in distinct components of~$G$, then we define $\dist_G(v,v'):=+\infty$.
For sets $X,X'\subseteq V(G)$, let $\dist_G(v,X)=\dist_G(X,v):=\min_{v'\in X}\dist_G(v,v')$ and $\dist_G(X,X'):=\min_{v\in X}\dist_G(v,X')$.
For subgraphs~$H$ and~$H'$ of~$G$, let
\[
    \dist_G(H,H'):=\dist_G(V(H),V(H')).
\]
For an integer $d\geq0$, the \emph{ball of radius~$d$ around~$v$ in~$G$}, denoted by $B_G(v,d)$, is the set of vertices of~$G$ at distance at most~$d$ from~$v$.
Let $B_G(X,d):=\bigcup_{v\in X}B_G(v,d)$ and let $B_G(H,d):=B_G(V(H),d)$.

A vertex is \emph{pendant} if it has degree at most~$1$.
A graph is \emph{subcubic} if every vertex has degree at most~$3$.
We will use the following result of Simonovits~\cite{Simonovits1967}.
For every positive integer~$k$, let
\begin{align*}
    s_k:=
    \begin{cases}
        4k(\log k+\log\log k+4)\quad & \text{if } k>1,\\
        2 & \text{if } k=1.
    \end{cases}
\end{align*}

\begin{theorem}[Simonovits~\cite{Simonovits1967}]
    \label{thm:simonovitz}
    Let~$G$ be a subcubic graph without pendant vertices.
    If $\abs{V_{\geq3}(G)}\geq s_k$ for some positive integer~$k$, then~$G$ contains~$k$ pairwise vertex-disjoint cycles.
    Moreover, these~$k$ cycles can be found in polynomial time.
\end{theorem}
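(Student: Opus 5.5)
The plan is to prove the statement by induction on~$k$, by repeatedly removing a shortest cycle. The quantity that guarantees a short cycle is the number of branch vertices, and the quantity that controls how much this number drops is the cyclomatic excess
\[
\mu(G):=\abs{E(G)}-\abs{V(G)} .
\]
For a subcubic graph without pendant vertices, $\mu(G)=\tfrac12\abs{V_{\geq3}(G)}$.

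\textbf{Reduction to cubic multigraphs.} First, if~$G$ has a cycle component, I set it aside; it is already one of the cycles I want. In the rest of~$G$, I suppress every vertex of degree~$2$. The result is a cubic multigraph~$H$, with loops and parallel edges allowed, whose vertex set is exactly $V_{\geq3}(G)$. Cycles of~$H$ correspond to cycles of~$G$ (a loop or a pair of parallel edges gives a genuine cycle in~$G$), and vertex-disjoint cycles of~$H$ lift to vertex-disjoint cycles of~$G$. So it suffices to show that a cubic multigraph on $n\geq s_k$ vertices has~$k$ disjoint cycles. For $k=1$ this is immediate, since any graph of minimum degree at least~$2$ contains a cycle.

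\textbf{Short cycle.} Run breadth-first search in~$H$ from any vertex. If there is no cycle of length at most~$2d+1$, then the balls of radius~$d$ are trees. Every vertex has degree~$3$, so such a ball contains at least $1+3(2^{d}-1)$ vertices. This forces the girth~$g$ of~$H$ to satisfy
\[
g\leq 2\log_2 n+O(1).
\]
Let~$C$ be a shortest cycle of~$H$, with $c=\abs{V(C)}$ vertices. It is found in polynomial time by running BFS from every vertex.

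\textbf{Removing~$C$ costs few branch vertices.} Delete~$V(C)$ from~$H$. The degrees in~$C$ sum to~$3c$, and~$C$ contains at least~$c$ edges, so at most~$2c$ edges are removed together with the~$c$ vertices. Hence~$\mu$ drops by at most~$c$. Next, repeatedly delete vertices of degree at most~$1$, and suppress vertices of degree~$2$ (setting aside cycle components as before). Deleting a degree-$1$ vertex with its edge leaves~$\mu$ unchanged. Deleting an isolated vertex increases~$\mu$. Suppression leaves~$\mu$ unchanged. So the resulting cubic multigraph~$H'$ satisfies
\[
\abs{V(H')}=2\mu(H')\geq n-2c\geq n-4\log_2 n-O(1).
\]
One subtlety needs checking here: a set-aside cycle component contributes $\mu=0$ but is itself a cycle, disjoint from~$C$. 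This only helps, since it supplies a cycle directly.

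\textbf{Induction and the main obstacle.} Apply induction to~$H'$ to get $k-1$ disjoint cycles, all disjoint from~$C$, and add~$C$. The only real work is the arithmetic: I must verify that
\[
s_k-4\log_2 s_k-O(1)\geq s_{k-1}
\]
for the explicit $s_k=4k(\log k+\log\log k+4)$. I expect this to be the main obstacle. The increment $s_k-s_{k-1}$ is about $4(\log k+\log\log k+4)$, while the loss is $4\log_2(4k\log k)$, roughly $4\log k+4\log\log k+O(1)$. The additive constant~$4$ in~$s_k$ is what absorbs the~$O(1)$ terms. Making this rigorous requires pinning down the base of the logarithm and the exact Moore-type girth bound; if needed, I would handle small~$k$ separately.

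Every step (BFS, deletion, pruning, suppression) is polynomial, and it is repeated~$k$ times, so the cycles are found in polynomial time.
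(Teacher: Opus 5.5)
The paper cites this theorem from Simonovits without proving it, and your proposal is correct: it is the classical proof. You delete a shortest cycle (of length at most $2\log_2 n$ by the Moore bound), prune and suppress, and induct, and your $\mu$-bookkeeping correctly shows that this loses at most $2\abs{V(C)}$ branch vertices.

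The arithmetic you flagged does close with base-$2$ logarithms. Write $s_k=4kr_k$ with $r_k=\log_2 k+\log_2\log_2 k+4$. Then $\log_2 s_k\leq r_k$ for $k\geq3$, and $x-4\log_2 x$ is increasing for $x\geq6$, so
\[
n-4\log_2 n\geq s_k-4\log_2 s_k\geq 4(k-1)r_k\geq s_{k-1}.
\]
The case $k=2$ is immediate from $40-4\log_2 40>2=s_1$.
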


For a partially ordered set~$X$, a \emph{chain} of~$X$ is a set of pairwise comparable elements of~$X$, and an \emph{antichain} of~$X$ is a set of pairwise incomparable elements of~$X$.
A \emph{chain decomposition} of~$X$ is a partition of~$X$ into pairwise disjoint chains.

\begin{theorem}[Dilworth~\cite{Dilworth87}]\label{thm:Dilworth}
    For a finite partially ordered set~$X$, the maximum size of an antichain of~$X$ is equal to the minimum size of a chain decomposition of~$X$.
\end{theorem}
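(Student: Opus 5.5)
We prove the two inequalities separately. The inequality ``maximum antichain $\leq$ minimum chain decomposition'' is immediate. Let $A$ be an antichain and $\mathcal{C}$ a chain decomposition of~$X$. Two distinct elements of~$A$ are incomparable, so no chain of~$\mathcal{C}$ contains two of them. Since the chains of $\mathcal{C}$ cover~$X$, each element of~$A$ lies in its own chain, and hence $\abs{A}\leq\abs{\mathcal{C}}$.

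For the reverse inequality we reduce to K\H{o}nig's theorem on bipartite graphs: in a bipartite graph, the maximum size of a matching equals the minimum size of a vertex cover.

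\textbf{Construction.} Build a bipartite graph~$B$ with two disjoint copies $X^-=\{x^-:x\in X\}$ and $X^+=\{x^+:x\in X\}$ of~$X$. Join $x^-$ to $y^+$ exactly when $x<y$.

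\textbf{From matchings to chains.} Let~$M$ be a matching of~$B$, and let~$D$ be the directed graph on~$X$ with an arc $x\to y$ for each edge $x^-y^+\in M$.
\begin{itemize}
    \item Since~$M$ is a matching, every element of~$D$ has out-degree at most~$1$ and in-degree at most~$1$.
    \item Every arc goes from a smaller element to a larger one, so~$D$ is acyclic.
    \item Hence~$D$ is a disjoint union of directed paths covering~$X$. There are exactly $\abs{X}-\abs{M}$ of them, because each arc merges two path components into one.
    \item Along each path consecutive elements increase, so by transitivity all elements on the path are pairwise comparable. Thus each path is a chain.
\end{itemize}
Taking~$M$ to be a maximum matching gives a chain decomposition of size $\abs{X}-\nu(B)$, where $\nu(B)$ is the maximum matching size.

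\textbf{From vertex covers to antichains.} By K\H{o}nig's theorem there is a vertex cover~$C$ of~$B$ with $\abs{C}=\nu(B)$. Let~$A$ be the set of $x\in X$ such that neither~$x^-$ nor~$x^+$ lies in~$C$.
\begin{itemize}
    \item Each vertex of~$C$ excludes at most one element of~$X$ from~$A$, so $\abs{A}\geq\abs{X}-\abs{C}=\abs{X}-\nu(B)$.
    \item Suppose $x<y$ with $x,y\in A$. Then the edge $x^-y^+$ of~$B$ has neither end in~$C$, contradicting that~$C$ is a vertex cover. Hence~$A$ is an antichain.
\end{itemize}

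Combining the two parts, the maximum antichain has size at least $\abs{X}-\nu(B)$. This is at least the size of the chain decomposition constructed above, and therefore at least the minimum size of a chain decomposition. Together with the easy direction, this gives equality.

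The only delicate step is verifying that a matching really yields chains. This relies on bounded in- and out-degrees, acyclicity, and transitivity, as checked above. Everything else is a direct application of K\H{o}nig's theorem.
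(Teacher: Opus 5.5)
Your proof is correct. It is Fulkerson's classical reduction of Dilworth's theorem to K\H{o}nig's theorem, and each step checks out: the matching digraph is a disjoint union of $\abs{X}-\abs{M}$ directed paths, each a chain, and the elements untouched by a minimum vertex cover form an antichain of size at least $\abs{X}-\nu(B)$.

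The paper gives no proof of this statement. It quotes Dilworth's theorem as a black box from the literature, so there is no internal argument to compare against.

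Relative to Dilworth's original inductive proof, your route has the merit of being constructive. A maximum matching of $B$ can be computed in polynomial time, and the usual alternating-path argument then gives a minimum vertex cover. This yields both an optimal chain decomposition and a maximum antichain efficiently, which fits the paper's claim that its packings are found in polynomial time. The price is that you depend on K\H{o}nig's theorem rather than giving a self-contained argument.

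The paper in fact only uses a weaker consequence: a poset with more than $ab$ elements has a chain of size $b+1$ or an antichain of size $a+1$. For example, it extracts a chain or antichain of size $2k+16$ from $(2k+16)^2$ elements in \cref{lem:jump1}, and an antichain of size $(26k-19)^2$ when no chain of size $2k+6$ exists in \cref{lem:jump2}. This already follows from Mirsky's elementary dual theorem: label each element by the size of a longest chain ending at it, and note that each label class is an antichain. So for the paper's purposes, neither K\H{o}nig's theorem nor the full strength of Dilworth's theorem is needed.
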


\subsection{Digraphs}

A \emph{digraph} is a pair $D:=(V,A)$ where~$V$ is a finite set and~$A$ is a set of ordered pairs of~$V$.
We call~$V$ and~$A$ the \emph{vertex set} and the \emph{arc set} of~$D$, and denote them by~$V(D)$ and~$A(D)$, respectively.
Each $(u,v)\in A$ is called an \emph{arc from~$u$ to~$v$}.
If $u=v$, then we call it a \emph{loop}.
\emph{Parallel arcs} are distinct arcs starting from and ending with the same vertices.
For a vertex~$v$ of~$D$, the \emph{in-degree} is the number of arcs toward~$v$, and the \emph{out-degree} is the number of arcs from~$v$.

In this paper, every digraph has neither a loop nor parallel arcs.
The \emph{underlying graph} of a digraph~$D$ is a graph~$G$ with vertex set~$V(D)$ such that distinct vertices are adjacent in~$G$ if and only if~$D$ has an arc between them.
An \emph{independent set} of a digraph is an independent set of its underlying graph.

\begin{lemma}\label{lem:digraph}
    Let~$D$ be a digraph where every vertex has out-degree at most~$1$.
    If $\abs{V(D)}\geq3k-2$ for some positive integer $k$, then~$D$ has an independent set of size~$k$.
\end{lemma}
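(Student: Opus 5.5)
The plan is to show that the underlying graph of~$D$ is $2$-degenerate, and then use the fact recalled in the preliminaries: every $d$-degenerate graph~$G$ has an independent set of size at least $\abs{V(G)}/(d+1)$. With $d=2$ and $\abs{V(D)}\geq 3k-2$, this gives an independent set of size at least $\lceil (3k-2)/3\rceil = k$, since $(3k-2)/3 > k-1$. Any independent set of size at least~$k$ contains one of size exactly~$k$. Note that the bound $3k-2$ is exactly what $2$-degeneracy needs, so no finer argument should be required.

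To prove $2$-degeneracy, let~$H$ be any non-null subgraph of the underlying graph~$G$ of~$D$. Each edge of~$H$ comes from at least one arc of~$D$ between its ends. Orient each edge of~$H$ along one such arc; the case where both $(u,v)$ and $(v,u)$ are arcs causes no trouble. In this orientation every vertex still has out-degree at most~$1$, so $\abs{E(H)}\leq\abs{V(H)}$. Hence the average degree of~$H$ is at most~$2$, and~$H$ has a vertex of degree at most~$2$. This holds for every subgraph, so~$G$ is $2$-degenerate.

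There is no real obstacle here. The one point to state carefully is that the map from edges of~$H$ to their tails is injective, which is why $\abs{E(H)}\leq\abs{V(H)}$: two distinct edges with the same tail would require two out-arcs from that vertex. Alternatively, one could argue directly: each weak component of~$D$ has at most one cycle, so it is a pseudoforest, and a pseudoforest is $3$-colourable. Some colour class then has size at least $\abs{V(D)}/3$. The degeneracy route is shorter and reuses the stated fact, so I would present that one.
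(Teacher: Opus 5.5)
Your proof is correct and takes essentially the same route as the paper: both use out-degree at most~$1$ to get $\abs{E}\leq\abs{V}$, then apply a standard $n/3$ independent-set bound. The only difference is that the paper uses this count once, on the whole underlying graph, and invokes Tur\'{a}n's theorem in its average-degree form, whereas you apply it to every subgraph to obtain $2$-degeneracy and then use the degeneracy bound recalled in the preliminaries, which keeps the argument slightly more self-contained.
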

\begin{proof}
    Let~$G$ be the underlying graph of~$D$.
    Since every vertex has out-degree at most~$1$ in~$D$, we have $\abs{A(D)}\leq\abs{V(D)}$.
    Thus, the average degree of~$G$ is at most~$2$.
    By Tur\'{a}n's theorem~\cite{Turan1941} (see also~\cite[Chapter 6]{AlonS2016}), the maximum size of an independent set in~$G$ is at least $\frac{|V(G)|}{2+1}\geq k-\frac{2}{3}$.
    Thus, $D$ has an independent set of size~$k$.
\end{proof}

\section{\texorpdfstring{$(\ell,S)$-frames}{(l,S)-frames}}\label{sec:frame}

To prove \cref{thm:main1}, we introduce $(\ell,S)$-frames in graphs.
We need the following definitions and notations.
Let~$G$ be a graph, let $S:=\{s_1,\ldots,s_m\}$ be a subset of~$V(G)$, and let~$H$ be a subgraph of~$G$.
An \emph{$S$-cycle} is a cycle containing at least one vertex in~$S$.
An \emph{$(\ell,S)$-cycle} is an $S$-cycle of length at least~$\ell$.
An \emph{$S$-ear of~$H$} is an $H$-path~$P$ such that either
\begin{itemize}
    \item $P$ contains a vertex in~$S$, or
    \item the ends of~$P$ are in distinct components of $H-S$.
\end{itemize}
We remark that for an $S$-ear~$P$ of~$H$, every cycle in $H\cup P$ containing at least one edge of~$P$ is an $S$-cycle.
An \emph{$(\ell,S)$-ear of~$H$} is an $S$-ear~$P$ of~$H$ such that for its ends~$a$ and~$b$, it holds that
\[
    \abs{E(P)}+\dist_H(a,b)\geq\ell.
\]
Thus, for an $(\ell,S)$-ear~$P$ of~$H$, every cycle in $H\cup P$ containing at least one edge of~$P$ is an $(\ell,S)$-cycle.
We remark that every $H$-path between distinct components of~$H$ is an $(\ell,S)$-ear of~$H$.

Let $\mathcal{L}_0(H)$ be the graph obtained from $H-V_{\geq3}(H)$ as follows: for each cycle component~$C$ of~$H$, if~$C$ contains a vertex in~$S$, then remove the vertex $s_i\in V(C)\cap S$ with the smallest~$i$, and otherwise remove~$C$.
Note that every component of $\mathcal{L}_0(H)$ is a path.
The graph $\mathcal{L}_0(H)$ is well defined if~$S$ is ordered.
Thus, from now on, when a set $S\subseteq V(G)$ is given, we always assume that it is ordered, without explicitly mentioning it.
Let $\mathcal{L}_1(H)$ be the graph obtained from $\mathcal{L}_0(H)$ by recursively removing a vertex of degree at most~$1$ which is not contained in~$S$.
Note that the ends of each component of $\mathcal{L}_1(H)$ are in~$S$.

A \emph{strict $S$-ear of $H$} is an $H$-path with ends in the same component of $\mathcal{L}_0(H)$ and containing a vertex in $S$.
Let~$P$ be an $H$-path of~$G$ such that the ends of~$P$ are in one component of $\mathcal{L}_0(H)$.
The \emph{base of~$P$ in~$H$} is the shortest path~$Q$ of $\mathcal{L}_0(H)$ containing the ends of~$P$.
We denote by $H\oplus P$ the graph obtained from~$H$ by replacing~$Q$ with~$P$, that is, we remove from~$H$ all edges and internal vertices of~$Q$ and take the union of the resulting graph with~$P$.

We now define an $(\ell,S)$-frame~$\mathcal{H}$ in~$G$.
We begin by considering a null graph~$H_{0,0}$ as a trivial $(\ell,S)$-frame in~$G$.
We define~$H^-_{0,0}$ as a null graph and $\mathcal{P}(H_{0,0})$ as an empty set.
Suppose that we have found an $(\ell,S)$-frame~$H_{i,j}$ in~$G$ for some $i,j\geq0$ with~$H^-_{i,j}$ and $\mathcal{P}(H_{i,j})$.
Let
\begin{align*}
    Y_{i,j}&:=B_{H_{i,j}}(V_{\geq3}(H_{i,j}),\ell-1),\\
    Z_{i,j}&:=B_{G-(V(H_{i,j})\setminus Y_{i,j})}(Y_{i,j},1),\\
    G_{i,j}&:=G-Z_{i,j}.
\end{align*}
We inductively define a larger $(\ell,S)$-frame~$\mathcal{H}$ in~$G$ together with~$\mathcal{H}^-$ and $\mathcal{P}(\mathcal{H})$ as follows; see \cref{fig:frame-construction} for an illustration.

\begin{figure}[t]
    \centering
    \begin{tikzpicture}[scale=0.9,font=\small]
        \draw[thick] (-5,0) -- (-1,0) -- (2,0) -- (5,0);
        \draw[thick] (-1,0) .. controls (-0.5,1.0) and (1.5,1.0) .. (2,0);
        
        \foreach \x/\y in {-5/0,-1/0,2/0,5/0}{
          \fill (\x,\y) circle (2pt);
        }
        
        \fill (-4,0) circle (2pt);
        \fill (-2.4,0) circle (2pt);
        \draw[very thick,dashed]
          (-4,0) .. controls (-3.5,1.3) and (-2.9,1.3) .. (-2.4,0);
        \node[above] at (-3.2,1.15) {(A)};
        
        \fill (3.4,0) circle (2pt);
        \draw[very thick,dashed]
          (3.4,0)
          .. controls (2.6,1.1) and (3.0,2.0) .. (3.8,2.0)
          .. controls (4.6,2.0) and (4.2,1.1) .. (3.4,0);
        \node[above] at (3.6,2.15) {(B)};
        
        \draw[very thick,dashed] (-4,-1.4) circle (0.65);
        \node at (-4,-1.42) {(C)};
        
        \fill (-0.5,0) circle (2pt);
        \fill (1.2,0) circle (2pt);
        
        \draw[line width=1.2pt] (-0.5,0) -- (1.2,0);
        \node[below] at (0.35,-0.05) {$Q$};
        
        \draw[very thick,dashed]
          (-0.5,0) .. controls (0.2,-1.6) and (0.7,-1.6) .. (1.2,0);
        \node[below] at (0.35,-1.45) {(D)};
        
        \filldraw[fill=white] (0.35,-1.25) circle (2.5pt);
        \filldraw[fill=white] (-3.5,-1.8) circle (2.5pt);
        \filldraw[fill=white] (3.35,1.9) circle (2.5pt);

        \node[right] at (5,0) {$\mathcal{H}$};
        
    \end{tikzpicture}
    \caption{The four augmentation steps in the construction of an $(\ell,S)$-frame~$\mathcal{H}$.
    Dashed parts indicate newly added paths or cycles.
    Step~\ref{item:ear1} adds an \((\ell,S)\)-ear, Step~\ref{item:ear2} adds an \((\ell,S)\)-cycle intersecting the current $(\ell,S)$-frame in one vertex, Step~\ref{item:ear3} adds a vertex-disjoint $(\ell,S)$-cycle, and Step~\ref{item:ear4} adds a strict $S$-ear whose base is~$Q$.
    White-filled circles indicates vertices in~$S$.}
\label{fig:frame-construction}
\end{figure}

\begin{enumerate}[label=(\Alph*)]
    \item\label{item:ear1} If~$G_{i,j}$ has an $H_{i,j}$-path which is an $(\ell,S)$-ear of $H^-_{i,j}$, then let~$P_{i,j+1}$ be a shortest such path with ends~$a_{i,j+1}$ and~$b_{i,j+1}$ and let
    \begin{align*}
        \mathcal{H}&:=H_{i,j+1}=H_{i,j}\cup P_{i,j+1},\\
        \mathcal{H}^-&:=H^-_{i,j+1}=H^-_{i,j}\cup P_{i,j+1},\\
        \mathcal{P}(\mathcal{H})&:=\mathcal{P}(H_{i,j})\cup\{(i,j+1)\}.
    \end{align*}
    \item\label{item:ear2} If~\ref{item:ear1} does not hold and~$G_{i,j}$ has an $(\ell,S)$-cycle intersecting~$H_{i,j}$ at exactly one vertex, then let~$P_{i+1,1}$ be a shortest such cycle with intersecting vertex~$c_i$ and let
    \begin{align*}
        \mathcal{H}&:=H_{i+1,1}=H_{i,j}\cup P_{i+1,1},\\
        \mathcal{H}^-&:=H^-_{i+1,1}=H^-_{i,j}\cup P_{i+1,1},\\
        \mathcal{P}(\mathcal{H})&:=\mathcal{P}(H_{i,j})\cup\{(i+1,1)\}.
    \end{align*}
    \item\label{item:ear3} If neither~\ref{item:ear1} nor~\ref{item:ear2} holds and~$G_{i,j}$ has an $(\ell,S)$-cycle vertex-disjoint from~$H_{i,j}$, then let~$P_{i+1,1}$ be a shortest such cycle and let
    \begin{align*}
        \mathcal{H}&:=H_{i+1,1}=H_{i,j}\cup P_{i+1,1},\\
        \mathcal{H}^-&:=H^-_{i+1,1}=H^-_{i,j}\cup P_{i+1,1},\\
        \mathcal{P}(\mathcal{H})&:=\mathcal{P}(H_{i,j})\cup\{(i+1,1)\}.
    \end{align*}
    \item\label{item:ear4} If none of \ref{item:ear1}--\ref{item:ear3} hold and~$G_{i,j}$ has a strict $S$-ear of~$H_{i,j}$ disjoint from $B_{H_{i,j}}(\mathcal{L}_1(H^-_{i,j}),\ell-1)$, then let~$P_{i+1,1}$ be such an ear whose base in~$H_{i,j}$ is closest in~$H_{i,j}$ to the degree-$1$ vertices of $\mathcal{L}_0(H^-_{i,j})$ and let
    \begin{align*}
        \mathcal{H}&:=H_{i+1,1}=H_{i,j}\cup P_{i+1,1},\\
        \mathcal{H}^-&:=H^-_{i+1,1}=H^-_{i,j}\oplus P_{i+1,1},\\
        \mathcal{P}(\mathcal{H})&:=\mathcal{P}(H_{i,j})\cup\{(i+1,1)\}.
    \end{align*}
    We denote by~$Q_{i+1}$ the base of~$P_{i+1,1}$ in~$H_{i,j}$ and by~$a_{i+1,1}$ and~$b_{i+1,1}$ the ends of~$Q_{i+1}$.
\end{enumerate}

The \emph{depth} of~$\mathcal{H}$ is the largest integer~$t$ such that $(t,1)\in\mathcal{P}(\mathcal{H})$; the depth of~$H_{0,0}$ is defined as~$0$.
For each $i\in[t]$, we denote by~$\mu(i)$ the largest integer such that $(i,\mu(i))\in\mathcal{P}(\mathcal{H})$.
We say that~$\mathcal{H}$ is \emph{maximal} if there is neither~$P_{t,\mu(t)+1}$ nor~$P_{t+1,1}$ satisfying one of \ref{item:ear1}--\ref{item:ear4}.
A vertex of~$\mathcal{H}$ is \emph{admissible} if it is not contained in~$Y_{t,\mu(t)}$.
Note that for all $(i,j),(i',j')\in\mathcal{P}(\mathcal{H})$ with $(i,j)\leq_L(i',j')$, we have $Y_{i,j}\subseteq Y_{i',j'}$ and $Z_{i,j}\subseteq Z_{i',j'}$.

We have the following simple observation.

\begin{observation}\label{obs:structure}
    Let~$G$ be a graph.
    For a set $S\subseteq V(G)$, let~$\mathcal{H}$ be an $(\ell,S)$-frame in~$G$.
    For all $(i,j),(i',j')\in\mathcal{P}(\mathcal{H})$ with $(i,j)\leq_L(i',j')$, the following hold.
    \begin{enumerate}[label=(\roman*)]
        \item Every vertex of~$\mathcal{H}$ has degree between~$2$ and~$4$ in~$\mathcal{H}$.
        In particular, every admissible vertex of~$\mathcal{H}$ has degree~$2$ in~$\mathcal{H}$.
        \item\label{item:order} For every branch vertex~$u$ of~$H_{i,j}$, it holds that $B_{H_{i,j}}(u,\ell-1)=B_{H_{i',j'}}(u,\ell-1)$.
    \end{enumerate}
\end{observation}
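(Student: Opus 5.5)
Both parts can be proved by induction along the construction, in the order $\leq_L$ on $\mathcal{P}(\mathcal{H})$. The induction tracks how each of Steps \ref{item:ear1}--\ref{item:ear4} changes degrees and distances in the frame.

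For (i), the frame $H_{0,0}$ is null, and the first nonempty frame is a cycle: only Step~\ref{item:ear3} can apply to a null frame, so all degrees equal~$2$. We then check each step in turn.
\begin{itemize}
\item In Step~\ref{item:ear1}, a path $P_{i,j+1}$ with two ends in $H_{i,j}$ is added. Internal vertices get degree~$2$, and each end gains one.
\item In Step~\ref{item:ear2}, a cycle meeting $H_{i,j}$ only at $c_i$ is added, so $c_i$ gains two and the new vertices have degree~$2$.
\item In Step~\ref{item:ear3}, a disjoint cycle is added, so all new degrees are~$2$.
\item In Step~\ref{item:ear4}, $P_{i+1,1}$ is added to $H_{i,j}$ (not replaced), so again its ends gain one and its internal vertices have degree~$2$.
\end{itemize}
Thus degrees never drop below~$2$. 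For the upper bound, the key point is that every attachment vertex of a new ear or cycle lies in $V(H_{i,j})\setminus Z_{i,j}$. This holds because the ear lies in $G_{i,j}=G-Z_{i,j}$ and $Y_{i,j}\subseteq Z_{i,j}$ contains every branch vertex. Hence each attachment vertex has degree exactly~$2$ in $H_{i,j}$ at the moment of attachment and becomes a branch vertex afterwards. A branch vertex can never be used again as an attachment vertex, since $V_{\geq3}(H_{i',j'})\subseteq Y_{i',j'}\subseteq Z_{i',j'}$ at every later stage. Therefore a vertex of degree~$2$ receives at most one augmentation, which adds either $1$ or~$2$ (the latter only for $c_i$ in Step~\ref{item:ear2}). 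This gives the upper bound~$4$. Any vertex of degree at least~$3$ in $\mathcal{H}=H_{t,\mu(t)}$ is a branch vertex and hence lies in $Y_{t,\mu(t)}$. So admissible vertices have degree~$2$.

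For (ii), fix a branch vertex $u$ of $H_{i,j}$. By induction it suffices to compare $H_{i,j}$ with the next frame $H'$. Since $H'\supseteq H_{i,j}$, distances can only decrease. So we must show that the ball $B_{H'}(u,\ell-1)$ gains no new vertex and no distance within it shortens in a way that admits new vertices. Any path in $H'$ from $u$ that leaves $H_{i,j}$ must pass through an attachment vertex $w$ of the new piece. Every such $w$ lies outside $Z_{i,j}\supseteq Y_{i,j}=B_{H_{i,j}}(V_{\geq3}(H_{i,j}),\ell-1)$, so $\dist_{H_{i,j}}(u,w)\geq\ell$.

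The main obstacle is that $\dist_{H'}(u,w)$ is measured in $H'$, which might offer a shortcut. I would handle this by taking a shortest $(u,w)$-path in $H'$ and considering the first vertex at which it leaves $H_{i,j}$. That vertex is itself an attachment vertex $w'$, and the subpath from $u$ to $w'$ lies entirely in $H_{i,j}$, so its length is at least~$\ell$. Hence every new vertex, and every old vertex reached via the new piece, is at distance at least~$\ell$ from $u$ in $H'$. Paths within distance $\ell-1$ therefore stay inside $H_{i,j}$, and the balls coincide.

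The same first-exit argument also shows that the sets $Y_{i,j}$ are monotone, which is what keeps the inductive hypothesis consistent along the whole construction.
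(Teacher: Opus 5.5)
Your proof is correct, and it is the natural induction along Steps~\ref{item:ear1}--\ref{item:ear4} that the paper leaves implicit by stating this as an observation without proof: attachment vertices lie outside $Z_{i,j}\supseteq Y_{i,j}$, so they have degree~$2$ and are at distance at least~$\ell$ from every current branch vertex, which is the same reasoning that drives the proof of \cref{lem:degree}. One point of phrasing: an ear in Step~\ref{item:ear1} may be a single edge joining two old vertices, so state your first-exit argument in terms of the first edge of the path not in $E(H_{i,j})$, whose tail is then an attachment vertex, rather than the first vertex at which the path leaves $H_{i,j}$; read this way, the argument goes through unchanged.
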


For each $(i,j)\in\mathcal{P}(\mathcal{H})$, let
\[
    \Gamma_{i,j}:=
    \begin{cases}
        V(P_{i,j})\cap V(H_{i-1,\mu(i-1)}) & \text{if }j=1,\\
        V(P_{i,j})\cap V(H_{i,j-1}) & \text{otherwise}.
    \end{cases}
\]
Note that the vertices of~$\Gamma_{i,1}$ are admissible vertices of~$H_{i-1,\mu(i-1)}$ as~$P_{i,1}$ is disjoint from~$Y_{i-1,\mu(i-1)}$.
Similarly, for $j\geq2$, the vertices of~$\Gamma_{i,j}$ are admissible vertices of~$H_{i,j-1}$.
We remark that~$\Gamma_{i,j}$ is non-empty if and only if ${P_{i,j}-\Gamma_{i,j}}$ is a path.
Moreover, the values of~$j$ and $\abs{\Gamma_{i,j}}$ determine which of \ref{item:ear1}–\ref{item:ear4} is satisfied by $P_{i,j}$.
Thus, the following hold.
\begin{itemize}
    \item $P_{i,j}$ satisfies~\ref{item:ear1} if and only if $j\geq2$, in which case we call~$P_{i,j}$ a \emph{regular $\mathcal{H}$-ear}.
    \item $P_{i,j}$ satisfies~\ref{item:ear2} if and only if $j=1$ and $\abs{\Gamma_{i,j}}=1$, in which case we call~$P_{i,j}$ an \emph{attached $\mathcal{H}$-earring}.
    \item $P_{i,j}$ satisfies~\ref{item:ear3} if and only if $j=1$ and $\abs{\Gamma_{i,j}}=0$, in which case we call~$P_{i,j}$ a \emph{detached $\mathcal{H}$-earring}.
    \item $P_{i,j}$ satisfies~\ref{item:ear4} if and only if $j=1$ and $\abs{\Gamma_{i,j}}=2$, in which case we call~$P_{i,j}$ an \emph{$\mathcal{H}$-detour}.
\end{itemize}
By an \emph{$\mathcal{H}$-earring}, we mean either an attached or a detached $\mathcal{H}$-earring.
For each vertex~$v$ of~$\mathcal{H}$, we denote by~$\sigma_\mathcal{H}(v)$ the lexicographically smallest $(p,q)\in\mathcal{P}(\mathcal{H})$ such that~$v$ is a vertex of~$H_{p,q}$.
Note that $v\in V(P_{p,q})\setminus\Gamma_{p,q}$.
In addition, for every $(i,j)\in\mathcal{P}(\mathcal{H})$ with $v\in V(H_{i,j})$, we have $\sigma_\mathcal{H}(v)=\sigma_{H_{i,j}}(v)$.
We may omit the subscript if it is clear from the context.

In the remainder of this section, we investigate the structure of~$\mathcal{H}$.
We first show that any two nearby admissible vertices of~$\mathcal{H}$ are joined by a unique shortest path.

\begin{lemma}\label{lem:nearby}
    Let~$G$ be a graph.
    For a set $S\subseteq V(G)$, let~$\mathcal{H}$ be an $(\ell,S)$-frame in~$G$.
    If~$\mathcal{H}$ has admissible vertices~$v$ and~$v'$ with $\dist_\mathcal{H}(v,v')\leq2\ell-1$, then every $(v,v')$-path of~$\mathcal{H}$ of length at most $2\ell-1$ is a path of $\mathcal{L}_0(\mathcal{H})$.
    Consequently, such a path is unique and $\sigma(v)=\sigma(v')$.
\end{lemma}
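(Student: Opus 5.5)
The plan is to show that a short path between two admissible vertices cannot reach a branch vertex of~$\mathcal{H}$. Once that is established, the path lives inside $\mathcal{H}-V_{\geq3}(\mathcal{H})$, whose components are paths and cycle components of~$\mathcal{H}$. Uniqueness and $\sigma(v)=\sigma(v')$ should then follow from the structure of a single component of $\mathcal{L}_0(\mathcal{H})$.

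Let~$P$ be a $(v,v')$-path of~$\mathcal{H}$ with $\abs{E(P)}\leq2\ell-1$, and suppose~$P$ contains a branch vertex~$u$ of~$\mathcal{H}$. Then $\abs{E(vPu)}+\abs{E(uPv')}\leq2\ell-1$, so one of the two subpaths has length at most $\ell-1$. Hence~$v$ or~$v'$ lies in $B_{\mathcal{H}}(V_{\geq3}(\mathcal{H}),\ell-1)$. Since $\mathcal{H}=H_{t,\mu(t)}$, this set is exactly~$Y_{t,\mu(t)}$, contradicting admissibility. So $V(P)\cap V_{\geq3}(\mathcal{H})=\emptyset$, and~$P$ is a path in one component~$K$ of $\mathcal{H}-V_{\geq3}(\mathcal{H})$. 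By \cref{obs:structure}, every vertex of~$\mathcal{H}$ has degree at least~$2$. Thus~$K$ is either a path whose ends are adjacent in~$\mathcal{H}$ to branch vertices, or a cycle component of~$\mathcal{H}$.

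If~$K$ is a path component, it is untouched by the passage to $\mathcal{L}_0(\mathcal{H})$, so~$P$ is a path of $\mathcal{L}_0(\mathcal{H})$, and the unique $(v,v')$-path in the path~$K$ is~$P$. For $\sigma(v)=\sigma(v')$, I would walk along~$P$ and argue that consecutive vertices have equal~$\sigma$. Every vertex of~$P$ has degree~$2$ in~$\mathcal{H}$, and each ear or earring is attached only at vertices that have degree at least~$3$ in the final frame, namely the vertices of the sets~$\Gamma_{p,q}$. Hence~$P$ cannot cross from the interior of one $P_{p,q}$ into another without passing through some $\Gamma_{p,q}$ vertex, which is a branch vertex. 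For a detour, the replaced base~$Q_{i+1}$ still lies in~$\mathcal{H}$, so its ends $a_{i+1,1},b_{i+1,1}$ are branch vertices as well. Therefore $V(P)\subseteq V(P_{p,q})\setminus\Gamma_{p,q}$ for a single $(p,q)$, which gives $\sigma(v)=\sigma(v')$.

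The main obstacle is the case where~$K$ is a cycle component~$C$, arising from a detached earring never subsequently attached to. Then all vertices of~$C$ are admissible. However, $\mathcal{L}_0$ deletes one vertex~$s_i$ of~$C$, or all of~$C$ if~$C$ has no vertex of~$S$, and~$P$ might pass through the deleted part. Moreover, if $\abs{V(C)}\leq 4\ell-2$, both $(v,v')$-arcs of~$C$ could have length at most $2\ell-1$. Here I would first try to exploit the frame construction. Possible sources are the fact that~$C$ is a shortest $(\ell,S)$-cycle chosen in Step~\ref{item:ear3}, and the convention of which vertex of~$S$ is removed. Failing that, I would check whether the intended reading of ``admissible'' or of~$Y$ treats the removed vertex of a cycle component like a branch vertex, so that the argument of the second paragraph applies verbatim with~$s_i$ in the role of~$u$. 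In any case, the equality $\sigma(v)=\sigma(v')$ is immediate in this case, since $V(C)\setminus\{c\}=V(P_{p,1})\setminus\Gamma_{p,1}$ for the earring $P_{p,1}$ forming~$C$.
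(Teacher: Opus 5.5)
Your argument for the case where $v$ and $v'$ lie in a path component of $\mathcal{H}-V_{\geq3}(\mathcal{H})$ is correct and is the paper's proof: admissible vertices are at distance at least~$\ell$ from every branch vertex, so a path of length at most $2\ell-1$ between them avoids all branch vertices. Your edge-by-edge justification of $\sigma(v)=\sigma(v')$ is more careful than the paper's one-line appeal to lying in a common component of $\mathcal{L}_0(\mathcal{H})$.

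The cycle-component case you leave open is a genuine hole, but no argument can close it. The paper's proof passes over it silently, and in that case the first two assertions of the lemma are false.

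Let $G$ be a cycle of length at least $3\ell-2$ whose only vertex in~$S$ is~$s$. Then $(G,S)$ is a good pair and $\mathcal{H}=P_{1,1}=G$ is a maximal frame. Since $V_{\geq3}(\mathcal{H})=\emptyset$, every vertex is admissible, and $\mathcal{L}_0(\mathcal{H})=\mathcal{H}-s$. For the two neighbours $v,v'$ of~$s$, the path $vsv'$ has length $2\leq2\ell-1$ but is not a path of $\mathcal{L}_0(\mathcal{H})$. If $\abs{V(\mathcal{H})}\leq4\ell-2$, uniqueness fails as well, as you noticed.

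Because $P_{1,1}$ here is a shortest $(\ell,S)$-cycle, your first idea (exploiting the choice in Step~\ref{item:ear3}) cannot help. Only $\sigma(v)=\sigma(v')$ survives, and your argument for it is fine.

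Your second idea is the kind of repair that is needed: let the deleted $S$-vertex of each cycle component count like a branch vertex when forming $Y_{i,j}$. With that change your second paragraph goes through verbatim. However, this alters the definitions rather than proving the lemma as stated.

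The defect is not cosmetic. Take $\ell\geq5$ and add to this $G$ a vertex $s'\in S$ adjacent to the two neighbours $a,b$ of~$s$, with $P_{1,1}$ the original cycle. Then Step~\ref{item:ear4} adds $as'b$ with base the long arc of $P_{1,1}-s$, and $\mathcal{H}^-$ becomes the $4$-cycle $as'bsa$. This contradicts both \cref{lem:strict1}, whose proof relies on this lemma, and \cref{lem:all cycles}.
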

\begin{proof}
    Since~$v$ and~$v'$ are admissible vertices of~$\mathcal{H}$, they are at distance at least~$\ell$ from every branch vertex of~$\mathcal{H}$.
    As $\dist_\mathcal{H}(v,v')\leq2\ell-1$, every $(v,v')$-path of~$\mathcal{H}$ of length at most $2\ell-1$ is a path of $\mathcal{L}_0(\mathcal{H})$ and hence is unique.
    Since~$v$ and~$v'$ are in the same component of $\mathcal{L}_0(\mathcal{H})$, we have $\sigma(v)=\sigma(v')$.
\end{proof}

We provide equivalent conditions for being a branch vertex of~$\mathcal{H}$.

\begin{lemma}\label{lem:degree}
    Let~$G$ be a graph.
    For a set $S\subseteq V(G)$, let~$\mathcal{H}$ be an $(\ell,S)$-frame in~$G$.
    For a vertex~$v$ of~$\mathcal{H}$, the following hold.
    \begin{itemize}
        \item $\deg_\mathcal{H}(v)=3$ if and only if there is a unique $(i,j)\in\mathcal{P}(\mathcal{H})$ such that $v\in\{a_{i,j},b_{i,j}\}$.
        \item $\deg_\mathcal{H}(v)=4$ if and only if there is a unique $i\geq1$ such that $v=c_i$.
    \end{itemize}
\end{lemma}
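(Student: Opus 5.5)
We argue by induction along the construction, tracking how degrees change when each piece $P_{i,j}$ is added to obtain the next frame. Write $F_-$ for the frame just before $P_{i,j}$ is added, i.e. $H_{i,j-1}$ or $H_{i-1,\mu(i-1)}$. The first step is to record the degree increments caused by one augmentation, splitting into the four types of pieces.

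For a regular $\mathcal{H}$-ear (Step~\ref{item:ear1}), $P_{i,j}$ is an $F_-$-path. Hence $\Gamma_{i,j}=\{a_{i,j},b_{i,j}\}$ with $a_{i,j}\neq b_{i,j}$, since an $H$-path has length at least one and is a path. Each end gains exactly one edge, while the internal vertices are new and have degree $2$. For an $\mathcal{H}$-detour (Step~\ref{item:ear4}), we have $\mathcal{H}=F_-\cup P_{i,1}$ (only $\mathcal{H}^-$ uses $\oplus$), so the same holds for $a_{i,1},b_{i,1}$, the ends of $P_{i,1}$. For an attached earring (Step~\ref{item:ear2}), $c_i$ gains exactly two edges and the other vertices of the cycle are new with degree $2$. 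For a detached earring (Step~\ref{item:ear3}), all vertices are new with degree $2$. The first frame $H_{1,1}$ is a detached earring, a cycle, so every vertex starts with degree $2$. This is consistent with \cref{obs:structure}(i).

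The key point is that every vertex of $\Gamma_{i,j}$ has degree $2$ in $F_-$. These vertices are admissible in $F_-$: $P_{i,j}$ lies in $G_{\cdot}$, which avoids $Z\supseteq Y$. So they are not within distance $\ell-1$ of any branch vertex, and in particular are not branch vertices themselves. Since every vertex of a frame has degree at least $2$, they have degree exactly $2$ in $F_-$. Moreover, once a vertex has degree $3$ or $4$ it is a branch vertex and lies in $Y$ forever, so it never belongs to a later $\Gamma$ and its degree never changes again.

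Combining these two facts, the degree of a vertex $v$ in $\mathcal{H}$ is $2$ plus the total increment it received, and it receives an increment at most once over the whole construction. That increment is either $+1$, when $v$ is an end $a_{i,j}$ or $b_{i,j}$ of a regular ear or detour, or $+2$, when $v=c_i$. This gives both equivalences. For uniqueness, observe that $v\in\{a_{i,j},b_{i,j}\}$ for two different pairs $(i,j)$ would require two increments. Likewise $v=c_i=c_{i'}$ with $i\neq i'$, or $v$ being both some $c_i$ and some $a_{i',j'}$, would do the same. So in every case the relevant index is unique, and the two cases are mutually exclusive.

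The main subtlety is the labeling convention for detours. Here $a_{i+1,1},b_{i+1,1}$ are defined as the ends of the base $Q_{i+1}$, which coincide with the ends of $P_{i+1,1}$, and we must confirm they are distinct admissible vertices of $F_-$. One also has to handle the fact that $P_{i,1}$ for an earring has no $a,b$ labels, which is exactly why it only appears in the $c_i$ clause.
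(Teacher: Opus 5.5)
Your proposal is correct and follows essentially the paper's argument. The paper inducts on $\abs{\mathcal{P}(\mathcal{H})}$ and uses the same two facts: the attachment vertices in $\Gamma_{t,\mu(t)}$ are admissible, hence of degree~$2$, in the preceding frame; and each vertex therefore gains exactly $+1$ (as an end of a regular ear or detour) or $+2$ (as $c_i$ of an attached earring) at most once. Your global ``at most one increment'' bookkeeping, justified by the fact that branch vertices stay in $Y$ forever, is an unrolled version of that induction rather than a different route.
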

\begin{proof}
    Let~$t$ be the depth of~$\mathcal{H}$.
    We proceed by induction on $x:=\abs{\mathcal{P}(\mathcal{H})}$ to show that both statements hold.
    The statements obviously hold for~${x\leq1}$.
    Thus, we may assume that~${x\geq2}$.
    Let
    \[
        H':=
        \begin{cases}
            H_{t-1,\mu(t-1)} & \text{if }\mu(t)=1,\\
            H_{t,\mu(t)-1} & \text{otherwise}.
        \end{cases}
    \]

    Suppose that $P_{t,\mu(t)}$ is not an $\mathcal{H}$-earring.
    Note that $\Gamma_{t,\mu(t)}=\{a_{t,\mu(t)},b_{t,\mu(t)}\}$.
    Since~$a_{t,\mu(t)}$ and~$b_{t,\mu(t)}$ are admissible vertices of~$H'$, they have degree~$2$ in~$H'$.
    By the inductive hypothesis, there is no $(i,j)\in\mathcal{P}(H')$ with $\Gamma_{i,j}\cap\Gamma_{t,\mu(t)}\neq\emptyset$.
    Since both~$a_{t,\mu(t)}$ and~$b_{t,\mu(t)}$ have degree~$3$ in~$\mathcal{H}$ and every other vertex of~$P_{t,\mu(t)}$ has degree~$2$ in~$\mathcal{H}$, we have $V_{\geq3}(\mathcal{H})\setminus V_{\geq3}(H')=\Gamma_{t,\mu(t)}$.
    Hence, the statements hold.

    Now, suppose that $P_{t,\mu(t)}$ is an $\mathcal{H}$-earring.
    Note that $\mu(t)=1$.
    If~$P_{t,1}$ is a detached $\mathcal{H}$-earring, then the statements easily follow from the inductive hypothesis as~$P_{t,1}$ and~$H'$ are vertex-disjoint.
    Thus, we may assume that~$P_{t,1}$ is an attached $\mathcal{H}$-earring.
    Since~$c_t$ is an admissible vertex of~$H'$, it has degree~$2$ in~$H'$.
    By the inductive hypothesis, there is no $(i,j)\in\mathcal{P}(H')$ with $c_t\in\Gamma_{i,j}$.
    Since~$c_t$ has degree~$4$ in~$\mathcal{H}$ and every other vertex of~$P_{t,1}$ has degree~$2$ in~$\mathcal{H}$, we have $V_{\geq3}(\mathcal{H})\setminus V_{\geq3}(H')=\{c_t\}$.
    Hence, the statements hold.
    
    This completes the proof by induction.
\end{proof}

As a corollary of \cref{lem:degree}, we show that if two branch vertices of~$\mathcal{H}$ are close to each other in~$\mathcal{H}$, then they are precisely~$a_{i,j}$ and~$b_{i,j}$ for some $(i,j)\in\mathcal{P}(\mathcal{H})$.

\begin{corollary}\label{cor:short branch}
    Let~$G$ be a graph.
    For a set $S\subseteq V(G)$, let~$\mathcal{H}$ be an $(\ell,S)$-frame in~$G$.
    If~$\mathcal{H}$ has distinct branch vertices~$v_1$ and~$v_2$ with $\dist_\mathcal{H}(v_1,v_2)\leq\ell-1$, then there is a unique $(p,q)\in\mathcal{P}(\mathcal{H})$ with $\{v_1,v_2\}=\{a_{p,q},b_{p,q}\}$.
    Consequently, both~$v_1$ and~$v_2$ have degree~$3$ in~$\mathcal{H}$.
\end{corollary}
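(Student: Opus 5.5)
Since both vertices lie in a ball of small radius around each other, I will argue by the time at which each becomes a branch vertex. By \cref{lem:degree}, every branch vertex of~$\mathcal{H}$ is either an end $a_{i,j}$ or $b_{i,j}$ of some $P_{i,j}$ (degree~$3$), or some~$c_i$ (degree~$4$). Each branch vertex $v$ is created at a well-defined step, namely the unique $(i,j)\in\mathcal{P}(\mathcal{H})$ with $v\in\Gamma_{i,j}$. Let $(p,q)$ be the creation step of~$v_2$, and without loss of generality suppose $v_1$ became a branch vertex no later than $v_2$, that is, at a step $\leq_L(p,q)$.

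First I handle the case where $v_1$ is already a branch vertex of~$H'$, the frame just before step $(p,q)$. The vertex $v_2\in\Gamma_{p,q}$ is an admissible vertex of~$H'$, so $\dist_{H'}(v_2,V_{\geq3}(H'))\geq\ell$. By \cref{obs:structure}\ref{item:order}, $B_{H'}(v_1,\ell-1)=B_{\mathcal{H}}(v_1,\ell-1)$. Since $\dist_\mathcal{H}(v_1,v_2)\leq\ell-1$, we get $v_2\in B_{\mathcal{H}}(v_1,\ell-1)=B_{H'}(v_1,\ell-1)$, which contradicts admissibility. Hence $v_1$ is not a branch vertex of~$H'$, and so both vertices are created at the same step $(p,q)$. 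In particular both lie in $\Gamma_{p,q}$.

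Next I exclude earrings. An attached earring creates only the single branch vertex~$c_p$, and a detached earring creates none, by the proof of \cref{lem:degree}. So $P_{p,q}$ is a regular ear or a detour with $\Gamma_{p,q}=\{a_{p,q},b_{p,q}\}$, and since $v_1\neq v_2$, we have $\{v_1,v_2\}=\{a_{p,q},b_{p,q}\}$.

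Uniqueness follows from \cref{lem:degree}: each of $v_1,v_2$ of degree~$3$ lies in $\{a_{i,j},b_{i,j}\}$ for a unique $(i,j)$, which forces $(i,j)=(p,q)$. The same lemma gives that both vertices have degree~$3$ and not~$4$, because $v=c_i$ and $v\in\{a_{i,j},b_{i,j}\}$ cannot both hold, as each branch vertex is created once.

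The main subtlety is the first case. One must use \cref{obs:structure}\ref{item:order} to transfer the distance bound from~$\mathcal{H}$ back to~$H'$: later ears can create shortcuts, but only outside the $(\ell-1)$-ball of an existing branch vertex, which is exactly the situation the observation covers.
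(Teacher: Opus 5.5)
Your proof is correct and follows essentially the same route as the paper. You use \cref{lem:degree} to attach to each branch vertex the unique step $(p,q)$ with $v\in\Gamma_{p,q}$. You then combine \cref{obs:structure}\ref{item:order} with the admissibility of $\Gamma_{p,q}$ in the preceding frame to force both vertices to arise at the same step. Finally, distinctness gives $\abs{\Gamma_{p,q}}=2$, and \cref{lem:degree} yields degree~$3$; you simply spell out details the paper leaves implicit, such as why earrings are excluded and why the degree cannot be~$4$.
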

\begin{proof}
    For each $i\in[2]$, by \cref{lem:degree}, there is a unique $(p_i,q_i)\in\mathcal{P}(\mathcal{H})$ with $v_i\in\Gamma_{p_i,q_i}$.
    Without loss of generality, we may assume that $(p_1,q_1)\leq_L(p_2,q_2)$.
    As
    \[
        v_2\in B_\mathcal{H}(v_1,\ell-1)=B_{H_{p_1,q_1}}(v_1,\ell-1)\subseteq Y_{p_1,q_1},
    \]
    we have $(p_1,q_1)=(p_2,q_2)$.
    Since~$v_1$ and~$v_2$ are distinct, we have $\{v_1,v_2\}=\{a_{p_1,q_1},b_{p_1,q_1}\}$.
    By \cref{lem:degree}, both~$v_1$ and~$v_2$ have degree~$3$ in~$\mathcal{H}$.
\end{proof}

The following lemma provides properties of $\mathcal{H}$-detours.
Especially, it shows that all $\mathcal{H}$-detours and their bases are short.

\begin{lemma}\label{lem:strict1}
    Let~$G$ be a graph.
    For a set $S\subseteq V(G)$, let~$\mathcal{H}$ be an $(\ell,S)$-frame in~$G$ with depth~$t$.
    For each $r\in[t]$, if~$P_{r,1}$ is an $\mathcal{H}$-detour, then $P_{r,1}\cup Q_r$ has length at most $\ell-1$ and its vertex set is a subset of $Y_{r,1}$.
    Consequently, every vertex in $V(P_{r,1}\cup Q_r)\setminus\Gamma_{r,1}$ has degree~$2$ in~$\mathcal{H}$, and every $\mathcal{H}$-path in $G_{t,\mu(t)}$ is an $\mathcal{H}^-$-path.
\end{lemma}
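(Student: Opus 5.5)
The plan is to argue by induction on $r$, using that Step~\ref{item:ear4} is only applied when Step~\ref{item:ear1} fails. Write $H:=H_{r-1,\mu(r-1)}$ and $H^-:=H^-_{r-1,\mu(r-1)}$, and let $P:=P_{r,1}$ with ends $a:=a_{r,1}$ and $b:=b_{r,1}$.

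First I would check that $P$ is an $H^-$-path, which is needed before Step~\ref{item:ear1} can be invoked. The graph $H^-$ arises from $H$ by deleting the edges and internal vertices of the earlier bases $Q_{r'}$ with $r'<r$, together with the corresponding steps that built $H^-$. By the inductive hypothesis, each such $V(Q_{r'})$ lies in $Y_{r',1}\subseteq Y_{r-1,\mu(r-1)}$. Now $P$ lies in $G_{r-1,\mu(r-1)}$, so it avoids $Z\supseteq Y$; in particular $a$ and $b$ are admissible. Hence $a,b\in V(H^-)$, and the internal vertices of $P$ lie outside $H\supseteq H^-$. The edges of $P$ are not in $H$, so they are not in $H^-$, since $H^-\subseteq H$ (replacement only deletes from $H$ and adds $P$-type paths already in $H$). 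The same reasoning at level $t$ gives the final claim: every $\mathcal{H}$-path in $G_{t,\mu(t)}$ avoids the internal vertices of all bases, so its ends are in $\mathcal{H}^-$ and it is an $\mathcal{H}^-$-path.

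Next I would obtain the length bound. Since $P$ contains a vertex of $S$, it is an $S$-ear of $H^-$. If $a$ and $b$ lay in different components of $H^-$, then $P$ would be an $(\ell,S)$-ear of $H^-$, contradicting the failure of Step~\ref{item:ear1}. So $\abs{E(P)}+\dist_{H^-}(a,b)\leq\ell-1$. Take a shortest $(a,b)$-path $R$ in $H^-\subseteq H$; it has length at most $\ell-2$. Because $a$ and $b$ are admissible in $H$, \cref{lem:nearby} shows that $R$ is the unique $(a,b)$-path of $\mathcal{L}_0(H)$ of that length. Therefore $R=Q_r$, and so $\abs{E(P\cup Q_r)}\leq\ell-1$. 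I expect this step to be the main obstacle. One must verify that $\mathcal{L}_0$ is taken with respect to the same graph used to define the base; if the base is defined via $\mathcal{L}_0(H^-)$ rather than $\mathcal{L}_0(H)$, this identification needs the observation that near admissible vertices $H$ and $H^-$ coincide.

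Finally I would show the containment in $Y_{r,1}$ and the degree statement. In $H_{r,1}$, both $a$ and $b$ have degree~$3$. Every vertex of the cycle $P\cup Q_r$, which has length at most $\ell-1$, lies within distance $\ell-1$ of $a$ in $H_{r,1}$. So $V(P\cup Q_r)\subseteq B_{H_{r,1}}(a,\ell-1)\subseteq Y_{r,1}$. For the degree claim, a vertex $v\in V(P\cup Q_r)\setminus\Gamma_{r,1}$ has degree~$2$ in $H_{r,1}$: internal vertices of $Q_r$ were admissible in $H$, hence of degree~$2$ there. Every later $P_{i,j}$ avoids $Y_{r,1}$, so $v\notin\Gamma_{i,j}$ for any $(i,j)$. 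By \cref{lem:degree}, $v$ therefore has degree~$2$ in $\mathcal{H}$.
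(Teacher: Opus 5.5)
Your proposal is correct and follows essentially the same route as the paper. The failure of Step~\ref{item:ear1} gives $\abs{E(P_{r,1})}+\dist_{H^-_{r-1,\mu(r-1)}}(a_{r,1},b_{r,1})\leq\ell-1$, \cref{lem:nearby} identifies a shortest $(a_{r,1},b_{r,1})$-path with $Q_r$, and the resulting containment in $Y_{r,1}$ yields both the degree claim and the $\mathcal{H}^-$-path claim. Your explicit induction, which checks that the ends of $P_{r,1}$ lie in $H^-_{r-1,\mu(r-1)}$ because earlier bases sit inside $Y$, makes precise a point the paper's proof leaves implicit.
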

\begin{proof}
    Since~$P_{r,1}$ is a strict $S$-ear, it contains a vertex in~$S$.
    By~\ref{item:ear4}, $P_{r,1}$ is not an $(\ell,S)$-ear of $H^-_{r-1,\mu(r-1)}$.
    Thus,
    \[
        \abs{E(P_{r,1})}+\dist_{H^-_{r-1,\mu(r-1)}}(a_{r,1},b_{r,1})\leq\ell-1.
    \]
    Since~$a_{r,1}$ and~$b_{r,1}$ are admissible vertices of $H_{r-1,\mu(r-1)}$, by \cref{lem:nearby} for $\mathcal{H}:=H_{r-1,\mu(r-1)}$, $Q_r$ is the unique shortest $(a_{r,1},b_{r,1})$-path of $H_{r-1,\mu(r-1)}$.
    Thus, $P_{r,1}\cup Q_r$ has length at most $\ell-1$ and hence $V(P_{r,1}\cup Q_r)\subseteq B_\mathcal{H}(\Gamma_{r,1},\ell-1)\subseteq Y_{r,1}$.

    Note that every vertex $v\in V(P_{r,1}\cup Q_r)\setminus\Gamma_{r,1}$ has degree~$2$ in~$H_{r,1}$.
    As $v\in Y_{r,1}$, $v$ also has degree~$2$ in~$\mathcal{H}$.
    Since~$G_{t,\mu(t)}$ is disjoint from~$Y_{t,\mu(t)}$, every $\mathcal{H}$-path in~$G_{t,\mu(t)}$ is vertex-disjoint from the base of each $\mathcal{H}$-detour and hence is an $\mathcal{H}^-$-path.
\end{proof}

The following lemma shows that every cycle of~$\mathcal{H}^-$ is an $(\ell,S)$-cycle.

\begin{lemma}\label{lem:all cycles}
    Let~$G$ be a graph.
    For a set $S\subseteq V(G)$, let~$\mathcal{H}$ be an $(\ell,S)$-frame in~$G$.
    Then every cycle of~$\mathcal{H}^-$ is an $(\ell,S)$-cycle.
\end{lemma}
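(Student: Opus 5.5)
We proceed by induction on $x:=\abs{\mathcal{P}(\mathcal{H})}$, following the same scheme as in the proof of \cref{lem:degree}. The case $x=0$ is vacuous since $H^-_{0,0}$ is null. For $x\geq1$, let $t$ be the depth, let $H'$ be the preceding frame ($H_{t-1,\mu(t-1)}$ if $\mu(t)=1$ and $H_{t,\mu(t)-1}$ otherwise), and let $P:=P_{t,\mu(t)}$. By the inductive hypothesis, every cycle of $(H')^-$ is an $(\ell,S)$-cycle. Consider a cycle $C$ of $\mathcal{H}^-$. If $C$ uses no edge of $P$, then $C$ is a cycle of $(H')^-$. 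This holds in the detour case as well, because there $\mathcal{H}^-=(H')^-\oplus P$ has fewer edges of $(H')^-$ outside $P$. So we may assume that $C$ contains an edge of $P$, and we split according to which of \ref{item:ear1}--\ref{item:ear4} produced $P$.

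\textbf{Earrings and regular ears.} If $P$ is an $\mathcal{H}$-earring (Step~\ref{item:ear2} or~\ref{item:ear3}), then $P$ meets $(H')^-$ in at most one vertex. Hence every cycle through an edge of $P$ equals $P$, which is an $(\ell,S)$-cycle by construction. If $P$ is a regular ear (Step~\ref{item:ear1}), then $P$ is an $(\ell,S)$-ear of $(H')^-$ and $\mathcal{H}^-=(H')^-\cup P$. The remark following the definition of $(\ell,S)$-ears gives that every cycle of $(H')^-\cup P$ containing an edge of $P$ is an $(\ell,S)$-cycle. For completeness, we can verify this directly. Such a $C$ consists of $P$ together with an $(a,b)$-path $R$ of $(H')^-$. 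Its length is at least $\abs{E(P)}+\dist_{(H')^-}(a,b)\geq\ell$. Moreover, $C$ contains a vertex of $S$: either $P$ does, or $a$ and $b$ lie in distinct components of $(H')^--S$, in which case $R$ must meet $S$.

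\textbf{Detours.} This is the main case. Let $P=P_{t,1}$ be an $\mathcal{H}$-detour with base $Q=Q_t$ in $\mathcal{L}_0((H')^-)$, and let $\mathcal{H}^-=(H')^-\oplus P$. Then $C$ contains all of $P$, since the internal vertices of $P$ have degree~$2$. Replacing $P$ by $Q$ turns $C$ into a closed walk $C'$ of $(H')^-$. Since $P$ is internally disjoint from $(H')^-$ and the internal vertices of $Q$ were removed, $C'$ is in fact a cycle of $(H')^-$. By the inductive hypothesis, $C'$ is an $(\ell,S)$-cycle.

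Two things then remain to check for $C$. The first is that $C$ contains a vertex of $S$; this is immediate, because $P$ is a strict $S$-ear and therefore contains a vertex of $S$. The second is the length bound. By \cref{lem:strict1}, $\abs{E(P)}+\abs{E(Q)}\leq\ell-1$, so $C$ could in principle be shorter than $C'$; this is the step I expect to be the main obstacle. My plan is to argue that $C'-Q$, the $(a,b)$-path of $(H')^-$ complementary to $Q$, is already long. Since $a=a_{t,1}$ and $b=b_{t,1}$ are admissible, $C'$ must leave the component of $\mathcal{L}_0((H')^-)$ containing $Q$. That component is a path whose vertices lie at distance at least $\ell$ from $a$ and $b$ in its extension to branch vertices, or which is a cycle component minus one $S$-vertex. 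In the first subcase, $C'-Q$ passes through a branch vertex at distance at least $\ell$ from $a$, so it has length at least $\ell$, and thus $\abs{E(C)}\geq\ell$. In the second subcase, $C'$ is a cycle component, and $C'-Q$ contains the removed $S$-vertex $s$ together with the two segments from $a$ and $b$ to $s$. I will use admissibility relative to $Y$ and the fact that $P$ avoids $B(\mathcal{L}_1((H')^-),\ell-1)$ to get that the distance from $\{a,b\}$ to $s$ along the cycle is at least $\ell$, again giving $\abs{E(C)}\geq\ell$. If this distance argument fails in the second subcase, I will fall back on the good-pair assumption (no $(\ell,S)$-cycles of length at most $3(\ell-1)$) to rule out short configurations.
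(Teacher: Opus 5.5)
Your plan is the paper's own: induction on $\abs{\mathcal{P}(\mathcal{H})}$, the same treatment of earrings and regular ears, and in the detour case the vertex of $S$ comes from the strict $S$-ear. Your first detour subcase is exactly the paper's one-line argument: the return path $C'-Q$ must reach a branch vertex of $H_{t-1,\mu(t-1)}$, and every such vertex lies at distance at least $\ell$ from $\Gamma_{t,1}$.

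The genuine gap is your second subcase. There the component of $\mathcal{L}_0$ containing $Q$ comes from a cycle component $C_0$ with no branch vertex, and the return path runs through the vertex $s\in S$ that was deleted from $C_0$. Neither tool you name bounds the distance from $\{a,b\}$ to $s$. Admissibility only measures distance to branch vertices, and $C_0$ has none. If $s$ is the only vertex of $S$ on $C_0$, then $C_0-s$ is stripped completely when forming $\mathcal{L}_1$, so the condition of avoiding $B(\mathcal{L}_1(\cdot),\ell-1)$ is vacuous. The good-pair fallback is also unavailable: the lemma has no good-pair hypothesis. Even with one, a good pair only forbids $(\ell,S)$-cycles of length at most $3(\ell-1)$. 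It does not forbid $S$-cycles shorter than $\ell$, and that is exactly what arises here.

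In fact this subcase cannot be rescued, because the statement fails there. Here is a counterexample.
\begin{itemize}
\item Let $\ell=6$, let $C_0$ be a cycle of length $L\geq16$ whose only vertex in $S$ is $s$, and let $a,b$ be the neighbours of $s$ on $C_0$. Add a path $ap_1p_2b$ with $p_1\in S$.
\item The cycles of $G$ have lengths $L$, $L+1$, and $5$, so $(G,S)$ is even a good pair.
\item Step~\ref{item:ear3} takes $P_{1,1}=C_0$. Since $C_0$ has no branch vertex, $G_{1,1}=G$.
\item The path $ap_1p_2b$ is an $S$-ear with $3+\dist_{C_0}(a,b)=5<\ell$, so \ref{item:ear1} fails. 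Steps \ref{item:ear2} and \ref{item:ear3} also fail, since every cycle meets $C_0$ in at least two vertices.
\item However, $ap_1p_2b$ is a strict $S$-ear and $\mathcal{L}_1(C_0)$ is null, so \ref{item:ear4} applies with base $C_0-s$.
\item Hence $\mathcal{H}^-=H^-_{2,1}$ is the $5$-cycle $asbp_2p_1$, which is not a $(6,S)$-cycle.
\end{itemize}

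The paper's proof has the same hole, since it tacitly assumes that the return path meets a branch vertex. Likewise, the bound $\abs{E(P_{r,1}\cup Q_r)}\leq\ell-1$ from \cref{lem:strict1} that you quote fails in this example, where that length is $L+1$. That bound is derived via \cref{lem:nearby}, which overlooks the deleted vertex of $S$.

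So this step cannot be completed as the statement stands; the construction has to change. One option is to require the ends of a detour to be at distance at least $\ell$ from the deleted vertex of $S$ of a cycle component. With that change, your second subcase goes through like the first, since the return path then has length at least $2\ell$.
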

\begin{proof}
    We proceed by induction on $x:=\abs{\mathcal{P}(\mathcal{H})}$.
    The statement obviously holds for $x\leq1$.
    Thus, we may assume that $x\geq2$.
    Let~$t$ be the depth of~$\mathcal{H}$ and let~$C$ be a cycle of~$\mathcal{H}^-$.
    By the inductive hypothesis, we may assume that~$C$ contains an edge of~$P_{t,\mu(t)}$.
    If $P_{t,\mu(t)}$ is a regular $\mathcal{H}$-ear, then~$C$ is an $(\ell,S)$-cycle by the definition of an $(\ell,S)$-ear.
    If $P_{t,\mu(t)}$ is an $\mathcal{H}$-earring, then~$C$ is equal to~$P_{t,\mu(t)}$ which is an $(\ell,S)$-cycle.
    Thus, we may assume that~$P_{t,\mu(t)}$ is an $\mathcal{H}$-detour.
    Note that $\mu(t)=1$.
    By \cref{lem:strict1}, every vertex of $P_{t,1}$ has degree~$2$ in~$\mathcal{H}^-$, and therefore~$P_{t,1}$ is a subpath of~$C$.
    Since~$P_{t,1}$ is a strict $S$-ear, $C$ contains a vertex in~$S$.
    Since~$P_{t,1}$ is disjoint from $Y_{t-1,\mu(t-1)}$, $\Gamma_{t,1}$ is at distance at least~$\ell$ from every branch vertex of~$H_{t-1,\mu(t-1)}$.
    Therefore, $C$ is an $(\ell,S)$-cycle.

    This completes the proof by induction.
\end{proof}

For a cycle~$C$ of~$\mathcal{H}$, we denote by~$C^-$ the graph obtained from~$C$ as follows: for every $\mathcal{H}$-detour~$P_{r,1}$ whose base is a path of~$C$, replace~$Q_r$ with~$P_{r,1}$.
By \cref{lem:strict1}, $C^-$ is a cycle of~$\mathcal{H}^-$ if and only if $C\neq P_{r,1}\cup Q_r$ for every $\mathcal{H}$-detour~$P_{r,1}$.

\subsection{\texorpdfstring{Short $\mathcal{H}$-paths}{Short H-paths}}\label{subsec:short paths}

In this subsection, we provide properties of $\mathcal{H}$-paths of length at most $\ell-1$.
We first show that if~$P_{i,1}$ for some $i\geq1$ is a detached $\mathcal{H}$-earring, then every $(\ell,S)$-ear of~$H^-_{i,1}$ in~$G_{i,1}$ of length at most $\ell-1$ has exactly one end in~$P_{i,1}$, unless~$P_{i,1}$ has length at most $2(\ell-1)$.

\begin{lemma}\label{lem:bridge}
    Let~$G$ be a graph.
    For a set $S\subseteq V(G)$, let~$\mathcal{H}$ be an $(\ell,S)$-frame in~$G$.
    For some $i\geq1$, if $P_{i,1}$ is a detached $\mathcal{H}$-earring of length at least $2\ell-1$, then every $(\ell,S)$-ear of~$H^-_{i,1}$ in~$G_{i,1}$ of length at most $\ell-1$ has exactly one end in~$P_{i,1}$.
\end{lemma}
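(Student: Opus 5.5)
The plan is to argue by contradiction using the way $P_{i,1}$ was chosen in Step~\ref{item:ear3}. Write $H':=H_{i-1,\mu(i-1)}$; then $H^-_{i,1}=H'^-\cup P_{i,1}$ with $P_{i,1}$ vertex-disjoint from $H'$. Since $P_{i,1}$ was added by Step~\ref{item:ear3}, neither Step~\ref{item:ear1} nor Step~\ref{item:ear2} applied to $H'$, and $P_{i,1}$ is a shortest $(\ell,S)$-cycle in $G_{i-1,\mu(i-1)}$ vertex-disjoint from $H'$.

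Let $R$ be an $(\ell,S)$-ear of $H^-_{i,1}$ in $G_{i,1}$ with $\abs{E(R)}\leq\ell-1$. Since $Z_{i-1,\mu(i-1)}\subseteq Z_{i,1}$, the path $R$ lies in $G_{i-1,\mu(i-1)}$. Moreover, every vertex of $H_{i,1}$ that is not in $H^-_{i,1}$ is an internal vertex of the base of some detour. By \cref{lem:strict1}, such a vertex lies in $Y_{i,1}$, so it is not in $G_{i,1}$. Hence $R$ is an $H_{i,1}$-path, not merely an $H^-_{i,1}$-path. We must exclude two cases: $R$ has both ends in $P_{i,1}$, or $R$ has no end in $P_{i,1}$.

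\emph{No end in $P_{i,1}$.} Then $R$ is an $H'$-path in $G_{i-1,\mu(i-1)}$. Since $P_{i,1}$ is a separate component of $H^-_{i,1}$, distances in $H^-_{i,1}$ between vertices of $H'^-$ equal distances in $H'^-$. Likewise, the components of $H^-_{i,1}-S$ meeting $H'^-$ are exactly those of $H'^--S$. Thus $R$ is an $(\ell,S)$-ear of $H'^-$ in $G_{i-1,\mu(i-1)}$. Step~\ref{item:ear1} would therefore have applied to $H'$, contradicting the fact that $P_{i,1}$ arose from Step~\ref{item:ear3}.

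\emph{Both ends $a,b$ in $P_{i,1}$.} Let $A_1$ and $A_2$ be the two $(a,b)$-paths of the cycle $P_{i,1}$, with $\abs{E(A_1)}=d=\dist_{H^-_{i,1}}(a,b)\leq\abs{E(P_{i,1})}/2$. The ear condition gives $\abs{E(R)}+d\geq\ell$, so both cycles $R\cup A_1$ and $R\cup A_2$ have length at least $\ell$. Both are also $S$-cycles. If $R$ contains a vertex of $S$, this is immediate. Otherwise $a$ and $b$ lie in distinct components of $H^-_{i,1}-S$; since $P_{i,1}$ is a component of $H^-_{i,1}$, both arcs $A_1$ and $A_2$ must contain a vertex of $S$. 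The cycle $R\cup A_1$ lies in $G_{i-1,\mu(i-1)}$, and it is vertex-disjoint from $H'$ because the interior of $R$ avoids $H_{i,1}$. Its length satisfies
\[
    \abs{E(R)}+d\leq\ell-1+\abs{E(P_{i,1})}/2<\abs{E(P_{i,1})},
\]
where the strict inequality uses $\abs{E(P_{i,1})}\geq2\ell-1$. This contradicts the minimality of $P_{i,1}$ in Step~\ref{item:ear3}.

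The main care point is the bookkeeping in both cases. First, the interior of $R$ must avoid $H'$ entirely, including the bases of detours, which is what \cref{lem:strict1} supplies. Second, the $(\ell,S)$-ear condition must transfer from $H^-_{i,1}$ to $H'^-$ in the first case, which uses that $P_{i,1}$ is a separate component of $H^-_{i,1}$.
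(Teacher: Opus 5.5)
Your proposal is correct and takes essentially the same route as the paper. In the no-end case you contradict the fact that Step~\ref{item:ear1} was unavailable for $H_{i-1,\mu(i-1)}$, and in the both-ends case you close the ear with the shorter arc of $P_{i,1}$ to get an $(\ell,S)$-cycle shorter than $P_{i,1}$, contradicting Step~\ref{item:ear3}; your explicit checks that $R$ is an $H_{i,1}$-path (via \cref{lem:strict1}) and that the ear condition transfers to $H^-_{i-1,\mu(i-1)}$ fill in details the paper leaves implicit.
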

\begin{proof}
    Let~$P$ be an $(\ell,S)$-ear of~$H^-_{i,1}$ in~$G_{i,1}$ of length at most $\ell-1$ and let~$a$ and~$b$ be the ends of~$P$.
    Since~$P_{i,1}$ is vertex-disjoint from $H_{i-1,\mu(i-1)}$, if neither~$a$ nor~$b$ is in~$P_{i,1}$, then~$P$ is an $(\ell,S)$-ear of $H^-_{i-1,\mu(i-1)}$, contradicting~\ref{item:ear3}.
    Thus, at least one of~$a$ and~$b$ is in~$P_{i,1}$.

    Towards a contradiction, suppose that both~$a$ and~$b$ are in~$P_{i,1}$.
    Let~$P_1$ and~$P_2$ be the two $(a,b)$-paths of~$P_{i,1}$.
    Without loss of generality, we may assume that $\abs{E(P_1)}\leq\abs{E(P_2)}$.
    As $\abs{E(P_{i,1})}\geq2\ell-1$, we have $\abs{E(P_2)}\geq\ell$.
    Let~$C$ be the cycle obtained by concatenating~$P_1$ and~$P$.
    Note that~$C$ is a cycle of $G_{i-1,\mu(i-1)}$ vertex-disjoint from $H_{i-1,\mu(i-1)}$ as~$P$ is an $H_{i,1}$-path in~$G_{i,1}$.
    Since~$C$ is a cycle of $P_{i,1}\cup P$, it is an $(\ell,S)$-cycle by the definition of an $(\ell,S)$-ear.
    As $\abs{E(P)}\leq\ell-1<\abs{E(P_2)}$, $C$ is an $(\ell,S)$-cycle shorter than~$P_{i,1}$, contradicting~\ref{item:ear3}.
\end{proof}

In the following two lemmas, we provide properties of $\mathcal{H}$-paths of length at most $\ell-1$ in general cases.
For the ends~$a$ and~$b$ of such a path, the first lemma deals with the case $\sigma(a)=\sigma(b)$ and the second lemma deals with the case $\sigma(a)\neq\sigma(b)$.
We need the following definition.

For $(i,j)\in\mathcal{P}(\mathcal{H})$, we say that~$P_{i,j}$ is \emph{fragile} if it is either
\begin{itemize}
    \item an attached $\mathcal{H}$-earring with $c_i\notin S$, or
    \item a regular $\mathcal{H}$-ear such that~$a_{i,j}$ and~$b_{i,j}$ are in the same component of $H^-_{i,j-1}-S$.
\end{itemize}
By~\ref{item:ear1} or~\ref{item:ear2}, every fragile~$P_{i,j}$ contains a vertex in~$S$.
For a fragile~$P_{i,j}$, we denote by~$P^*_{i,j}$ the shortest path of $P_{i,j}-\Gamma_{i,j}$ containing all vertices in $V(P_{i,j})\cap S$, and by~$a^*_{i,j}$ and~$b^*_{i,j}$ the ends of~$P^*_{i,j}$.
Let $P^{\operatorname{side}}_{i,j}:=P_{i,j}-(V(P^*_{i,j})\cup\Gamma_{i,j})$.
See \cref{fig:fragile-ear} for an illustration of \cref{lem:short path1}\ref{item:short path1-2}.

\begin{figure}[t]
    \centering
    \begin{tikzpicture}[scale=0.9, every node/.style={font=\small}]
        \draw[thick] (0,0) -- (10,0);
        
        \fill (0,0) circle (2pt);
        \fill (10,0) circle (2pt);
        
        \node[below] at (0,0) {$a_{i,j}$};
        \node[below] at (10,0) {$b_{i,j}$};

        \draw[very thick] (4,0) -- (6,0);
        
        \filldraw[fill=white, thick] (4,0) circle (2.5pt);
        \filldraw[fill=white, thick] (6,0) circle (2.5pt);

        \node[below] at (4,0) {$a^*_{i,j}$};
        \node[below] at (6,0) {$b^*_{i,j}$};
        
        \draw[decorate,decoration={brace,mirror,amplitude=5pt}]
          (0,-0.85)--(4,-0.85)
          node[midway,below=6pt]{left side};
        
        \draw[decorate,decoration={brace,mirror,amplitude=5pt}]
          (6,-0.85)--(10,-0.85)
          node[midway,below=6pt]{right side};
        
        \draw[dashed,thick,bend left=35] (2,0) to (5.2,0);
        \draw[dashed,thick,bend left=35] (3,0) to (8,0);
        \draw[dashed,thick,bend left=35] (5.7,0) to (8.5,0);

        \foreach \x in {4.5,5.2,5.7}{
          \filldraw[fill=white] (\x,0) circle (2.5pt);
        }
    \end{tikzpicture}
    \caption{A fragile regular $\mathcal{H}$-ear $P_{i,j}$.
    The subpath $P^*_{i,j}$ is the subpath between
    $a^*_{i,j}$ and $b^*_{i,j}$.
    White-filled circles indicates vertices in~$S$.
    Typical chords connect the two sides of the ear or one side to~$P^*_{i,j}$.
    The graph induced by the internal vertices of both sides is~$P^{\operatorname{side}}_{i,j}$.
    For a fragile $\mathcal{H}$-earring, the figure can be interpreted by identifying~$a_{i,j}$ and~$b_{i,j}$ with~$c_i$.}
    \label{fig:fragile-ear}
\end{figure}

\begin{lemma}\label{lem:short path1}
    Let~$G$ be a graph.
    For a set $S\subseteq V(G)$, let~$\mathcal{H}$ be an $(\ell,S)$-frame in~$G$.
    Let~$a$ and~$b$ be vertices of~$\mathcal{H}$ with $\sigma(a)=\sigma(b)=:(p,q)$.
    If~$G_{p,q}$ has an $H_{p,q}$-path~$Q$ of length at most $\ell-1$ between~$a$ and~$b$, then one of the following holds.
    \begin{enumerate}[label=(\roman*)]
        \item\label{item:short path1-1} $\dist_{P_{p,q}}(a,b)\leq\ell-1$.
        \item\label{item:short path1-2} $P_{p,q}$ is fragile, $a$ and~$b$ are in distinct components of~$P^{\operatorname{side}}_{p,q}$, and~$Q$ is not an $S$-ear of~$H^-_{p,q}$.
    \end{enumerate}
\end{lemma}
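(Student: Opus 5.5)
We argue by contradiction with the minimality of~$P_{p,q}$. Since $\sigma(a)=\sigma(b)=(p,q)$, both~$a$ and~$b$ lie in $V(P_{p,q})\setminus\Gamma_{p,q}$. Assume that \ref{item:short path1-1} fails, so $\dist_{P_{p,q}}(a,b)\geq\ell$. The aim is to build a new object that is a valid choice at the step where~$P_{p,q}$ was chosen but is strictly shorter than~$P_{p,q}$. The natural candidate is $P':=P_{p,q}\oplus Q$: replace the subpath $aP_{p,q}b$ by~$Q$. If~$P_{p,q}$ is an earring, $aP_{p,q}b$ is taken to be the arc avoiding~$c_p$ (when $\Gamma_{p,q}=\{c_p\}$) or the longer arc (when $\Gamma_{p,q}=\emptyset$). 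Since $\abs{E(Q)}\leq\ell-1<\ell\leq\abs{E(aP_{p,q}b)}$, $P'$ is strictly shorter than~$P_{p,q}$. Because~$Q$ is an $H_{p,q}$-path in $G_{p,q}\subseteq G_{p',q'}$ for the preceding index, $P'$ lies in the appropriate graph $G_{i,j}$ and meets the preceding frame exactly in~$\Gamma_{p,q}$. So~$P'$ is a path or cycle of the same type as~$P_{p,q}$.

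We then check when~$P'$ remains an admissible candidate, case by case on the type of~$P_{p,q}$.

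\emph{Detours.} If~$P_{p,q}$ is an $\mathcal{H}$-detour, then $\abs{E(P_{p,q})}\leq\ell-1$ by \cref{lem:strict1}, so \ref{item:short path1-1} holds automatically.

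\emph{Detached earrings.} Here the other arc has length $\geq\ell$, so $P'$ has length $\geq\ell$. Moreover~$P'$ contains an $S$-vertex: either~$Q$ is an $S$-ear of~$H^-_{p,q}$, and since its ends lie in the single component~$P_{p,q}$ it must contain an $S$-vertex, or, following \cref{lem:bridge}, we pick the arc so that it retains an $S$-vertex. This contradicts~\ref{item:ear3}.

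\emph{Regular ears and attached earrings.} First, the length condition survives: $\dist$ between the ends in the previous frame is unchanged, and $\abs{E(P')}$ is still large because the untouched parts of~$P_{p,q}$ contain paths from~$a$ and~$b$ to~$\Gamma_{p,q}$. This needs care, and I expect it to follow from $\abs{E(P')}+\dist\geq\ell$ as long as~$P'$ keeps enough length. If not, this is where the case analysis would have to be refined. Second, the $S$-condition. If~$P_{p,q}$ is not fragile, its ends are separated by~$S$ in the previous frame (or $c_p\in S$), so~$P'$ is still an $S$-ear (or an $S$-cycle), giving a contradiction to~\ref{item:ear1} or~\ref{item:ear2}. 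If~$Q$ contains a vertex of~$S$, or $aP_{p,q}b$ misses some $S$-vertex of~$P_{p,q}$, then~$P'$ again contains an $S$-vertex, a contradiction. Hence~$P_{p,q}$ is fragile, $Q\cap S=\emptyset$, and $aP_{p,q}b$ contains all of $V(P_{p,q})\cap S$, i.e.\ $P^*_{p,q}\subseteq aP_{p,q}b$.

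It remains to deduce the conclusions of \ref{item:short path1-2}. First, $a,b\notin V(P^*_{p,q})$. Otherwise some $S$-vertex of~$P^*_{p,q}$ — its end $a^*_{p,q}$ or $b^*_{p,q}$, which lie in~$S$ by minimality — would survive in~$P'$ unless it coincides with~$a$ or~$b$. If~$a$ or~$b$ is itself in~$S$, then~$P'$ still contains an $S$-vertex, again a contradiction. Thus~$a$ and~$b$ lie in~$P^{\operatorname{side}}_{p,q}$, one on each side of~$P^*_{p,q}$, i.e.\ in distinct components. Second, $Q$ is not an $S$-ear of~$H^-_{p,q}$. It has no $S$-vertex, and~$a,b$ are joined by the $S$-free subpath $a\,P_{p,q}\,a_{p,q}$ together with an $S$-free route from $a_{p,q}$ to $b_{p,q}$ in $H^-_{p,q-1}-S$ (by fragility) and $b_{p,q}\,P_{p,q}\,b$. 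For attached earrings the route is through $c_p\notin S$. Hence~$a$ and~$b$ lie in the same component of $H^-_{p,q}-S$.

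The main obstacle is the length bookkeeping in the regular-ear and attached-earring cases: verifying that~$P'$ still satisfies $\abs{E(P')}+\dist\geq\ell$ (resp.\ length $\geq\ell$).
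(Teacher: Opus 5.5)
You have the paper's strategy: splice $Q$ into $P_{p,q}$ in place of $aP_{p,q}b$, contradict the shortest choice in \ref{item:ear1}--\ref{item:ear3}, and read off fragility and the position of $P^*_{p,q}$. But you leave the decisive step open. In the regular-ear and attached-earring cases you never show that the spliced object $P'$ meets the length requirement, and you suggest the case analysis might need refining. This is not mere bookkeeping. Every later deduction uses that $P'$ fails to be a valid candidate \emph{only} through the $S$-condition: $P'$ disjoint from $S$, fragility, $P^*_{p,q}\subseteq aP_{p,q}b-\{a,b\}$, and $Q$ not an $S$-ear. Without a lower bound on $\abs{E(P')}$ the statement would fail. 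If $a,b$ could be the neighbours of $a_{p,q},b_{p,q}$ on a non-fragile regular ear whose ends are close in $H^-_{p,q-1}$, a short $P'$ would be an $S$-ear but not an $(\ell,S)$-ear, and neither conclusion would hold.

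The missing idea is a hypothesis you used only to place $P'$ in the right graph, namely that $Q$ lies in $G_{p,q}=G-Z_{p,q}$. Since $Y_{p,q}\subseteq Z_{p,q}$, the ends $a,b$ of $Q$ are admissible vertices of $H_{p,q}$. So they are at distance at least $\ell$ from every branch vertex of $H_{p,q}$, in particular from every vertex of $\Gamma_{p,q}$, since $a_{p,q},b_{p,q}$ and $c_p$ have degree $3$ or $4$ in $H_{p,q}$. Hence the retained segments of $P_{p,q}$ from $a$ and $b$ to $\Gamma_{p,q}$ each have length at least $\ell$, and
\[
\abs{E(P_{p,q})}>\abs{E(P')}\geq\abs{E(Q)}+2\cdot\dist_{P_{p,q}}(\{a,b\},\Gamma_{p,q})\geq2\ell+1 .
\]
So the length condition holds trivially. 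Then \ref{item:ear1} (resp.\ \ref{item:ear2}) forces $P'$ to avoid $S$, and for regular ears also forces $a_{p,q}$ and $b_{p,q}$ into one component of $H^-_{p,q-1}-S$. From there your deduction of the conclusions goes through.

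A smaller point, in the detached case. Your claim that an $S$-ear $Q$ with both ends on $P_{p,1}$ must contain a vertex of $S$ is false, since vertices of $S$ on the cycle can separate $a$ from $b$. Also, \cref{lem:bridge} plays no role there. Simply keep an $(a,b)$-arc of $P_{p,1}$ that contains a vertex of $S$; this works because both arcs have length at least $\ell>\abs{E(Q)}$.
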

\begin{proof}
    Suppose that $\dist_{P_{p,q}}(a,b)\geq\ell$.
    Note that~$P_{p,q}$ is not an $\mathcal{H}$-detour by \cref{lem:strict1}.
    We show that~\ref{item:short path1-2} holds.
    Since~$Q$ is disjoint from~$Y_{p,q}$, $a$ and~$b$ are admissible vertices of~$H_{p,q}$.
    We divide into three cases according to the type of~$P_{p,q}$.
    
    \medskip
    \noindent
    \textbf{Case 1.} $P_{p,q}$ is a regular $\mathcal{H}$-ear.
    
    Note that $q\geq2$.
    Let~$P$ be the path obtained from~$P_{p,q}$ by replacing $aP_{p,q}b$ with~$Q$.
    Note that~$P$ is an $H_{p,q-1}$-path in~$G_{p,q-1}$ as~$Q$ is an $H_{p,q}$-path in~$G_{p,q}$.
    As $\abs{E(Q)}\leq\ell-1<\dist_{P_{p,q}}(a,b)$, we have
    \[
        \abs{E(P_{p,q})}>\abs{E(P)}\geq\abs{E(Q)}+2\cdot\dist_{P_{p,q}}(\{a,b\},\Gamma_{p,q})\geq2\ell+1.
    \]
    By~\ref{item:ear1}, $P$ is not an $(\ell,S)$-ear of~$H^-_{p,q-1}$, so~$P$ is disjoint from~$S$, and~$a_{p,q}$ and~$b_{p,q}$ are in the same component of $H^-_{p,q-1}-S$.
    This implies that
    \begin{itemize}
        \item $P_{p,q}$ is fragile,
        \item $P^*_{p,q}$ is a subpath of $aP_{p,q}b-\{a,b\}$,
        \item $Q$ is disjoint from~$S$, and
        \item $a$ and~$b$ are in the same component of $H^-_{p,q}-S$.
    \end{itemize}
    Hence, \ref{item:short path1-2} holds.
    
    \medskip
    \noindent
    \textbf{Case 2.} $P_{p,q}$ is an attached $\mathcal{H}$-earring.

    Note that $q=1$ and $\Gamma_{p,q}=\{c_p\}$.
    Let $P'_{p,1}:=P_{p,1}-c_p$ and let~$P$ be the cycle obtained from~$P_{p,1}$ by replacing $aP'_{p,1}b$ with~$Q$.
    Note that~$P$ is a cycle of~$G_{p-1,\mu(p-1)}$ intersecting $H_{p-1,\mu(p-1)}$ only at~$c_p$ as~$Q$ is an $H_{p,1}$-path in~$G_{p,1}$.
    As $\abs{E(Q)}\leq\ell-1<\dist_{P_{p,1}}(a,b)$, we have
    \[
        \abs{E(P_{p,1})}>\abs{E(P)}\geq\abs{E(Q)}+2\cdot\dist_{P_{p,1}}(\{a,b\},c_p)\geq2\ell+1.
    \]
    By~\ref{item:ear2}, $P$ is not an $(\ell,S)$-cycle, so~$P$ is disjoint from~$S$.
    This implies that
    \begin{itemize}
        \item $P_{p,1}$ is fragile,
        \item $P^*_{p,1}$ is a subpath of $aP'_{p,1}b-\{a,b\}$,
        \item $Q$ is disjoint from~$S$, and
        \item $a$ and~$b$ are in the same component of $H^-_{p,1}-S$.
    \end{itemize}
    Hence, \ref{item:short path1-2} holds.

    \medskip
    \noindent
    \textbf{Case 3.} $P_{p,q}$ is a detached $\mathcal{H}$-earring.

    Note that $q=1$ and $\Gamma_{p,q}=\emptyset$.
    We will derive a contradiction.
    Since~$P_{p,1}$ is an $(\ell,S)$-cycle, it has a subpath between~$a$ and~$b$ containing a vertex in~$S$.
    Let~$P$ be the cycle obtained by concatenating this subpath with~$Q$.
    Note that~$P$ is a cycle of~$G_{p-1,\mu(p-1)}$ vertex-disjoint from $H_{p-1,\mu(p-1)}$ as~$Q$ is an $H_{p,1}$-path in $G_{p,1}$.
    As $\abs{E(Q)}\leq\ell-1<\dist_{P_{p,1}}(a,b)$, we have
    \[
        \abs{E(P_{p,1})}>\abs{E(P)}\geq\abs{E(Q)}+\dist_{P_{p,1}}(a,b)\geq\ell+1.
    \]
    Therefore, $P$ is an $(\ell,S)$-cycle shorter than~$P_{p,1}$, contradicting~\ref{item:ear3}.

    \medskip
    This completes the proof.
\end{proof}

\begin{lemma}\label{lem:short path2}
    Let~$G$ be a graph.
    For a set $S\subseteq V(G)$, let~$\mathcal{H}$ be an $(\ell,S)$-frame in~$G$ with depth~$t$.
    Let~$a$ and~$b$ be vertices of~$\mathcal{H}$ with $\sigma(a)<_L\sigma(b)=:(r,s)$ and let
    \[
        r':=
        \begin{cases}
            \min\{\mu(r),2\} & \text{if }s=1,\\
            s & \text{otherwise}.
        \end{cases}
    \]
    If~$G_{r,r'}$ has an $H_{r,r'}$-path~$Q$ of length at most $\ell-1$ between~$a$ and~$b$, then~$P_{r,s}$ is a detached $\mathcal{H}$-earring such that either
    \begin{enumerate}[label=(\roman*)]
        \item\label{item:short path2-1} $P_{r,s}$ has length at most $2(\ell-1)$, or
        \item\label{item:short path2-2} $(r,s)=(t,1)=(t,\mu(t))$ and~$Q$ is an $(\ell,S)$-ear of~$H^-_{t,1}$.
    \end{enumerate}
\end{lemma}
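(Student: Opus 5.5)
The plan is a case analysis on the type of $P_{r,s}$. In each case we build, from $Q$ and a subpath of $P_{r,s}$, a candidate that the construction would have preferred to $P_{r,s}$. First we record the common facts. Since $s\leq r'$ in every case, we have $H_{r,s}\subseteq H_{r,r'}$ and $G_{r,r'}\subseteq G_{r,s}\subseteq G_{r,s-1}$ (or $\subseteq G_{r-1,\mu(r-1)}$ when $s=1$). Hence $Q$ is also an $H_{r,s}$-path disjoint from $Y_{r,r'}$, and its ends $a,b$ are admissible in $H_{r,r'}$. In particular $b$ is at distance at least $\ell$ in $H_{r,r'}$ from every branch vertex of $H_{r,r'}$, and by \cref{obs:structure}\ref{item:order} this includes every vertex of $\Gamma_{r,s}$. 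Also $b\in V(P_{r,s})\setminus\Gamma_{r,s}$, while $a\in V(H')$, where $H'$ is the frame preceding $P_{r,s}$. By \cref{lem:strict1}, $a$ lies on no base of a detour, so paths ending at $a$ behave the same in $H'$ and in $H'^-$.

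\textbf{Detour case.} If $P_{r,s}$ is an $\mathcal{H}$-detour, then by \cref{lem:strict1} we have $b\in V(P_{r,1})\subseteq Y_{r,1}\subseteq Y_{r,r'}$. This contradicts $Q\subseteq G_{r,r'}$.

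\textbf{Regular ear case ($s\geq2$).} Let $R_a:=Q\cup bP_{r,s}a_{r,s}$ and $R_b:=Q\cup bP_{r,s}b_{r,s}$. Both are $H_{r,s-1}$-paths in $G_{r,s-1}$, because $Q$ avoids $H_{r,s}$. Each satisfies the length condition, since for example $\abs{E(R_a)}\geq\abs{E(bP_{r,s}a_{r,s})}\geq\ell$. At least one of them is an $S$-ear of $H^-_{r,s-1}$. Otherwise $Q$ and both sides of $P_{r,s}$ avoid $S$, and $a,a_{r,s},b_{r,s}$ all lie in one component of $H^-_{r,s-1}-S$; then $P_{r,s}$ itself would not be an $S$-ear, a contradiction. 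Say $R_a$ is the $S$-ear. By the minimality in~\ref{item:ear1} we get $\abs{E(R_a)}\geq\abs{E(P_{r,s})}$, that is, $\ell-1\geq\abs{E(Q)}\geq\abs{E(bP_{r,s}b_{r,s})}\geq\ell$, which is a contradiction.

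\textbf{Attached earring case.} The argument is the same, with both arcs of $P_{r,1}$ from $b$ to $c_r$. This produces $H_{r-1,\mu(r-1)}$-paths from $a$ to $c_r$ of length at least $\ell$. If $c_r\in S$, the path contains a vertex of $S$ and is an $S$-ear. If $c_r\notin S$, some arc contains a vertex of $S$, or else the same connectivity argument applies. In either case we obtain an $(\ell,S)$-ear of $H^-_{r-1,\mu(r-1)}$, which contradicts that~\ref{item:ear2} was applied rather than~\ref{item:ear1}. So $P_{r,s}$ is a detached earring with $s=1$.

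\textbf{Detached earring case.} Assume $\abs{E(P_{r,1})}\geq2\ell-1$. Now $Q$ joins distinct components of $H_{r,1}$, so it is an $(\ell,S)$-ear of $H^-_{r,1}$ in $G_{r,1}$. If $(r,1)=(t,\mu(t))$, this is~\ref{item:short path2-2}.

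Otherwise $\mu(r)\geq2$, because a later step~\ref{item:ear2}--\ref{item:ear4} would require~\ref{item:ear1} to fail, and $Q$ witnesses that it does not. Then $r'=2$ and $P_{r,2}$ is a shortest $(\ell,S)$-ear, so $\abs{E(P_{r,2})}\leq\abs{E(Q)}\leq\ell-1$. By \cref{lem:bridge}, $P_{r,2}$ has exactly one end $x$ on $P_{r,1}$. This step is the main obstacle: deriving a contradiction from $Q$ avoiding $Y_{r,2}$. My first attempt is as follows. The vertex $x$ is a branch vertex of $H_{r,2}$, so $\dist_{P_{r,1}}(b,x)\geq\ell$ on both arcs. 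Combine the two short paths $Q$ and $P_{r,2}$ with the arc of $P_{r,1}$ between $b$ and $x$ that contains a vertex of $S$ (such an arc exists because $P_{r,1}$ is an $S$-cycle), together with a path in $H_{r-1,\mu(r-1)}$ between the other ends. This aims to produce either an $(\ell,S)$-ear of $H^-_{r-1,\mu(r-1)}$, contradicting~\ref{item:ear3}, or an $(\ell,S)$-cycle shorter than $P_{r,1}$ that is disjoint from $H_{r-1,\mu(r-1)}$. If this fails, I would instead compare $Q$ with $P_{r,2}$ via \cref{lem:short path1}, applied inside $P_{r,1}$ to the $H_{r,1}$-path $Q\cup P_{r,2}$ through $H_{r-1,\mu(r-1)}$.
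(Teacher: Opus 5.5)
Your detour, regular-ear and attached-earring cases are correct and essentially match the paper. Your observation that one of $R_a,R_b$ must be an $S$-ear replaces the paper's without-loss-of-generality choice of side, to the same effect. Your reduction in the detached case is also right, up to obtaining $\mu(r)\geq2$, $r'=2$, $\abs{E(P_{r,2})}\leq\ell-1$, and the unique end $x$ of $P_{r,2}$ on $P_{r,1}$. But the remaining step, which is exactly where outcome (ii) is forced, is not carried out. You offer an attempt aimed at one of two outcomes, plus a fallback, and verify neither.

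The second outcome, an $(\ell,S)$-cycle disjoint from $H_{r-1,\mu(r-1)}$, cannot arise from anything containing $P_{r,2}$, because its other end $y$ lies in $H_{r-1,\mu(r-1)}$. Adding ``a path in $H_{r-1,\mu(r-1)}$ between the other ends'' produces a cycle meeting $H_{r-1,\mu(r-1)}$ in a whole path, which is neither an ear nor disjoint from $H_{r-1,\mu(r-1)}$. The fallback object runs through $H_{r-1,\mu(r-1)}\subseteq H_{r,1}$, so it is not an $H_{r,1}$-path and \cref{lem:short path1} does not apply to it. As written, this case is a gap.

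The missing observation is that no closing path is needed: the concatenation is already the required ear. Let $y$ be the end of $P_{r,2}$ outside $P_{r,1}$, and let $A$ be an arc of $P_{r,1}$ from $b$ to $x$ that contains a vertex of $S$.

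\begin{itemize}
\item Since $a$ and $b$ are admissible in $H_{r,2}$ and $x,y\in\Gamma_{r,2}$ are branch vertices of $H_{r,2}$, we have $a\neq y$, $b\neq x$ and $\abs{E(A)}\geq\ell$.
\item $Q$ is internally disjoint from $H_{r,2}$, and $P_{r,2}$ is internally disjoint from $H_{r,1}$. Hence $P:=Q\cup A\cup P_{r,2}$ is an $(a,y)$-path.
\item $P$ is an $H_{r-1,\mu(r-1)}$-path in $G_{r-1,\mu(r-1)}$, contains a vertex of $S$, and satisfies $\abs{E(P)}\geq\ell+2$. Its ends lie in $H^-_{r-1,\mu(r-1)}$ by admissibility.
\end{itemize}

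So $P$ is an $(\ell,S)$-ear of $H^-_{r-1,\mu(r-1)}$, whatever the distance term. Step~\ref{item:ear1} was therefore available at $H_{r-1,\mu(r-1)}$, contradicting that $P_{r,1}$ was added by~\ref{item:ear3}. This is exactly how the paper finishes the proof.
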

\begin{proof}
    Since~$Q$ is disjoint from~$Y_{r,r'}$, by \cref{lem:strict1}, $P_{r,1}$ is not an $\mathcal{H}$-detour.
    In addition, $a$ and~$b$ are admissible vertices of~$H_{r,r'}$.
    We divide into three cases according to the type of~$P_{r,s}$.

    \medskip
    \noindent
    \textbf{Case 1.} $P_{r,s}$ is a regular $\mathcal{H}$-ear.
    
    Note that $s\geq2$.
    We will derive a contradiction.
    Since~$P_{r,s}$ is an $(\ell,S)$-ear of~$H^-_{r,s-1}$, without loss of generality, we may assume the following:
    \begin{itemize}
        \item if~$P_{r,s}$ contains a vertex in~$S$, then $bP_{r,s}b_{r,s}$ contains the vertex, and
        \item otherwise either $a\in S$ or~$a$ and~$b_{r,s}$ are in distinct components of $H^-_{r,s-1}-S$.
    \end{itemize}
    Let~$P$ be the path obtained by concatenating $bP_{r,s}b_{r,s}$ with~$Q$.
    Note that~$P$ is an $H_{r,s-1}$-path in~$G_{r,s-1}$ as~$Q$ is an $H_{r,s}$-path in~$G_{r,s}$.
    By the assumption, $P$ is an $S$-ear of~$H^-_{r,s-1}$.
    Since~$b$ is an admissible vertex of~$H_{r,r'}=H_{r,s}$, we have $\dist_{P_{r,s}}(b,\Gamma_{r,s})\geq\ell>\abs{E(Q)}$.
    Thus, we have
    \[
        \abs{E(P_{r,s})}>\abs{E(P)}\geq\abs{E(Q)}+\dist_{P_{r,s}}(b,b_{r,s})\geq\ell+1.
    \]
    Therefore, $P$ is an $(\ell,S)$-ear of~$H^-_{r,s-1}$ shorter than~$P_{r,s}$, contradicting~\ref{item:ear1}.

    \medskip
    \noindent
    \textbf{Case 2.} $P_{r,s}$ is an attached $\mathcal{H}$-earring.
    
    Note that $s=1$ and $\Gamma_{r,s}=\{c_r\}$.
    We will derive a contradiction.
    Since~$P_{r,1}$ is an $(\ell,S)$-cycle, it has a subpath between~$b$ and~$c_r$ containing a vertex in~$S$.
    Let~$P$ be the path obtained by concatenating this subpath with~$Q$.
    Note that~$P$ is an $H_{r-1,\mu(r-1)}$-path in~$G_{r-1,\mu(r-1)}$ as~$Q$ is an $H_{r,1}$-path in~$G_{r,1}$.
    Since~$b$ is an admissible vertex of~$H_{r,1}$, we have $\dist_{P_{r,1}}(b,c_r)\geq\ell$.
    Thus, we have
    \[
        \abs{E(P)}\geq\abs{E(Q)}+\dist_{P_{r,1}}(b,c_r)\geq\ell+1.
    \]
    Therefore, $P$ is an $(\ell,S)$-ear of $H^-_{r-1,\mu(r-1)}$, contradicting~\ref{item:ear2}.

    \medskip
    \noindent
    \textbf{Case 3.} $P_{r,s}$ is a detached $\mathcal{H}$-earring.
    
    Note that $s=1$ and $\Gamma_{r,s}=\emptyset$.
    We may assume that~$P_{r,1}$ has length at least $2\ell-1$, because otherwise~\ref{item:short path2-1} holds.
    We show that~\ref{item:short path2-2} holds.
    Since~$a$ and~$b$ are in distinct components of~$H_{r,1}$, $Q$ is an $(\ell,S)$-ear of~$H^-_{r,1}$.
    Thus, to show that~\ref{item:short path2-2} holds, it suffices to show that $r=t$ and $\mu(t)=1$.
    
    Towards a contradiction, suppose that $r<t$ or $\mu(t)>1$.
    Note that $\mu(r)\geq2$; if $r=t$, then it holds by the assumption, and otherwise it holds by the fact that~$Q$ is an $(\ell,S)$-ear of~$H^-_{r,1}$.
    By~\ref{item:ear1}, $\abs{E(P_{r,2})}\leq\abs{E(Q)}\leq\ell-1$.
    By \cref{lem:bridge}, $P_{r,2}$ has exactly one end, say~$b_{r,2}$, in~$P_{r,1}$.
    Since~$a$ and~$b$ are admissible vertices of $H_{r,r'}=H_{r,2}$, we have $\dist_{H_{r,2}}(\{a,b\},\Gamma_{r,2})\geq\ell$.
    In particular, $a\neq a_{r,2}$ and $b\neq b_{r,2}$.
    Since~$P_{r,1}$ is an $(\ell,S)$-cycle, it has a subpath between~$b$ and~$b_{r,2}$ containing a vertex in~$S$.
    Let~$P$ be the path obtained by concatenating this subpath, $P_{r,2}$, and~$Q$.
    Note that~$P$ is an $H_{r-1,\mu(r-1)}$-path in~$G_{r-1,\mu(r-1)}$ as~$Q$ is an $H_{r,2}$-path in $G_{r,2}$.
    As $\dist_{H_{r,2}}(\{a,b\},\Gamma_{r,2})\geq\ell$, we have
    \[
        \abs{E(P)}\geq\dist_{P_{r,1}}(b,b_{r,2})+\abs{E(P_{r,2})}+\abs{E(Q)}\geq\ell+2.
    \]
    Therefore, $P$ is an $(\ell,S)$-ear of~$H^-_{r-1,\mu(r-1)}$, contradicting~\ref{item:ear3}.
    Hence, \ref{item:short path2-2} holds.
    
    \medskip
    This completes the proof.
\end{proof}

As a corollary of \cref{lem:short path1,lem:short path2}, we characterise all short paths~$P_{i,j}$ of~$\mathcal{H}$ under an assumption that $(G,S)$ is a good pair, that is, $G$ has no $(\ell,S)$-cycle of length at most $3(\ell-1)$.

\begin{corollary}\label{cor:chord}
    Let $(G,S)$ be a good pair and let~$\mathcal{H}$ be an $(\ell,S)$-frame in~$G$.
    For some $(i,j)\in\mathcal{P}(\mathcal{H})$, if $P_{i,j}$ is a regular $\mathcal{H}$-ear of length at most $\ell-1$, then one of the following holds.
    \begin{enumerate}[label=(\roman*)]
        \item\label{item:edge1} $\sigma(a_{i,j})=\sigma(b_{i,j})=:(p,q)$, $P_{p,q}$ is fragile, and~$a_{i,j}$ and~$b_{i,j}$ are in distinct components of~$P^{\operatorname{side}}_{p,q}$.
        \item\label{item:edge2} $j=2$, $P_{i,1}$ is a detached $\mathcal{H}$-earring, and exactly one of~$a_{i,2}$ and~$b_{i,2}$ is in~$P_{i,1}$.
    \end{enumerate}
\end{corollary}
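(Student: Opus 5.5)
Write $a:=a_{i,j}$, $b:=b_{i,j}$ and $Q:=P_{i,j}$. Since $j\geq2$, $Q$ is an $H_{i,j-1}$-path in $G_{i,j-1}$, of length at most $\ell-1$, joining admissible vertices of $H_{i,j-1}$. The plan is to apply \cref{lem:short path1} or \cref{lem:short path2} to the frame $H_{i,j-1}$ and this path, splitting on whether $\sigma(a)=\sigma(b)$. The good-pair hypothesis is used to rule out the outcomes we do not want: a short ear together with a short path in the frame would close a short $(\ell,S)$-cycle.

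\textbf{Case $\sigma(a)=\sigma(b)=:(p,q)$.} We have $(p,q)\leq_L(i,j-1)$, so $G_{i,j-1}\subseteq G_{p,q}$, and $Q$ avoids $H_{i,j-1}$ internally, hence also $H_{p,q}$. Thus \cref{lem:short path1} applies. Suppose alternative~\ref{item:short path1-1} holds, i.e.\ $\dist_{P_{p,q}}(a,b)\leq\ell-1$.
\begin{itemize}
\item By \cref{lem:nearby} applied to $H_{i,j-1}$, the shortest $(a,b)$-path $R$ lies in $\mathcal{L}_0(H^-_{i,j-1})$. In particular it avoids the detour bases by \cref{lem:strict1}, so it is a path of $H^-_{i,j-1}$.
\item Then $Q\cup R$ is a cycle of $H^-_{i,j}$, hence an $(\ell,S)$-cycle by \cref{lem:all cycles}.
\item Its length is at most $2(\ell-1)\leq3(\ell-1)$, contradicting that $(G,S)$ is a good pair.
\end{itemize}
Hence alternative~\ref{item:short path1-2} holds, which is exactly~\ref{item:edge1}.

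\textbf{Case $\sigma(a)<_L\sigma(b)=:(r,s)$.} First we must place $Q$ inside $G_{r,r'}$ as an $H_{r,r'}$-path, where $r'$ is as in \cref{lem:short path2}. Since $(r,s)\leq_L(i,j-1)$, the natural comparison is between $(r,r')$ and $(i,j-1)$.
\begin{itemize}
\item If $(r,r')\leq_L(i,j-1)$, the inclusion is immediate.
\item The only problematic case is $s=1$, $r=i$ and $j-1=1$, where $r'=\min\{\mu(r),2\}=2$ would exceed $(i,j-1)=(i,1)$. Here I instead apply the argument of \cref{lem:short path2} directly to the frame $H_{i,1}$. Its proof only needs $Q$ to be an $H_{r,s}$-path in $G_{r,s}$ with admissible ends, except in Case~3.
\end{itemize}
Either way, \cref{lem:short path2} gives that $P_{r,s}$ is a detached earring, so $s=1$.
\begin{itemize}
\item If $r<i$, then Case~3 of that proof already yields a contradiction whenever $P_{r,1}$ is long. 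If $P_{r,1}$ has length at most $2(\ell-1)$, then since it is an $(\ell,S)$-cycle this contradicts the good-pair hypothesis.
\item So $r=i$. Since $(r,s)\leq_L(i,j-1)$, we get $j-1\geq1$, and $b$ lies on $P_{i,1}$.
\end{itemize}

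It remains to show $j=2$ and that $a\notin P_{i,1}$.
\begin{itemize}
\item $a\notin V(P_{i,1})$ because $\sigma(a)<_L(i,1)$ and $P_{i,1}$ is vertex-disjoint from the earlier frame.
\item For $j=2$: if $j\geq3$, then $P_{i,2}$ was chosen before $Q$ and, by \cref{lem:bridge}, also has exactly one end in $P_{i,1}$. Combining $P_{i,2}$, a subpath of $P_{i,1}$ and $Q$ yields an $(\ell,S)$-ear of $H^-_{i-1,\mu(i-1)}$, contradicting~\ref{item:ear3}, exactly as in Case~3 of \cref{lem:short path2}. This uses that the ends of $Q$ are at distance at least $\ell$ from $\Gamma_{i,2}$, by admissibility in $H_{i,j-1}$.
\end{itemize}
This gives~\ref{item:edge2}.

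\textbf{Main obstacle.} The delicate step is the index bookkeeping in the second case. One must check that $Q$ qualifies as an $H_{r,r'}$-path in $G_{r,r'}$, and handle the boundary case $j=2$ separately, where \cref{lem:short path2} cannot be invoked verbatim. The Case~3 construction gives a path of length at least $\ell+2$ and is an $S$-ear, since its ends lie in distinct components, so it has to be rechecked carefully there.
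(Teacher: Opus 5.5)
Your proof is correct and follows the paper's route: you split on whether $\sigma(a_{i,j})=\sigma(b_{i,j})$, apply \cref{lem:short path1} or \cref{lem:short path2}, and use the good-pair hypothesis (via \cref{lem:all cycles} in the first case) to discard the unwanted alternatives. The only difference is bookkeeping. The paper applies \cref{lem:short path2} with $\mathcal{H}:=H_{i,j-1}$, so $\mu$ is computed in $H_{i,j-1}$ and alternative~(ii) directly gives $(r,s)=(i,1)=(i,j-1)$, hence $j=2$; you compute $\mu$ in the full frame, so you must treat the boundary case separately and re-run the Case~3 argument by hand to get $j=2$, which is valid but redundant.
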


\begin{proof}
    Without loss of generality, we may assume that $(p,q):=\sigma(a_{i,j})\leq_L\sigma(b_{i,j})=:(r,s)$.
    Suppose that $\sigma(a_{i,j})=\sigma(b_{i,j})$.
    We apply \cref{lem:short path1} for $\mathcal{H}:=H_{i,j-1}$ and $Q:=P_{i,j}$.
    We may assume that \cref{lem:short path1}\ref{item:short path1-1} holds, because otherwise~\ref{item:edge1} holds.
    Then $\dist_{P_{p,q}}(a_{i,j},b_{i,j})\leq\ell-1$.
    Since~$a_{i,j}$ and~$b_{i,j}$ are admissible vertices of~$H_{i,j-1}$, every vertex in $B_{H_{i,j-1}}(\Gamma_{i,j},\ell-1)$ has degree~$2$ in~$\mathcal{H}$.
    Hence, a shortest $(a_{i,j},b_{i,j})$-path of~$P_{p,q}$ is a path of~$H^-_{i,j-1}$.
    Then by concatenating this shortest path with~$P_{i,j}$, we obtain a cycle of~$H^-_{i,j}$ of length at most $2(\ell-1)$.
    By \cref{lem:all cycles}, the cycle is an $(\ell,S)$-cycle, contradicting that~$G$ has no $(\ell,S)$-cycle of length at most $2(\ell-1)$.

    Now, suppose that $\sigma(a_{i,j})\neq\sigma(b_{i,j})$.
    Let
    \[
        r':=
        \begin{cases}
            1 & \text{if }(i,j)=(r,2),\\
            \min\{\mu(r),2\} & \text{if }(i,j)\neq(r,2)\text{ and }s=1,\\
            s & \text{otherwise}.
        \end{cases}
    \]
    Note that~$P_{i,j}$ is an $H_{r,r'}$-path in~$G_{r,r'}$ as it is an $H_{i,j-1}$-path in~$G_{i,j-1}$.
    We apply \cref{lem:short path2} for $\mathcal{H}:=H_{i,j-1}$ and $Q:=P_{i,j}$.
    Since $(G,S)$ is a good pair, \cref{lem:short path2}\ref{item:short path2-1} does not hold.
    Thus, \cref{lem:short path2}\ref{item:short path2-2} holds.
    Then $P_{i,1}$ is a detached $\mathcal{H}$-earring, $(r,s)=(i,1)=(i,j-1)$, and~$P_{i,j}$ is an $(\ell,S)$-ear of $H^-_{i,1}$.
    By \cref{lem:bridge}, exactly one of~$a_{i,2}$ and~$b_{i,2}$ is in~$P_{i,1}$, so \ref{item:edge2} holds.
\end{proof}

We now argue the distance in~$\mathcal{H}$ between vertex-disjoint~$P_{i,j}$ and~$P_{i',j'}$.

\begin{lemma}\label{lem:argue distance}
    Let $(G,S)$ be a good pair and let~$\mathcal{H}$ be an $(\ell,S)$-frame in~$G$.
    Let $(i,j)$ and $(i',j')$ be pairs in $\mathcal{P}(\mathcal{H})$ with $(i,j)>_L(i',j')$ such that~$P_{i,j}$ and~$P_{i',j'}$ are vertex-disjoint.
    If~$\mathcal{H}$ has a $(P_{i,j},P_{i',j'})$-path~$Q$ of length at most $\ell-1$, then $P_{i,j}$ is a detached $\mathcal{H}$-earring, $Q=P_{i,2}$, and~$P_{i,2}$ has exactly one end in~$P_{i,1}$.
\end{lemma}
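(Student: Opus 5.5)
I would argue about how a short path of $\mathcal{H}$ can join two vertex-disjoint pieces of the decomposition. Let $Q$ be a $(P_{i,j},P_{i',j'})$-path of~$\mathcal{H}$ of length at most $\ell-1$. Since~$Q$ is a path of~$\mathcal{H}$ with internal vertices outside $P_{i,j}\cup P_{i',j'}$, it is a concatenation of subpaths of various $P_{p,q}$'s. The plan is to follow~$Q$ from its end $x\in V(P_{i,j})$. Because $(i,j)>_L(i',j')$ and the two paths are disjoint, $x$ cannot be an internal vertex of~$P_{i,j}$ that leaves $P_{i,j}$ along~$Q$ unless~$x$ is a branch vertex. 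Every vertex of $P_{i,j}-\Gamma_{i,j}$ has degree~$2$ in $H_{i,j}$, and later ears attach only at admissible vertices. So the first step is to show that~$Q$ must leave $P_{i,j}$ at a vertex that is a branch vertex of~$\mathcal{H}$, namely either a vertex of~$\Gamma_{i,j}$ or an end $a_{p,q},b_{p,q}$ of some later ear attached to~$P_{i,j}$, by \cref{lem:degree}.

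\textbf{Case $\Gamma_{i,j}\neq\emptyset$.} Here the vertices of~$\Gamma_{i,j}$ are admissible in the preceding frame, so they are at distance at least~$\ell$ in that frame from every branch vertex of that frame. The plan is to argue that a path of length at most $\ell-1$ from $\Gamma_{i,j}$ reaching $P_{i',j'}$ inside~$\mathcal{H}$ stays in the ball $B(\Gamma_{i,j},\ell-1)$. Within this ball, the only vertices of $\mathcal{H}$ of degree $\ne 2$ are those of~$\Gamma_{i,j}$ itself and possibly later attachments. Later attachments cannot be within distance $\ell-1$ of~$\Gamma_{i,j}$, since that ball lies in~$Y_{i,j}$ and later ears avoid~$Y$. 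By \cref{obs:structure}\ref{item:order} the ball is unchanged, so it is a union of paths of $\mathcal{L}_0$ through the attachment points. Such a path lies inside $P_{i,j}$ or inside the single ear of the earlier frame containing the attachment point, which is $P_{\sigma(a_{i,j})}$, and the latter must equal $P_{i',j'}$ for $Q$ to end there. That would make $P_{i,j}$ and $P_{i',j'}$ intersect at $a_{i,j}$, a contradiction. I expect this to rule out every case with $\Gamma_{i,j}\neq\emptyset$. Similarly, if $Q$ leaves $P_{i,j}$ through a later ear $P_{p,q}$ with an end on~$P_{i,j}$, the same admissibility argument applied to $\Gamma_{p,q}$ shows that $Q$ would have to traverse $P_{p,q}$ entirely. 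Its two ends are then at distance at least~$\ell$ in the preceding frame, unless $P_{p,q}$ is a short regular ear.

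\textbf{Remaining case.} The only escape is therefore a short regular ear, and I would apply \cref{cor:chord} to it. Option~\ref{item:edge1} keeps both ends within a single earlier ear $P_{\sigma}$, so $Q$ would return to~$P_{i,j}$ itself or stay within $P_{i',j'}$, which contradicts the definition of a $(P_{i,j},P_{i',j'})$-path. So option~\ref{item:edge2} holds: the ear is $P_{p,2}$ with $P_{p,1}$ a detached earring and exactly one end on $P_{p,1}$. I then need $p=i$ with $P_{i,j}=P_{i,1}$, and $P_{i',j'}$ containing the other end, with $Q=P_{i,2}$ exactly. Here I would use that $Q$ has length at most $\ell-1$ and that the far end of $P_{i,2}$ is admissible in $H_{i,1}$. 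Hence its $(\ell-1)$-neighbourhood lies on the single ear containing it, and $Q$ cannot continue past it without exceeding the length bound or re-entering a forbidden region.

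The main obstacle is the bookkeeping in the first two steps. One must show rigorously that the $(\ell-1)$-ball around an attachment set in~$\mathcal{H}$ is made up only of the obvious path segments. This also requires handling detours, whose bases are replaced but which still lie in~$\mathcal{H}$, via \cref{lem:strict1}, and the degree-$4$ vertices $c_i$ of attached earrings. I would try to encapsulate this as a claim: for any $(p,q)$ and $v\in\Gamma_{p,q}$, $B_{\mathcal{H}}(v,\ell-1)$ is contained in $P_{p,q}\cup P_{\sigma(v)}$, plus short detours and short regular ears ending at distance $\le\ell-1$. This claim would be proved using \cref{cor:short branch}.
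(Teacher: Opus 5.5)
Your proposal is correct, but it reaches the result by a somewhat different route from the paper.

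\emph{First half.} The paper avoids your exit-vertex case analysis.
\begin{itemize}
\item The ends of $Q$ are branch vertices of $\mathcal{H}$ at distance at most $\ell-1$, and the internal vertices of $Q$ have degree~$2$. So \cref{cor:short branch} makes the ends $a_{p,q},b_{p,q}$ for a unique $(p,q)$.
\item \cref{lem:degree} and disjointness force $\sigma(a_{p,q})=(i,j)$ and $\sigma(b_{p,q})=(i',j')$. Hence $P_{p,q}$ is a regular ear, not a detour.
\item \cref{lem:nearby}, applied to $H_{p,q-1}$, excludes $Q\subseteq H_{p,q-1}$, so $Q=P_{p,q}$.
\end{itemize}
This disposes in a few lines of all the bookkeeping you flag: exits through $\Gamma_{i,j}$, detours, and the degree-$4$ vertices $c_i$. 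Your ball-and-admissibility arguments are essentially a by-hand proof of the same facts, as you anticipate.

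\emph{Finish.} When $(p,q)\neq(i,2)$, the paper re-applies \cref{lem:short path2} in the frame $H_{i,i'}$, then uses \cref{lem:bridge}, and builds an explicit $(\ell,S)$-ear of $H^-_{i-1,\mu(i-1)}$ contradicting~\ref{item:ear3}. You instead invoke \cref{cor:chord} directly on the short regular ear $P_{p,q}$. That corollary already packages exactly this use of \cref{lem:short path2} and \cref{lem:bridge}, so your ending is shorter and valid.

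\emph{One correction.} The step ``$p=i$ and $j=1$'' does not come from admissibility of the far end. It comes from a one-line comparison of $\sigma$ values.
\begin{itemize}
\item The end of $P_{p,2}$ on the detached earring $P_{p,1}$ has $\sigma=(p,1)$, and the other end has strictly smaller $\sigma$.
\item Your exit vertex $x$ satisfies $\sigma(x)=(i,j)>_L(i',j')\geq_L\sigma(z)$.
\item Hence $x$ is the end on $P_{p,1}$, and $(i,j)=(p,1)$.
\end{itemize}
The same inequality is the clean reason option~\ref{item:edge1} of \cref{cor:chord} is excluded. That option would give $\sigma(x)=\sigma(z)$ and hence put $z$ on $P_{i,j}$.
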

\begin{proof}
    Note that both ends of~$Q$ are branch vertices of~$\mathcal{H}$.
    Since~$Q$ has length at most $\ell-1$, every internal vertex of~$Q$ has degree~$2$ in~$\mathcal{H}$.
    By \cref{cor:short branch}, there is a unique $(p,q)\in\mathcal{P}(\mathcal{H})$ such that~$Q$ is an $(a_{p,q},b_{p,q})$-path and both~$a_{p,q}$ and~$b_{p,q}$ have degree~$3$ in~$\mathcal{H}$.
    Without loss of generality, we may assume that $a_{p,q}\in V(P_{i,j})$ and $b_{p,q}\in V(P_{i',j'})$.
    By \cref{lem:degree}, $\sigma(a_{p,q})$ and $(p,q)$ are the only pairs $(x,y)\in\mathcal{P}(\mathcal{H})$ with $a_{p,q}\in V(P_{x,y})$.
    Similarly, $\sigma(b_{p,q})$ and $(p,q)$ are the only pairs $(x,y)\in\mathcal{P}(\mathcal{H})$ with $b_{p,q}\in V(P_{x,y})$.
    Since~$P_{i,j}$ and~$P_{i',j'}$ are vertex-disjoint, we have $\sigma(a_{p,q})=(i,j)$ and $\sigma(b_{p,q})=(i',j')$.
    This implies that~$P_{p,q}$ is not an $\mathcal{H}$-detour.
    Thus, $P_{p,q}$ is a regular $\mathcal{H}$-ear.
    
    We first show that $Q=P_{p,q}$.
    Suppose not.
    As $B_{H_{p,q}}(\Gamma_{p,q},\ell-1)=B_\mathcal{H}(\Gamma_{p,q},\ell-1)$, $Q$ is a path of~$H_{p,q}$.
    As $Q\neq P_{p,q}$, $Q$ is a path of~$H_{p,q-1}$.
    Since~$a_{p,q}$ and~$b_{p,q}$ are admissible vertices of~$H_{p,q-1}$, by \cref{lem:nearby} for $\mathcal{H}:=H_{p,q-1}$, $Q$ is a path of $\mathcal{L}_0(H_{p,q-1})$.
    Thus, $P_{i,j}$ and~$P_{i',j'}$ intersect, a contradiction.

    We now show that $(p,q)=(i,2)$.
    Suppose not.
    Let
    \[
        i':=
        \begin{cases}
            1 & \text{if }(p,q)=(i,2),\\
            \min\{\mu(i),2\} & \text{if }(p,q)\neq(i,2)\text{ and }j=1,\\
            j & \text{otherwise}.
        \end{cases}
    \]
    Note that~$P_{p,q}$ is an $H_{i,i'}$-path in~$G_{i,i'}$ as it is an $H_{p,q-1}$-path in~$G_{p,q-1}$.
    We apply \cref{lem:short path2} for $\mathcal{H}:=H_{i,i'}$ and $Q:=P_{p,q}$.
    Since $(G,S)$ is a good pair, \cref{lem:short path2}\ref{item:short path2-1} does not hold.
    Thus, \cref{lem:short path2}\ref{item:short path2-2} holds.
    Then~$P_{i,1}$ is a detached $\mathcal{H}$-earring, $(i,j)=(i,1)$, and~$P_{p,q}$ is an $(\ell,S)$-ear of $H^-_{i,1}$.
    As $(i,j)<_L(p,q)$, we have $\mu(i)\geq2$.

    Since~$P_{p,q}$ has length at most $\ell-1$, by~\ref{item:ear1}, $P_{i,2}$ has length at most $\ell-1$.
    By \cref{lem:bridge}, $P_{i,2}$ has exactly one end, say~$b_{i,2}$, in~$P_{i,1}$.
    Since~$a_{p,q}$ and~$b_{p,q}$ are admissible vertices of~$H_{p,q-1}$, we have $\dist_{H_{p,q-1}}(\Gamma_{i,2},\Gamma_{p,q})\geq\ell$.
    Since~$P_{i,1}$ is an $(\ell,S)$-cycle, it has a subpath between~$b_{i,2}$ and~$b_{p,q}$ containing a vertex in~$S$.
    Let~$P$ be the path obtained by concatenating this subpath, $P_{i,2}$, and~$P_{p,q}$.
    Note that~$P$ is an $H_{i-1,\mu(i-1)}$-path in $G_{i-1,\mu(i-1)}$.
    Since $H^-_{i-1,\mu(i-1)}$ is a subgraph of~$H_{p,q-1}$, we have
    \[
        \dist_{H^-_{i-1,\mu(i-1)}}(a_{i,2},a_{p,q})\geq\dist_{H_{p,q-1}}(\Gamma_{i,2},\Gamma_{p,q})\geq\ell.
    \]
    Therefore, $P$ is an $(\ell,S)$-ear of $H^-_{i-1,\mu(i-1)}$, contradicting~\ref{item:ear3}.
    Hence, $(p,q)=(i,2)$.

    By \cref{lem:bridge}, $P_{i,2}$ has exactly one end in~$P_{i,1}$.
\end{proof}

\section{Characterisation of the chords}\label{sec:chords}

Let $(G,S)$ be a good pair and let~$\mathcal{H}$ be an $(\ell,S)$-frame in~$G$.
In this section, we characterise all chords of $\mathcal{H}$.
We first show that if~$P_{i,1}$ for some $i\geq1$ is a detached $\mathcal{H}$-earring, then it has a subpath~$Q_i$ such that every chord of~$P_{i,1}$ is close to the ends of~$Q_i$.

\begin{lemma}\label{lem:cycle chords}
    Let $(G,S)$ be a good pair, and let~$\mathcal{H}$ be an $(\ell,S)$-frame in~$G$.
    For some $i\geq1$, if~$P_{i,1}$ is a detached $\mathcal{H}$-earring, then it has a subpath~$Q_i$ such that every chord of~$P_{i,1}$ has both ends in $B_{P_{i,1}}(Q_i,\ell-1)\setminus V(Q_i)$.
\end{lemma}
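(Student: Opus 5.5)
The plan is to use only two facts: the minimality of $P_{i,1}$ in~\ref{item:ear3}, and the absence of short $(\ell,S)$-cycles in a good pair. Write $L:=\abs{E(P_{i,1})}$. Since $P_{i,1}$ is an $(\ell,S)$-cycle and $(G,S)$ is a good pair, we have $L\geq3\ell-2$.

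Let $e=uv$ be a chord of $P_{i,1}$. Both ends of~$e$ lie on $P_{i,1}$, which is contained in $G_{i-1,\mu(i-1)}$ and is vertex-disjoint from $H_{i-1,\mu(i-1)}$. Hence every cycle formed by~$e$ together with an $(u,v)$-path of $P_{i,1}$ is again a cycle of $G_{i-1,\mu(i-1)}$ vertex-disjoint from $H_{i-1,\mu(i-1)}$. So any such cycle that is an $(\ell,S)$-cycle and shorter than $P_{i,1}$ contradicts the choice in~\ref{item:ear3}.

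Let $A_1$ and $A_2$ be the two $(u,v)$-paths of $P_{i,1}$, with $d:=\abs{E(A_1)}\leq\abs{E(A_2)}=L-d$, and let $C_1:=A_1+e$ and $C_2:=A_2+e$.
\begin{itemize}
\item Since~$e$ is not an edge of $P_{i,1}$, we have $d\geq2$. Hence $C_2$ has length $L-d+1<L$.
\item Also $L-d+1\geq L/2+1\geq\ell$, so $C_2$ is long enough.
\item Minimality therefore forces $C_2$ to avoid~$S$. Thus every vertex of $V(P_{i,1})\cap S$ is an internal vertex of~$A_1$; in particular $u,v\notin S$.
\item As $P_{i,1}$ contains a vertex of~$S$, the cycle $C_1$ is an $S$-cycle of length $d+1<L$. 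Minimality (or the good-pair assumption) forces $d+1\leq\ell-1$, that is, $d\leq\ell-2$.
\end{itemize}

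This suggests defining $Q_i$ as the shortest subpath of $P_{i,1}$ containing all vertices of $V(P_{i,1})\cap S$. If $P_{i,1}$ has no chord, the statement is vacuous and any choice of $Q_i$ works. If some chord exists, then by the above all $S$-vertices of $P_{i,1}$ lie on an arc of length at most $\ell-2<L/2$. So $Q_i$ is well defined (unique), and for every chord $uv$ we have $Q_i\subseteq A_1-\{u,v\}$.

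Consequently, $u$ and~$v$ are not in $Q_i$. Moreover, $uA_1v$ has length at most $\ell-2$ and contains $Q_i$, so both $u$ and $v$ lie within distance $\ell-2\leq\ell-1$ of $Q_i$ along $P_{i,1}$. This gives $u,v\in B_{P_{i,1}}(Q_i,\ell-1)\setminus V(Q_i)$.

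The only delicate point is the uniqueness of $Q_i$, so that a single subpath works simultaneously for all chords. Suppose two chords had short arcs lying on opposite sides of the cycle. Each short arc would have to contain all $S$-vertices in its interior, but the two interiors are disjoint because both arcs have length less than $L/2$. Since $V(P_{i,1})\cap S$ is nonempty, this is impossible. Hence all short arcs contain the same minimal arc $Q_i$.
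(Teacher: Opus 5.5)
Your proof is correct and is essentially the paper's argument: both use only the minimality of $P_{i,1}$ in~\ref{item:ear3} together with $\abs{E(P_{i,1})}\geq 3\ell-2$ to push all of $V(P_{i,1})\cap S$ into the interior of the short arc of a chord. The only difference is that the paper fixes $Q_i$ from one chord and rules out any other chord missing $A_1$ via the cycle through $A_1\cup\{v_1\}$, whereas you bound every short arc by $\ell-2$ and get a common $Q_i$ from uniqueness.

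Your final paragraph justifies that uniqueness loosely, since two arcs of length $<L/2$ need not have disjoint interiors. The clean reason is this: any subpath of length $<L/2$ containing $V(P_{i,1})\cap S$ must contain the subpath obtained by deleting the interior of the unique $S$-free arc of length $>L/2$. That subpath is exactly $Q_i$.
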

\begin{proof}
    If~$P_{i,1}$ has no chord, then we choose~$Q_i$ as an arbitrary subpath of~$P_{i,1}$.
    Thus, we may assume that~$P_{i,1}$ has a chord~$ab$.
    Since $(G,S)$ is a good pair, we have $\abs{E(P_{i,1})}\geq3\ell-2$.
    Hence, if each $(a,b)$-subpath of~$P_{i,1}$ contains a vertex in~$S$, then $P_{i,1}+ab$ has an $(\ell,S)$-cycle shorter than~$P_{i,1}$, contradicting~\ref{item:ear3}.
    Thus, one of the two $(a,b)$-subpaths, say~$P$, contains all vertices in $V(P_{i,1})\cap S$ as internal vertices.

    Let~$Q_i$ be a shortest subpath of~$P$ containing all vertices in $V(P_{i,1})\cap S$.
    Let~$v_1$ and~$v_2$ be the ends of~$Q_i$.
    For each $j\in[2]$, let $A_j:=B_{P_{i,1}}(v_j,\ell-1)\setminus V(Q_i)$.
    Note that $A_1\cup A_2=B_{P_{i,1}}(Q_i,\ell-1)\setminus V(Q_i)$.
    
    To prove the statement, we claim that every chord of $P_{i,1}$ has one end in $A_1$ and the other in $A_2$.
    Suppose, to the contrary, that $P_{i,1}$ has a chord $e$ that does not satisfy this property.
    Without loss of generality, we may assume that~$e$ is disjoint from~$A_1$.
    Then $P_{i,1}+e$ has an $(\ell,S)$-cycle containing $A_1\cup\{v_1\}$ shorter than~$P_{i,1}$, contradicting~\ref{item:ear3}.
    Hence, every chord of $P_{i,1}$ has one end in $A_1$ and the other in $A_2$.
    This implies that every chord of~$P_{i,1}$ has both ends in $B_{P_{i,1}}(Q_i,\ell-1)\setminus V(Q_i)$.
\end{proof}

From now on, for every detached $\mathcal{H}$-earring $P_{i,1}$, we denote by~$Q_i$ the subpath of~$P_{i,1}$ obtained by applying~\cref{lem:cycle chords}.

In the following two propositions, we characterise all chords of~$\mathcal{H}$.
The first proposition characterises all chords~$ab$ with $\sigma(a)=\sigma(b)$.

\begin{proposition}\label{prop:induced1}
    Let $(G,S)$ be a good pair and let~$\mathcal{H}$ be an $(\ell,S)$-frame in~$G$.
    If~$\mathcal{H}$ has a chord~$ab$ with $\sigma(a)=\sigma(b)=:(p,q)$, then one of the following holds.
    \begin{enumerate}[label=(\roman*)]
        \item\label{item:prop1-1} $P_{p,q}$ is a detached $\mathcal{H}$-earring and both~$a$ and~$b$ are in~$B_{P_{p,1}}(Q_p,\ell-1)\setminus V(Q_p)$.
        \item\label{item:prop1-2} There is $x\in\Gamma_{p,q}$ such that both~$a$ and~$b$ are in $B_\mathcal{H}(x,\ell-1)$.
        \item\label{item:prop1-3} $P_{p,q}$ is fragile and~$a$ and~$b$ are in distinct components of~$P^{\operatorname{side}}_{p,q}$.
    \end{enumerate}
\end{proposition}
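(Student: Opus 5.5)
We split into cases according to the type of $P_{p,q}$ and according to whether the chord $ab$ is close to the attachment set $\Gamma_{p,q}$. Since $\sigma(a)=\sigma(b)=(p,q)$, both $a$ and $b$ lie in $V(P_{p,q})\setminus\Gamma_{p,q}$.

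First, if $P_{p,q}$ is a detached $\mathcal{H}$-earring, then $ab$ is a chord of $P_{p,1}$ in $G$, and \cref{lem:cycle chords} gives \ref{item:prop1-1} directly. Second, if $P_{p,q}$ is an $\mathcal{H}$-detour, then by \cref{lem:strict1} the path $P_{p,1}$ has length at most $\ell-1$. Hence every vertex of $P_{p,1}$ is within distance $\ell-1$ of $a_{p,1}$, and \ref{item:prop1-2} holds with $x=a_{p,1}$. So we may assume that $P_{p,q}$ is a regular $\mathcal{H}$-ear or an attached $\mathcal{H}$-earring. Also, if both $a$ and $b$ lie in $B_\mathcal{H}(x,\ell-1)$ for some $x\in\Gamma_{p,q}$, then \ref{item:prop1-2} holds, so we may assume otherwise.

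The main step is to apply \cref{lem:short path1} with $Q:=ab$, an $H_{p,q}$-path of length $1\leq\ell-1$. For this we must check that $Q$ lies in $G_{p,q}$, that is, that neither $a$ nor $b$ is in $Z_{p,q}$. This is the delicate point. The vertex $a$ lies in $Z_{p,q}$ only if $a\in Y_{p,q}$, or if $a$ is adjacent (by an edge outside the relevant frame vertices) to $Y_{p,q}$ while $a\notin V(H_{p,q})\setminus Y_{p,q}$. Since $a\in V(H_{p,q})$, the second option forces $a\in Y_{p,q}$. So we only need $a,b\notin Y_{p,q}$.

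Now $Y_{p,q}$ is the $(\ell-1)$-ball in $H_{p,q}$ around $V_{\geq3}(H_{p,q})$. The branch vertices of $H_{p,q}$ near $P_{p,q}$ are the vertices of $\Gamma_{p,q}$ together with earlier branch vertices. We must rule out the case where $a$ is close to an earlier branch vertex $u$ but not to $\Gamma_{p,q}$. Any path in $H_{p,q}$ from $a\in V(P_{p,q})\setminus\Gamma_{p,q}$ to $u\notin V(P_{p,q})\setminus\Gamma_{p,q}$ must pass through $\Gamma_{p,q}$. So $\dist(a,u)\leq\ell-1$ implies $\dist(a,\Gamma_{p,q})\leq\ell-1$, with the same bound. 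A remaining subtlety is that $a$ and $b$ might be close to different vertices of $\Gamma_{p,q}$, or one close and one far.

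To handle this, we treat the "one end near $\Gamma_{p,q}$" situation separately. If $a$ is within distance $\ell-1$ of $x\in\Gamma_{p,q}$ but $b$ is not, we form a short cycle or a shorter ear using the chord. The walk from $x$ along $P_{p,q}$ to $a$, then the chord to $b$, then onward along $P_{p,q}$ to the other attachment vertex, yields either a shorter $(\ell,S)$-ear contradicting the minimality in \ref{item:ear1}/\ref{item:ear2}, or, when the $S$-vertices are bypassed, a fragile configuration. The argument mirrors Case 1 and Case 2 of \cref{lem:short path1}, using that the chord shortcuts a subpath of length at least $\ell$, and we expect to conclude \ref{item:prop1-3} in that case as well.

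If neither end is near $\Gamma_{p,q}$, then $a,b$ are admissible in $H_{p,q}$, and \cref{lem:short path1} applies. Its outcome \ref{item:short path1-2} is exactly \ref{item:prop1-3}. Its outcome \ref{item:short path1-1} gives $\dist_{P_{p,q}}(a,b)\leq\ell-1$, and since all vertices involved have degree $2$ in $\mathcal{H}^-$, this subpath together with $ab$ forms a cycle of length at most $\ell$ in $H^-_{p,q}+ab$. We must derive a contradiction here, since such a short cycle does not by itself contain $S$. The idea is to reroute: the cycle $C$ of $\mathcal{H}^-$ through $P_{p,q}$ (an $(\ell,S)$-cycle by \cref{lem:all cycles}) shortcut by $ab$ either keeps an $S$-vertex and is still long, contradicting the choice of a shortest ear or earring, or loses all $S$-vertices, again forcing fragility.

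We expect the main obstacle to be this bookkeeping when exactly one end is near $\Gamma_{p,q}$, and the exclusion of the \ref{item:short path1-1} outcome. Both reduce to the shortest-ear minimality arguments already used in \cref{lem:short path1}.
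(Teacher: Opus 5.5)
Your reductions for detached earrings (\cref{lem:cycle chords}) and detours (\cref{lem:strict1}) are fine, and the construction you gesture at in your last paragraphs is the right one: shortcut $P_{p,q}$ by the chord and invoke the minimality in \ref{item:ear1} or \ref{item:ear2}. However, the step that actually proves \ref{item:prop1-3} is only something you ``expect'', and the reason you give for it is the wrong one. Let $P$ be obtained from $P_{p,q}$ by replacing the $(a,b)$-subpath avoiding $\Gamma_{p,q}$ with $ab$. That $P$ is shorter than $P_{p,q}$ is automatic, since $ab$ is a chord. What must be shown is $\abs{E(P)}+\dist_{H^-_{p,q-1}}(a_{p,q},b_{p,q})\geq\ell$ for a regular ear, and $\abs{E(P)}\geq\ell$ for an attached earring. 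Your claim that ``the chord shortcuts a subpath of length at least $\ell$'' neither gives this nor follows from your one-end-near hypothesis: $a$ and $b$ may lie at distances $\ell-1$ and $\ell+1$ from $a_{p,q}$ along $P_{p,q}$. Mirroring \cref{lem:short path1} does not help either, because there the lower bound came from admissibility of both ends, which is exactly what fails in that case. In the branch where \ref{item:short path1-1} holds, you should shortcut the ear $P_{p,q}$ itself, not a cycle of $\mathcal{H}^-$ through it; a shorter $(\ell,S)$-cycle contradicts nothing when $P_{p,q}$ is a regular ear. Nor should you aim for a contradiction there: a chord hopping over a short stretch that contains all $S$-vertices of a fragile ear can occur (when the resulting $S$-cycle has length less than $\ell$), and it gives \ref{item:prop1-3}.

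The missing idea is that failure of \ref{item:prop1-2} at a single $x\in\Gamma_{p,q}$ already supplies the bound, with no admissibility needed. This is how the paper argues.

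For a regular ear, orient so that $a_{p,q}P_{p,q}a$ avoids $b$. Let $Q$ be the $(a,b)$-path that runs through $a_{p,q}$, a shortest $(a_{p,q},b_{p,q})$-path of $H^-_{p,q-1}$, and $b_{p,q}$. One of $a,b$ is at $\mathcal{H}$-distance at least $\ell$ from $a_{p,q}$, so $\abs{E(Q)}\geq\ell+1$. Hence $\abs{E(P)}+\dist_{H^-_{p,q-1}}(a_{p,q},b_{p,q})=1+\abs{E(Q)}\geq\ell+2$.

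Moreover, $P$ is an $H_{p,q-1}$-path in $G_{p,q-1}$ simply because it consists of parts of $P_{p,q}$ plus an edge between two non-attachment vertices. So your $Z_{p,q}$ bookkeeping is unnecessary. By \ref{item:ear1}, $P$ is not an $S$-ear. Thus $P$ avoids $S$ \emph{and} $a_{p,q},b_{p,q}$ lie in one component of $H^-_{p,q-1}-S$; both facts are needed for fragility, not merely that the $S$-vertices are bypassed. Hence $P_{p,q}$ is fragile and $P^*_{p,q}$ lies strictly inside $aP_{p,q}b$, which gives \ref{item:prop1-3}.

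For an attached earring, the end far from $c_p$ is at distance at least $\ell$ from $c_p$ along both arcs. So the shortcut cycle has length at least $\ell+2$, and \ref{item:ear2} forces it to avoid $S$, giving \ref{item:prop1-3} in the same way.

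This single argument covers all of your subcases at once: both ends far, one end near, and ends near different attachment vertices. The detour through \cref{lem:short path1} is therefore unnecessary.
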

\begin{proof}
    If~$P_{p,q}$ is an $\mathcal{H}$-detour, then by \cref{lem:strict1}, $P_{p,q}$ has length at most $\ell-2$, and therefore~\ref{item:prop1-2} holds.
    If~$P_{p,q}$ is a detached $\mathcal{H}$-earring, then \ref{item:prop1-1} holds by \cref{lem:cycle chords}.
    Thus, we may assume that~$P_{p,q}$ is either an attached $\mathcal{H}$-earring or a regular $\mathcal{H}$-ear.
    Note that $\Gamma_{p,q}\neq\emptyset$.

    We show that if~\ref{item:prop1-2} does not hold, then~\ref{item:prop1-3} holds.
    Let $P'_{p,q}:=P_{p,q}-\Gamma_{p,q}$ and let~$P$ be the graph obtained from~$P_{p,q}$ by replacing $aP'_{p,q}b$ with~$ab$.
    Note that~$P$ is shorter than~$P_{p,q}$.

    Suppose that~$P_{p,q}$ is an attached $\mathcal{H}$-earring.
    Since~\ref{item:prop1-2} does not hold, $a$ or~$b$ is at distance at least~$\ell$ from~$c_p$ in~$P_{p,q}$.
    Thus, $P$ has length at least $\ell+2$.
    Since~$P$ is shorter than~$P_{p,q}$, by~\ref{item:ear2}, it is not an $(\ell,S)$-cycle.
    Thus, $P$ is disjoint from~$S$.
    This implies that~$P_{p,q}$ is fragile and~$P^*_{p,q}$ is a subpath of $aP'_{p,q}b-\{a,b\}$.
    Hence, \ref{item:prop1-3} holds.

    Now, suppose that~$P_{p,q}$ is a regular $\mathcal{H}$-ear.
    Without loss of generality, we may assume that $a_{p,q}P_{p,q}a$ does not contain~$b$.
    Let~$Q$ be the path obtained by concatenating $a_{p,q}P_{p,q}a$, $b_{p,q}P_{p,q}b$, and a shortest $(a_{p,q},b_{p,q})$-path of~$H^-_{p,q-1}$.
    Since~\ref{item:prop1-2} does not hold, one of~$a$ and~$b$ is at distance at least~$\ell$ from~$a_{p,q}$ in~$\mathcal{H}$.
    This implies that $a_{p,q}Qa$ or $a_{p,q}Qb$ has length at least~$\ell$.
    Thus, we have
    \[
        \abs{E(P)}+\dist_{H^-_{p,q-1}}(a_{p,q},b_{p,q})=1+\abs{E(Q)}\geq\ell+2.
    \]
    Since~$P$ is shorter than~$P_{p,q}$, by~\ref{item:ear1}, it is not an $(\ell,S)$-ear of~$H^-_{p,q-1}$.
    Thus, $P$ is disjoint from~$S$, and~$a_{p,q}$ and~$b_{p,q}$ are in the same component of $H^-_{p,q-1}-S$.
    This implies that~$P_{p,q}$ is fragile and~$P^*_{p,q}$ is a subpath of $aP_{p,q}b-\{a,b\}$.
    Hence, \ref{item:prop1-3} holds.
\end{proof}

The second proposition characterises all chords~$ab$ of~$\mathcal{H}$ with $\sigma(a)\neq\sigma(b)$ under an additional assumption that~$\mathcal{H}$ is maximal.

\begin{proposition}\label{prop:induced2}
    Let $(G,S)$ be a good pair and let~$\mathcal{H}$ be a maximal $(\ell,S)$-frame in~$G$.
    If~$\mathcal{H}$ has a chord~$ab$ with $(p,q):=\sigma(a)<_L\sigma(b)=:(r,s)$, then one of the following holds.
    \begin{enumerate}[label=(\roman*)]
        \item\label{item:prop2-1} There is $x\in\Gamma_{r,s}$ such that either
        \begin{itemize}
            \item both~$a$ and~$b$ are in $B_\mathcal{H}(x,\ell-1)$, or
            \item $P_{r,s}$ is a regular $\mathcal{H}$-ear, $b$ is adjacent to~$x$ in~$\mathcal{H}$, $P_{p,q}$ is fragile, and~$a$ and~$x$ are in distinct components of~$P^{\operatorname{side}}_{p,q}$.
        \end{itemize}
        \item\label{item:prop2-2} $P_{r,s}$ is a detached $\mathcal{H}$-earring, $\mu(r)\geq2$, $\abs{E(P_{r,2})}=1$, and there is $x\in\Gamma_{r,2}$ such that either
        \begin{itemize}
            \item both~$a$ and~$b$ are in $B_\mathcal{H}(x,\ell-1)$, or
            \item $b\in\Gamma_{r,2}\setminus\{x\}$, $P_{p,q}$ is fragile, and~$a$ and~$x$ are in distinct components of~$P^{\operatorname{side}}_{p,q}$.
        \end{itemize}
        \item\label{item:prop2-3} $P_{r,s}$ is a regular $\mathcal{H}$-ear, $a\in\Gamma_{r,s}$, $P_{r,s}$ is fragile, and~$a$ and~$b$ are in distinct components of $P_{r,s}-V(P^*_{r,s})$.
    \end{enumerate}
\end{proposition}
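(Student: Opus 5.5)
The chord~$ab$ is itself an $H_{r,s}$-path of length~$1$ between~$a$ and~$b$. The plan is to split according to where~$b$ sits on~$P_{r,s}$ relative to~$\Gamma_{r,s}$. In each branch we either apply \cref{lem:short path2} to~$ab$ directly, or we build a longer path by walking from~$b$ along~$P_{r,s}$ and then use \cref{lem:short path1} or the minimality in~\ref{item:ear1}--\ref{item:ear4}.

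First we dispose of the easy cases. If $P_{r,s}$ is an $\mathcal{H}$-detour, \cref{lem:strict1} gives $V(P_{r,s})\subseteq B_\mathcal{H}(\Gamma_{r,s},\ell-1)$. We then need $a$ to be close as well; $a$ is adjacent to~$b$, so $a\in B_\mathcal{H}(x,\ell)$, and we must improve this by one. If $b$ is at distance at most $\ell-2$ from some $x\in\Gamma_{r,s}$ we get~\ref{item:prop2-1} directly. Otherwise~$b$ is far from~$\Gamma_{r,s}$, and then~$ab$ itself is a short $H_{r,r'}$-path avoiding~$Z_{r,r'}$; the relevant index is the one from \cref{lem:short path2}, and maximality of~$\mathcal{H}$ ensures the later frames are taken into account. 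In that situation \cref{lem:short path2} forces $P_{r,s}$ to be a detached earring. Since $(G,S)$ is good, alternative~\ref{item:short path2-1} is excluded, so either $(r,s)=(t,1)=(t,\mu(t))$ or we are in the situation with~$P_{r,2}$. In the first subcase $ab$ is an $(\ell,S)$-ear of~$H^-_{t,1}$ lying in~$G_{t,1}$, which contradicts maximality. In the second subcase, \cref{cor:chord} and \cref{lem:argue distance} should show that~$ab$ must essentially be~$P_{r,2}$ or parallel to it. Then $\abs{E(P_{r,2})}=1$ and the chord is localised at~$\Gamma_{r,2}$, giving~\ref{item:prop2-2}.

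The intermediate case is the core. Here~$b$ is at distance exactly about $\ell-1$ from $x\in\Gamma_{r,s}$ while~$a$ lies outside $B_\mathcal{H}(x,\ell-1)$. Suppose $P_{r,s}$ is a regular ear and~$b$ is adjacent to~$x$, or more generally $b$ is within distance $\ell-1$ of~$x$. Consider the path $Q:=ab\cup bP_{r,s}x$. It is an $H_{p,q}$-type path of length at most~$\ell-1$ joining~$a$ to $x\in\Gamma_{r,s}$, and $\sigma(x)=(p,q)$ holds provided $x$ was attached on~$P_{p,q}$; this needs to be checked via \cref{lem:degree}. Apply \cref{lem:short path1} to~$Q$ between~$a$ and~$x$ in~$H_{p,q}$. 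Outcome~\ref{item:short path1-1} places~$a$ within distance~$\ell-1$ of~$x$. Outcome~\ref{item:short path1-2} makes~$P_{p,q}$ fragile with~$a$ and~$x$ in distinct components of~$P^{\operatorname{side}}_{p,q}$. In that outcome, the fact that~$Q$ is not an $S$-ear should force $b$ to be adjacent to~$x$: otherwise, replacing along~$P_{r,s}$ gives a shorter $(\ell,S)$-ear contradicting~\ref{item:ear1}. This yields the second bullet of~\ref{item:prop2-1}. The symmetric situation, where~$a$ itself is an attachment vertex of~$P_{r,s}$ and~$b$ is far from it, is handled by the minimality of~$P_{r,s}$. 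Shortcutting~$P_{r,s}$ via the chord~$ab$ gives a shorter candidate ear, and this shorter ear fails to be an $(\ell,S)$-ear only if it loses all of~$S$. That forces fragility with $P^*_{r,s}$ between~$a$ and~$b$, which is~\ref{item:prop2-3}, exactly as in the proof of \cref{prop:induced1}. For attached earrings, the same shortcut argument with~\ref{item:ear2} should give a contradiction or case~\ref{item:prop2-1}.

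I expect the main obstacle to be the bookkeeping of indices. We must verify that the constructed short paths lie in the correct~$G_{r,r'}$, which means avoiding~$Z_{r,r'}$, so that \cref{lem:short path1,lem:short path2} apply. We must also pin down precisely when~$ab$ coincides with~$P_{r,2}$. I would first try to handle this by mimicking the index choice~$i'$ used in the proof of \cref{lem:argue distance}.
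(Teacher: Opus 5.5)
There is a genuine gap. Your case skeleton roughly matches the paper's: the type of $P_{r,s}$, the position of $b$ relative to $\Gamma_{r,s}$, and the shortcut through $ab$ that yields (iii) when $a\in\Gamma_{r,s}$. Several key steps, however, are wrong or missing.

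The first concrete error is in the detour case. You infer from the adjacency of $a$ and $b$ that $a\in B_\mathcal{H}(x,\ell)$. But $ab$ is a chord, not an edge of $\mathcal{H}$, so it says nothing about $\dist_\mathcal{H}(a,x)$; bounding that distance is exactly what the proposition asserts. Your fallback also fails there. By \cref{lem:strict1}, $V(P_{r,1})\subseteq Y_{r,1}\subseteq Y_{r,r'}$, so $b\notin V(G_{r,r'})$ and \cref{lem:short path2} cannot be applied to $ab$. More generally, applying it to $ab$ needs $a$ to be admissible in $H_{r,r'}$, which you do not control.

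The idea you are missing drives almost every case of the paper's argument. Let $(r_0,s_0)$ be $(r-1,\mu(r-1))$ if $s=1$ and $(r,s-1)$ otherwise. Then $a$ is admissible in $H_{r_0,s_0}$, since otherwise $b\in Z_{r_0,s_0}$. For $a\notin\Gamma_{r,s}$, pick $x\in\Gamma_{r,s}$ and a subpath of $P_{r,s}$ from $b$ to $x$ that contains a vertex of $S$. If $P_{r,s}$ has no vertex of $S$, instead choose $x$ so that $a$ and $x$ are separated by $S$ in $H^-_{r_0,s_0}$. Together with $ab$, this subpath is an $S$-ear of $H^-_{r_0,s_0}$ in $G_{r_0,s_0}$, and unless both $a,b\in B_\mathcal{H}(x,\ell-1)$ it is an $(\ell,S)$-ear.
\begin{itemize}
\item For a detour, or an attached earring with $a\neq c_r$, this contradicts the fact that step (A) was unavailable when $P_{r,1}$ was chosen.
\item For a regular ear with $\dist_\mathcal{H}(b,\Gamma_{r,s})\geq2$, the ear is strictly shorter than $P_{r,s}$, so minimality in (A) directly gives $a,b\in B_\mathcal{H}(x,\ell-1)$.
\end{itemize}
You use this shortcut only when $a\in\Gamma_{r,s}$. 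Your substitute for regular ears, applying \cref{lem:short path1} to $ab\cup bP_{r,s}x$ for an arbitrary nearby $x$, does not go through. That path can have length $\ell$, it need not be an $S$-ear, and in the fragile outcome the correct conclusion is the first bullet of (i) via the shortcut above, not a contradiction with (A).

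Two further pieces are missing.

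First, when $b$ is adjacent to $x\in\Gamma_{r,s}$, \cref{lem:degree} cannot give you $\sigma(x)=\sigma(a)$, and this equality can fail. Then you must apply \cref{lem:short path2} to $abx$, with an index choice like the $i'$ of \cref{lem:argue distance}. This forces $(r,s)=(i,2)$, with $P_{i,1}$ a detached earring and $\abs{E(P_{i,2})}=2$. Afterwards \cref{lem:short path1}, applied to $abb_{i,2}$ inside $P_{i,1}$, gives the first bullet of (i).

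Second, in the detached case $ab$ cannot be ``essentially $P_{r,2}$'', since it is not an edge of $\mathcal{H}$. What is true is that $ab$ is itself an $(\ell,S)$-ear of $H^-_{r,1}$ in $G_{r,1}$. Hence maximality gives $\mu(r)\geq2$, minimality in (A) gives $\abs{E(P_{r,2})}=1$, and \cref{lem:bridge} puts exactly one end $b_{r,2}$ on $P_{r,1}$. You must then split according to $\{a,b\}\cap\Gamma_{r,2}$.
\begin{itemize}
\item If $a=a_{r,2}$ and $b\notin B_\mathcal{H}(b_{r,2},\ell-1)$, there is an $(\ell,S)$-cycle meeting $H_{r-1,\mu(r-1)}$ only at $a_{r,2}$, contradicting the choice at step $r$.
\item If $b=b_{r,2}$, apply \cref{lem:short path1} to the $2$-path $ab_{r,2}a_{r,2}$, with \cref{lem:short path2} excluding $\sigma(a)\neq\sigma(a_{r,2})$. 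This is what produces the fragile alternative of (ii).
\item If $\{a,b\}\cap\Gamma_{r,2}=\emptyset$, the ear formed by $ab$, $P_{r,2}$ and an $S$-carrying arc of $P_{r,1}$ contradicts the choice at step $r$, unless $a$ and $b$ are both close to $\Gamma_{r,2}$.
\end{itemize}
Your sketch never derives the second bullet of (ii).
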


The proof of \cref{prop:induced2} is divided into three lemmas below.
The first lemma shows that if $s\geq2$ and $a\notin\Gamma_{r,s}$, then~$ab$ satisfies \cref{prop:induced2}\ref{item:prop2-1}.

\begin{lemma}\label{lem:first}
    Let $(G,S)$ be a good pair and let~$\mathcal{H}$ be an $(\ell,S)$-frame in~$G$.
    If~$\mathcal{H}$ has a chord~$ab$ with $(p,q):=\sigma(a)<_L\sigma(b)=:(r,s)$ such that $s\geq2$ and $a\notin\Gamma_{r,s}$, then there is $x\in\Gamma_{r,s}$ such that either
    \begin{enumerate}[label=(\roman*)]
        \item\label{item:first1} both~$a$ and~$b$ are in $B_\mathcal{H}(x,\ell-1)$, or
        \item\label{item:first2} $b$ is adjacent to~$x$ in~$\mathcal{H}$, $P_{p,q}$ is fragile, and~$a$ and~$x$ are in distinct components of~$P^{\operatorname{side}}_{p,q}$.
    \end{enumerate}
\end{lemma}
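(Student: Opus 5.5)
Since $s\geq2$, $P_{r,s}$ is a regular $\mathcal{H}$-ear with $\Gamma_{r,s}=\{a_{r,s},b_{r,s}\}$, and $b$ is an internal vertex of $P_{r,s}$. The edge $ab$ is not an edge of $\mathcal{H}$, and $a\in V(H_{r,s-1})\setminus\Gamma_{r,s}$. The plan is to reroute $P_{r,s}$ through the chord and use the minimality in~\ref{item:ear1}. By symmetry, let $x\in\Gamma_{r,s}$ be the end for which the subpath $R:=xP_{r,s}b$ contains no vertex of $S$ other than possibly $x$. If neither side is free of $S$, choose $x$ arbitrarily; this case will be excluded below. Let $P:=R+ba$. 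This is an $H_{r,s-1}$-path with ends $x$ and $a$. Because $P_{r,s}$ lies in $G_{r,s-1}$, the path $P$ avoids $Z_{r,s-1}$ except possibly at its end $a$. We must check that $a\notin Y_{r,s-1}$. This holds because $b\notin Z_{r,s-1}$ and $b$ is adjacent to $a$, and if $a$ were in $Y_{r,s-1}$ then $b$ would lie in $Z_{r,s-1}$. Hence $P$ is an $H_{r,s-1}$-path in $G_{r,s-1}$, and it is shorter than $P_{r,s}$ unless $b$ is adjacent to the other end of $P_{r,s}$. That boundary case will be handled separately, essentially by swapping the roles of the two ends.

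By~\ref{item:ear1}, $P$ is therefore not an $(\ell,S)$-ear of $H^-_{r,s-1}$. Suppose first that $\dist_\mathcal{H}(x,b)\leq\ell-1$ and $\dist_\mathcal{H}(x,a)\leq\ell-1$; then~\ref{item:first1} holds. So I assume that one of these distances is at least $\ell$. In that case $\abs{E(P)}+\dist_{H^-_{r,s-1}}(x,a)\geq\ell$. This uses $\dist_{H^-}\geq\dist_{H}$, since $H^-$ only replaces bases by detours, and \cref{lem:strict1} gives that those detours are short and lie in degree-$2$ regions. It also uses $\dist_\mathcal{H}(x,a)\leq\abs{E(P)}$ together with a triangle-inequality argument. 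It follows that $P$ is not an $S$-ear of $H^-_{r,s-1}$. So $P$ avoids $S$, and $x$ and $a$ lie in the same component of $H^-_{r,s-1}-S$. In particular, $x\notin S$ and $a\notin S$.

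Next I compare $x$ and $a$ inside $H_{r,s-1}$. If $b$ is far from $x$ along $P_{r,s}$, then I instead apply the same reasoning with the other end $y$ of $P_{r,s}$. The path $yP_{r,s}b+ba$ must also fail to be an $(\ell,S)$-ear. This forces all of $S\cap V(P_{r,s})$ to lie in $xP_{r,s}b$ or in $yP_{r,s}b$, and the choice of $x$ handles exactly that. The outcome I aim for is that $b$ is adjacent to $x$ in $\mathcal{H}$. Now, for the remaining situation, $\dist_\mathcal{H}(x,a)\geq\ell$, so we have a short $H_{p,q}$-path between $x$ and $a$: the path $x\,b\,a$, of length $2$. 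Here $\sigma(x)$ and $\sigma(a)$ are compared. If $\sigma(x)=\sigma(a)=(p,q)$, then \cref{lem:short path1} applied to $H_{p,q}$ gives either $\dist_{P_{p,q}}(x,a)\leq\ell-1$, contradicting our case, or \cref{lem:short path1}\ref{item:short path1-2}, which is exactly~\ref{item:first2}. If $\sigma(x)\neq\sigma(a)$, then \cref{lem:short path2} applies. Option~\ref{item:short path2-1} is excluded because $(G,S)$ is a good pair. Option~\ref{item:short path2-2} would make this short path an $(\ell,S)$-ear of a later stage, and the path could then be extended along $P_{r,s}$ to contradict~\ref{item:ear1} or~\ref{item:ear3}, in the style of the proof of \cref{lem:argue distance}.

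The main obstacle I expect is reducing to the case where $b$ is adjacent to $x$. For this I need to show that if $b$ is at distance at least $2$ from both ends of $P_{r,s}$ and~\ref{item:first1} fails, then one of the two reroutings is a genuinely shorter $(\ell,S)$-ear. This requires carefully using the distance condition $\abs{E(P)}+\dist\geq\ell$, together with the fact that the admissibility of $\Gamma_{r,s}$ keeps $a$ far from branch vertices. It also requires checking that the rerouted path $P$ really lies in $G_{r,s-1}$.
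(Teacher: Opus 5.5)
\textbf{Case $\dist_\mathcal{H}(b,\Gamma_{r,s})\geq2$.} Your argument here is muddled but repairable. Choosing $x$ so that $xP_{r,s}b$ avoids $S$ points the wrong way. Your remark about where $S\cap V(P_{r,s})$ lies also overlooks the case where $P_{r,s}$ is $S$-free and is an $S$-ear only because its ends are separated by $S$. What you call the main obstacle is one line. Here both reroutings $zP_{r,s}b+ba$, $z\in\Gamma_{r,s}$, are shorter than $P_{r,s}$. So if \ref{item:first1} failed at both ends, then by \ref{item:ear1} neither rerouting would be an $S$-ear. Then $P_{r,s}$ avoids $S$, and both of its ends lie with $a$ in one component of $H^-_{r,s-1}-S$, contradicting that $P_{r,s}$ is an $S$-ear. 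The paper does this directly, by taking the end whose rerouting is an $S$-ear.

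\textbf{The genuine gap.} It lies in the case where $b$ is adjacent to $x$, $\dist_\mathcal{H}(x,a)\geq\ell$, and $\sigma(x)\neq\sigma(a)$. You claim that \cref{lem:short path2}\ref{item:short path2-2} leads to a contradiction with \ref{item:ear1} or \ref{item:ear3}. It need not: this alternative describes configurations that really occur, and the lemma is then satisfied at the \emph{other} end of $P_{r,s}$.

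Here is an example with $\ell=3$. Let $G$ consist of two disjoint $7$-cycles, each with exactly one vertex in $S$, plus a vertex $b\notin S$. Make $b$ adjacent to a vertex $x$ of the first cycle and to two consecutive non-$S$ vertices $y,a$ of the second. This is a good pair. Adding the first cycle, then the second by \ref{item:ear3}, then $P_{2,2}=xby$ by \ref{item:ear1}, gives a legitimate frame. In it, $ab$ is a chord, $\sigma(x)=(1,1)\neq\sigma(a)=(2,1)$, and $\dist_\mathcal{H}(x,a)=3=\ell$. Here \cref{lem:short path2}\ref{item:short path2-2} holds and nothing is contradicted; the lemma holds only via \ref{item:first1} at $y$. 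Concatenating along $P_{r,s}$ in the style of \cref{lem:argue distance} yields nothing new, because $xba$ and $P_{r,s}$ share both $x$ and $b$. Taking the other end as $x$ does not rescue the claim either: with a fragile attached earring one can build examples where the conclusion is \ref{item:first2} at the other end.

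\textbf{What is missing} is the paper's treatment of this branch:
\begin{enumerate}
\item When $\abs{E(P_{r,s})}=2$, fix $x$ to be the end with lexicographically smaller $\sigma$.
\item Apply \cref{lem:short path2} to $H_{i,i'}$, with $i'$ chosen so that alternative \ref{item:short path2-2} forces $(r,s)=(i,2)$.
\item Then \ref{item:ear1} gives $\abs{E(P_{i,2})}=2$, and \cref{lem:bridge} together with the choice of $x$ places both $a$ and $b_{i,2}$ on the detached earring $P_{i,1}$.
\item Finally, apply \cref{lem:short path1} to $abb_{i,2}$ in $H_{i,1}$. The fragile alternative is impossible there, so this gives \ref{item:first1} with $b_{i,2}$ in place of $x$.
\end{enumerate}
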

\begin{proof}
    Note that~$a$ is an admissible vertex of~$H_{r,s-1}$, because otherwise $b\in Z_{r,s-1}$.
    Since~$b$ is an internal vertex of~$P_{r,s}$, we have $\abs{E(P_{r,s})}\geq2$.
    We divide into two cases according to $\dist_\mathcal{H}(b,\Gamma_{r,s})$.

    \medskip
    \noindent
    \textbf{Case 1.} $\dist_\mathcal{H}(b,\Gamma_{r,s})\geq2$.

    Since~$P_{r,s}$ is an $(\ell,S)$-ear of~$H^-_{r,s-1}$, without loss of generality, we may assume the following:
    \begin{itemize}
        \item if~$P_{r,s}$ contains a vertex in~$S$, then $bP_{r,s}b_{r,s}$ contains the vertex, and
        \item otherwise either $a\in S$ or~$a$ and~$b_{r,s}$ are in distinct components of $H^-_{r,s-1}-S$.
    \end{itemize}
    Let~$P$ be the path obtained by concatenating $bP_{r,s}b_{r,s}$ and~$ab$.
    Note that~$P$ is an $H_{r,s-1}$-path in~$G_{r,s-1}$.
    By the assumption, $P$ is an $S$-ear of~$H^-_{r,s-1}$.
    As $\dist_\mathcal{H}(b,\Gamma_{r,s})\geq2$, $P$ is shorter than~$P_{r,s}$.
    By~\ref{item:ear1}, $P$ is not an $(\ell,S)$-ear of~$H^-_{r,s-1}$.
    Thus, we have
    \[
        \abs{E(P)}+\dist_{H^-_{r,s-1}}(a,b_{r,s})=1+\dist_{P_{r,s}}(b,b_{r,s})+\dist_{H^-_{r,s-1}}(a,b_{r,s})\leq\ell-1.
    \]
    Therefore, both~$a$ and~$b$ are in $B_{H_{r,s}}(b_{r,s},\ell-1)$.
    By \cref{obs:structure}\ref{item:order}, \ref{item:first1} holds for $x:=b_{r,s}$.

    \medskip
    \noindent
    \textbf{Case 2.} $\dist_\mathcal{H}(b,\Gamma_{r,s})=1$.

    Without loss of generality, we may assume the following:
    \begin{itemize}
        \item if $\abs{E(P_{r,s})}=2$, then $\sigma(a_{r,s})\leq_L\sigma(b_{r,s})$, and
        \item otherwise~$b$ is adjacent to~$a_{r,s}$ in~$\mathcal{H}$.
    \end{itemize}
    Let $P:=aba_{r,s}$.
    If $a\in B_\mathcal{H}(a_{r,s},\ell-1)$, then~\ref{item:first1} holds for $x:=a_{r,s}$.
    Thus, we may assume that $a\notin B_\mathcal{H}(a_{r,s},\ell-1)$.

    Suppose that $\sigma(a)=\sigma(a_{r,s})$.
    Note that~$P$ is an $H_{p,q}$-path in~$G_{p,q}$.
    We apply \cref{lem:short path1} for $\mathcal{H}:=H_{p,q}$ and $Q:=P$.
    As $\dist_{P_{p,q}}(a,a_{r,s})\geq\dist_\mathcal{H}(a,a_{r,s})\geq\ell$, \cref{lem:short path1}\ref{item:short path1-1} does not hold.
    Thus, \cref{lem:short path1}\ref{item:short path1-2} holds, and therefore \ref{item:first2} holds for $x:=a_{r,s}$.

    Now, suppose that $\sigma(a)\neq\sigma(a_{r,s})$.
    Let~$\alpha$ and~$\beta$ be the vertices in $\{a,a_{r,s}\}$ such that $\sigma(\alpha)<_L\sigma(\beta)=:(i,j)$.
    Note that $(i,j)<_L(r,s)$.
    Let
    \[
        i':=
        \begin{cases}
            1 & \text{if }(r,s)=(i,2),\\
            \min\{\mu(i),2\} & \text{if }(r,s)\neq(i,2)\text{ and }j=1,\\
            j & \text{otherwise}.
        \end{cases}
    \]
    Note that~$P$ is an $H_{i,i'}$-path in~$G_{i,i'}$ as~$P_{r,s}$ is an $H_{r,s-1}$-path in~$G_{r,s-1}$.
    We apply \cref{lem:short path2} for $\mathcal{H}:=H_{i,i'}$ and $Q:=P$.
    Since $(G,S)$ is a good pair, \cref{lem:short path2}\ref{item:short path2-1} does not hold.
    Thus, \cref{lem:short path2}\ref{item:short path2-2} holds.
    Then~$P_{i,1}$ is a detached $\mathcal{H}$-earring, $(i,j)=(i,1)=(i,i')$, and~$P$ is an $(\ell,S)$-ear of~$H^-_{i,1}$.
    As $(i,j)<_L(r,s)$, we have $\mu(i)\geq2$.
    Thus, by the construction of~$i'$, we have $(r,s)=(i,2)$.
    By~\ref{item:ear1}, $\abs{E(P_{i,2})}=\abs{E(P)}=2$.
    By the assumption, $\sigma(a_{i,2})\leq_L\sigma(b_{i,2})$.
    By \cref{lem:bridge}, $b_{i,2}$ is the unique end of~$P_{i,2}$ in~$P_{i,1}$.
    Thus, $\alpha=a_{i,2}$, $\beta=a$, and $\sigma(a)=(i,1)$.
    Let $P':=abb_{i,2}$.
    Note that~$P'$ is an $H_{i,1}$-path in~$G_{i,1}$
    We apply \cref{lem:short path1} for $\mathcal{H}:=H_{i,1}$ and $Q:=P'$.
    Since~$P_{i,1}$ is a detached $\mathcal{H}$-earring, \cref{lem:short path1}\ref{item:short path1-2} does not hold.
    Thus, \cref{lem:short path1}\ref{item:short path1-1} holds, and therefore \ref{item:first1} holds for $x:=b_{i,2}$.

    \medskip
    This completes the proof.
\end{proof}

The second lemma shows that if $s=1$ and $\Gamma_{r,1}\neq\emptyset$, then~$ab$ satisfies the first item of \cref{prop:induced2}\ref{item:prop2-1}, that is, there is $x\in\Gamma_{r,1}$ such that both~$a$ and~$b$ are in $B_\mathcal{H}(x,\ell-1)$.
We remark that this lemma does not require that~$G$ has no short $(\ell,S)$-cycle.

\begin{lemma}\label{lem:second}
    Let~$G$ be a graph.
    For a set $S\subseteq V(G)$, let~$\mathcal{H}$ be an $(\ell,S)$-frame in~$G$.
    If~$\mathcal{H}$ has a chord~$ab$ with $\sigma(a)<_L\sigma(b)=:(r,1)$ such that $\Gamma_{r,1}\neq\emptyset$, then there is $x\in\Gamma_{r,1}$ such that both~$a$ and~$b$ are in~$B_\mathcal{H}(x,\ell-1)$.
\end{lemma}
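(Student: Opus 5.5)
Since $\sigma(b)=(r,1)$ and $\Gamma_{r,1}\neq\emptyset$, the path $P_{r,1}$ is either an attached $\mathcal{H}$-earring (with $\Gamma_{r,1}=\{c_r\}$) or an $\mathcal{H}$-detour (with $\Gamma_{r,1}=\{a_{r,1},b_{r,1}\}$). We treat these two cases separately. Throughout, write $H':=H_{r-1,\mu(r-1)}$. Since $\sigma(a)<_L(r,1)$, the vertex $a$ lies in $H'$. Moreover $b\in V(P_{r,1})\setminus\Gamma_{r,1}$, so $b\notin Z_{r-1,\mu(r-1)}$. Hence $a$ is not in $Y_{r-1,\mu(r-1)}$, because otherwise $b\in B_G(Y_{r-1,\mu(r-1)},1)$ via an edge avoiding $V(H')\setminus Y$. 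So $a$ is an admissible vertex of $H'$.

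\textbf{Detour case.} By \cref{lem:strict1}, $P_{r,1}\cup Q_r$ has length at most $\ell-1$, so $b\in B_\mathcal{H}(a_{r,1},\ell-1)$. We must show that $a$ is also close to some vertex of $\Gamma_{r,1}$. Suppose not. Consider the path $P:=ab\,P_{r,1}\,x$, where $x\in\Gamma_{r,1}$ is chosen so that $bP_{r,1}x$ contains a vertex of $S$; such an $x$ exists if $b$ is not itself the unique $S$-vertex, and if $b\in S$ the path contains $b$ anyway. Then $P$ is an $H'$-path in $G_{r-1,\mu(r-1)}$ containing a vertex of $S$, with ends $a$ and $x$. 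Since $\dist_{H'}(a,x)\geq\ell$, the path $P$ is an $(\ell,S)$-ear of $H'^-$ (using the analogue of the last claim of \cref{lem:strict1}, namely that distances in $H'^-$ are at least those in $H'$ for admissible ends). This contradicts~\ref{item:ear1} for the step producing $P_{r,1}$, since~\ref{item:ear4} is applied only when no $(\ell,S)$-ear exists. Hence $\dist_\mathcal{H}(a,x)\leq\ell-1$ for some $x\in\Gamma_{r,1}$, and so $a,b\in B_\mathcal{H}(x,\ell-1)$, after a small adjustment if $x$ differs from the endpoint used for $b$. A cleaner choice is to pick $x$ nearest to $b$ along $P_{r,1}$ and then take the side containing $S$.

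\textbf{Attached earring case.} Here $P_{r,1}$ is a cycle through $c_r$, and $b$ is an internal vertex. Split $P_{r,1}-c_r$ at $b$ into two $(b,c_r)$-subpaths. At least one of them, together with $b$, contains a vertex of $S$, since $P_{r,1}$ is an $S$-cycle (and if $c_r\in S$ this holds trivially). Let $P$ be $ab$ concatenated with this subpath. Then $P$ is an $H'$-path in $G_{r-1,\mu(r-1)}$ with ends $a$ and $c_r$ that contains a vertex of $S$. If $\dist_{H'^-}(a,c_r)+\abs{E(P)}\geq\ell$, then $P$ is an $(\ell,S)$-ear, contradicting the fact that~\ref{item:ear1} failed before $P_{r,1}$ was added via~\ref{item:ear2}. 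Hence $\abs{E(P)}+\dist(a,c_r)\leq\ell-1$, which gives both $a,b\in B_\mathcal{H}(c_r,\ell-1)$; here \cref{obs:structure}\ref{item:order} transfers the ball from $H_{r,1}$ to $\mathcal{H}$.

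The main obstacle is the distance comparison between $H'^-$ and $H'$. Because $H'^-$ omits the detour bases, distances there could be larger, and we also need that $a\neq c_r$. The second point is immediate, because $ab$ is a chord and $b\neq c_r$; if $a=c_r$ the claim is trivial anyway. For the first point, the bound goes in the favourable direction: a short ear length in $H'^-$ only strengthens the inequality, and we derive the ball membership from $\dist_{H'}\leq\dist_{H'^-}$.
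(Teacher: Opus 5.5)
Your route matches the paper's. You split into the detour and attached-earring cases, concatenate $ab$ with a subpath of $P_{r,1}$ to a vertex of $\Gamma_{r,1}$, and contradict the failure of \ref{item:ear1} at the step where $P_{r,1}$ was added. The detour case is correct. The ``small adjustment'' there is unnecessary: \cref{lem:strict1} gives $\abs{E(P_{r,1})}\leq\ell-2$, so $b$ lies within distance $\ell-1$ of both vertices of $\Gamma_{r,1}$. Your distance comparisons are also fine, because $H^-_{r-1,\mu(r-1)}\subseteq H_{r-1,\mu(r-1)}\subseteq\mathcal{H}$ gives $\dist_\mathcal{H}\leq\dist_{H_{r-1,\mu(r-1)}}\leq\dist_{H^-_{r-1,\mu(r-1)}}$, which covers both directions you use.

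The gap is the subcase $a=c_r$ of the attached-earring case, which you dismiss in two ways, both incorrect.
\begin{itemize}
\item It is not excluded by $ab$ being a chord. Since $\sigma(c_r)<_L(r,1)$, a chord of $\mathcal{H}$ joining $c_r$ to an internal vertex $b$ of $P_{r,1}$ not adjacent to it on $P_{r,1}$ satisfies every hypothesis of the lemma.
\item It is not trivial. $a=c_r$ gives $a\in B_\mathcal{H}(c_r,\ell-1)$, but you still need $b\in B_\mathcal{H}(c_r,\ell-1)$. Your main argument cannot supply this: when $a=c_r$, concatenating $ab$ with a $(b,c_r)$-subpath of $P_{r,1}$ yields a cycle, not an $H_{r-1,\mu(r-1)}$-path, so no $(\ell,S)$-ear arises.
\end{itemize}

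The missing ingredient is the minimality of $P_{r,1}$ in \ref{item:ear2}, not the absence of ears. Suppose $\dist_\mathcal{H}(b,c_r)\geq\ell$. Then both $(b,c_r)$-subpaths of $P_{r,1}$ have length at least $\ell$. So $c_rb$ together with the subpath containing a vertex of $S$ is an $(\ell,S)$-cycle of $G_{r-1,\mu(r-1)}$. It meets $H_{r-1,\mu(r-1)}$ only at $c_r$ and is strictly shorter than $P_{r,1}$, contradicting the choice of $P_{r,1}$ as a shortest such cycle. This is how the paper disposes of $a=c_r$ before running the ear argument you give.
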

\begin{proof}
    Note that~$a$ is an admissible vertex of~$H_{r-1,\mu(r-1)}$, because otherwise $b\in Z_{r-1,\mu(r-1)}$.
    Suppose that~$P_{r,1}$ is an $\mathcal{H}$-detour.
    Without loss of generality, we may assume that $bP_{r,1}b_{r,1}$ contains a vertex in~$S$.
    By \cref{lem:strict1}, $b\in B_\mathcal{H}(b_{r,1},\ell-1)$.
    We may assume that $a\notin B_\mathcal{H}(b_{r,1},\ell-1)$, because otherwise the statement holds for $x:=b_{r,1}$.
    Let~$P$ be the path obtained by concatenating $bP_{r,1}b_{r,1}$ and~$ab$.
    Note that~$P$ is an $H_{r-1,\mu(r-1)}$-path in~$G_{r-1,\mu(r-1)}$.
    As $a\notin B_\mathcal{H}(b_{r,1},\ell-1)$, we have
    \[
        \dist_{H^-_{r-1,\mu(r-1)}}(a,b_{r,1})\geq\dist_\mathcal{H}(a,b_{r,1})\geq\ell.
    \]
    Therefore, $P$ is an $(\ell,S)$-ear of~$H^-_{r-1,\mu(r-1)}$, contradicting~\ref{item:ear4}.

    Now, suppose that~$P_{r,1}$ is an attached $\mathcal{H}$-earring.
    We may assume that~$a$ or~$b$ is not in $B_\mathcal{H}(c_r,\ell-1)$, because otherwise the statement holds for $x:=c_r$.
    If $a=c_r$, then $P_{r,1}+ab$ has an $(\ell,S)$-cycle shorter than~$P_{r,1}$, contradicting~\ref{item:ear2}.
    Thus, $a\neq c_r$.
    Since~$P_{r,1}$ is an $(\ell,S)$-cycle, it has a subpath between~$b$ and~$c_r$ containing a vertex in~$S$.
    Let~$P$ be the path obtained by concatenating this subpath with~$ab$.
    Note that~$P$ is an $H_{r-1,\mu(r-1)}$-path in~$G_{r-1,\mu(r-1)}$.
    Since~$a$ or~$b$ is not in $B_\mathcal{H}(c_r,\ell-1)$, we have
    \[
        \abs{E(P)}+\dist_{H^-_{r-1,\mu(r-1)}}(a,c_r)\geq1+\dist_{P_{r,1}}(b,c_r)+\dist_{H^-_{r-1,\mu(r-1)}}(a,c_r)\geq\ell+2.
    \]
    Therefore, $P$ is an $(\ell,S)$-ear of $H^-_{r-1,\mu(r-1)}$, contradicting~\ref{item:ear2}.
\end{proof}

The last lemma shows that if $s=1$ and $\Gamma_{r,1}=\emptyset$, then~$ab$ satisfies \cref{prop:induced2}\ref{item:prop2-2} under an additional assumption that~$G_{r,\mu(r)}$ has no $(\ell,S)$-ear of $H^-_{r,\mu(r)}$.

\begin{lemma}\label{lem:third}
    Let $(G,S)$ be a good pair and let~$\mathcal{H}$ be an $(\ell,S)$-frame in~$G$.
    If~$\mathcal{H}$ has a chord~$ab$ with $(p,q):=\sigma(a)<_L\sigma(b)=:(r,1)$ such that $\Gamma_{r,1}=\emptyset$ and~$G_{r,\mu(r)}$ has no $(\ell,S)$-ear of~$H^-_{r,\mu(r)}$, then $\mu(r)\geq2$, $\abs{E(P_{r,2})}=1$, and there is $x\in\Gamma_{r,2}$ such that either
    \begin{enumerate}[label=(\roman*)]
        \item\label{item:third1} both~$a$ and~$b$ are in~$B_\mathcal{H}(x,\ell-1)$, or
        \item\label{item:third2} $b\in\Gamma_{r,2}\setminus\{x\}$, $P_{p,q}$ is fragile, and~$a$ and~$x$ are in distinct components of~$P^{\operatorname{side}}_{p,q}$.
    \end{enumerate}
\end{lemma}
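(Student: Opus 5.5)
The plan has two stages: first use the hypothesis that $G_{r,\mu(r)}$ has no $(\ell,S)$-ear of $H^-_{r,\mu(r)}$ to force $\mu(r)\geq2$ with $P_{r,2}$ equal to the chord $ab$ itself, and then locate $a$ relative to $\Gamma_{r,2}$ using \cref{lem:short path1,lem:short path2}.

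\textbf{Step 1: $\mu(r)\geq2$ and $P_{r,2}=ab$.} Since $\Gamma_{r,1}=\emptyset$, $P_{r,1}$ is a detached $\mathcal{H}$-earring, so it is vertex-disjoint from $H_{r-1,\mu(r-1)}$. Because $\sigma(a)<_L(r,1)$, the vertex $a$ lies in $H_{r-1,\mu(r-1)}$, while $b\in V(P_{r,1})$. Hence $ab$ joins distinct components of $H_{r,1}$ and is an $(\ell,S)$-ear of $H^-_{r,1}$.

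Consider first the case where $ab$ lies in $G_{r,1}$, that is, $a,b\notin Y_{r,1}$ and neither is in $Z_{r,1}$. Then a regular ear must have been added at step $(r,2)$ by~\ref{item:ear1}; otherwise the hypothesis (for $\mu(r)=1$) is violated directly. By minimality of $P_{r,2}$, we get $\abs{E(P_{r,2})}\leq1$, so $\abs{E(P_{r,2})}=1$.

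Otherwise some end of $ab$ is close to a branch vertex of $H_{r,1}$. But $H_{r,1}$ has no branch vertices on $P_{r,1}$, so this must be $a$ near a branch vertex of $H_{r-1,\mu(r-1)}$. Then $b\in Z_{r-1,\mu(r-1)}$, contradicting the fact that $P_{r,1}$ avoids $Z_{r-1,\mu(r-1)}$. I need to check this step carefully, since $Z$ is defined through $Y$, and $a$ being inside $Y$ puts its neighbours into $Z$.

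By \cref{lem:bridge}, which applies since the good-pair condition gives $\abs{E(P_{r,1})}\geq 3\ell-2$, $P_{r,2}$ has exactly one end on $P_{r,1}$. Write $P_{r,2}=a_{r,2}b_{r,2}$ with $b_{r,2}\in V(P_{r,1})$.

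\textbf{Step 2: reduce to $ab=P_{r,2}$ or a short path through $P_{r,2}$.} If $ab=P_{r,2}$, then $ab$ is not a chord, so this case does not occur. Hence $ab\neq P_{r,2}$, and since $ab$ is a chord we need to locate it. If both $a,b$ lie in $B_\mathcal{H}(x,\ell-1)$ for some $x\in\Gamma_{r,2}$, then~\ref{item:third1} holds. Otherwise I would build short $H_{r,2}$-paths and use maximality:
\begin{itemize}
\item If $b$ is far from $b_{r,2}$ on $P_{r,1}$, take the path $a_{r,2}$–$b_{r,2}$–(along $P_{r,1}$ towards $b$, choosing the side containing an $S$-vertex)–$b$–$a$. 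This is an $H_{r-1,\mu(r-1)}$-path through $P_{r,1}$ whose length plus distance is at least $\ell$, so it is an $(\ell,S)$-ear of $H^-_{r-1,\mu(r-1)}$, contradicting~\ref{item:ear3} exactly as in \cref{lem:short path2} Case~3. This is legitimate only if $a$ and $a_{r,2}$ are admissible and far apart; if instead $a$ is close to $a_{r,2}$, we are in~\ref{item:third1}.
\item Hence $b$ is within $\ell-1$ of $b_{r,2}$, while $a$ is far from $a_{r,2}$. Using the minimality of $P_{r,2}$ and the fact that $ab$ is also a length-one candidate, I expect to force $b=b_{r,2}$. Then $a\,b_{r,2}\,a_{r,2}$ is a length-$2$ $H_{p,q}$-type path between $a$ and $a_{r,2}$ avoiding~$P_{r,1}$.
\end{itemize}

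\textbf{Step 3: conclude with \cref{lem:short path1,lem:short path2}.} If $\sigma(a)=\sigma(a_{r,2})$, apply \cref{lem:short path1} to this length-$2$ path. Since $a$ is far from $a_{r,2}$, alternative~\ref{item:short path1-1} fails, so $P_{p,q}$ is fragile and $a,a_{r,2}$ lie in distinct components of $P^{\operatorname{side}}_{p,q}$. This is~\ref{item:third2} with $x:=a_{r,2}$ and $b=b_{r,2}$. If $\sigma(a)\neq\sigma(a_{r,2})$, apply \cref{lem:short path2} as in \cref{lem:first} Case~2. Alternative~\ref{item:short path2-1} is excluded by the good-pair assumption, so the conclusion is that the later one is a detached earring with $(i,j)=(t,1)=(t,\mu(t))$. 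This contradicts the existence of $P_{r,2}$ after it, or after the hypothesis on $G_{r,\mu(r)}$.

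The main obstacle is Step 2. It requires a careful choice of which side of $P_{r,1}$ to traverse so that the resulting path contains $S$. It also requires verifying the admissibility and $Z$-avoidance conditions needed for these paths to be valid candidates in $G_{r-1,\mu(r-1)}$ or $G_{r,1}$.
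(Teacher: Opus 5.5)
Your Step~1 and Step~3 match the paper's proof: its opening paragraph, and its Case~2, where $b=b_{r,2}$. Step~2, however, must dispose of the cases $a=a_{r,2}$ and $\{a,b\}\cap\Gamma_{r,2}=\emptyset$, and both of its bullets contain a genuine gap.

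\textbf{First bullet.} Your fallback ``if $a$ is close to $a_{r,2}$, we are in~\ref{item:third1}'' is false. For example, take $a=a_{r,2}$ and $\dist_{P_{r,1}}(b,b_{r,2})\geq\ell$. Then $b$ lies in neither $B_\mathcal{H}(a_{r,2},\ell-1)$ nor $B_\mathcal{H}(b_{r,2},\ell-1)$, so no choice of $x$ works. Your ``far apart'' restriction is also unnecessary. For $a\neq a_{r,2}$, the arc of $P_{r,1}$ alone makes your path have length at least $\ell+2$, so it is an $(\ell,S)$-ear of $H^-_{r-1,\mu(r-1)}$ wherever $a$ is. The case that genuinely needs separate treatment is $a=a_{r,2}$, where your path closes up into a cycle. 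The paper observes that this cycle is an $(\ell,S)$-cycle of $G_{r-1,\mu(r-1)}$ meeting $H_{r-1,\mu(r-1)}$ only at $a_{r,2}$. Hence step~\ref{item:ear2} would have been taken at $(r,1)$ instead of~\ref{item:ear3}.

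\textbf{Second bullet.} The minimality of $P_{r,2}$ cannot force $b=b_{r,2}$: $ab$ and $P_{r,2}$ are both length-one ears, and a tie contradicts nothing. The correct tool is again your first construction. Here $b\in B_\mathcal{H}(b_{r,2},\ell-1)$ and \ref{item:third1} fails, so $a\notin B_\mathcal{H}(b_{r,2},\ell-1)$; in particular $a\neq a_{r,2}$. Hence $\dist_{H^-_{r-1,\mu(r-1)}}(a,a_{r,2})\geq\dist_\mathcal{H}(a,a_{r,2})\geq\ell-1$. Now suppose $b\neq b_{r,2}$. Concatenate $ab$, an $S$-containing $(b,b_{r,2})$-arc of $P_{r,1}$, and $P_{r,2}$. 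This is an $H_{r-1,\mu(r-1)}$-path in $G_{r-1,\mu(r-1)}$ of length at least~$3$, and its length plus the distance between its ends is at least $\ell+2$. So it is an $(\ell,S)$-ear of $H^-_{r-1,\mu(r-1)}$, contradicting that~\ref{item:ear1} was not applicable at $(r-1,\mu(r-1))$.

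The paper handles both of your bullets at once as the case $\{a,b\}\cap\Gamma_{r,2}=\emptyset$. Either $a,b\in B_\mathcal{H}(\Gamma_{r,2},\ell-2)$, which gives~\ref{item:third1} since $\abs{E(P_{r,2})}=1$, or this path is such an ear. With these two repairs, your Step~3 finishes the proof as in the paper. Your use of \cref{lem:short path2} on the whole frame is fine there: the conclusion $(i,1)=(t,1)$ with $i<r\leq t$ is immediately contradictory.
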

\begin{proof}
    Note that~$a$ is an admissible vertex of~$H_{r-1,\mu(r-1)}$, because otherwise $b\in Z_{r-1,\mu(r-1)}$.
    As $\Gamma_{r,1}=\emptyset$, $a$ and~$b$ are admissible vertices of~$H_{r,1}$.
    Since~$a$ and~$b$ are in distinct components of~$H_{r,1}$, the edge~$ab$ forms an $(\ell,S)$-ear of~$H^-_{r,1}$ in~$G_{r,1}$.
    Since $G_{r,\mu(r)}$ has no $(\ell,S)$-ear of~$H_{r,\mu(r)}$, we have $\mu(r)\geq2$.
    By~\ref{item:ear1}, $\abs{E(P_{r,2})}=1$.
    By \cref{lem:bridge}, $P_{r,2}$ has exactly one end, say~$b_{r,2}$, in~$P_{r,1}$.
    Note that $\{a,b\}\neq\Gamma_{r,2}$.
    We divide into three cases according to $\{a,b\}\cap\Gamma_{r,2}$.

    \medskip
    \noindent
    \textbf{Case 1.} $a=a_{r,2}$.

    We may assume that $b\notin B_\mathcal{H}(b_{r,2},\ell-1)$, because otherwise~\ref{item:third1} holds for $x:=b_{r,2}$.
    Since~$P_{r,1}$ is an $(\ell,S)$-cycle, it has a subpath between~$b$ and~$b_{r,2}$ containing a vertex in~$S$.
    Let~$P$ be the cycle obtained by concatenating this subpath with $ba_{r,2}b_{r,2}$.
    Note that~$P$ is a cycle of~$G_{r-1,\mu(r-1)}$ intersecting~$H_{r-1,\mu(r-1)}$ only at~$a_{r,2}$.
    As $b\notin B_\mathcal{H}(b_{r,2},\ell-1)$, $P$ is an $(\ell,S)$-cycle, contradicting~\ref{item:ear3}.

    \medskip
    \noindent
    \textbf{Case 2.} $a\neq a_{r,2}$ and $b=b_{r,2}$.

    Suppose that $\sigma(a)=\sigma(a_{r,2})$.
    We apply \cref{lem:short path1} for $\mathcal{H}:=H_{p,q}$ and $Q:=ab_{r,2}a_{r,2}$.
    As $a\notin B_\mathcal{H}(a_{r,2},\ell-1)$, \cref{lem:short path1}\ref{item:short path1-1} does not hold.
    Thus, \cref{lem:short path1}\ref{item:short path1-2} holds, and therefore~\ref{item:third2} holds for $x:=a_{r,2}$.

    Now, suppose that $\sigma(a)\neq\sigma(a_{r,2})$.
    Let $(i,j)$ be the lexicographically larger one between~$\sigma(a)$ and $\sigma(a_{r,2})$.
    Note that $(i,j)<_L(r,1)$.
    In particular, we have $i<r$.
    Let
    \[
        i':=
        \begin{cases}
            \min\{\mu(i),2\} & \text{if }j=1,\\
            j & \text{otherwise}.
        \end{cases}
    \]
    As $(i,j)<_L(r,1)$, $ab_{r,2}a_{r,2}$ is an $H_{i,i'}$-path in~$G_{i,i'}$.
    We apply \cref{lem:short path2} for $\mathcal{H}:=H_{i,\mu(i)}$ and $Q:=ab_{r,2}a_{r,2}$.
    Since $(G,S)$ is a good pair, \cref{lem:short path2}\ref{item:short path2-1} does not hold.
    Thus, \cref{lem:short path2}\ref{item:short path2-2} holds.
    Then $P_{i,1}$ is a detached $\mathcal{H}$-earring, $(i,j)=(i,1)=(i,\mu(i))$, and this path~$Q$ is an $(\ell,S)$-ear of~$H^-_{i,1}$.
    This contradicts one of \ref{item:ear2}--\ref{item:ear4} as $i<r$.
    
    \medskip
    \noindent
    \textbf{Case 3.} $\{a,b\}\cap\Gamma_{r,2}=\emptyset$.

    Since~$P_{r,1}$ is an $(\ell,S)$-cycle, it has a subpath between~$b$ and~$b_{r,2}$ containing a vertex in~$S$.
    Let~$P$ be the path obtained by concatenating this subpath, $P_{r,2}$, and~$ab$.
    Note that~$P$ is an $H_{r-1,\mu(r-1)}$-path in~$G_{r-1,\mu(r-1)}$.
    As $\abs{E(P_{r,2})}=1$, if both~$a$ and~$b$ are in $B_\mathcal{H}(\Gamma_{r,2},\ell-2)$, then~\ref{item:third1} holds with some $x\in\Gamma_{r,2}$.
    Thus, we may assume that~$a$ or~$b$ is not in $B_\mathcal{H}(\Gamma_{r,2},\ell-2)$.
    We have
    \[
        \abs{E(P)}+\dist_{H^-_{r-1,\mu(r-1)}}(a,a_{r,2})\geq2+\dist_{P_{r,1}}(b,b_{r,2})+\dist_\mathcal{H}(a,a_{r,2})\geq\ell+2.
    \]
    Therefore, $P$ is an $(\ell,S)$-ear of~$H^-_{r-1,\mu(r-1)}$, contradicting~\ref{item:ear3}.
    
    \medskip
    This completes the proof.
\end{proof}

We now prove \cref{prop:induced2}.

\begin{proof}[Proof of \cref{prop:induced2}]
    By the maximality of~$\mathcal{H}$, $G_{r,\mu(r)}$ has no $(\ell,S)$-ear of $H^-_{r,\mu(r)}$.
    By \cref{lem:second,lem:third}, we may assume that $s\geq2$.
    By \cref{lem:first}, we may assume that $a\in\Gamma_{r,s}$.
    Without loss of generality, we may assume that $a=a_{r,s}$.
    
    Let~$P$ be the path obtained by concatenating~$bP_{r,s}b_{r,s}$ and~$ab$.
    Note that~$P$ is an $H_{r,s-1}$-path in~$G_{r,s-1}$ shorter than~$P_{r,s}$.
    By~\ref{item:ear1}, $P$ is not an $(\ell,S)$-ear of~$H^-_{r,s-1}$.
    If~$P$ is not an $S$-ear of~$H^-_{r,s-1}$, then~$P_{r,s}$ is fragile and~$P^*_{r,s}$ is a subpath of $aP_{r,s}b-\{a,b\}$, and therefore \ref{item:prop2-3} holds.
    Thus, we may assume that~$P$ is an $S$-ear of~$H^-_{r,s-1}$.
    Then
    \[
        \abs{E(P)}+\dist_{H^-_{r,s-1}}(a_{r,s},b_{r,s})\leq\ell-1,
    \]
    and therefore~\ref{item:prop2-1} holds for each $x\in\Gamma_{r,s}$.
\end{proof}

In the remainder of this section, we present three corollaries of \cref{prop:induced1,prop:induced2}, which will be used in the next section.
For each $(i,j)\in\mathcal{P}(\mathcal{H})$, let $\Sigma_{i,j}:=\{\sigma(x):x\in\Gamma_{i,j}\}$.
The first corollary characterises all edges of~$G$ between distinct~$P_{i,j}$ and~$P_{i',j'}$ of a maximal~$\mathcal{H}$ such that $\Sigma_{i,j}=\Sigma_{i',j'}\neq\emptyset$.

\begin{corollary}\label{cor:induced1}
    Let $(G,S)$ be a good pair and let~$\mathcal{H}$ be a maximal $(\ell,S)$-frame in~$G$.
    Let $(i,j)$ and $(i',j')$ be distinct pairs in $\mathcal{P}(\mathcal{H})$ with $\Sigma_{i,j}=\Sigma_{i',j'}\neq\emptyset$.
    If~$G$ has an edge $ab$ between~$P_{i,j}$ and~$P_{i',j'}$, then~$ab$ is a chord of~$\mathcal{H}$ and there are $x\in\Gamma_{i,j}$ and $x'\in\Gamma_{i',j'}$ with $\sigma(x)=\sigma(x')=:(i'',j'')$ such that
    \begin{enumerate}[label=(\roman*)]
        \item\label{item:induced1-1} $\{a,b\}\subseteq B_{P_{i,j}}(x,1)\cup B_{P_{i',j'}}(x',1)$, and
        \item\label{item:induced1-2} if~$P_{i'',j''}$ is not a detached $\mathcal{H}$-earring, then~$P_{i'',j''}$ is fragile and~$x$ and~$x'$ are in distinct components of~$P^{\operatorname{side}}_{i'',j''}$.
    \end{enumerate}
\end{corollary}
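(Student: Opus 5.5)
We need to plan a proof of Corollary cor:induced1. Note that $\Sigma_{i,j}=\Sigma_{i',j'}\neq\emptyset$ forces $\Gamma_{i,j}$ and $\Gamma_{i',j'}$ to be nonempty, so neither path is a detached earring. In particular, neither is contained in a previous frame. An edge $ab$ between $P_{i,j}$ and $P_{i',j'}$ is not an edge of $\mathcal{H}$ unless $a$ or $b$ is a shared attachment vertex. We will handle that degenerate case separately, and otherwise reduce to \cref{prop:induced1,prop:induced2}.

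\textbf{Setup.} Without loss of generality $(i',j')<_L(i,j)$. First I will show that the endpoints of $ab$ lie in $V(P_{i,j})\setminus\Gamma_{i,j}$ and $V(P_{i',j'})\setminus\Gamma_{i',j'}$ respectively; if an endpoint were in some $\Gamma$, it would have $\sigma$ in $\Sigma$, and I will treat that case directly. For the main case, $\sigma(b)=(i,j)$ and $\sigma(a)=(i',j')$, and $ab\notin E(\mathcal{H})$ because the only edges of $\mathcal{H}$ between $P_{i,j}$ and earlier parts are at $\Gamma_{i,j}$. So $ab$ is a chord with $\sigma(a)<_L\sigma(b)=(r,s)=(i,j)$, and I apply \cref{prop:induced2}. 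Outcome \ref{item:prop2-2} is impossible since $P_{i,j}$ is not a detached earring. Outcome \ref{item:prop2-3} requires $a\in\Gamma_{i,j}$, which would put $a$ in a path with $\sigma$-value $(i',j')$ yet $\sigma(a)\in\Sigma_{i,j}=\Sigma_{i',j'}$, a contradiction since $\Gamma_{i',j'}$ consists of earlier vertices. So outcome \ref{item:prop2-1} holds with some $x\in\Gamma_{i,j}$.

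\textbf{Locating $a$ and $b$.} In the first bullet of \ref{item:prop2-1}, both $a,b\in B_\mathcal{H}(x,\ell-1)$. Since $x$ is admissible at the time $P_{i,j}$ was added, the ball around $x$ consists of $x$, an initial piece of $P_{i,j}$, and a segment of $P_{\sigma(x)}$. Then $a$, lying on $P_{i',j'}$ near $x$, forces $P_{i',j'}$ to attach within distance $\ell-1$ of $x$, i.e.\ some $x'\in\Gamma_{i',j'}$ with $\sigma(x')=\sigma(x)$ close to $x$. I then need to sharpen "within $\ell-1$" to "within $1$". For this I will use shortest-ear minimality: a short $(x',x)$-route formed by $a b$ and the pieces of $P_{i,j}$ and $P_{i',j'}$ gives an $H$-path whose ends lie on the same $P_{\sigma(x)}$. \cref{lem:short path1} then says either $\dist_{P_{\sigma(x)}}(x,x')\le\ell-1$ or the fragile configuration occurs. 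In the first case I get a cycle of $\mathcal{H}^-$ (by \cref{lem:all cycles}) of length at most $3(\ell-1)$, contradicting goodness, unless the cycle degenerates. I expect this degenerate situation to be exactly $a,b$ being within distance $1$ of the attachment points, which gives \ref{item:induced1-1}.

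\textbf{Conclusion and main obstacle.} The surviving case from \cref{lem:short path1}\ref{item:short path1-2}, or from the second bullet of \ref{item:prop2-1} ($b$ adjacent to $x$, $P_{p,q}$ fragile, with $a$ and $x$ in distinct sides), yields \ref{item:induced1-2} with $(i'',j'')=\sigma(x)$. Here $x'$ is the attachment of $P_{i',j'}$ near $a$: by \cref{prop:induced1} applied in $H_{\sigma(x)}$, the relevant near part of $P_{\sigma(x)}$ lies on one side. The main obstacle is the distance bookkeeping in the previous step, namely converting "both in an $(\ell-1)$-ball" into "adjacent to the attachment vertex". My plan is to use minimality of $P_{i,j}$ as an $(\ell,S)$-ear: replacing the initial segment $xP_{i,j}b$ by the route $x'P_{i',j'}a\,b$ must not produce a shorter valid ear. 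Combining this with the no-short-cycle hypothesis should pin $b$ and $a$ to distance at most $1$ from $x$ and $x'$.
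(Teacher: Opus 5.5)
There is a genuine gap. You never establish the fact on which the whole corollary rests: $P_{i,j}$ and $P_{i',j'}$ are vertex-disjoint and $\dist_{\mathcal{H}}(P_{i,j},P_{i',j'})\geq\ell$.

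Disjointness follows from \cref{lem:degree}. With $(i',j')<_L(i,j)$, a common vertex $v$ must have $\sigma(v)=(i',j')$ and $v\in\Gamma_{i,j}$. Then $(i',j')\in\Sigma_{i,j}=\Sigma_{i',j'}$, which is impossible because every element of $\Sigma_{i',j'}$ precedes $(i',j')$. Since $P_{i,j}$ is not a detached $\mathcal{H}$-earring, \cref{lem:argue distance} then gives the distance bound.

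This bound is what shows that $ab$ is a chord. Your justification (``the only edges of $\mathcal{H}$ between $P_{i,j}$ and earlier parts are at $\Gamma_{i,j}$'') overlooks that $ab$ could itself be a later one-edge regular $\mathcal{H}$-ear attached at $a$ and $b$; ruling this out is exactly what \cref{lem:argue distance} does. The same bound, together with admissibility of attachment vertices, also eliminates every outcome of \cref{prop:induced1,prop:induced2} of the form ``$a,b\in B_\mathcal{H}(x,\ell-1)$''. Essentially only the fragile outcomes survive, and in those the distance-$1$ conclusion is immediate.

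So no ``sharpening'' from radius $\ell-1$ to radius $1$ is needed, and the one you sketch is not an argument. Knowing $a,b\in B_\mathcal{H}(x,\ell-1)$ does not bound the length of your $(x',x)$-route. Moreover, a cycle through the chord $ab$ is not a cycle of $\mathcal{H}^-$, so \cref{lem:all cycles} says nothing about it.

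Your case split also puts the weight in the wrong place. The configuration you call the main case, with $a$ interior to $P_{i',j'}$ and $b$ interior to $P_{i,j}$, cannot occur. There $x\in\Gamma_{i,j}$, so the first bullet of \cref{prop:induced2}\ref{item:prop2-1} violates the distance bound, and the second would put $x$ on $P^{\operatorname{side}}_{i',j'}$, violating disjointness. Conversely, your intended first step, showing that neither endpoint lies in $\Gamma_{i,j}\cup\Gamma_{i',j'}$, is false in general. In every realisable case some endpoint is an attachment vertex, and these are exactly the cases you postpone with ``I will treat that case directly''.

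The paper normalises $a\in V(P_{i,j})$, $b\in V(P_{i',j'})$ and $\sigma(a)\leq_L\sigma(b)$, then splits into three cases.

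If $\sigma(a)=\sigma(b)$, disjointness forces $a\in\Gamma_{i,j}$ and $b\in\Gamma_{i',j'}$. Unless $P_{\sigma(a)}$ is a detached earring (where the second item is vacuous), only \cref{prop:induced1}\ref{item:prop1-3} survives, so $x=a$ and $x'=b$ work.

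If $\sigma(b)=(i',j')$, only the fragile bullet of \cref{prop:induced2}\ref{item:prop2-1} survives. It makes $b$ adjacent to some $x'\in\Gamma_{i',j'}$ on the side of the fragile $P_{\sigma(a)}$ opposite to $a$. Since $x'$ then lies on $P_{\sigma(a)}$, disjointness gives $\sigma(a)\neq(i,j)$, so $a\in\Gamma_{i,j}$ and one takes $x=a$.

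If $b\in\Gamma_{i',j'}$, then \cref{prop:induced2}\ref{item:prop2-1} fails by admissibility of $b$, and \cref{prop:induced2}\ref{item:prop2-3} contradicts $\Sigma_{i,j}=\Sigma_{i',j'}$. The remaining outcome \cref{prop:induced2}\ref{item:prop2-2} forces $P_{i',j'}$ to be the one-edge ear $P_{r,2}$ with $(r,1)=\sigma(b)$. Its other end $x'$ lies on the side of the fragile $P_{\sigma(a)}$ opposite to $a$, and again $x=a\in\Gamma_{i,j}$.
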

\begin{proof}
    Without loss of generality, we may assume that $a\in V(P_{i,j})$, $b\in V(P_{i',j'})$, and $(p,q):=\sigma(a)\leq_L\sigma(b)=:(r,s)$.
    As $\Sigma_{i,j}=\Sigma_{i',j'}\neq\emptyset$, neither~$P_{i,j}$ nor~$P_{i',j'}$ is a detached $\mathcal{H}$-earring.
    In addition, by \cref{lem:degree}, $P_{i,j}$ and~$P_{i',j'}$ are vertex-disjoint.
    By \cref{lem:argue distance}, $\dist_\mathcal{H}(P_{i,j},P_{i',j'})\geq\ell$, and therefore~$ab$ is a chord of~$\mathcal{H}$.

    Suppose that $\sigma(a)=\sigma(b)$.
    Since~$P_{i,j}$ and~$P_{i',j'}$ are vertex-disjoint, we have $a\in\Gamma_{i,j}$ and $b\in\Gamma_{i',j'}$.
    We show that the statement holds for $x:=a$ and $x':=b$.
    Note that~\ref{item:induced1-1} holds.
    We may assume that~$P_{p,q}$ is not a detached $\mathcal{H}$-earring, because otherwise~\ref{item:induced1-2} obviously holds.
    We apply \cref{prop:induced1} for~$\mathcal{H}$ and~$ab$.
    Since~$P_{p,q}$ is not a detached $\mathcal{H}$-earring, \cref{prop:induced1}\ref{item:prop1-1} does not hold.
    Since~$a$ and~$b$ are admissible vertices of~$H_{p,q}$, \cref{prop:induced1}\ref{item:prop1-2} does not hold.
    Thus, \cref{prop:induced1}\ref{item:prop1-3} holds, and therefore the statement holds for $x:=a$ and $x':=b$.

    Now, suppose that $\sigma(a)\neq\sigma(b)$.
    We divide into two cases according to whether $\sigma(b)=(i',j')$.

    \medskip
    \noindent
    \textbf{Case 1.} $\sigma(b)=(i',j')$.

    Let
    \[
        (i^*,j^*):=
        \begin{cases}
            (i'-1,\mu(i'-1)) & \text{if }j'=1,\\
            (i',j'-1) & \text{otherwise}.
        \end{cases}
    \]
    Note that~$a$ is an admissible vertex of~$H_{i^*,j^*}$, because otherwise $b\in Z_{i^*,j^*}$.
    We apply \cref{prop:induced2} for~$\mathcal{H}$ and~$ab$.
    Since~$P_{i',j'}$ is not a detached $\mathcal{H}$-earring, \cref{prop:induced2}\ref{item:prop2-2} does not hold.
    Since~$P_{i,j}$ and~$P_{i',j'}$ are vertex-disjoint, \cref{prop:induced2}\ref{item:prop2-3} does not hold.
    Thus, \cref{prop:induced2}\ref{item:prop2-1} holds for some $x'\in\Gamma_{i',j'}$.
    As $\dist_\mathcal{H}(P_{i,j},P_{i',j'})\geq\ell$, we have $a\notin B_\mathcal{H}(x',\ell-1)$.
    Thus, $b$ is adjacent to~$x'$ in~$\mathcal{H}$, $P_{p,q}$ is fragile, and~$a$ and~$x'$ are in distinct components of~$P^{\operatorname{side}}_{p,q}$.
    Note that $(p,q)\neq(i,j)$ as~$P_{i,j}$ and~$P_{i',j'}$ are vertex-disjoint.
    Thus, $a\in\Gamma_{i,j}$, and therefore~\ref{item:induced1-1} and~\ref{item:induced1-2} hold for $x:=a$ and~$x'$.

    \medskip
    \noindent
    \textbf{Case 2.} $\sigma(b)\neq(i',j')$.
    
    Note that $b\in\Gamma_{i',j'}$.
    We apply \cref{prop:induced2} for~$\mathcal{H}$ and~$ab$.
    Since~$b$ is an admissible vertex of~$H_{r,s}$, \cref{prop:induced2}\ref{item:prop2-1} does not hold.
    Suppose that \cref{prop:induced2}\ref{item:prop2-2} holds.
    Then~$b$ is not an admissible vertex of~$H_{r,2}$.
    Since~$b$ is an admissible vertex of~$H_{r,1}$, we deduce that $(r,2)=(i',j')$.
    Let~$x'$ be the vertex in $\Gamma_{r,2}\setminus\{b\}$.
    As $\dist_\mathcal{H}(P_{i,j},P_{i',j'})\geq\ell$, we have $a\notin B_\mathcal{H}(x',\ell-1)$.
    Thus, $P_{p,q}$ is fragile and~$a$ and~$x'$ are in distinct components of~$P^{\operatorname{side}}_{p,q}$.
    Note that $(p,q)\neq(i,j)$ as~$P_{i,j}$ and~$P_{i',j'}$ are vertex-disjoint.
    Thus, $a\in\Gamma_{i,j}$, and therefore~\ref{item:induced1-1} and~\ref{item:induced1-2} hold for $x:=a$ and~$x'$.

    Now, suppose that \cref{prop:induced2}\ref{item:prop2-3} holds.
    Then $a\in\Gamma_{r,s}$.
    Note that $(r,s)\neq(i,j)$ as~$P_{i,j}$ and~$P_{i',j'}$ are vertex-disjoint.
    By \cref{lem:degree}, $a\notin\Gamma_{i,j}$.
    Thus, $\sigma(a)=(i,j)<_L(r,s)$, and therefore $(r,s)\notin\Sigma_{i,j}$, contradicting that $\Sigma_{i,j}=\Sigma_{i',j'}$.

    \medskip
    This completes the proof.
\end{proof}

The second corollary shows that if~$G$ has an induced packing $\{C_1,C_2\}$ of cycles of~$\mathcal{H}$, then $\{C^-_1,C^-_2\}$ is an induced packing in~$G$ as well.

\begin{corollary}\label{cor:induced2}
    Let $(G,S)$ be a good pair and let~$\mathcal{H}$ be a maximal $(\ell,S)$-frame in~$G$.
    If~$G$ has an induced packing $\{C_1,C_2\}$ of cycles of~$\mathcal{H}$, then $\{C^-_1,C^-_2\}$ is an induced packing in~$G$.
\end{corollary}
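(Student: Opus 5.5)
The plan is to check the two defining conditions of an induced packing directly, using that every vertex of~$C^-_i$ outside~$C_i$ is an internal vertex of an $\mathcal{H}$-detour whose base lies on~$C_i$. Such vertices are very local, and \cref{lem:strict1,lem:second} pin down where their neighbours can be. Write $V(C^-_i)\subseteq V(C_i)\cup\bigcup_r V(P_{r,1})$, where the union runs over the $\mathcal{H}$-detours~$P_{r,1}$ with $Q_r\subseteq C_i$; note that $\Gamma_{r,1}\subseteq V(Q_r)\subseteq V(C_i)$.

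\textbf{Disjointness.} Let~$u$ be an internal vertex of a detour~$P_{r,1}$ with $Q_r\subseteq C_1$. By \cref{lem:strict1}, $u$ has degree~$2$ in~$\mathcal{H}$ and its only neighbours in~$\mathcal{H}$ lie on $P_{r,1}$. Hence any cycle of~$\mathcal{H}$ through~$u$ contains all of~$P_{r,1}$, and so meets~$\Gamma_{r,1}\subseteq V(C_1)$. In particular~$C_2$ avoids~$u$, because $C_2$ is disjoint from $C_1$. Also, $u$ cannot lie on a detour whose base is in~$C_2$, since distinct detours are internally disjoint by \cref{lem:degree}. So $C_1^-$ and $C_2^-$ are vertex-disjoint.

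\textbf{No edges.} Edges between $V(C_1)$ and $V(C_2)$ are excluded by hypothesis. It therefore suffices to take an edge $uv$ of~$G$ with~$u$ internal to a detour $P_{r,1}$ based on~$C_1$ and $v\in V(C^-_2)$, and derive a contradiction; by symmetry this covers the remaining cases, including both ends new.
\begin{itemize}
\item \emph{$v$ enters after $(r,1)$.} We have $u\in Y_{r,1}$ by \cref{lem:strict1}. If $\sigma(v)>_L(r,1)$, then $v$ lies on a later piece, which is contained in some $G_{i,j}$ with $(i,j)\geq_L(r,1)$. This is impossible, since $v$ is adjacent to $u\in Y_{r,1}$ and so $v\in Z_{r,1}$.
\item \emph{$v$ on the same detour.} If $\sigma(v)=(r,1)$, then $v\in V(P_{r,1})\subseteq V(C_1^-)$, contradicting disjointness.
\item \emph{$v$ enters before $(r,1)$.} Then $\sigma(v)<_L(r,1)$. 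If $uv$ is an edge of~$\mathcal{H}$, then $v\in\Gamma_{r,1}\subseteq V(C_1)$, again a contradiction. Otherwise $uv$ is a chord, and \cref{lem:second} gives $x\in\Gamma_{r,1}$ with $v\in B_\mathcal{H}(x,\ell-1)$.
\end{itemize}

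The main step is to show that this ball is essentially covered by $C_1\cup P_{r,1}$. Since~$u$ lies in $G_{r-1,\mu(r-1)}$, $v\notin Y_{r-1,\mu(r-1)}$, and~$x$ is admissible in $H_{r-1,\mu(r-1)}$. By \cref{lem:nearby} and \cref{obs:structure}, the ball $B_{H_{r-1,\mu(r-1)}}(x,\ell-1)$ is a subpath of~$\mathcal{L}_0$ through~$x$ consisting of degree-$2$ vertices. In~$\mathcal{H}$ the only additions near~$x$ are $P_{r,1}$ together with later pieces, and later pieces avoid $Y_{r,1}\ni x$'s ball by \cref{obs:structure}\ref{item:order}.

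Now $C_1$ passes through~$x$ via~$Q_r$ and one further edge. The vertices reachable from~$x$ within distance $\ell-1$ along the other direction have degree~$2$ in~$\mathcal{H}$ and no earlier branch vertex, so $C_1$ must contain them, since a cycle entering a degree-$2$ path traverses it. Hence
\[
B_\mathcal{H}(x,\ell-1)\subseteq V(C_1)\cup V(P_{r,1}),
\]
so $v\in V(C_1^-)\cup V(C_1)$, which contradicts $v\in V(C_2^-)$ together with the disjointness already shown.

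I expect this last containment to be the delicate point. One must check that the ball around~$x$ does not pass through a branch vertex of~$\mathcal{H}$ created at step $(r,1)$ other than the ends of~$Q_r$. The ball can contain $b_{r,1}$ when $x=a_{r,1}$, but then it still stays on $Q_r\cup P_{r,1}$ plus the degree-$2$ continuations beyond both ends of~$Q_r$, all of which~$C_1$ traverses.
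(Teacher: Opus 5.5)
Your proof is correct. Its core matches the paper's: a chord from an internal vertex of a detour $P_{r,1}$ to an older vertex has both ends in $B_\mathcal{H}(x,\ell-1)$ for some $x\in\Gamma_{r,1}$. That ball lies in $V(C_1)\cup V(P_{r,1})$, because $C_1$ must run through the degree-$2$ continuations beyond both ends of $Q_r$. This is the paper's containment $B_\mathcal{H}(\Gamma_{i,1},\ell-1)\subseteq V(C_2^-\cup C_2)$.

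The difference is in how the chord is controlled.
\begin{itemize}
\item The paper feeds the edge into \cref{prop:induced2} in both orders of $\sigma$. It then discards each outcome: a detour is neither a detached earring nor fragile, and its internal vertices have degree $2$.
\item You apply \cref{lem:second} directly when the other end is older. When it is younger, you observe that it would lie in $Z_{r,1}$, so that case cannot occur at all.
\end{itemize}
Neither \cref{lem:second} nor the $Z$-separation uses maximality of $\mathcal{H}$ or the good-pair hypothesis; both enter \cref{prop:induced2} through \cref{lem:first,lem:third}. So your argument proves the corollary for every $(\ell,S)$-frame. It also justifies the vertex-disjointness of $C_1^-$ and $C_2^-$ and the ball containment, which the paper states in one line each. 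The paper's version is shorter only because \cref{prop:induced2} is already in hand.

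One tacit assumption, shared with the paper: when you let $C_1$ leave $x$ along the outward edge rather than along $P_{r,1}$, you need $C_1\neq P_{r,1}\cup Q_r$. This is exactly the situation in which $C_1^-$ is a cycle, which is how the corollary is used. In the degenerate case $V(C_1^-)\subseteq V(C_1)$. Requiring $u\notin V(C_1)$ in your reduction then either places $u$ on a non-degenerate side, or leaves only edges between $V(C_1)$ and $V(C_2)$, which the hypothesis excludes.
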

\begin{proof}
    Note that~$C^-_1$ and~$C^-_2$ are vertex-disjoint.
    Towards a contradiction, suppose that~$G$ has an edge~$ab$ with $a\in V(C^-_1)$ and $b\in V(C^-_2)$.
    Since $\{C_1,C_2\}$ is an induced packing in~$G$, for some $i\geq2$, $P_{i,1}$ is an $\mathcal{H}$-detour containing~$a$ or~$b$ as an internal vertex such that every vertex of~$Q_i$ has degree~$2$ in~$C_1$ or~$C_2$, respectively.
    Without loss of generality, we may assume that~$b$ is an internal vertex of~$P_{i,1}$.
    Note that $\sigma(b)=(i,1)\neq\sigma(a)$.
    We apply \cref{prop:induced2} for~$\mathcal{H}$ and~$ab$.
    By \cref{lem:strict1}, we have $B_\mathcal{H}(b,\ell-1)\subseteq B_\mathcal{H}(\Gamma_{i,1},\ell-1)\subseteq V(C^-_2\cup C_2)$.
    Thus, $\mathcal{H}$ has no branch vertex~$x$ such that both~$a$ and~$b$ are in $B_\mathcal{H}(x,\ell-1)$.

    Suppose that $\sigma(a)<_L\sigma(b)$.
    Note that \cref{prop:induced2}\ref{item:prop2-1} does not hold.
    Since~$P_{i,1}$ is an $\mathcal{H}$-detour, \cref{prop:induced2}\ref{item:prop2-2} does not hold.
    Since~$C^-_1$ and~$C^-_2$ are vertex-disjoint, \cref{prop:induced2}\ref{item:prop2-3} does not hold.
    Hence, \cref{prop:induced2} does not hold, a contradiction.

    Now, suppose that $\sigma(a)>_L\sigma(b)$.
    Since~$P_{i,1}$ is not fragile, neither \cref{prop:induced2}\ref{item:prop2-1} nor \cref{prop:induced2}\ref{item:prop2-2} holds.
    Since~$b$ has degree~$2$ in~$\mathcal{H}$, \cref{prop:induced2}\ref{item:prop2-3} does not hold.
    Hence, \cref{prop:induced2} does not hold, a contradiction.
\end{proof}

The last corollary characterises all chords of a maximal $\mathcal{H}$ between distinct cycles of~$\mathcal{H}$ which form an induced packing in~$\mathcal{H}$.

\begin{corollary}\label{cor:induced3}
    Let $(G,S)$ be a good pair and let~$\mathcal{H}$ be a maximal $(\ell,S)$-frame in~$G$.
    If~$\mathcal{H}$ has an induced packing $\{C_1,C_2\}$ of cycles and a chord~$ab$ between~$C_1$ and~$C_2$ with $(p,q):=\sigma(a)\leq_L\sigma(b)=:(r,s)$, then one of the following holds.
    \begin{enumerate}[label=(\Roman*)]
        \item\label{item:packing1} $(p,q)=(r,s)$, $P_{p,q}$ is a detached $\mathcal{H}$-earring, and both~$a$ and~$b$ are in $B_{P_{p,1}}(Q_p,\ell-1)\setminus V(Q_p)$.
        \item\label{item:packing2} $(p,q)=(r,s)$, $P_{p,q}$ is fragile, and~$a$ and~$b$ are in distinct components of~$P^{\operatorname{side}}_{p,q}$.
        \item\label{item:packing3} $(p,q)\neq(r,s)$, $P_{r,s}$ is a regular $\mathcal{H}$-ear, and there is $x\in\Gamma_{r,s}$ such that~$b$ is adjacent to~$x$ in~$\mathcal{H}$, $P_{p,q}$ is fragile, and~$a$ and~$x$ are in distinct components of~$P^{\operatorname{side}}_{p,q}$.
        \item\label{item:packing4} $(p,q)\neq(r,s)$, $P_{r,s}$ is a detached $\mathcal{H}$-earring, $\mu(r)\geq2$, $\abs{E(P_{r,2})}=1$, and there is $x\in\Gamma_{r,2}$ such that $b\in\Gamma_{r,2}\setminus\{x\}$, $P_{p,q}$ is fragile, and~$a$ and~$x$ are in distinct components of~$P^{\operatorname{side}}_{p,q}$.
        \item\label{item:packing5} $(p,q)\neq(r,s)$, $P_{r,s}$ is a regular $\mathcal{H}$-ear, $a\in\Gamma_{r,s}$, $P_{r,s}$ is fragile, and~$a$ and~$b$ are in distinct components of ${P_{r,s}-V(P^*_{r,s})}$.
        \item\label{item:packing6} There is a regular $\mathcal{H}$-ear~$P_{i,j}$ such that~$H_{i,j}$ has an $(a_{i,j},b_{i,j})$-path~$Q$ of length at most $\ell-1$ and for each $h\in[2]$, there are $x_h\in\Gamma_{i,j}$ and $y_h\in\{a,b\}$ with $y_h\in B_{\mathcal{H}-E(Q)}(x_h,\ell-1)\subseteq V(C_h)$.
    \end{enumerate}
\end{corollary}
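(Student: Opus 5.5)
The plan is to apply \cref{prop:induced1} when $(p,q)=(r,s)$ and \cref{prop:induced2} when $(p,q)<_L(r,s)$. In each case we keep exactly the alternatives that do not involve a ball $B_\mathcal{H}(x,\ell-1)$ around a branch vertex~$x$, and we show that whenever such a ball alternative occurs, \ref{item:packing6} must hold. The basic observation is the following. Since $C_1$ and $C_2$ form an induced packing in~$\mathcal{H}$, they are vertex-disjoint and no edge of~$\mathcal{H}$ joins them. We may assume $a\in V(C_1)$ and $b\in V(C_2)$ after relabelling; note that the statement is symmetric in~$h$, so this relabelling does not interact with the ordering $\sigma(a)\leq_L\sigma(b)$.

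\textbf{Same-$\sigma$ case.} Suppose $(p,q)=(r,s)$. Alternatives \ref{item:prop1-1} and \ref{item:prop1-3} of \cref{prop:induced1} are exactly \ref{item:packing1} and \ref{item:packing2}. It remains to treat \ref{item:prop1-2}: there is $x\in\Gamma_{p,q}$ with $a,b\in B_\mathcal{H}(x,\ell-1)$.

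\textbf{Different-$\sigma$ case.} Suppose $(p,q)<_L(r,s)$.
\begin{itemize}
\item The second bullet of \cref{prop:induced2}\ref{item:prop2-1} gives \ref{item:packing3}, once we check that $P_{r,s}$ is a regular ear there; the proposition already states this.
\item The second bullet of \ref{item:prop2-2} gives \ref{item:packing4}.
\item Alternative \ref{item:prop2-3} gives \ref{item:packing5}.
\end{itemize}
So in every case the only remaining situation is that both $a$ and $b$ lie in $B_\mathcal{H}(x,\ell-1)$ for some branch vertex~$x$ of~$\mathcal{H}$.

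\textbf{Ball case.} Let $u\in\{a,b\}$ and let $R_u$ be a path of length at most $\ell-1$ in~$\mathcal{H}$ from~$x$ to~$u$. The vertex~$x$ lies in at most one of $C_1,C_2$. Suppose $x\in V(C_1)$, or that $x$ lies in neither cycle. Follow $R_b$ from~$b$ towards~$x$. The point where it first leaves~$C_2$ is a vertex of~$C_2$ that has a neighbour on $R_b$ outside~$C_2$. Because $C_2$ is a cycle, this vertex has degree at least~$3$ in~$\mathcal{H}$, so it is a branch vertex $x_2$ with $\dist_\mathcal{H}(x,x_2)\leq\ell-1$. Similarly, if $x\notin V(C_1)$, walking along $R_a$ produces a branch vertex $x_1\in V(C_1)$ within distance $\ell-1$ of~$x$. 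Two distinct branch vertices at distance at most $\ell-1$ are handled by \cref{cor:short branch}: $\{x,x_2\}=\{a_{i,j},b_{i,j}\}$ for a unique $(i,j)$, and both have degree~$3$. The same argument applies to $x_1$, and one checks that $x_1$ must belong to the same pair.

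We next want the pair to come from a regular ear, and the shortest path~$Q$ between its ends to lie in~$H_{i,j}$. An $\mathcal{H}$-detour is excluded because its ends have degree~$3$ and its path $P_{r,1}\cup Q_r$ is short. The constraint $\Gamma_{i,j}=\{x_1,x_2\}$ comes from \cref{lem:degree}, which says each degree-$3$ vertex is an end of exactly one $P_{i,j}$. The path~$Q$ of length at most $\ell-1$ joining $x_1$ and $x_2$ then lies inside $B_\mathcal{H}(\Gamma_{i,j},\ell-1)$, which by \cref{obs:structure}\ref{item:order} equals $B_{H_{i,j}}(\Gamma_{i,j},\ell-1)$, so $Q$ is a path of~$H_{i,j}$.

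For each~$h$ we need $B_{\mathcal{H}-E(Q)}(x_h,\ell-1)\subseteq V(C_h)$. The degree-$3$ vertex~$x_h$ lies on~$C_h$ while $Q$ leaves it towards the other cycle. Hence the two other edges at~$x_h$ lie on~$C_h$. Within radius $\ell-1$ there are no further branch vertices other than the pair, again by \cref{cor:short branch}. So the $(\ell-1)$-ball in $\mathcal{H}-E(Q)$ consists of two arcs of~$C_h$. The vertex $y_h$ is~$a$ or~$b$, reached from~$x_h$ along $R_a$ or $R_b$ with the part through~$Q$ removed; this needs the distance bookkeeping to stay within $\ell-1$, and I would re-route through~$x_h$ directly and use uniqueness of short paths (\cref{lem:nearby}).

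\textbf{Expected obstacle.} I expect the main difficulty to be this final ball case. The points to get right are that the branch vertex produced by walking out of each cycle is one of the ends of a \emph{single} regular ear, and that the distance estimates survive deleting $E(Q)$. Detached earrings, where \cref{lem:argue distance} allows $Q=P_{i,2}$ with only one end on $P_{i,1}$, are a possible exception and would need a separate check.
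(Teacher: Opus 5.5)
Your proposal is correct and follows essentially the paper's proof. \cref{prop:induced1,prop:induced2} reduce everything to the case $a,b\in B_\mathcal{H}(x,\ell-1)$, and then \cref{cor:short branch} with \cref{lem:degree} gives a pair $\{a_{i,j},b_{i,j}\}$ split between $C_1$ and $C_2$. Detours are excluded because any cycle through one end of $P_{i,1}\cup Q_i$ passes through the other, and since no other branch vertex lies near $\Gamma_{i,j}$, each ball in $\mathcal{H}-E(Q)$ stays inside $C_h$. You find the partner branch vertex by walking out of the cycle, whereas the paper argues that otherwise both cycles would contain $x$; these are the same observation.

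The points you flag are harmless. The case $x\notin V(C_1)\cup V(C_2)$ cannot occur, since both exit points would be the unique partner of $x$ and so would coincide. The distance bookkeeping follows directly from your exit-point construction, without \cref{lem:nearby} (which concerns admissible vertices, not branch vertices). Finally, $P_{i,2}$ for a detached earring is itself a regular ear, so it needs no separate check.
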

\begin{proof}
    Suppose that none of~\ref{item:packing1}--\ref{item:packing5} hold.
    We show that~\ref{item:packing6} holds.
    By \cref{prop:induced1} or \cref{prop:induced2}, $\mathcal{H}$ has a branch vertex~$x$ such that both~$a$ and~$b$ are in $B_\mathcal{H}(x,\ell-1)$.
    If~$\mathcal{H}$ has no other branch vertex in $B_\mathcal{H}(x,\ell-1)$, then both~$C_1$ and~$C_2$ contain~$x$, a contradiction.
    Thus, $\mathcal{H}$ has another branch vertex~$x'$ in $B_\mathcal{H}(x,\ell-1)$.
    As $\dist_\mathcal{H}(x,x')\leq\ell-1$, by \cref{cor:short branch}, there is a unique $(i,j)\in\mathcal{P}(\mathcal{H})$ such that $\{x,x'\}=\{a_{i,j},b_{i,j}\}$ and both~$a_{i,j}$ and~$b_{i,j}$ have degree~$3$ in~$\mathcal{H}$.
    Note that~$a_{i,j}$ and~$b_{i,j}$ are the only branch vertices of~$\mathcal{H}$ in $B_\mathcal{H}(\Gamma_{i,j},\ell-1)$.
    Thus, each of~$C_1$ and~$C_2$ contains one of~$a_{i,j}$ and~$b_{i,j}$.
    Without loss of generality, we may assume that $\{a,a_{i,j}\}\subseteq V(C_1)$ and $\{b,b_{i,j}\}\subseteq V(C_2)$.

    By \cref{obs:structure}\ref{item:order}, $B_{H_{i,j}}(\Gamma_{i,j},\ell-1)=B_\mathcal{H}(\Gamma_{i,j},\ell-1)$.
    Thus, $\dist_{H_{i,j}}(a_{i,j},b_{i,j})\leq\ell-1$.
    Let~$Q$ be a shortest $(a_{i,j},b_{i,j})$-path of~$H_{i,j}$.
    If~$P_{i,j}$ is an $\mathcal{H}$-detour, then by \cref{lem:strict1}, $Q$ is either~$P_{i,1}$ or~$Q_i$ and hence each of~$C_1$ and~$C_2$ contains both~$a_{i,j}$ and~$b_{i,j}$, a contradiction.
    Thus, $P_{i,j}$ is a regular $\mathcal{H}$-ear.
    Since~$Q$ is a path of length at most $\ell-1$ between branch vertices of~$\mathcal{H}$, every internal vertex of~$Q$ has degree~$2$ in~$\mathcal{H}$.
    Thus, $Q$ is edge-disjoint from $C_1\cup C_2$.
    Since both~$a_{i,j}$ and~$b_{i,j}$ have degree~$3$ in~$\mathcal{H}$, we have $a\in B_{\mathcal{H}-E(Q)}(a_{i,j},\ell-1)\subseteq V(C_1)$ and $b\in B_{\mathcal{H}-E(Q)}(b_{i,j},\ell-1)\subseteq V(C_2)$.
\end{proof}

\section{Large induced packings}\label{sec:large packings}

Throughout this section, we assume that~$(G,S)$ is a good pair.
Let~$\mathcal{H}$ be a maximal $(\ell,S)$-frame in~$G$.
The goal of this section is to prove the following proposition showing that if~$\mathcal{H}^-$ has many branch vertices, then~$G$ has an induced packing of~$k$ $(\ell,S)$-cycles.
We refer to \cref{thm:simonovitz} for the value~$s_k$.

\begin{proposition}\label{prop:final}
    Let $(G,S)$ be a good pair and let~$\mathcal{H}$ be a maximal $(\ell,S)$-frame in~$G$.
    If~$\mathcal{H}^-$ has at least
    \[
        10^{20}\cdot k^9(2k-1)^4+2s_k
    \]
    branch vertices for some positive integer~$k$, then~$G$ has an induced packing of~$k$ $(\ell,S)$-cycles.
\end{proposition}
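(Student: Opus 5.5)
We reduce \cref{prop:final} to \cref{thm:simonovitz} followed by a selection argument that removes the interference caused by chords. First, we extract from $\mathcal{H}^-$ a subcubic subgraph without pendant vertices that has many branch vertices. Since every vertex of $\mathcal{H}$ has degree between $2$ and $4$ (\cref{obs:structure}), splitting each degree-$4$ vertex $c_i$ produces a subcubic graph in which $\mathcal{H}^-$ still has at least as many branch vertices as the bound requires. Applying \cref{thm:simonovitz} with a much larger parameter $K$ (roughly $K\approx 10^{20}k^9(2k-1)^4/\log$-terms) then gives $K$ pairwise vertex-disjoint cycles $C^-_1,\dots,C^-_K$ of $\mathcal{H}^-$. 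By \cref{lem:all cycles}, each is an $(\ell,S)$-cycle. Let $C_h$ be the corresponding cycle of $\mathcal{H}$, obtained by undoing the detours. By \cref{cor:induced2}, it suffices to find $k$ of the $C_h$ that form an induced packing in $G$. We also want the cycles to be pairwise non-adjacent within $\mathcal{H}$. Every vertex has degree at most $4$, so we can handle this with a degeneracy or greedy argument, at the cost of a constant factor. The $2s_k$ summand in the bound presumably pays for the part handled by Simonovits directly.

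The core step is to bound the conflicts. We build an auxiliary conflict digraph $D$ on the selected cycles. For each chord $ab$ joining $C_h$ and $C_{h'}$, we apply \cref{cor:induced3}, which places $ab$ in one of the types~\ref{item:packing1}--\ref{item:packing6}, and we charge the conflict to a specific ear:
\begin{itemize}
\item Type~\ref{item:packing1} lies inside a single detached earring. Both ends are near $Q_p$, so at most one other cycle can use that earring, and we charge the earring.
\item Type~\ref{item:packing6} is local to a short regular ear $P_{i,j}$, with $C_1$ and $C_2$ each containing one end. We charge $P_{i,j}$.
\item Types~\ref{item:packing2}--\ref{item:packing5} all involve a fragile ear $P_{p,q}$ or $P_{r,s}$ whose two sides are used by different cycles.
\end{itemize}
For each fragile ear, the cycles meeting its side components in a way that matters can be ordered along the ear. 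We then want to orient the conflicts so that each cycle receives only boundedly many out-arcs, namely the conflicts charged to ears that the cycle itself "owns", such as ears containing its branch vertices. With bounded out-degree, \cref{lem:digraph} (or degeneracy) yields a large independent set. Where the conflicts are not of bounded out-degree, we instead use \cref{thm:Dilworth} on a poset. A natural choice is to order cycles by nested positions along fragile ears: an antichain is a set of cycles that conflict pairwise only trivially, while a long chain forces many cycles to cross one fragile ear. Each step of this kind loses a polynomial factor in $k$, and chaining a few of them explains the $k^9(2k-1)^4$ factor.

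The main obstacle will be the fragile-ear chords of types~\ref{item:packing2}--\ref{item:packing4}. These can join cycles whose attachment points are far apart, and a single fragile ear could be crossed by many selected cycles. In that chain case, I would first use the shortest-ear minimality, as in \cref{lem:short path1}: a chord joining the two sides of $P_{p,q}$ with ends far apart gives a shortcut avoiding $P^*_{p,q}$. If many cycles pass through the two sides of the same fragile ear, then consecutive ones in the chain can be rerouted along these chords and segments of the sides. This rerouting should directly produce $k$ new cycles that are pairwise far apart and each contain a vertex of $S$ on $P^*$ or elsewhere, giving the induced packing. If no such long chain exists, Dilworth gives a large antichain, on which the remaining conflicts have bounded out-degree, and \cref{lem:digraph} finishes.
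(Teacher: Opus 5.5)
There is a genuine gap in the selection step after Simonovits, and that step is where all the difficulty lies. Once you fix $K$ arbitrary vertex-disjoint cycles of $\mathcal{H}^-$, nothing in your argument bounds how many other selected cycles a given one conflicts with. A single long cycle can pass through unboundedly many ears, detached earrings and sides of fragile ears. Each of these can produce a conflict, of one of the types in \cref{cor:induced3}, with a different cycle.

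Charging conflicts to ears bounds the number of conflicts per ear, not per cycle. So the orientation with bounded out-degree you want has no reason to exist, and nothing prevents the conflict graph from being dense. The same objection applies to your treatment of adjacency inside $\mathcal{H}$: contracting the selected cycles gives a minor of $\mathcal{H}$, and $\Delta(\mathcal{H})\leq 4$ gives no bound on the degeneracy of a minor.

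Your fallback for the dense case is that a long Dilworth chain along a fragile ear can be rerouted into $k$ new far-apart $(\ell,S)$-cycles. This is asserted, not proved, and it is precisely the missing content. The shortest-ear minimality you invoke yields restrictions on crossing chords (as in \cref{clm:jump2-3}), not new cycles. Separately, splitting the degree-$4$ vertices $c_i$ does not preserve vertex-disjointness in $\mathcal{H}^-$, though this is repairable.

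The paper reverses the order, so that no selection is needed after Simonovits. It first shows that each kind of ear that can cause conflicts is rare, since otherwise an induced packing can be built directly from those ears:
\begin{itemize}
\item $\mathcal{P}_1(\mathcal{H})$ (earrings, and regular ears whose ends are within distance $\ell$ in $H_{i,j-1}$) is handled by \cref{lem:aux2};
\item $\mathcal{P}_2(\mathcal{H})$ (ears carrying a chord that closes an $(\ell,S)$-cycle, and fragile ears whose two sides are linked through a neighbour of an attachment vertex) is handled by \cref{lem:aux3};
\item $\mathcal{P}_3(\mathcal{H})$ (regular ears attached only to $\mathcal{P}_2$-ears) is handled by pigeonholing on $\Sigma_{i,j}$ together with \cref{lem:jump1,lem:jump2}.
\end{itemize}
It then deletes the degree-$4$ vertices, the ends of $Q_i$ and $P_{i,2}$ for detached earrings, and the vertices $a_{i,j}$ with $(i,j)\in\mathcal{P}_1(\mathcal{H})$. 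Crucially, it also deletes one whole side $P^2_{i,j}$ of each ear in $\mathcal{P}_2(\mathcal{H})$, namely the side meeting fewer attachment vertices of the remaining ears. As a result, apart from bounded terms, at most half of the branch vertices of $\mathcal{H}^-$ are lost.

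In the resulting graph $\mathcal{H}'$, \cref{clm:final} shows that any two vertex-disjoint cycles already form an induced packing in $G$. So \cref{thm:simonovitz} is applied with parameter exactly $k$.

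This also explains the bound, which you misread. The $2s_k$ term compensates for the halving caused by the side deletions. The term $10^{20}k^9(2k-1)^4$ pays for the bounded deletions, the largest being $\abs{\mathcal{P}_3(\mathcal{H})}<\abs{\mathcal{P}_2(\mathcal{H})}^2\cdot 10^4k^3$. It does not come from a large Simonovits parameter or a chain of Dilworth steps.
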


To prove \cref{prop:final}, we will use the following lemmas.
For any pairs $(p,q),(r,s)\in\mathcal{P}(\mathcal{H})$, consider the set~$\mathcal{Q}$ of pairs $(i,j)\in\mathcal{P}(\mathcal{H})$ with $\Sigma_{i,j}=\{(p,q),(r,s)\}$.
The following two lemmas show that if~$\mathcal{Q}$ is large, then~$G$ has an induced packing of~$k$ $(\ell,S)$-cycles.
The first lemma deals with the case $(p,q)\neq(r,s)$ and the second lemma deals with the case $(p,q)=(r,s)$.

\begin{lemma}\label{lem:jump1}
    Let $(G,S)$ be a good pair and let $\mathcal{H}$ be a maximal $(\ell,S)$-frame in~$G$.
    For distinct pairs $(p,q),(r,s)\in\mathcal{P}(\mathcal{H})$, let~$\mathcal{Q}$ be the set of pairs $(i,j)\in\mathcal{P}(\mathcal{H})$ with $\Sigma_{i,j}=\{(p,q),(r,s)\}$.
    If $\abs{\mathcal{Q}}\geq16(k+8)^2+16$ for some positive integer~$k$, then~$G$ has an induced packing of~$k$ $(\ell,S)$-cycles.
\end{lemma}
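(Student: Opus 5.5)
Fix distinct $(p,q),(r,s)$ and for each $(i,j)\in\mathcal{Q}$ write $\Gamma_{i,j}=\{x_{i,j},y_{i,j}\}$ with $\sigma(x_{i,j})=(p,q)$ and $\sigma(y_{i,j})=(r,s)$. None of the $P_{i,j}$ with $(i,j)\in\mathcal{Q}$ is an earring or detour (such paths have $\abs{\Sigma_{i,j}}\le1$), so each is a regular $\mathcal{H}$-ear. By \cref{lem:degree} the attachment vertices are pairwise distinct. Thus the $x_{i,j}$ are distinct admissible-type vertices of $P_{p,q}$, and the $y_{i,j}$ are distinct vertices of $P_{r,s}$.

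\textbf{Step 1 (positions).} Since $B_\mathcal{H}(\Gamma_{i,j},\ell-1)$ contains no other branch vertex except possibly the partner end (\cref{cor:short branch}), the $x_{i,j}$ are pairwise at distance at least $\ell$ along $P_{p,q}$. The same holds for the $y_{i,j}$ along $P_{r,s}$. Order $\mathcal{Q}$ by the position of $x_{i,j}$ on $P_{p,q}$; this gives a second linear order by the position of $y_{i,j}$ on $P_{r,s}$. By Erd\H{o}s--Szekeres (or \cref{thm:Dilworth} applied to the product order), a subset of size about $\sqrt{\abs{\mathcal{Q}}}/4\ge k+8$ is monotone in both orders. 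Here the factor $4$ accounts for restricting the $x$'s to one side of $P^*_{p,q}$ (and the $y$'s to one side of $P^*_{r,s}$) when the ear is fragile. Consecutive elements $(i,j),(i',j')$ of this chain then give cycles
\[
C_{(i,j),(i',j')}:=P_{i,j}\cup x_{i,j}P_{p,q}x_{i',j'}\cup P_{i',j'}\cup y_{i,j}P_{r,s}y_{i',j'}.
\]
The monotonicity ensures these are genuine cycles, and cycles built from disjoint consecutive pairs are vertex-disjoint. Passing to $C^-$ (replacing detour bases), \cref{lem:all cycles} makes them $(\ell,S)$-cycles. Taking every other consecutive pair, and discarding a few extra pairs as buffer, yields at least $k$ vertex-disjoint cycles in $\mathcal{H}^-$ whose arcs on $P_{p,q}$ and $P_{r,s}$ are separated by gaps of length at least $\ell$.

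\textbf{Step 2 (killing chords).} I would show these cycles form an induced packing in $\mathcal{H}$ and then in $G$ via \cref{cor:induced2}. For a chord between two chosen cycles, apply \cref{cor:induced3}.

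Case~\ref{item:packing6} is excluded by the gaps: every branch vertex is in at most one cycle, and the balls of radius $\ell-1$ stay within a single cycle.

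Chords between two ears in $\mathcal{Q}$ are handled by \cref{cor:induced1}. They join $x$-ends that straddle $P^*_{p,q}$ (or $y$-ends that straddle $P^*_{r,s}$), which our one-side restriction forbids.

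The remaining cases \ref{item:packing1}--\ref{item:packing5} involve fragile sides. A chord of type \ref{item:packing2} or \ref{item:packing3} with $(p,q)$ fragile joins opposite sides of $P^*_{p,q}$. Since we kept all $x$'s on one side, such a chord could only touch a chosen cycle through the subpath of $P_{p,q}$ on that side. I would also ensure that the chosen arcs avoid $P^*_{p,q}$ and the opposite side, except through at most one cycle, which is the reason for the $+8$ slack and the $+16$. Type \ref{item:packing5} chords lie inside a single ear and hence inside one cycle. Type \ref{item:packing1} and \ref{item:packing4} chords concern detached earrings that are not $P_{p,q},P_{r,s}$, so they do not meet our cycles. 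If $P_{p,q}$ is itself a detached earring, then the chords of type \ref{item:packing1} lie near the ends of $Q_p$, and deleting the at most two cycles meeting $B(Q_p,\ell-1)$ handles them.

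\textbf{Main obstacle.} The hardest part will be the careful bookkeeping in Step~2 for fragile $P_{p,q}$ or $P_{r,s}$. One must show that, after choosing the one-sided monotone chain, any chord interacting with the opposite side or with $P^*$ affects at most a constant number of chosen cycles, and that the constant is covered by the $16(k+8)^2+16$ bound. I would first try to prove that all chords between distinct chosen cycles are incident to at most two ``boundary'' cycles on each of $P_{p,q}$ and $P_{r,s}$, and then discard those.
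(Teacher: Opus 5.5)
Your skeleton matches the paper's. Every member of $\mathcal{Q}$ is a regular ear; you restrict attachments to one half of each fragile base ear, take a chain monotone along both $P_{p,q}$ and $P_{r,s}$, close consecutive pairs into cycles, kill chords, and pass to $C^-$ by \cref{cor:induced2}. However, Step~2 rests on two false claims, and together they hide what the $+8$ is actually for.

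\textbf{First gap: type \ref{item:packing5} chords.} A chord of type~\ref{item:packing5} need not stay inside one ear. Suppose $(p,q)<_L(r,s)$ and $P_{r,s}$ is a fragile ear attached to $P_{p,q}$. Then $a_{r,s}\in\Gamma_{r,s}$ lies on $P_{p,q}$, and it may sit in the interior of an arc $x_{i,j}P_{p,q}x_{i',j'}$ of one chosen cycle. Only the chain's attachment vertices are forced to be at distance at least $\ell$ from $\Gamma_{r,s}$; the arcs between them are not. Meanwhile the chord's other end $b$ can lie beyond $P^*_{r,s}$ on a $P_{r,s}$-arc of a different chosen cycle. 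Restricting the $y$'s to one side does not exclude this, and the symmetric situation occurs with $\Gamma_{p,q}$ on $P_{r,s}$.

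\textbf{Second gap: type \ref{item:packing4} chords.} These do not concern ``other'' detached earrings. Every vertex of a chosen cycle has $\sigma$ in $\{(p,q),(r,s)\}\cup\mathcal{Q}$, and all members of $\mathcal{Q}$ are regular. So the detached earring in~\ref{item:packing4} is exactly $P_{r,s}=P_{r,1}$ (or $P_{p,q}$), and the chord ends at $b_{r,2}\in\Gamma_{r,2}$, which can lie inside a $P_{r,1}$-arc of another chosen cycle.

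\textbf{Consequence for your distance argument.} The same ear $P_{r,2}$ breaks your ``gaps'' argument. That argument is what you need to get an induced packing in $\mathcal{H}$ (required before \cref{cor:induced3} applies) and to exclude~\ref{item:packing6}. But $P_{r,2}$ is the one kind of regular ear joining different base ears that \cref{cor:chord}\ref{item:edge2} allows to be short, even a single edge. Moreover, $(r,2)$ may belong to $\mathcal{Q}$ without belonging to your chain, so its two ends can lie on two different chosen cycles.

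\textbf{The missing step.} Keep only the cycles that avoid $\Gamma_{p,q}\cup\Gamma_{r,s}$. In addition, whenever $P_{p,q}$ or $P_{r,s}$ is a detached earring $P_{i,1}$ with $\mu(i)\geq2$, keep only cycles that also avoid $\Gamma_{i,2}$. This discards at most $8$ of the $k+8$ cycles, which is what the $+8$ pays for; it is not about keeping arcs away from $P^*$, since arcs within one half already do that. After this pruning:
\begin{itemize}
\item An $\mathcal{H}$-edge between two kept cycles would be a regular ear, and \cref{cor:chord} forces it to have length at least $\ell$.
\item \cref{prop:induced2}\ref{item:prop2-3} is excluded, because its end in $\Gamma_{r,s}$ lies on no kept cycle.
\item \cref{prop:induced2}\ref{item:prop2-2} fails. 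Otherwise $b$ would be within $\ell-1$ of $\Gamma_{r,2}$ along $P_{r,1}$. That path must leave $b$'s cycle through a chain attachment vertex, which would then lie in $Y_{r,2}$ — impossible.
\end{itemize}

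\textbf{Counting.} After the factor-$4$ side restriction you need a chain of size $2k+16$, i.e.\ about $\sqrt{\abs{\mathcal{Q}}/4}$ rather than $\sqrt{\abs{\mathcal{Q}}}/4$. Pair it as $(1,2),(3,4),\ldots$ into $k+8$ disjoint cycles. Taking ``every other consecutive pair'' from a chain of size $k+8$ leaves far fewer than $k$ cycles.
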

\begin{proof}
    As $(p,q)\neq(r,s)$, for every $(i,j)\in\mathcal{Q}$, $P_{i,j}$ is a regular $\mathcal{H}$-ear.
    Without loss of generality, we may assume that $a_{i,j}\in V(P_{p,q})$ and $b_{i,j}\in V(P_{r,s})$.
    We denote by~$a'_{i,j}$ and $b'_{i,j}$ the neighbours of~$a_{i,j}$ and~$b_{i,j}$ in~$P_{i,j}$, respectively.
    Let $\mathcal{C}_0:=\{P_{i,j}:(i,j)\in\mathcal{Q}\}$.
    By \cref{lem:degree}, the paths in~$\mathcal{C}_0$ are pairwise vertex-disjoint.
    In addition, by \cref{lem:argue distance}, they are pairwise at distance at least~$\ell$ in~$\mathcal{H}$.
    This obviously implies that~$\mathcal{C}_0$ is an induced packing in~$\mathcal{H}$.

    We define a subpath~$P'_{p,q}$ of~$P_{p,q}$ as follows: if~$P_{p,q}$ is a detached $\mathcal{H}$-earring, then for an arbitrary vertex~$c_{p,q}$ of~$P_{p,q}$, let $P'_{p,q}:=P_{p,q}-c_{p,q}$, and otherwise let $P'_{p,q}:=P_{p,q}-\Gamma_{p,q}$.
    Moreover, we define two subpaths~$P^1_{p,q}$ and~$P^2_{p,q}$ of~$P_{p,q}$ as follows:
    \begin{itemize}
        \item if~$P_{p,q}$ is fragile, then let~$P^1_{p,q}$ and~$P^2_{p,q}$ be the two subpaths of~$P_{p,q}$ between~$a^*_{p,q}$ and~$\Gamma_{p,q}$, and
        \item otherwise let $P^1_{p,q}:=P'_{p,q}$ and let~$P^2_{p,q}$ be a null graph.
    \end{itemize}
    Similarly, we define three subpaths~$P'_{r,s}$, $P^1_{r,s}$, and~$P^2_{r,s}$ of~$P_{r,s}$.
    For each $h\in[2]$, let $\mathcal{Q}_h$ be the set of pairs $(i,j)\in\mathcal{Q}$ such that $a_{i,j}\in V(P^h_{p,q})$.
    We have
    \[
        \abs{\mathcal{Q}_1}+\abs{\mathcal{Q}_2}\geq\abs{\mathcal{Q}}-1\geq16(k+8)^2+15.
    \]
    Thus, for some $h\in[2]$, $\abs{\mathcal{Q}_h}\geq8(k+8)^2+8$.
    Again, for each $m\in[2]$, let $\mathcal{Q}'_m$ be the set of pairs $(i,j)\in\mathcal{Q}_h$ such that $b_{i,j}\in V(P^m_{r,s})$.
    We have
    \[
        \abs{\mathcal{Q}'_1}+\abs{\mathcal{Q}'_2}\geq\abs{\mathcal{Q}_h}-1\geq8(k+8)^2+7.
    \]
    Thus, for some $m\in[2]$, $\abs{\mathcal{Q}'_m}\geq4(k+8)^2+4=(2k+16)^2+4$.
    Let~$\mathcal{Q}''$ be the set of pairs $(i,j)\in\mathcal{Q}'_m$ such that for each $(x,y)\in\{(p,q),(r,s)\}$, if $P_{x,y}$ is a detached $\mathcal{H}$-earring, then~$P_{i,j}$ is disjoint from $B_{P_{x,y}}(Q_x,\ell-1)\setminus V(Q_x)$.
    Since the paths in~$\mathcal{C}_0$ are pairwise at distance at least~$\ell$ in~$\mathcal{H}$, we have $\abs{\mathcal{Q}''}\geq\abs{\mathcal{Q}'_m}-4\geq(2k+16)^2$.
    By the choice of~$\mathcal{Q}'_m$ and \cref{cor:induced1}, $\{P_{i,j}:(i,j)\in\mathcal{Q}''\}$ is an induced packing in~$G$.

    Let~$\preceq_{p,q}$ and~$\preceq_{r,s}$ be the linear orders on the vertices of $P'_{p,q}$ and $P'_{r,s}$, respectively, induced by the order in which the vertices appear along the corresponding paths.
    Let~$\preceq_1$ and $\preceq_2$ be partial orders on~$\mathcal{Q}''$ such that for distinct $(i,j),(i',j')\in\mathcal{Q}''$,
    \begin{itemize}
        \item $(i,j)\prec_1(i',j')$ if and only if $a_{i,j}\prec_{p,q} a_{i',j'}$ and $b_{i,j}\prec_{r,s}b_{i',j'}$, and
        \item $(i,j)\prec_2(i',j')$ if and only if $a_{i,j}\prec_{p,q} a_{i',j'}$ and $b_{i',j'}\prec_{r,s}b_{i,j}$.
    \end{itemize}
    We remark that every chain of $(\mathcal{Q}'',\preceq_1)$ is an antichain of $(\mathcal{Q}'',\preceq_2)$, and vice versa.
    
    As $\abs{\mathcal{Q}''}\geq(2k+16)^2$, by \cref{thm:Dilworth}, $(\mathcal{Q}'',\preceq_1)$ or $(\mathcal{Q}'',\preceq_2)$ has a chain~$\mathcal{Q}^*$ of size $2k+16$.
    Let $(i_1,j_1),\ldots,(i_{2k+16},j_{2k+16})$ be the pairs in~$\mathcal{Q}^*$ with $a_{i_1,j_1}\prec_{p,q}\cdots\prec_{p,q}a_{i_{2k+16},j_{2k+16}}$.
    For each $h\in[k+8]$,
    \begin{itemize}
        \item let~$A_h$ be the subpath of $P'_{p,q}$ between~$a_{i_{2h-1},j_{2h-1}}$ and $a_{i_{2h},j_{2h}}$,
        \item let~$B_h$ be the subpath of $P'_{r,s}$ between~$b_{i_{2h-1},j_{2h-1}}$ and $b_{i_{2h},j_{2h}}$,
        \item let $P_h:=P_{i_{2h-1},j_{2h-1}}\cup P_{i_{2h},j_{2h}}$, and
        \item let~$C_h$ be the cycle obtained by concatenating~$A_h$, $B_h$, and~$P_h$.
    \end{itemize}
    Since~$\mathcal{Q}^*$ is a chain of $(\mathcal{Q}'',\preceq_1)$ or $(\mathcal{Q}'',\preceq_2)$, the cycles in $\{C_h:h\in[k+8]\}$ are pairwise vertex-disjoint.
    Let~$\mathcal{C}_1$ be the set of cycles $C_h$ for $h\in[k+8]$ disjoint from $\Gamma_{p,q}\cup\Gamma_{r,s}$ such that for each $(i,j)\in\{(p,q),(r,s)\}$, if~$P_{i,j}$ is a detached $\mathcal{H}$-earring and $\mu(i)\geq2$, then~$C_h$ is disjoint from~$\Gamma_{i,2}$.
    Note that $\abs{\mathcal{C}_1}\geq k$.

    We show that~$\mathcal{C}_1$ is an induced packing in~$G$.
    Towards a contradiction, suppose that~$G$ has an edge~$ab$ between distinct~$C_h$ and~$C_{h'}$ in~$\mathcal{C}_1$, where $a\in V(C_h)$ and $b\in V(C_{h'})$.
    Since $\{P_{i,j}:(i,j)\in\mathcal{Q}''\}$ is an induced packing in~$G$, $a$ or~$b$, say~$a$, is in $P_{p,q}\cup P_{r,s}$.
    We remark that every $(i,j)\in\mathcal{Q}''$ is lexicographically larger than both $(p,q)$ and $(r,s)$, and has non-empty~$\Gamma_{i,j}$.

    Suppose that~$ab$ is an edge of~$\mathcal{H}$.
    Note that~$a$ and~$b$ are branch vertices of~$\mathcal{H}$.
    By \cref{cor:short branch}, there is a unique $(x,y)\in\mathcal{P}(\mathcal{H})$ such that $\{a,b\}=\{a_{x,y},b_{x,y}\}$.
    If~$P_{x,y}$ is an $\mathcal{H}$-detour, then $\sigma(a)=\sigma(b)$ and hence either~$P_{p,q}$ or~$P_{r,s}$, say~$P_{p,q}$, contains both~$a$ and~$b$.
    By \cref{lem:strict1}, $\dist_{P_{p,q}}(P_h,P_{h'})\leq\dist_{P_{p,q}}(a,b)\leq\ell-2$, contradicting that the paths in~$\mathcal{C}_0$ are pairwise at distance at least~$\ell$ in~$\mathcal{H}$.
    Thus, $P_{x,y}$ is a regular $\mathcal{H}$-ear.
    By the construction of~$\mathcal{C}_1$ and \cref{cor:chord}, we have $\abs{E(P_{x,y})}\geq\ell$.
    Thus, $ab$ is an edge of~$H_{x,y-1}$.
    This implies that $\sigma(a)=\sigma(b)$, and hence either~$P_{p,q}$ or~$P_{r,s}$ contains both~$a$ and~$b$.
    Then~$ab$ is an edge between~$P_h$ and~$P_{h'}$, a contradiction.

    Hence, $ab$ is a chord of~$\mathcal{H}$.
    We divide into three cases according to whether~$b$ lies in~$P_{p,q}\cup P_{r,s}$.

    \medskip
    \noindent
    \textbf{Case 1.} $P_{p,q}$ or~$P_{r,s}$ contains both~$a$ and~$b$.

    Without loss of generality, we may assume that~$P_{p,q}$ contains both~$a$ and~$b$.
    We apply \cref{prop:induced1} for~$\mathcal{H}$ and~$ab$.
    By the construction of~$\mathcal{Q}''$, neither \cref{prop:induced1}\ref{item:prop1-1} nor \cref{prop:induced1}\ref{item:prop1-3} holds.
    Since~$a$ and~$b$ are admissible vertices of~$H_{p,q}$, \cref{prop:induced1}\ref{item:prop1-2} does not hold.
    Hence, \cref{prop:induced1} does not hold, a contradiction.

    \medskip
    \noindent
    \textbf{Case 2.} Each of $P_{p,q}$ and~$P_{r,s}$ contains~$a$ or~$b$.

    Without loss of generality, we may assume that $a\in V(P_{p,q})$, $b\in V(P_{r,s})$, and $(p,q)<_L(r,s)$.
    We apply \cref{prop:induced2} for~$\mathcal{H}$ and~$ab$.
    Since~$b$ is an admissible vertex of~$H_{r,s}$, \cref{prop:induced2}\ref{item:prop2-1} does not hold.
    By the construction of~$\mathcal{Q}''$, \cref{prop:induced2}\ref{item:prop2-3} does not hold.
    Thus, \cref{prop:induced2}\ref{item:prop2-2} holds.
    Then~$P_{r,s}$ is a detached $\mathcal{H}$-earring, $\mu(r)\geq2$, and~$b$ is not an admissible vertex of~$H_{r,2}$.
    Since~$b$ is an admissible vertex of~$H_{r,1}$, we have $\dist_{P_{r,1}}(b,\Gamma_{r,2})\leq\ell-1$.
    Since~$C_{h'}$ is disjoint from~$\Gamma_{r,2}$, we have $\dist_{P_{r,1}}(P_{h'},\Gamma_{r,2})\leq\ell-1$.
    Hence, $P_{h'}$ contains a vertex in~$Y_{r,2}$, a contradiction.
    
    \medskip
    \noindent
    \textbf{Case 3.} $b\notin V(P_{p,q}\cup P_{r,s})$.

    Recall that $a$ is in $P_{p,q}\cup P_{r,s}$.
    Without loss of generality, we may assume that $a\in V(P_{p,q})$.
    Note that $(p,q)=\sigma(a)<_L\sigma(b)$.
    Let $\sigma(b):=(i'',j'')$.
    We apply \cref{prop:induced2} for~$\mathcal{H}$ and~$ab$.
    Since~$P_{i'',j''}$ is not a detached $\mathcal{H}$-earring, \cref{prop:induced2}\ref{item:prop2-2} does not hold.
    Since~$C_h$ and~$C_{h'}$ are vertex-disjoint, \cref{prop:induced2}\ref{item:prop2-3} does not hold.
    Thus, \cref{prop:induced2}\ref{item:prop2-1} holds.
    By the construction of~$\mathcal{Q}''$, there is no $x\in\Gamma_{i'',j''}$ such that~$a$ and~$x$ are in distinct components of~$P^{\operatorname{side}}_{p,q}$.
    Thus, for some $x\in\Gamma_{i'',j''}$, both~$a$ and~$b$ are in $B_\mathcal{H}(x,\ell-1)$.
    As $B_{\mathcal{H}-E(P_{i'',j''})}(x,\ell-1)\subseteq V(P_{p,q})$, we have $\dist_{P_{p,q}}(a,x)\leq\ell-1$.
    Since~$a$ is in~$A_h\cup B_h$, we have $\dist_{P_{p,q}}(x,P_h)\leq\ell-1$, contradicting that the paths in~$\mathcal{C}_0$ are pairwise at distance at least~$\ell$ in~$\mathcal{H}$.

    \medskip
    Therefore, $\mathcal{C}_1$ is an induced packing in~$G$.
    By \cref{lem:all cycles}, for each $C\in\mathcal{C}_1$, $C^-$ is an $(\ell,S)$-cycle.
    Thus, by \cref{cor:induced2}, $\{C^-:C\in\mathcal{C}_1\}$ is an induced packing in~$G$ of at least~$k$ $(\ell,S)$-cycles.
\end{proof}

\begin{lemma}\label{lem:jump2}
    Let $(G,S)$ be a good pair and let~$\mathcal{H}$ be a maximal $(\ell,S)$-frame in~$G$.
    For a pair $(p,q)\in\mathcal{P}(\mathcal{H})$, let~$\mathcal{Q}$ be the set of pairs $(i,j)\in\mathcal{P}(\mathcal{H})$ with $j\geq2$ and $\Sigma_{i,j}=\{(p,q)\}$.
    If $\abs{\mathcal{Q}}\geq(2k+5)(26k-19)^2$ for some positive integer~$k$, then~$G$ has an induced packing of~$k$ $(\ell,S)$-cycles.
\end{lemma}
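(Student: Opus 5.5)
For every $(i,j)\in\mathcal{Q}$ we have $j\geq2$, so $P_{i,j}$ is a regular $\mathcal{H}$-ear. Since $\Sigma_{i,j}=\{(p,q)\}$, both ends $a_{i,j},b_{i,j}$ lie on $P_{p,q}$. The plan mirrors \cref{lem:jump1}, except that both attachment points now sit on the single path $P'_{p,q}$ (defined as before, after deleting $\Gamma_{p,q}$ or one arbitrary vertex $c_{p,q}$ when $P_{p,q}$ is a detached earring). Each such ear then determines an interval $I_{i,j}:=a_{i,j}P'_{p,q}b_{i,j}$ of the linearly ordered path $P'_{p,q}$. By \cref{lem:degree} these ears are pairwise vertex-disjoint, and by \cref{lem:argue distance} they are pairwise at distance at least~$\ell$ in~$\mathcal{H}$.

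\textbf{Step 1: clean the family.} First remove the $O(1)$ ears that come within distance $\ell-1$ of $\Gamma_{p,q}$, of $\Gamma_{p,2}$ (if $P_{p,q}$ is a detached earring with $\mu(p)\ge2$), or of $B_{P_{p,1}}(Q_p,\ell-1)\setminus V(Q_p)$. If $P_{p,q}$ is fragile, split $P'_{p,q}$ into its two sides and $P^*_{p,q}$, and classify the ears by which parts contain their two ends. \cref{cor:induced1} says that any edge of $G$ between two surviving ears joins neighbourhoods of attachment points lying in distinct components of $P^{\operatorname{side}}_{p,q}$. So edges between ears arise only from an ear with an end on the left side and an ear with an end on the right side, and these edges are near their attachment points. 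I will model these conflicts by a digraph in which each ear points to at most a bounded number of conflicting ears. The intended argument is that the neighbourhoods $B_{P}(x,1)$ are tiny and the ears are far apart, so each endpoint neighbourhood meets at most one other ear's endpoint neighbourhood. \cref{lem:digraph}, or the degeneracy bound, then yields a subfamily of size about $|\mathcal{Q}|/(26k-19)$-ish that is an induced packing of paths in~$G$. This is my guess for where the factor $(26k-19)$ comes from: it is presumably a Turán-type loss applied twice.

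\textbf{Step 2: extract cycles.} The intervals $I_{i,j}$ form an interval family on a path. By Dilworth's theorem (\cref{thm:Dilworth}) applied to the containment and precedence orders, we get either a long chain of pairwise disjoint intervals or a long chain of nested intervals. I should also handle crossing intervals, perhaps by partitioning into classes by left endpoint and using that crossing families contain either disjoint or nested subfamilies. With about $(26k-19)^2$ leftover ears, we obtain either $\Theta(k)$ disjoint intervals or $\Theta(k)$ nested ones.

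In the disjoint case, take the cycles $C_{i,j}:=P_{i,j}\cup I_{i,j}$. In the nested case, pair consecutive ears $I_1\supset I_2\supset\cdots$ and use the cycles formed from $P_{2h-1}$, $P_{2h}$ and the two segments of $I_{2h-1}\setminus I_{2h}$, exactly as in \cref{lem:jump1}. The additive $2k+5$ factor covers discarding every other cycle plus the boundary cycles meeting $\Gamma_{p,q}$ and~$\Gamma_{p,2}$.

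\textbf{Step 3: verify inducedness.} Argue as in Cases 1--3 of \cref{lem:jump1}. Edges inside $\mathcal{H}$ are excluded by \cref{cor:short branch} and \cref{cor:chord}. Chords with both ends on $P_{p,q}$ are excluded by \cref{prop:induced1}, using the cleaning from Step~1. Chords with one end elsewhere are excluded by \cref{prop:induced2} together with the distance-$\ell$ separation. Finally, pass to $C^-$ via \cref{lem:all cycles} and \cref{cor:induced2}.

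The main obstacle is the fragile case in Step 1: chords joining the two sides of $P_{p,q}$ at arbitrary distance. In the nested case they could connect segments of distinct cycles, not only the ears. To handle this, I would additionally require the selected cycles' segments on $P'_{p,q}$ to lie on a single side, by choosing nested ears whose ends straddle $P^*_{p,q}$ consistently. Any remaining cross-side chords would then be charged to endpoint neighbourhoods, and the degeneracy argument applied a second time.
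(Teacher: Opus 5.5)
There is a genuine gap. It sits exactly at the obstacle you flag at the end, and that obstacle is the core of the lemma rather than a side issue.

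\textbf{Cross-side chords.} Suppose $P_{p,q}$ is fragile and the ears surviving your Dilworth step are pairwise nested (or pairwise crossing), each with one end on each side of $P^*_{p,q}$. Nothing lets you avoid this configuration. Then every cycle built from two consecutive ears contains a segment $A_h$ on one side and a segment $B_h$ on the other, so ``segments on a single side'' is impossible. Moreover, \cref{prop:induced1}\ref{item:prop1-3} allows chords between $A_h$ and $B_{h'}$ ($h\neq h'$) at arbitrary interior points of these segments, far from every attachment vertex, so there is nothing to charge them to.

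Your Step~1 bound has the same defect. Distance in $\mathcal{H}$ says nothing about chords of $G$. Nothing in \cref{cor:induced1} or \cref{prop:induced1} prevents an attachment vertex on one side from being adjacent to attachment vertices of many ears on the other side, or even every left attachment vertex from being adjacent to every right one. So the claim that each endpoint neighbourhood meets at most one other is unsupported.

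\textbf{The missing idea} is an uncrossing argument from the choice of $P_{p,q}$ as a shortest ear or earring. Suppose $A_g,A_h,B_{h'},B_{g'}$ occur in this order along $P'_{p,q}$. Then $G_{p,q}$ cannot contain $H_{p,q}$-paths of length at most~$2$ both between $A_g$ and $B_{h'}$ and between $A_h$ and $B_{g'}$. Indeed, reroute $P_{p,q}$ through them and skip the stretch between $A_g$ and $A_h$ and the stretch between $B_{h'}$ and $B_{g'}$. Each skipped stretch has length at least~$\ell$, since the ears are pairwise at distance at least $\ell$ in $\mathcal{H}$. The result is a strictly shorter ear or earring that still contains $P^*_{p,q}$ and has length at least $2\ell+3$, contradicting the choice of $P_{p,q}$.

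This makes the conflict graph between left and right segments non-crossing, and only then can $k$ conflict-free cycles be extracted. The paper first keeps the cycles avoiding one side entirely; this is your single-side idea. When fewer than $k$ such cycles exist for each side, it splits on whether at least $6k-5$ segments have conflict-degree at least~$3$. In the first case it exploits the non-crossing structure directly. In the second it uses an independent set in an auxiliary graph of maximum degree at most~$4$.

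\textbf{Two smaller points.}
\begin{itemize}
\item In Step~2 the dichotomy ``disjoint or nested'' is false. Two intervals can also cross, and a pairwise crossing family has no disjoint or nested pair, so partitioning by left endpoint cannot reduce that case to the others. The paper uses three orders (disjoint, crossing, nested) and applies \cref{thm:Dilworth} twice. A pairwise crossing chain is handled by the same consecutive-pair cycles as a nested one, so this is easily repaired.
\item In the disjoint-interval case, cycles $P_{i,j}\cup I_{i,j}$ lying on opposite sides of $P^*_{p,q}$ can still be joined by cross-side chords. The threshold $2k+5$ is there to pigeonhole $k+3$ intervals meeting the same one of the two subpaths of $P_{p,q}$ separated at $a^*_{p,q}$. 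One then discards those containing $a^*_{p,q}$ or meeting $B_{P_{p,q}}(Q_p,\ell-1)\setminus V(Q_p)$. The factor $(26k-19)^2$ is the size needed for the second Dilworth application, not a Tur\'an-type loss.
\end{itemize}
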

\begin{proof}
    For each $(i,j)\in\mathcal{Q}$, we denote by~$a'_{i,j}$ and~$b'_{i,j}$ the neighbours of~$a_{i,j}$ and~$b_{i,j}$ in~$P_{i,j}$, respectively.
    Let $\mathcal{C}_0:=\{P_{i,j}:(i,j)\in\mathcal{Q}\}$.
    By \cref{lem:degree}, the paths in~$\mathcal{C}_0$ are pairwise vertex-disjoint.
    In addition, by \cref{lem:argue distance}, they are pairwise at distance at least~$\ell$ in~$\mathcal{H}$.
    This obviously implies that~$\mathcal{C}_0$ is an induced packing in~$\mathcal{H}$.
    
    We define a subpath~$P'_{p,q}$ of~$P_{p,q}$ as follows: if~$P_{p,q}$ is a detached $\mathcal{H}$-earring, then for an arbitrary vertex~$c_{p,q}$ of~$P_{p,q}$, let $P'_{p,q}:=P_{p,q}-c_{p,q}$, and otherwise let $P'_{p,q}:=P_{p,q}-\Gamma_{p,q}$.
    Let~$\preceq$ be the linear order on the vertices of $P'_{p,q}$ induced by the order in which the vertices appear along the path.
    Without loss of generality, for every $(i,j)\in\mathcal{Q}$, we may assume that $a_{i,j}\prec b_{i,j}$.
    Let~$\preceq_0$, $\preceq_1$, and $\preceq_2$ be partial orders on~$\mathcal{Q}$ such that for distinct $(i,j),(i',j')\in\mathcal{Q}$,
    \begin{itemize}
        \item $(i,j)\prec_0(i',j')$ if and only if $a_{i,j}\prec b_{i,j}\prec a_{i',j'}\prec b_{i',j'}$,
        \item $(i,j)\prec_1(i',j')$ if and only if $a_{i,j}\prec a_{i',j'}\prec b_{i,j}\prec b_{i',j'}$, and
        \item $(i,j)\prec_2(i',j')$ if and only if $a_{i,j}\prec a_{i',j'}\prec b_{i',j'}\prec b_{i,j}$.
    \end{itemize}
    We remark that every chain of $(\mathcal{Q},\preceq_1)$ is an antichain of $(\mathcal{Q},\preceq_2)$, and vice versa.
    We divide into two cases according to whether $(\mathcal{Q},\preceq_0)$ has a chain of size $2k+6$.

    \medskip
    \noindent
    \textbf{Case 1.} $(\mathcal{Q},\preceq_0)$ has a chain~$\mathcal{Q}'$ of size $2k+6$.

    Let $(i_1,j_1),\ldots,(i_{2k+5},j_{2k+5})$ be distinct pairs in~$\mathcal{Q}'$ such that if~$P_{p,q}$ is a detached $\mathcal{H}$-earring, then $c_{p,q}\notin\Gamma_{i_h,j_h}$ for each $h\in[2k+5]$.
    Without loss of generality, we may assume that $a_{i_1,j_1}\prec\cdots\prec a_{i_{2k+5},j_{2k+5}}$.
    For each $h\in[2k+5]$,
    \begin{itemize}
        \item let~$A_h$ be the subpath of $P'_{p,q}$ between~$a_{i_h,j_h}$ and~$b_{i_h,j_h}$, and
        \item let~$C_h$ be the cycle obtained by concatenating~$A_h$ and $P_{i_h,j_h}$.
    \end{itemize}
    Since~$\mathcal{Q}'$ is a chain of $(\mathcal{Q},\preceq_0)$, the cycles in $\{C_h:h\in[2k+5]\}$ are pairwise vertex-disjoint.

    We define two subpaths~$P^1_{p,q}$ and~$P^2_{p,q}$ of~$P_{p,q}$ as follows:
    \begin{itemize}
        \item if~$P_{p,q}$ is fragile, then let~$P^1_{p,q}$ and~$P^2_{p,q}$ be the two subpaths of~$P_{p,q}$ between~$a^*_{p,q}$ and~$\Gamma_{p,q}$, and
        \item otherwise let $P^1_{p,q}:=P'_{p,q}$ and let~$P^2_{p,q}$ be a null graph.
    \end{itemize}
    For each $m\in[2]$, let~$\mathcal{Q}_m$ be the set of pairs $(i_h,j_h)$ for $h\in[2k+5]$ such that~$A_h$ intersects~$P^m_{p,q}$.
    Note that for some $m\in[2]$, $\abs{\mathcal{Q}_m}\geq k+3$.
    Let $\mathcal{Q}''$ be the set of pairs $(i_h,j_h)\in\mathcal{Q}_m$ such that
    \begin{itemize}
        \item if~$P_{p,q}$ is fragile, then~$A_h$ does not contain~$a^*_{p,q}$, and
        \item if~$P_{p,q}$ is a detached $\mathcal{H}$-earring, then~$A_h$ is disjoint from $B_{P_{p,q}}(Q_p,\ell-1)\setminus V(Q_p)$.
    \end{itemize}
    Since the paths in~$\mathcal{C}_0$ are pairwise at distance at least~$\ell$ in~$\mathcal{H}$, we have $\abs{\mathcal{Q}''}\geq\abs{\mathcal{Q}_m}-2\geq k+1$.
    By \cref{cor:induced1}, $\{P_{i,j}:(i,j)\in\mathcal{Q}''\}$ is an induced packing in~$G$.

    We show that~$\mathcal{C}_1:=\{C_h:(i_h,j_h)\in\mathcal{Q}''\}$ is an induced packing in~$G$.
    Towards a contradiction, suppose that~$G$ has an edge~$ab$ between distinct~$C_h$ and~$C_{h'}$ in~$\mathcal{C}_1$, where $a\in V(C_h)$ and $b\in V(C_{h'})$.
    Since $\{P_{i,j}:(i,j)\in\mathcal{Q}''\}$ is an induced packing in~$G$, $a$ or~$b$, say~$a$, is in~$P_{p,q}$.
    We remark that every $(i,j)\in\mathcal{Q}'$ is lexicographically larger than $(p,q)$, and has non-empty $\Gamma_{i,j}$.

    Suppose that~$ab$ is an edge of~$\mathcal{H}$.
    Note that~$a$ and~$b$ are branch vertices of~$\mathcal{H}$.
    By \cref{cor:short branch}, there is a unique $(x,y)\in\mathcal{P}(\mathcal{H})$ such that $\{a,b\}=\{a_{x,y},b_{x,y}\}$.
    If~$P_{x,y}$ is an $\mathcal{H}$-detour, then $\sigma(a)=\sigma(b)$ and hence~$P_{p,q}$ contains both~$a$ and~$b$.
    By \cref{lem:strict1}, $\dist_{P_{p,q}}(P_{i_h,j_h},P_{i_{h'},j_{h'}})\leq\dist_{P_{p,q}}(a,b)\leq\ell-2$, contradicting that the paths in~$\mathcal{C}_0$ are pairwise at distance at least~$\ell$ in~$\mathcal{H}$.
    Thus, $P_{x,y}$ is a regular $\mathcal{H}$-ear.
    By the construction of~$\mathcal{Q}''$ and \cref{cor:chord}, we have $\abs{E(P_{x,y})}\geq\ell$.
    Thus, $ab$ is an edge of~$H_{x,y-1}$.
    This implies that $\sigma(a)=\sigma(b)$, and hence~$P_{p,q}$ contains both~$a$ and~$b$.
    Then~$ab$ is an edge between~$A_h$ and~$A_{h'}$, a contradiction.
    
    Hence, $ab$ is a chord of~$\mathcal{H}$.
    We divide into two subcases according to whether~$b$ lies in~$P_{p,q}$.

    \medskip
    \noindent
    \textbf{Case 1-1.} $b\in V(P_{p,q})$.

    We apply \cref{prop:induced1} for~$\mathcal{H}$ and~$ab$.
    By the construction of~$\mathcal{Q}''$, neither \cref{prop:induced1}\ref{item:prop1-1} nor \cref{prop:induced1}\ref{item:prop1-3} holds.
    Since~$a$ and~$b$ are admissible vertices of~$H_{p,q}$, \cref{prop:induced1}\ref{item:prop1-2} does not hold.
    Hence, \cref{prop:induced1} does not hold, a contradiction.

    \medskip
    \noindent
    \textbf{Case 1-2.} $b\notin V(P_{p,q})$.
    
    Note that $(p,q)=\sigma(a)<_L\sigma(b)=(i_{h'},j_{h'})$.
    We apply \cref{prop:induced2} for~$\mathcal{H}$ and~$ab$.
    Since~$P_{i_{h'},j_{h'}}$ is not a detached $\mathcal{H}$-earring, \cref{prop:induced2}\ref{item:prop2-2} does not hold.
    Since~$C_h$ and~$C_{h'}$ are vertex-disjoint, \cref{prop:induced2}\ref{item:prop2-3} does not hold.
    Thus, \cref{prop:induced2}\ref{item:prop2-1} holds.
    By the construction of~$\mathcal{Q}''$, there is no $x\in\Gamma_{i_{h'},j_{h'}}$ such that~$a$ and~$x$ are in distinct components of~$P^{\operatorname{side}}_{p,q}$.
    Thus, for some $x\in\Gamma_{i_{h'},j_{h'}}$, both~$a$ and~$b$ are in $B_\mathcal{H}(x,\ell-1)$.
    As $B_{\mathcal{H}-E(P_{i_{h'},j_{h'}})}(x,\ell-1)\subseteq V(P_{p,q})$, we have $\dist_{P_{p,q}}(a,x)\leq\ell-1$.
    Since~$a$ is in~$A_h$, we have $\dist_{P_{p,q}}(x,P_{i_h,j_h})\leq\ell-1$, contradicting that the paths in~$\mathcal{C}_0$ are pairwise at distance at least~$\ell$ in~$\mathcal{H}$.

    \medskip
    Therefore, $\mathcal{C}_1$ is an induced packing in~$G$.
    By \cref{lem:all cycles}, for each $C\in\mathcal{C}_1$, $C^-$ is an $(\ell,S)$-cycle.
    Thus, by \cref{cor:induced2}, $\{C^-:C\in\mathcal{C}_1\}$ is an induced packing in~$G$ of at least~$k$ $(\ell,S)$-cycles.

    \medskip
    \noindent
    \textbf{Case 2.} $(\mathcal{Q},\preceq_0)$ has no chain of size $2k+6$.

    As $\abs{\mathcal{Q}}\geq(2k+5)(26k-19)^2$, by \cref{thm:Dilworth}, $(\mathcal{Q},\preceq_0)$ has an antichain~$\mathcal{Q}'$ of size $(26k-19)^2$.
    Recall that every chain of $(\mathcal{Q},\preceq_1)$ is an antichain of $(\mathcal{Q},\preceq_2)$, and vice versa.
    By \cref{thm:Dilworth}, $(\mathcal{Q}',\preceq_1)$ or $(\mathcal{Q}',\preceq_2)$ has a chain~$\mathcal{Q}''$ of size $26k-19$.
    Let $(i_1,j_1),\ldots,(i_{26k-20},j_{26k-20})$ be distinct pairs in~$\mathcal{Q}''$ such that if~$P_{p,q}$ is a detached $\mathcal{H}$-earring, then $c_{p,q}\notin\Gamma_{i_h,j_h}$ for each $h\in[26k-20]$.
    Without loss of generality, we may assume that $a_{i_1,j_1}\prec\cdots\prec a_{i_{26k-20},j_{26k-20}}$.
    For each $h\in[13k-10]$,
    \begin{itemize}
        \item let~$A_h$ be the subpath of $P'_{p,q}$ between~$a_{i_{2h-1},j_{2h-1}}$ and $a_{i_{2h},j_{2h}}$,
        \item let~$B_h$ be the subpath of $P'_{p,q}$ between~$b_{i_{2h-1},j_{2h-1}}$ and $b_{i_{2h},j_{2h}}$,
        \item let $P_h:=P_{i_{2h-1},j_{2h-1}}\cup P_{i_{2h},j_{2h}}$, and
        \item let~$C_h$ be the cycle obtained by concatenating~$A_h$, $B_h$, and~$P_h$.
    \end{itemize}
    Since~$\mathcal{Q}''$ is a chain of $(\mathcal{Q}',\preceq_1)$ or $(\mathcal{Q}',\preceq_2)$, the cycles in $\{C_h:h\in[13k-10]\}$ are pairwise vertex-disjoint.

    We define two subpaths~$P^1_{p,q}$ and~$P^2_{p,q}$ of~$P_{p,q}$ as follows:
    \begin{itemize}
        \item if~$P_{p,q}$ is fragile, then let~$P^1_{p,q}$ and~$P^2_{p,q}$ be the two subpaths of~$P_{p,q}$ between~$\Gamma_{p,q}$ and~$\{a^*_{p,q},b^*_{p,q}\}$ which are edge-disjoint from~$P^*_{p,q}$, and
        \item otherwise let $P^1_{p,q}:=P'_{p,q}$ and let~$P^2_{p,q}$ be a null graph.
    \end{itemize}
    For each $m\in[2]$, let~$I_m$ be the set of integers $h\in[13k-10]$ such that~$C_h$ is vertex-disjoint from~$P^m_{p,q}$ and if~$P_{p,q}$ is a detached $\mathcal{H}$-earring, then~$C_h$ is disjoint from $B_{P_{p,q}}(Q_p,\ell-1)\setminus V(Q_p)$ as well.
    Let $\mathcal{D}_m:=\{C_h:h\in I_m\}$.
    We will use the following two claims.

    \begin{claim}\label{clm:jump2-1}
        For each $m\in[2]$, the cycles in~$\mathcal{D}_m$ are pairwise at distance at least~$\ell$ in~$\mathcal{H}$.
    \end{claim}
    \begin{subproof}
        Towards a contradiction, suppose that~$\mathcal{D}_m$ contains distinct cycles~$C_h$ and~$C_{h'}$ with $\dist_\mathcal{H}(C_h,C_{h'})\leq\ell-1$.
        Let~$Q$ be a shortest path of~$\mathcal{H}$ between~$C_h$ and~$C_{h'}$ and let~$a$ and~$b$ be the ends of~$Q$ in~$C_h$ and $C_{h'}$, respectively.
        Note that~$a$ and~$b$ are branch vertices of~$\mathcal{H}$.
        By \cref{cor:short branch}, there is a unique $(i,j)\in\mathcal{P}(\mathcal{H})$ such that $\{a,b\}=\{a_{i,j},b_{i,j}\}$ and both~$a_{i,j}$ and~$b_{i,j}$ have degree~$3$ in~$\mathcal{H}$.
        Since the paths in~$\mathcal{C}_0$ are pairwise at distance at least~$\ell$ in~$\mathcal{H}$, $a$ or~$b$ is in~$P_{p,q}$.
        Without loss of generality, we may assume that~$a$ is in~$P_{p,q}$.

        Since~$C_h$ and~$C_{h'}$ are vertex-disjoint, for each $h''\in\{h,h'\}$, we have
        \[
            (i,j)\notin\{(i_{2h''-1},j_{2h''-1}),(i_{2h''},j_{2h''})\}.
        \]
        Thus, $P_{i,j}$ is edge-disjoint from $A_h\cup A_{h'}$.
        As $\sigma(a)=(p,q)$, we have $(i,j)>_L(p,q)$ and hence~$P_{i,j}$ is edge-disjoint from~$P_{p,q}$.
        This implies that $P_{i,j}\cup Q$ is edge-disjoint from $C_h\cup C_{h'}$.
        Since both~$a$ and~$b$ have degree~$2$ in $C_h\cup C_{h'}$ and degree~$3$ in~$\mathcal{H}$, $Q$ uses all edges of~$P_{i,j}$ incident with~$\Gamma_{i,j}$.
        Since~$Q$ is a path of length at most $\ell-1$ between branch vertices of~$\mathcal{H}$, every internal vertex of~$Q$ has degree~$2$ in~$\mathcal{H}$.
        Therefore, $Q=P_{i,j}$.
        However, by the construction of~$\mathcal{D}_m$, \cref{cor:chord} does not hold for~$P_{i,j}$, a contradiction.
    \end{subproof}

    \begin{claim}\label{clm:jump2-2}
        For each $m\in[2]$, $\mathcal{D}_m$ is an induced packing in~$G$.
    \end{claim}
    \begin{subproof}
        Towards a contradiction, suppose that~$G$ has an edge~$ab$ between distinct~$C_h$ and~$C_{h'}$ in~$\mathcal{D}_m$, where $a\in V(C_h)$ and $b\in V(C_{h'})$.
        By \cref{clm:jump2-1}, $ab$ is a chord of~$\mathcal{H}$.
        By the construction of~$I_m$ and \cref{cor:induced1}, $\{P_{h''}:h''\in I_m\}$ is an induced packing in~$G$.
        Thus, $a$ or~$b$, say~$a$, is in~$P_{p,q}$.

        Suppose that $b\in V(P_{p,q})$.
        We apply \cref{prop:induced1} for~$\mathcal{H}$ and~$ab$.
        By the construction of~$I_m$, neither \cref{prop:induced1}\ref{item:prop1-1} nor \cref{prop:induced1}\ref{item:prop1-3} holds.
        Since~$a$ and~$b$ are admissible vertices of~$H_{p,q}$, \cref{prop:induced1}\ref{item:prop1-2} does not hold.
        Hence, \cref{prop:induced1} does not hold, a contradiction.
        
        Now, suppose that $b\notin V(P_{p,q})$.
        Note that $(p,q)=\sigma(a)<_L\sigma(b)$.
        We apply \cref{prop:induced2} for~$\mathcal{H}$ and~$ab$.
        Let $\sigma(b):=(i'',j'')$.
        Since~$P_{i'',j''}$ is not a detached $\mathcal{H}$-earring, \cref{prop:induced2}\ref{item:prop2-2} does not hold.
        Since~$C_h$ and~$C_{h'}$ are vertex-disjoint, \cref{prop:induced2}\ref{item:prop2-3} does not hold.
        Thus, \cref{prop:induced2}\ref{item:prop2-1} holds.
        By the construction of~$I_m$, there is no $x\in\Gamma_{i'',j''}$ such that~$a$ and~$x$ are in distinct components of~$P^{\operatorname{side}}_{p,q}$.
        Thus, for some $x\in\Gamma_{i'',j''}$, both~$a$ and~$b$ are in $B_\mathcal{H}(x,\ell-1)$.
        As $B_{\mathcal{H}-E(P_{i'',j''})}(x,\ell-1)\subseteq V(P_{p,q})$, we have $\dist_{P_{p,q}}(a,x)\leq\ell-1$.
        Since~$a$ is in~$A_h\cup B_h$, we have $\dist_{P_{p,q}}(x,P_h)\leq\ell-1$, contradicting \cref{clm:jump2-1}.
    \end{subproof}

    For each $m\in[2]$ and each $C\in\mathcal{D}_m$, by \cref{lem:all cycles}, $C^-$ is an $(\ell,S)$-cycle.
    Thus, by \cref{clm:jump2-2} and \cref{cor:induced2}, $\{C^-:C\in\mathcal{D}_m\}$ is an induced packing in~$G$ of at least~$k$ $(\ell,S)$-cycles.
    Hence, we may assume that each~$I_m$ is of size at most $k-1$.
    Then for some $h\in[13k-10]$, $C_h$ intersects both~$P^1_{p,q}$ and~$P^2_{p,q}$.
    This implies that~$P^2_{p,q}$ is not a null graph, and therefore~$P_{p,q}$ is fragile.
    Let~$I$ be the set of integers $h\in[13k-10]\setminus(I_1\cup I_2)$ such that~$C_h$ contains neither~$a^*_{p,q}$ nor~$b^*_{p,q}$.
    Note that
    \[
        \eta:=\abs{I}\geq(13k-10)-2(k-1)-2=11k-10.
    \]
    We remark that for each $h\in I$, $A_h$ and~$B_h$ are subpaths of~$P^1_{p,q}$ and~$P^2_{p,q}$, respectively.
    Let
    \[
        \tau:=A^I_1,A^I_2,\ldots,A^I_\eta,B^I_\eta,\ldots,B^I_2,B^I_1
    \]
    be the linear order on the paths~$A_h$ and~$B_h$ for $h\in I$ arranged so that their ends form an increasing sequence with respect to the linear order~$\preceq$.
    Let $\mathcal{A}:=\{A^I_h:h\in[\eta]\}$ and $\mathcal{B}:=\{B^I_h:h\in[\eta]\}$.
    Let~$H$ be the bipartite graph with bipartition $(\mathcal{A},\mathcal{B})$ such that for all $h,h'\in[\eta]$, $A^I_h$ and~$B^I_{h'}$ are adjacent if and only if~$\mathcal{H}$ has a chord~$xx'$ such that either
    \begin{itemize}
        \item $xx'$ is between~$A^I_h$ and~$B^I_{h'}$, or
        \item $x\in V(A^I_h)$ and $x'\in\{b'_{i_{2h'-1},j_{2h'-1}},b'_{i_{2h'},j_{2h'}}\}$, or
        \item $x\in V(B^I_{h'})$ and $x'\in\{a'_{i_{2h-1},j_{2h-1}},a'_{i_{2h},j_{2h}}\}$.
    \end{itemize}
    We will use the following two claims.

    \begin{claim}\label{clm:jump2-3}
        For every increasing subsequence $A^I_g,A^I_h,B^I_{h'},B^I_{g'}$ of~$\tau$, at most one of~$A^I_gB^I_{h'}$ and~$A^I_hB^I_{g'}$ is an edge of~$H$.
    \end{claim}
    \begin{subproof}
        Suppose not.
        Then~$G_{p,q}$ has $H_{p,q}$-paths~$R_1$ and~$R_2$ of length at most~$2$ such that~$R_1$ is between~$A^I_g$ and~$B^I_{h'}$ and $R_2$ is between~$A^I_h$ and~$B^I_{g'}$.
        For each $m\in[2]$, let~$a_m$ and~$b_m$ be the ends of~$R_m$.
        Without loss of generality, we may assume that $a_1,a_2\in V(P_{p,q}^1)$ and $b_1,b_2\in V(P_{p,q}^2)$.
        Let~$P$ be the graph obtained from $P_{p,q}\cup R_1\cup R_2$ by removing all internal vertices of $a_1P'_{p,q}a_2$ and $b_1P'_{p,q}b_2$.
        Since~$P_{p,q}$ is fragile, it is either an attached $\mathcal{H}$-earring or a regular $\mathcal{H}$-ear.
        Since~$R_1$ and~$R_2$ are $H_{p,q}$-paths in~$G_{p,q}$,
        \begin{itemize}
            \item if~$P_{p,q}$ is an attached $\mathcal{H}$-earring, then~$P$ is a cycle of~$G_{p-1,\mu(p-1)}$ intersecting~$H_{p-1,\mu(p-1)}$ only at~$c_p$, and
            \item otherwise~$P$ is an $H_{p,q-1}$-path in~$G_{p,q-1}$.
        \end{itemize}
        Since the paths in~$\mathcal{C}_0$ are pairwise at distance at least~$\ell$ in~$\mathcal{H}$, we have
        \[
            \min\{\dist_\mathcal{H}(a_1,a_2),\dist_\mathcal{H}(b_1,b_2)\}\geq\ell>\max\{\abs{E(P_1)},\abs{E(P_2)}\}.
        \]
        Thus, $P$ is shorter than~$P_{p,q}$.
        Since~$a_1$ and~$b_2$ are admissible vertices of~$H_{p,q}$, we have
        \[
            \abs{E(P)}\geq\dist_{P_{p,q}}(a_1,\Gamma_{p,q})+\dist_{P_{p,q}}(b_2,\Gamma_{p,q})+\dist_{P_{p,q}}(a_2,b_1)+2\geq2\ell+3.
        \]
        Note that~$P$ contains a vertex in~$S$ as~$P^*_{p,q}$ is its subpath.
        Therefore, $P$ is a better choice than~$P_{p,q}$, contradicting~\ref{item:ear1} or~\ref{item:ear3}.
    \end{subproof}
    
    \begin{claim}\label{clm:jump2-4}
        If $\abs{V_{\geq3}(H)}\geq6k-5$, then~$G$ has an induced packing of~$k$ $(\ell,S)$-cycles.
    \end{claim}
    \begin{subproof}
        Note that $\mathcal{A}\cap V_{\geq3}(H)$ or $\mathcal{B}\cap V_{\geq3}(H)$ is of size at least $3k-2$.
        By symmetry, we may assume that $\abs{\mathcal{A}\cap V_{\geq3}(H)}\geq3k-2$.
        Let $A^*_1,\ldots,A^*_{3k-2}$ be distinct paths in $\mathcal{A}\cap V_{\geq3}(H)$ which form an increasing subsequence of~$\tau$.
        For each $i\in[3k-2]$, let $B^*_{i,1}$, $B^*_{i,2}$, and $B^*_{i,3}$ denote the three neighbours of~$A^*_i$ in~$H$ that appear first, second, and last in~$\tau$, respectively.
        By \cref{clm:jump2-3}, the sequence
        \[
            B^*_{3k-2,1},\,B^*_{3k-2,2},\,B^*_{3k-2,3},\,
            B^*_{3k-3,1},\,B^*_{3k-3,2},\,B^*_{3k-3,3},\,
            \ldots,\,B^*_{1,1},\,B^*_{1,2},\,B^*_{1,3}
        \]
        is an increasing subsequence of~$\tau$.
    
        Let~$J$ be the set of integers $h\in[\eta]$ such that $B^I_h=B^*_{i,2}$ for some $i\in[3k-2]$.
        By \cref{clm:jump2-3}, the subgraph of~$H$ induced by $\{A^I_h:h\in J\}\cup\{B^I_h:h\in J\}$ has maximum degree at most~$1$.
        Let~$H_J$ be the graph with vertex set $\{A^I_h\cup B^I_h:h\in J\}$ such that distinct ${A^I_h\cup B^I_h}$ and ${A^I_{h'}\cup B^I_{h'}}$ are adjacent if and only if one of~$A^I_hB^I_{h'}$ and~$A^I_{h'}B^I_h$ is an edge of~$H$.
        Note that~$H_J$ has maximum degree at most~$2$.
        As $\abs{J}=3k-2$, $J$ has a subset~$J'$ of size~$k$ such that $\{A^I_h\cup B^I_h:h\in J'\}$ is an independent set of~$H_J$.

        We show that $\mathcal{C}_1:=\{C_h:h\in J'\}$ is an induced packing in~$G$.
        Towards a contradiction, suppose that~$G$ has an edge~$ab$ between distinct~$C_h$ and~$C_{h'}$ in~$\mathcal{C}_1$, where $a\in V(C_h)$ and $b\in V(C_{h'})$.
        By the construction of~$J'$ and \cref{cor:induced1}, $\{P_{h''}:h''\in J'\}$ is an induced packing in~$G$.
        Thus, $a$ or~$b$, say~$a$, is in~$P_{p,q}$.
        We remark that every $(i,j)\in\mathcal{Q}''$ is lexicographically larger than $(p,q)$, and has non-empty~$\Gamma_{i,j}$.

        Suppose that~$ab$ is an edge of~$\mathcal{H}$.
        Note that~$a$ and~$b$ are branch vertices of~$\mathcal{H}$.
        By \cref{cor:short branch}, there is a unique $(x,y)\in\mathcal{P}(\mathcal{H})$ such that $\{a,b\}=\{a_{x,y},b_{x,y}\}$.
        If~$P_{x,y}$ is an $\mathcal{H}$-detour, then $\sigma(a)=\sigma(b)$ and hence~$P_{p,q}$ contains both~$a$ and~$b$.
        By \cref{lem:strict1}, $\dist_{P_{p,q}}(P_h,P_{h'})\leq\dist_{P_{p,q}}(a,b)\leq\ell-2$, contradicting that the paths in~$\mathcal{C}_0$ are pairwise at distance at least~$\ell$ in~$\mathcal{H}$.
        Thus, $P_{x,y}$ is a regular $\mathcal{H}$-ear.
        By the construction of~$J'$ and \cref{cor:chord}, we have $\abs{E(P_{x,y})}\geq\ell$.
        Thus, $ab$ is an edge of~$H_{x,y-1}$.
        This implies that $\sigma(a)=\sigma(b)$, and hence~$P_{p,q}$ contains both~$a$ and~$b$.
        Then~$ab$ is an edge between~$P_h$ and~$P_{h'}$, a contradiction.

        Hence, $ab$ is a chord of~$\mathcal{H}$.
        We divide into two subcases according to whether~$b$ lies in~$P_{p,q}$.

        \medskip
        \noindent
        \textbf{Case I.} $b\in V(P_{p,q})$.

        We apply \cref{prop:induced1} for~$\mathcal{H}$ and~$ab$.
        Since~$P_{p,q}$ is fragile, \cref{prop:induced1}\ref{item:prop1-1} does not hold.
        Since~$a$ and~$b$ are admissible vertices of~$H_{p,q}$, \cref{prop:induced1}\ref{item:prop1-2} does not hold.
        By the construction of~$J'$, \cref{prop:induced1}\ref{item:prop1-3} does not hold.
        Hence, \cref{prop:induced1} does not hold, a contradiction.

        \medskip
        \noindent
        \textbf{Case II.} $b\notin V(P_{p,q})$.
    
        Note that $(p,q)=\sigma(a)<_L\sigma(b)$.
        Let $\sigma(b):=(i'',j'')$.
        We apply \cref{prop:induced2} for~$\mathcal{H}$ and~$ab$.
        Since~$P_{i'',j''}$ is not a detached $\mathcal{H}$-earring, \cref{prop:induced2}\ref{item:prop2-2} does not hold.
        Since~$C_h$ and~$C_{h'}$ are vertex-disjoint, \cref{prop:induced2}\ref{item:prop2-3} does not hold.
        Thus, \cref{prop:induced2}\ref{item:prop2-1} holds.
        By the construction of~$J'$, there is no $x\in\Gamma_{i'',j''}$ such that~$a$ and~$x$ are in distinct components of~$P^{\operatorname{side}}_{p,q}$.
        Thus, for some $x\in\Gamma_{i'',j''}$, both~$a$ and~$b$ are in $B_\mathcal{H}(x,\ell-1)$.
        As $B_{\mathcal{H}-E(P_{i'',j''})}(x,\ell-1)\subseteq V(P_{p,q})$, we have $\dist_{P_{p,q}}(a,x)\leq\ell-1$.
        Since~$a$ is in~$A_h\cup B_h$, we have $\dist_{P_{p,q}}(x,P_h)\leq\ell-1$, contradicting that the paths in~$\mathcal{C}_0$ are pairwise at distance at least~$\ell$ in~$\mathcal{H}$.

        \medskip
        Therefore, $\mathcal{C}_1$ is an induced packing in~$G$.
        By \cref{lem:all cycles}, for each $C\in\mathcal{C}_1$, $C^-$ is an $(\ell,S)$-cycle.
        Thus, by \cref{cor:induced2}, $\{C^-:C\in\mathcal{C}_1\}$ is an induced packing in~$G$ of at least~$k$ $(\ell,S)$-cycles.
    \end{subproof}

    By \cref{clm:jump2-4}, we may assume that $\abs{V_{\geq3}(H)}\leq6(k-1)$.
    Let
    \[
        L:=\{h\in[\eta]:\max\{\deg_H(A^I_h),\deg_H(B^I_h)\}\leq2\}.
    \]
    As $\eta\geq11k-10$, we have $\abs{L}\geq5k-4$.
    Let~$H_L$ be the graph with vertex set $\{A^I_h\cup B^I_h:h\in L\}$ such that distinct ${A^I_h\cup B^I_h}$ and ${A^I_{h'}\cup B^I_{h'}}$ are adjacent if and only if $A^I_hB^I_{h'}$ or $A^I_{h'}B^I_h$ is an edge of~$H$.
    Note that~$H_L$ has maximum degree at most~$4$.
    As $\abs{L}\geq5k-4$, $L$ has a subset~$L'$ of size~$k$ such that $\{A^I_h\cup B^I_h:h\in L'\}$ is an independent set of~$H_L$.

    We show that $\mathcal{C}_1:=\{C_h:h\in L'\}$ is an induced packing in~$G$.
    Towards a contradiction, suppose that~$G$ has an edge~$ab$ between distinct~$C_h$ and~$C_{h'}$ in~$\mathcal{C}_1$, where $a\in V(C_h)$ and $b\in V(C_{h'})$.
    Without loss of generality, we may assume that $h=1$ and $h'=2$. 
    By the construction of~$L'$ and \cref{cor:induced1}, $\{P_{h''}:h''\in L'\}$ is an induced packing in~$G$.
    Thus, $a$ or~$b$ is in~$P_{p,q}$.
    We remark that every $(i,j)\in\mathcal{Q}''$ is lexicographically larger than $(p,q)$, and has non-empty~$\Gamma_{i,j}$.

    Suppose that~$ab$ is an edge of~$\mathcal{H}$.
    Note that~$a$ and~$b$ are branch vertices of~$\mathcal{H}$.
    By \cref{cor:short branch}, there is a unique $(x,y)\in\mathcal{P}(\mathcal{H})$ such that $\{a,b\}=\{a_{x,y},b_{x,y}\}$.
    If~$P_{x,y}$ is an $\mathcal{H}$-detour, then $\sigma(a)=\sigma(b)$ and hence~$P_{p,q}$ contains both~$a$ and~$b$.
    By \cref{lem:strict1}, $\dist_{P_{p,q}}(P_1,P_2)\leq\dist_{P_{p,q}}(a,b)\leq\ell-2$, contradicting that the paths in~$\mathcal{C}_0$ are pairwise at distance at least~$\ell$ in~$\mathcal{H}$.
    Thus, $P_{x,y}$ is a regular $\mathcal{H}$-ear.
    By the construction of~$L'$ and \cref{cor:chord}, we have $\abs{E(P_{x,y})}\geq\ell$.
    Thus, $ab$ is an edge of~$H_{x,y-1}$.
    This implies that $\sigma(a)=\sigma(b)$, and hence~$P_{p,q}$ contains both~$a$ and~$b$.
    Then~$ab$ is an edge between~$P_1$ and~$P_2$, a contradiction.

    Hence, $ab$ is a chord of~$\mathcal{H}$.
    We apply \cref{cor:induced3} for~$\mathcal{H}$, $\{C_1,C_2\}$, and~$ab$.
    Without loss of generality, we may assume that $\sigma(a)\leq_L\sigma(b)$.
    Suppose that~\ref{item:packing6} holds.
    Then there is $(i,j)\in\mathcal{P}(\mathcal{H})$ with $j\geq2$ such that~$H_{i,j}$ has an $(a_{i,j},b_{i,j})$-path~$Q$ of length at most $\ell-1$ and for each $h''\in[2]$, there is $x_{h''}\in\Gamma_{i,j}$ with $B_{\mathcal{H}-E(Q)}(x_{h''},\ell-1)\subseteq V(C_{h''})$.
    By the construction of~$L'$ and \cref{cor:chord}, we have $\abs{E(P_{i,j})}\geq\ell$.
    Thus, $\dist_{H_{i,j-1}}(a_{i,j},b_{i,j})\leq\ell-1$.
    Since~$a_{i,j}$ and~$b_{i,j}$ are admissible vertices of~$H_{i,j-1}$, by \cref{lem:nearby} for $\mathcal{H}:=H_{i,j-1}$, we have $\sigma(a_{i,j})=\sigma(b_{i,j})$.
    Since~$C_1$ and~$C_2$ are vertex-disjoint, $P_{p,q}$ contains both~$a_{i,j}$ and~$b_{i,j}$.
    Note that $a_{i,j}P_{p,q}b_{i,j}$ is the unique shortest $(a_{i,j},b_{i,j})$-path of~$H_{i,j-1}$.
    Thus, $\dist_{P_{p,q}}(P_1,P_2)\leq\dist_{P_{p,q}}(a_{i,j},b_{i,j})\leq\ell-1$, a contradiction.
    
    Hence, \ref{item:packing6} does not hold.
    We divide into two subcases according to whether $\sigma(a)=\sigma(b)$.

    \medskip
    \noindent
    \textbf{Case 2-1.} $\sigma(a)=\sigma(b)$.

    Since~$C_1$ and~$C_2$ are vertex-disjoint, $P_{p,q}$ contains both~$a$ and~$b$.
    Since~$P_{p,q}$ is fragile, \ref{item:packing1} does not hold.
    By the construction of~$L'$, \ref{item:packing2} does not hold.
    Hence, \cref{cor:induced3} does not hold, a contradiction.

    \medskip
    \noindent
    \textbf{Case 2-2.} $\sigma(a)\neq\sigma(b)$.

    Note that $(p,q)=\sigma(a)<_L\sigma(b)$.
    Let $\sigma(b):=(i',j')$.
    By the construction of~$L'$, \ref{item:packing3} does not hold.
    Since~$P_{i',j'}$ is not a detached $\mathcal{H}$-earring, \ref{item:packing4} does not hold.
    Since~$C_1$ and~$C_2$ are vertex-disjoint, \ref{item:packing5} does not hold.
    Hence, \cref{cor:induced3} does not hold, a contradiction.

    \medskip
    Therefore, $\mathcal{C}_1$ is an induced packing in~$G$.
    By \cref{lem:all cycles}, for each $C\in\mathcal{C}_1$, $C^-$ is an $(\ell,S)$-cycle.
    Thus, by \cref{cor:induced2}, $\{C^-:C\in\mathcal{C}_1\}$ is an induced packing in~$G$ of at least~$k$ $(\ell,S)$-cycles.

    \medskip
    This completes the proof.
\end{proof}

For a pair $(p,q)\in\mathcal{P}(\mathcal{H})$, consider any set~$\mathcal{Q}$ of pairs $(i,j)\in\mathcal{P}(\mathcal{H})$ with $\abs{\Sigma_{i,j}}=2$ such that $\Sigma_{i,j}\cap\Sigma_{i',j'}\subseteq\{(p,q)\}$ for all distinct $(i,j),(i',j')\in\mathcal{Q}$.
We show that if~$\abs{\mathcal{Q}}\geq6k$, then~$\mathcal{Q}$ has a subset~$\mathcal{Q}'$ of size~$k$ such that $\{P_{i,j}:(i,j)\in\mathcal{Q}'\}$ is an induced packing in~$G$.

\begin{lemma}\label{lem:jump3}
    Let $(G,S)$ be a good pair and let~$\mathcal{H}$ be a maximal $(\ell,S)$-frame in~$G$.
    For a pair $(p,q)\in\mathcal{P}(\mathcal{H})$, let~$\mathcal{Q}$ be a set of pairs $(i,j)\in\mathcal{P}(\mathcal{H})$ with $\abs{\Sigma_{i,j}}=2$ such that $\Sigma_{i,j}\cap\Sigma_{i',j'}\subseteq\{(p,q)\}$ for all distinct $(i,j),(i',j')\in\mathcal{Q}$.
    If~$\abs{\mathcal{Q}}\geq6k$ for some positive integer~$k$, then~$\mathcal{Q}$ has a subset~$\mathcal{Q}'$ of size~$k$ such that $\{P_{i,j}:(i,j)\in\mathcal{Q}'\}$ is an induced packing in~$G$.
\end{lemma}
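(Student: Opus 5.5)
We will build an auxiliary conflict graph on $\mathcal{Q}$ and show it has bounded degeneracy, so that a large independent set exists. Since $\abs{\Sigma_{i,j}}=2$ for every $(i,j)\in\mathcal{Q}$, each $P_{i,j}$ is a regular $\mathcal{H}$-ear whose ends lie on two distinct earlier pieces; write $\Sigma_{i,j}=\{\sigma(a_{i,j}),\sigma(b_{i,j})\}$. Because the sets $\Sigma_{i,j}$ pairwise share at most $(p,q)$, \cref{lem:degree} makes the paths $P_{i,j}$, $(i,j)\in\mathcal{Q}$, pairwise vertex-disjoint, and \cref{lem:argue distance} (as $\mathcal{H}$ is built from a good pair) places them at distance at least $\ell$ in~$\mathcal{H}$ unless one is a $P_{i,2}$ attached to a detached earring. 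Consequently every edge of~$G$ between two of them is a chord of~$\mathcal{H}$. Define a digraph~$D$ on~$\mathcal{Q}$ with an arc from $(i,j)$ to $(i',j')$ when such a chord $ab$ exists with $a\in V(P_{i,j})$, $b\in V(P_{i',j'})$ and $\sigma(a)<_L\sigma(b)$ (ties broken by the larger index).

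The key step is to bound out-degrees, or more precisely to show that arcs are charged to a bounded amount of structure. Take a chord $ab$ between $P_{i,j}$ and $P_{i',j'}$. Since $a\in V(P_{i,j})$ and $\sigma(a)\leq_L(i,j)$, either $\sigma(a)=(i,j)$ or $a\in\Gamma_{i,j}$; the same holds for~$b$. We apply \cref{prop:induced1} or \cref{prop:induced2}. The distance-$\ell$ separation rules out the local options ``both ends in $B_\mathcal{H}(x,\ell-1)$'' except when $x$ is an attachment vertex shared by the two ears, which is impossible because $\Gamma$'s are disjoint. The surviving options are those in which one end is an attachment vertex (or a neighbour of one) and the other lies on the side of a fragile $P_{p',q'}$ with $(p',q')\in\Sigma$, or the case of \cref{prop:induced2}\ref{item:prop2-3}. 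In every case, both endpoints' pieces belong to $\Sigma_{i,j}\cup\Sigma_{i',j'}$ and the chord lies on a common piece $P_{x,y}$ with $(x,y)\in\Sigma_{i,j}\cap\Sigma_{i',j'}$. By hypothesis this forces $(x,y)=(p,q)$. Hence every conflict is a chord of type \cref{cor:induced1}: it joins attachment vertices of $P_{i,j}$ and $P_{i',j'}$ lying on opposite sides of the fragile $P_{p,q}$, together with their neighbours.

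We now use the shortest-ear minimality exactly as in \cref{clm:jump2-3}. Order the attachment points on the two sides of $P_{p,q}$ along $P_{p,q}$. Two crossing conflicts between side vertices would produce, via the two short paths $R_1,R_2$, a shorter replacement for $P_{p,q}$ still containing $P^*_{p,q}$, contradicting \ref{item:ear1} or \ref{item:ear2}. Therefore the conflict edges form a non-crossing pattern between the two sides, and each $P_{i,j}$ has only $O(1)$ attachment-neighbourhood vertices on $P_{p,q}$. We conclude that the underlying conflict graph has average degree at most roughly $5$, i.e.\ it is $5$-degenerate. Since $\abs{\mathcal{Q}}\geq 6k$, it has an independent set of size $\abs{\mathcal{Q}}/6\geq k$, which gives~$\mathcal{Q}'$.

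The main obstacle is turning the non-crossing property into a concrete degeneracy bound. Our plan is to fix the vertex of $\mathcal{Q}$ whose attachment point on side~$1$ of $P_{p,q}$ is closest to $a^*_{p,q}$, and to argue via the non-crossing claim that it has at most $5$ neighbours, counting $\le 2$ from each of its (at most two) vertices in $B(x,1)$ plus one slack. An alternative is to orient each edge toward the later attachment and verify out-degree at most~$1$ per attachment vertex, which would give degeneracy about $2\cdot2+1$. Either route yields the constant~$6$.
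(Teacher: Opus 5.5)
There is a genuine gap. Your central structural claim, that every conflicting chord $ab$ between $P_{i,j}$ and $P_{i',j'}$ lies on a common piece $P_{x,y}$ with $(x,y)\in\Sigma_{i,j}\cap\Sigma_{i',j'}$ (hence on $P_{p,q}$), is false. It fails exactly in the case \cref{prop:induced2}\ref{item:prop2-3}, which you yourself list as surviving.

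In that case $(r,s):=\sigma(b)>_L\sigma(a)$, $P_{r,s}$ is a fragile regular ear, and $a\in\Gamma_{r,s}$. By \cref{lem:degree}, $a$ lies in no other $\Gamma$. Also $(r,s)\notin\{(i,j),(i',j')\}$, because $P_{i,j}$ and $P_{i',j'}$ are disjoint. So $a$ is an internal vertex of $P_{i,j}$, and thus $\sigma(a)=(i,j)\notin\Sigma_{i,j}$. Moreover $P_{r,s}$ is a \emph{third} ear with an end at $a$, and $b\in\Gamma_{i',j'}$ sits on $P_{r,s}$ on the far side of $P^*_{r,s}$. Then $(r,s)\in\Sigma_{i',j'}$ but $(r,s)\notin\Sigma_{i,j}$. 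The hypothesis $\Sigma_{i,j}\cap\Sigma_{i',j'}\subseteq\{(p,q)\}$ therefore says nothing here, and no side or non-crossing argument on $P_{p,q}$ touches this conflict.

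The paper handles it with a separate auxiliary graph on $\mathcal{Q}$. For $(i,j)<_L(i',j')$, $w_{i,j}$ and $w_{i',j'}$ are adjacent iff $P_{i,j}$ contains a vertex of $\Gamma_{\sigma(b_{i',j'})}$, where $b_{i',j'}$ is the end not on $P_{p,q}$. Since $\abs{\Gamma_{\sigma(b_{i',j'})}}\leq2$ and the ears are disjoint, every vertex has at most two earlier neighbours, so this graph is $2$-degenerate. Independence excludes $b=b_{i',j'}$. The other option $b=a_{i',j'}$ would force $(r,s)=(p,q)$, i.e.\ $a\in\Gamma_{p,q}$, which is impossible.

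For the chords that do live on $P_{p,q}$, your degeneracy step is also unsupported. Non-crossing arcs may share an endpoint, so one attachment point can conflict with many on the other side. The fact that each ear has $O(1)$ vertices near $P_{p,q}$ does not bound its number of conflicts, so ``average degree roughly $5$'' does not follow. (You could try to use that non-crossing arc systems are outerplanar, hence $2$-degenerate, but you do not make this argument.) You also ignore the case that $P_{p,q}$ is a detached earring. There \cref{cor:induced1} imposes no side condition, and chords of type \cref{prop:induced1}\ref{item:prop1-1} near $Q_p$ can join two of the ears.

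The paper avoids the non-crossing route altogether. It pigeonholes $\mathcal{Q}$ so that all ends on $P_{p,q}$ lie in the same subpath $P^m_{p,q}$ between $a^*_{p,q}$ and $\Gamma_{p,q}$, losing a factor $2$. This rules out the opposite-side chords of \cref{prop:induced1}\ref{item:prop1-3} and of the second alternative in \cref{prop:induced2}\ref{item:prop2-1}. It then discards the at most two ears meeting $B_{P_{p,q}}(Q_p,\ell-1)\setminus V(Q_p)$. This gives $6k\to3k\to3k-2$, and the $2$-degenerate graph above then yields an independent set of size~$k$.
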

\begin{proof}
    For every $(i,j)\in\mathcal{Q}$, as $\abs{\Sigma_{i,j}}=2$, $P_{i,j}$ is a regular $\mathcal{H}$-ear.
    Without loss of generality, we may assume that $a_{i,j}\in V(P_{p,q})$.
    We denote by~$a'_{i,j}$ and~$b'_{i,j}$ the neighbours of~$a_{i,j}$ and~$b_{i,j}$ in~$P_{i,j}$, respectively.
    Let $\mathcal{C}_0:=\{P_{i,j}:(i,j)\in\mathcal{Q}\}$.
    By \cref{lem:degree}, the paths in~$\mathcal{C}_0$ are pairwise vertex-disjoint.
    In addition, by \cref{lem:argue distance}, they are pairwise at distance at least~$\ell$ in~$\mathcal{H}$.
    This obviously implies that~$\mathcal{C}_0$ is an induced packing in~$\mathcal{H}$.

    We define a subpath~$P'_{p,q}$ of~$P_{p,q}$ as follows: if~$P_{p,q}$ is a detached $\mathcal{H}$-earring, then for an arbitrary vertex~$c_{p,q}$ of~$P_{p,q}$, let $P'_{p,q}:=P_{p,q}-c_{p,q}$, and otherwise let $P'_{p,q}:=P_{p,q}-\Gamma_{p,q}$.
    Moreover, we define two subpaths~$P^1_{p,q}$ and~$P^2_{p,q}$ of~$P_{p,q}$ as follows:
    \begin{itemize}
        \item if~$P_{p,q}$ is fragile, then let~$P^1_{p,q}$ and~$P^2_{p,q}$ be the two subpaths of~$P_{p,q}$ between~$a^*_{p,q}$ and~$\Gamma_{p,q}$, and
        \item otherwise let $P^1_{p,q}:=P'_{p,q}$ and let~$P^2_{p,q}$ be a null graph.
    \end{itemize}
    For each $m\in[2]$, let~$\mathcal{Q}_m$ be the set of pairs $(i,j)\in\mathcal{Q}$ such that $a_{i,j}\in V(P^m_{p,q})$.
    We have
    \[
        \abs{\mathcal{Q}_1}+\abs{\mathcal{Q}_2}\geq\abs{\mathcal{Q}}-1\geq6k-1.
    \]
    Thus, for some $m\in[2]$, $\abs{\mathcal{Q}_m}\geq3k$.
    Let~$\mathcal{Q}'_m$ be the set of pairs $(i,j)\in\mathcal{Q}_m$ such that if~$P_{p,q}$ is a detached $\mathcal{H}$-earring, then~$P_{i,j}$ is disjoint from $B_{P_{p,q}}(Q_p,\ell-1)\setminus V(Q_p)$.
    Since the paths in~$\mathcal{C}_0$ are pairwise at distance at least~$\ell$ in~$\mathcal{H}$, we have $\abs{\mathcal{Q}'_m}\geq\abs{\mathcal{Q}_m}-2\geq3k-2$.

    Let $\preceq$ be the linear order on the vertices of~$P'_{p,q}$ induced by the order in which vertices appear along the path.
    Let~$H$ be the graph with vertex set $\{w_{i,j}:(i,j)\in\mathcal{Q}'_m\}$ such that~$w_{i,j}$ and~$w_{i',j'}$ with $(i,j)<_L(i',j')$ are adjacent if and only if~$P_{i,j}$ contains a vertex in~$\Gamma_{i'',j''}$ for $(i'',j''):=\sigma(b_{i',j'})$.
    Note that~$H$ is $2$-degenerate.
    As $\abs{\mathcal{Q}'_m}\geq3k-2$, $H$ has an independent set~$I$ of size~$k$.
    
    We show that~$\mathcal{C}_1:=\{P_{i,j}:w_{i,j}\in I\}$ is an induced packing in~$G$.
    Towards a contradiction, suppose that~$G$ has an edge~$ab$ between distinct~$P_{i,j}$ and~$P_{i',j'}$ in~$\mathcal{C}_1$, where $a\in V(P_{i,j})$ and $b\in V(P_{i',j'})$.
    Since~$\mathcal{C}_0$ is an induced packing in~$\mathcal{H}$, $ab$ is a chord of~$\mathcal{H}$.
    We remark that every $(i'',j'')\in\mathcal{Q}'_m$ is lexicographically larger than $(p,q)$, and has non-empty~$\Gamma_{i'',j''}$.
    
    Suppose that $\sigma(a)=\sigma(b)$.
    Since~$P_{i,j}$ and~$P_{i',j'}$ are vertex-disjoint, $P_{p,q}$ contains both~$a$ and~$b$.
    We apply \cref{prop:induced1} for~$\mathcal{H}$ and~$ab$.
    By the construction of~$\mathcal{Q}'_m$, neither \cref{prop:induced1}\ref{item:prop1-1} nor \cref{prop:induced1}\ref{item:prop1-3} holds.
    Since~$a$ and~$b$ are admissible vertices of~$H_{p,q}$, \cref{prop:induced1}\ref{item:prop1-2} does not hold.
    Hence, \cref{prop:induced1} does not hold, a contradiction.

    Now, suppose that $\sigma(a)\neq\sigma(b)$.
    Without loss of generality, we may assume that $(i_1,j_1):=\sigma(a)<_L\sigma(b)=:(i_2,j_2)$.
    We apply \cref{prop:induced2} for~$\mathcal{H}$ and~$ab$.
    We divide into two cases according to whether $\sigma(b)=(i',j')$.

    \medskip
    \noindent
    \textbf{Case 1.} $\sigma(b)=(i',j')$.
    
    Since~$P_{i',j'}$ is not a detached $\mathcal{H}$-earring, \cref{prop:induced2}\ref{item:prop2-2} does not hold.
    Since~$P_{i,j}$ and~$P_{i',j'}$ are vertex-disjoint, \cref{prop:induced2}\ref{item:prop2-3} does not hold, and therefore \cref{prop:induced2}\ref{item:prop2-1} holds.
    As $\dist_{\mathcal{H}}(P_{i,j},P_{i',j'})\geq\ell$, we have $a\notin B_\mathcal{H}(\Gamma_{i',j'},\ell-1)$.
    Hence, $P_{i_1,j_1}$ is fragile and there is $x\in\Gamma_{i',j'}$ such that~$a$ and~$x$ are in distinct components of~$P^{\operatorname{side}}_{i_1,j_1}$.
    Since~$P_{i,j}$ and~$P_{i',j'}$ are vertex-disjoint, we have $\sigma(a)\neq(i,j)$.
    By the construction of~$\mathcal{Q}'_m$, $\sigma(a)\neq(p,q)$.
    Then $\Sigma_{i,j}=\Sigma_{i',j'}=\{(p,q),(i_1,j_1)\}$, a contradiction.
    
    \medskip
    \noindent
    \textbf{Case 2.} $\sigma(b)\neq(i',j')$.

    Note that $b\in\Gamma_{i',j'}$.
    Since~$b$ is an admissible vertex of~$H_{i_2,j_2}$, \cref{prop:induced2}\ref{item:prop2-1} does not hold.
    Suppose that \cref{prop:induced2}\ref{item:prop2-2} holds.
    Then~$b$ is not an admissible vertex of~$H_{i_2,2}$.
    Since~$b$ is an admissible vertex of~$H_{i_2,1}$, we have $(i_2,2)=(i',j')$.
    Let~$x$ be the vertex in $\Gamma_{i_2,2}\setminus\{b\}$.
    As $\dist_\mathcal{H}(P_{i,j},P_{i',j'})\geq\ell$, we have $a\notin B_\mathcal{H}(x,\ell-1)$.
    Hence, $P_{i_1,j_1}$ is fragile and~$a$ and~$x$ are in distinct components of~$P^{\operatorname{side}}_{i_1,j_1}$.
    Since~$P_{i,j}$ and~$P_{i',j'}$ are vertex-disjoint, we have $\sigma(a)\neq(i,j)$.
    Thus, $a\in\Gamma_{i,j}$.
    As $\Sigma_{i,j}\cap\Sigma_{i',j'}\subseteq\{(p,q)\}$, we have $\sigma(a)=\sigma(x)=(p,q)$, contradicting the construction of~$\mathcal{Q}'_m$.
    Thus, \cref{prop:induced2}\ref{item:prop2-3} holds.
    Then $a\in\Gamma_{i_2,j_2}$.
    Since~$I$ is an independent set of~$H$, we have $b\neq b_{i',j'}$ and hence $b=a_{i',j'}$ and $\sigma(b)=(p,q)$.
    Thus, $a\in\Gamma_{p,q}$, contradicting that~$P_{i,j}$ is disjoint from~$Y_{p,q}$.

    \medskip
    This completes the proof.
\end{proof}

We now construct two auxiliary graphs from~$\mathcal{H}$, and show that if any of them has many vertices, then~$G$ has an induced packing of~$k$ $(\ell,S)$-cycles.
We construct the first auxiliary graph as follows.
Let $\mathcal{P}_1(\mathcal{H})$ be the set of pairs $(i,j)\in\mathcal{P}(\mathcal{H})$ such that~$P_{i,j}$ is either
\begin{itemize}
    \item an $\mathcal{H}$-earring, or
    \item a regular $\mathcal{H}$-ear with $\dist_{H_{i,j-1}}(a_{i,j},b_{i,j})\leq\ell$.
\end{itemize}
For each $(i,j)\in\mathcal{P}_1(\mathcal{H})$, we define a path~$Q_{i,j}$ as follows:
\begin{itemize}
    \item if~$P_{i,j}$ is a detached $\mathcal{H}$-earring, then~$Q_{i,j}$ is a null graph,
    \item if~$P_{i,j}$ is an attached $\mathcal{H}$-earring, then~$Q_{i,j}$ is a trivial path at~$c_i$, and
    \item otherwise~$Q_{i,j}$ is a shortest $(a_{i,j},b_{i,j})$-path of~$H_{i,j-1}$.
\end{itemize}
For every $(i,j)\in\mathcal{P}_1(\mathcal{H})$ with $j\geq2$, since~$a_{i,j}$ and~$b_{i,j}$ are admissible vertices of~$H_{i,j-1}$, by \cref{lem:nearby} for $\mathcal{H}:=H_{i,j-1}$, we have $\sigma(a_{i,j})=\sigma(b_{i,j})$ and~$Q_{i,j}$ is a unique shortest $(a_{i,j},b_{i,j})$-path of~$H_{i,j-1}$.
We remark that every internal vertex of~$Q_{i,j}$ has degree~$2$ in~$\mathcal{H}$ as~$Q_{i,j}$ is a path of length at most~$\ell$ between branch vertices of~$\mathcal{H}$.
This implies that~$Q_{i,j}$ is a path of~$\mathcal{H}^-$.
Let~$A_1(\mathcal{H})$ be the graph with vertex set $\{w_{i,j}:(i,j)\in\mathcal{P}_1(\mathcal{H})\}$ such that~$w_{i,j}$ and~$w_{i',j'}$ with $(i,j)>_L(i',j')$ are adjacent if and only if either $(i',j')\in\Sigma_{i,j}$ or $\mu(i)\geq2$ and $\Sigma_{i,2}=\{(i,j),(i',j')\}$.
Note that~$A_1(\mathcal{H})$ is $2$-degenerate, because $\abs{\Sigma_{i,j}}\leq1$ for every $(i,j)\in\mathcal{P}_1(\mathcal{H})$.

We show that if $\abs{\mathcal{P}_1(\mathcal{H})}\geq3k(6k-5)$, then~$G$ has an induced packing of~$k$ $(\ell,S)$-cycles.

\begin{lemma}\label{lem:aux2}
    Let $(G,S)$ be a good pair and let~$\mathcal{H}$ be a maximal $(\ell,S)$-frame in~$G$.
    If $\abs{\mathcal{P}_1(\mathcal{H})}\geq3k(6k-5)$ for some positive integer~$k$, then~$G$ has an induced packing of~$k$ $(\ell,S)$-cycles.
\end{lemma}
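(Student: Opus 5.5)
We build one short cycle per element of $\mathcal{P}_1(\mathcal{H})$ and select an independent set of size $k$ in a bounded-degree conflict graph. For $(i,j)\in\mathcal{P}_1(\mathcal{H})$ let $D_{i,j}:=P_{i,j}\cup Q_{i,j}$. This is $P_{i,j}$ itself for an earring and the cycle formed by $P_{i,j}$ and the unique shortest path $Q_{i,j}$ otherwise. Since $Q_{i,j}$ is a path of $\mathcal{H}^-$ and $P_{i,j}$ is not a detour, $D_{i,j}^-=D_{i,j}$ is a cycle of $\mathcal{H}^-$, and by \cref{lem:all cycles} it is an $(\ell,S)$-cycle.

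\textbf{Reducing the set of cycles.} The bound $3k(6k-5)$ suggests first using the $2$-degeneracy of $A_1(\mathcal{H})$. This yields an independent set $\mathcal{I}$ of size at least $k(6k-5)$. Independence in $A_1(\mathcal{H})$ ensures that $P_{i,j}$ does not attach to $P_{i',j'}$, and excludes the special configuration of a detached earring linked by a one-edge ear $P_{i,2}$. Hence for distinct members of $\mathcal{I}$ the cycles $D_{i,j}$ and $D_{i',j'}$ are vertex-disjoint except possibly when $Q_{i,j}$ and $Q_{i',j'}$ lie on a common $P_{p,q}$ and overlap. Since every $Q_{i,j}$ has length at most $\ell$ and lies in $\mathcal{L}_0$ between admissible vertices, \cref{lem:nearby} and \cref{cor:short branch} imply that two such $Q$'s on the same $P_{p,q}$ are disjoint and in fact at distance at least $\ell$. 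Otherwise two of their end branch vertices would be within distance $\ell-1$ and hence form some $\Gamma_{x,y}$, which we rule out.

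\textbf{Splitting into regimes.} Group the elements of $\mathcal{I}$ by $\sigma(a_{i,j})$, the ear on which $Q_{i,j}$ sits. There are two regimes.
\begin{itemize}
  \item If some $(p,q)$ carries at least $6k-5$ elements, then all their cycles use disjoint subpaths of $P_{p,q}$. A chord between two of them is constrained by \cref{prop:induced1,prop:induced2}. Either it is local to some $\Gamma$, which the distance $\ge\ell$ separation forbids, or it is a fragile-side chord of $P_{p,q}$ or of the ears themselves.
  \item Otherwise there are at least $k$ distinct groups, and we pick one element per group. We then treat chords between different host ears via \cref{prop:induced2} and \cref{cor:induced3}.
\end{itemize}

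\textbf{Main obstacle.} The main obstacle is the fragile-side chords in the first regime. The plan is to define a conflict graph on these cycles in which each cycle is joined to cycles whose $Q$-segments lie on opposite sides of $P^*_{p,q}$ and are connected by a chord. We then use the crossing argument of \cref{clm:jump2-3}: two crossing chords between the sides would give a shorter fragile ear containing $P^*_{p,q}$, contradicting~\ref{item:ear1} or~\ref{item:ear2}. This should show that the conflict graph has maximum degree at most $2$ after discarding a bounded number of cycles, for instance those whose $Q$ contains $a^*_{p,q}$ or $b^*_{p,q}$, or those near $Q_p$ for detached earrings. The resulting independent set has size at least $(6k-5-\text{const})/3\ge k$, or at least $(6k-5)/(5+1)$ under a degree bound of $5$, which matches the constant.

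\textbf{Concluding.} Finally we verify that the chosen family is an induced packing in $G$. We do this by case analysis via \cref{cor:induced3} applied to any offending chord, in the same way as in the proof of \cref{lem:jump2}. Then \cref{cor:induced2}, with $D^-=D$, gives $k$ pairwise non-adjacent $(\ell,S)$-cycles.
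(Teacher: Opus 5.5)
\textbf{Gap in your second regime.} Picking one cycle per host ear (so the sets $\Sigma_{i,j}$ are pairwise disjoint), together with independence in $A_1(\mathcal{H})$, does not give an induced packing. Applying \cref{prop:induced2} and \cref{cor:induced3} cannot close the case that remains. Take chosen $(i,j)>_L(i',j')$ and let $(p,q):=\sigma(a_{i,j})$ be the host of $Q_{i,j}$. Suppose $P_{p,q}$ is a fragile regular ear with an attachment vertex $v\in\Gamma_{p,q}$ in the interior of $P_{i',j'}$. Suppose further that $G$ has a chord $uv$ with $u\in V(Q_{i,j})$ on the other side of $P^*_{p,q}$.

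This is exactly \cref{cor:induced3}\ref{item:packing5}, with $v$ in the role of $a$, and the frame permits it by \cref{prop:induced2}\ref{item:prop2-3}. Distinct hosts do not exclude it, because $\Sigma_{i',j'}$ only contains pairs below $(i',j')=\sigma(v)<_L(p,q)$. Independence in $A_1(\mathcal{H})$ does not exclude it either, because $(i',j')\notin\Sigma_{i,j}=\{(p,q)\}$. You need one more sparsification. For example, take the $2$-degenerate graph joining $w_{i,j}$ to $w_{i',j'}$ whenever $P_{i',j'}$ contains a vertex of $\Gamma_{p,q}$ for some $(p,q)\in\Sigma_{i,j}$. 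Extracting $k$ cycles from it needs about $3k-2$ distinct hosts. Your thresholds do not supply this: if every group has at most $6k-6$ of the $k(6k-5)$ elements, you are only guaranteed roughly $k+1$ groups.

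\textbf{Repair.} Make the one-host regime much cheaper; this also removes what you call the main obstacle, since no crossing argument is needed there. If $P_{p,q}$ is fragile, keep only the cycles whose $Q_{i,j}$ lies in the larger of the two subpaths of $P_{p,q}$ from $a^*_{p,q}$ to $\Gamma_{p,q}$. This kills every chord of types \ref{item:packing2} and \ref{item:packing3} at once, so $2k+1$ cycles on one host already yield $k$. If $P_{p,q}$ is not fragile, you only need to discard the at most two cycles near $Q_p$.

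Your claimed bounded degree in that regime is unjustified anyway. The crossing argument from the proof of \cref{lem:jump2} forbids only \emph{crossing} pairs of side-to-side chords. Nested chords are allowed, including chords from one segment to many segments on the other side, so the degree is not bounded.

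Detached earrings also have no $a_{i,j}$ to group by. Dispose of them first by observing that they form an induced packing among themselves, so at most $k-1$ of them remain. With at most $2k$ cycles per host, the remaining at least $6k(k-1)+1$ cycles span at least $3k-2$ hosts. The extra $2$-degenerate selection above then leaves $k$ cycles for which every case of \cref{cor:induced3} can be excluded. This is precisely the paper's accounting.
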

\begin{proof}
    Since~$A_1(\mathcal{H})$ is $2$-degenerate and has at least $3k(6k-5)$ vertices, it has an independent set~$I$ of size $k(6k-5)$.
    For each $w_{i,j}\in I$, let ${C_{i,j}:=P_{i,j}\cup Q_{i,j}}$ which is a cycle of~$H^-_{i,j}$.
    By \cref{lem:all cycles}, $C_{i,j}$ is an $(\ell,S)$-cycle.
    Let $\mathcal{C}_0:=\{C_{i,j}:w_{i,j}\in I\}$.
    We will use the following four claims.

    \begin{claim}\label{clm:aux2-1}
        The cycles in~$\mathcal{C}_0$ are pairwise at distance at least~$\ell$ in~$\mathcal{H}$.
    \end{claim}
    \begin{subproof}
        Let~$C_{i,j}$ and~$C_{i',j'}$ be cycles in~$\mathcal{C}_0$ with $(i,j)>_L(i',j')$.
        We first show that~$C_{i,j}$ and~$C_{i',j'}$ are vertex-disjoint.
        Suppose not.
        Let~$z$ be a vertex in $V(C_{i,j}\cap C_{i',j'})$.
        Since $V(P_{i,j})\setminus\Gamma_{i,j}$ is disjoint from $V(H_{i',j'})$, $z$ is in~$Q_{i,j}$.
        Thus, $z$ is an admissible vertex of~$H_{i,j-1}$.
        Then every vertex in $B_{H_{i,j-1}}(z,\ell-1)$ has degree~$2$ in~$H_{i,j-1}$.
        Thus, $z$ is not in~$Q_{i',j'}$ and hence is an internal vertex of~$P_{i',j'}$.
        Then~$Q_{i,j}$ is a subpath of $P_{i',j'}-\Gamma_{i',j'}$.
        Thus, $(i',j')\in\Sigma_{i,j}$, contradicting that~$w_{i,j}$ and~$w_{i',j'}$ are not adjacent in $A_1(\mathcal{H})$.

        We now show that $\dist_\mathcal{H}(C_{i,j},C_{i',j'})\geq\ell$.
        Since every internal vertex of~$Q_{i,j}$ and~$Q_{i',j'}$ has degree~$2$ in~$\mathcal{H}$, every shortest path of~$\mathcal{H}$ between~$C_{i,j}$ and~$C_{i',j'}$ is a $(P_{i,j},P_{i',j'})$-path.
        Since~$I$ is an independent set of~$A_1(\mathcal{H})$, by \cref{lem:argue distance}, we have $\dist_\mathcal{H}(C_{i,j},C_{i',j'})=\dist_\mathcal{H}(P_{i,j},P_{i',j'})\geq\ell$.
    \end{subproof}
    
    \begin{claim}\label{clm:aux2-2}
        If~$\mathcal{H}$ has a chord~$ab$ between distinct~$C_{i,j}$ and~$C_{i',j'}$ in~$\mathcal{C}_0$, then~$a$ or~$b$ is in $V(Q_{i,j}\cup Q_{i',j'})$.
    \end{claim}
    \begin{subproof}
        Suppose not.
        Without loss of generality, we may assume that $a\in V(P_{i,j})$ and $b\in V(P_{i',j'})$.
        Note that $a\notin\Gamma_{i,j}$ and $b\notin\Gamma_{i',j'}$.
        Thus, $\sigma(a)=(i,j)$ and $\sigma(b)=(i',j')$.
        We apply \cref{cor:induced3} for~$\mathcal{H}$, $\{C_1,C_2\}:=\{C_{i,j},C_{i',j'}\}$, and~$ab$.
        As $\sigma(a)\neq\sigma(b)$, neither~\ref{item:packing1} nor~\ref{item:packing2} holds.
        Since~$P_{i,j}$ and~$P_{i',j'}$ are vertex-disjoint, neither~\ref{item:packing3} nor~\ref{item:packing5} holds.
        Since~$I$ is an independent set of~$A_1(\mathcal{H})$, \ref{item:packing4} does not hold.
        By \cref{clm:aux2-1}, \ref{item:packing6} does not hold.
        Hence, \cref{cor:induced3} does not hold, a contradiction.
    \end{subproof}

    \begin{claim}\label{clm:aux2-3}
        The set of~$C_{i,j}\in\mathcal{C}_0$ with $\Gamma_{i,j}=\emptyset$ is an induced packing in~$G$.
    \end{claim}
    \begin{subproof}
        Towards a contradiction, suppose that~$G$ has an edge~$ab$ between distinct~$C_{i,j}$ and~$C_{i',j'}$ in~$\mathcal{C}_0$ with $\Gamma_{i,j}=\Gamma_{i',j'}=\emptyset$.
        By \cref{clm:aux2-1}, $ab$ is a chord of~$\mathcal{H}$.
        We apply \cref{cor:induced3} for~$\mathcal{H}$, $\{C_1,C_2\}:=\{C_{i,j},C_{i',j'}\}$, and~$ab$.
        As $\sigma(a)\neq\sigma(b)$, neither~\ref{item:packing1} nor~\ref{item:packing2} holds.
        As $\Gamma_{i,j}=\Gamma_{i',j'}=\emptyset$, none of~\ref{item:packing3}--\ref{item:packing5} hold.
        By \cref{clm:aux2-1}, \ref{item:packing6} does not hold.
        Hence, \cref{cor:induced3} does not hold, a contradiction.
    \end{subproof}
    
    \begin{claim}\label{clm:aux2-4}
        For a pair $(p,q)\in\mathcal{P}(\mathcal{H})$, if~$I$ has a subset~$I'$ of size $2k+1$ such that $\Sigma_{i,j}=\{(p,q)\}$ for every $w_{i,j}\in I'$, then~$G$ has an induced packing of~$k$ $(\ell,S)$-cycles.
    \end{claim}
    \begin{subproof}
        Note that for every $w_{i,j}\in I'$, $Q_{i,j}$ is a path of $P_{p,q}-B_{P_{p,q}}(\Gamma_{p,q},\ell-1)$.
        By \cref{lem:strict1}, $P_{p,q}$ is not an $\mathcal{H}$-detour.
        We divide into two cases according to whether~$P_{p,q}$ is fragile.
        
        \medskip
        \noindent
        \textbf{Case 1.} $P_{p,q}$ is fragile.

        Let~$P^1_{p,q}$ and~$P^2_{p,q}$ be the two subpaths of~$P_{p,q}$ between~$a^*_{p,q}$ and~$\Gamma_{p,q}$.
        For each $m\in[2]$, let~$I_m$ be the set of vertices $w_{i,j}\in I'$ such that~$Q_{i,j}$ is a subpath of~$P^m_{p,q}$.
        We have
        \[
            \abs{I_1}+\abs{I_2}\geq\abs{I'}-1=2k.
        \]
        Thus, for some $m\in[2]$, $\abs{I_m}\geq k$.

        We show that $\mathcal{C}_1:=\{C_{i,j}:w_{i,j}\in I_m\}$ is an induced packing in~$G$.
        Towards a contradiction, suppose that~$G$ has an edge~$ab$ between distinct~$C_{i,j}$ and~$C_{i',j'}$ in~$\mathcal{C}_1$ with $(i,j)>_L(i',j')$, where $a\in V(C_{i,j})$ and $b\in V(C_{i',j'})$.
        By \cref{clm:aux2-1}, $ab$ is a chord of~$\mathcal{H}$.
        We apply \cref{cor:induced3} for~$\mathcal{H}$, $\{C_1,C_2\}:=\{C_{i,j},C_{i',j'}\}$, and~$ab$.
        By \cref{clm:aux2-1}, \ref{item:packing6} does not hold.

        Suppose that $b\in V(Q_{i',j'})$.
        As $V(Q_{i',j'})\subseteq Y_{i',j'}$, we have $\sigma(a)\neq(i,j)$ and hence~$a$ is in~$Q_{i,j}$.
        Thus, $\sigma(a)=\sigma(b)=(p,q)$, and therefore none of~\ref{item:packing3}--\ref{item:packing5} hold.
        Since~$P_{p,q}$ is fragile, \ref{item:packing1} does not hold.
        By the construction of~$I_m$, \ref{item:packing2} does not hold.
        Hence, \cref{cor:induced3} does not hold, a contradiction.

        Now, suppose that $b\notin V(Q_{i',j'})$.
        By \cref{clm:aux2-2}, we have $a\in V(Q_{i,j})$ and hence $(p,q)=\sigma(a)<_L\sigma(b)=(i',j')$.
        Thus, neither~\ref{item:packing1} nor~\ref{item:packing2} holds.
        By the construction of~$I_m$, \ref{item:packing3} does not hold.
        Since~$P_{i',j'}$ is not a detached $\mathcal{H}$-earring, \ref{item:packing4} does not hold.
        Since~$C_{i,j}$ and~$C_{i',j'}$ are vertex-disjoint, \ref{item:packing5} does not hold.
        Hence, \cref{cor:induced3} does not hold, a contradiction.

        \medskip
        \noindent
        \textbf{Case 2.} $P_{p,q}$ is not fragile.

        By \cref{clm:aux2-1}, there are at most two vertices $w_{i,j}\in I'$ such that if~$P_{p,q}$ is a detached $\mathcal{H}$-earring, then~$Q_{i,j}$ contains a vertex in $B_{P_{p,q}}(Q_p,\ell-1)\setminus V(Q_p)$.
        Let~$I''$ be the set of the other vertices in~$I'$.
        Note that $\abs{I''}\geq2k-1\geq k$.

        We show that $\mathcal{C}_1:=\{C_{i,j}:w_{i,j}\in I''\}$ is an induced packing in~$G$.
        Towards a contradiction, suppose that~$G$ has an edge~$ab$ between distinct~$C_{i,j}$ and~$C_{i',j'}$ in~$\mathcal{C}_1$ with $(i,j)>_L(i',j')$, where $a\in V(C_{i,j})$ and $b\in V(C_{i',j'})$.
        By \cref{clm:aux2-1}, $ab$ is a chord of~$\mathcal{H}$.
        We apply \cref{cor:induced3} for~$\mathcal{H}$, $\{C_1,C_2\}:=\{C_{i,j},C_{i',j'}\}$, and~$ab$.
        By \cref{clm:aux2-1}, \ref{item:packing6} does not hold.
        
        Suppose that $b\in V(Q_{i',j'})$.
        As $V(Q_{i',j'})\subseteq Y_{i',j'}$, we have $\sigma(a)\neq(i,j)$ and hence~$a$ is in~$Q_{i,j}$.
        Thus, $\sigma(a)=\sigma(b)=(p,q)$, and therefore none of~\ref{item:packing3}--\ref{item:packing5} hold.
        By the construction of~$I''$, \ref{item:packing1} does not hold.
        Since~$P_{p,q}$ is not fragile, \ref{item:packing2} does not hold.
        Hence, \cref{cor:induced3} does not hold, a contradiction.

        Now, suppose that $b\notin V(Q_{i',j'})$.
        By \cref{clm:aux2-2}, we have $a\in V(Q_{i,j})$ and hence $(p,q)=\sigma(a)<_L\sigma(b)=(i',j')$.
        Thus, neither~\ref{item:packing1} nor~\ref{item:packing2} holds.
        Since~$P_{p,q}$ is not fragile, neither~\ref{item:packing3} nor~\ref{item:packing4} holds.
        Since~$C_{i,j}$ and~$C_{i',j'}$ are vertex-disjoint, \ref{item:packing5} does not hold.
        Hence, \cref{cor:induced3} does not hold, a contradiction.

        \medskip
        This proves the claim.
    \end{subproof}

    By \cref{clm:aux2-3}, we may assume that there are at most $k-1$ vertices $w_{i,j}\in I$ with $\Gamma_{i,j}=\emptyset$.
    Let~$I_1$ be the set of vertices $w_{i,j}\in I$ with $\Gamma_{i,j}\neq\emptyset$.
    Note that $\abs{I_1}\geq\abs{I}-(k-1)=6k(k-1)+1$.
    By \cref{clm:aux2-4}, for each $(p,q)\in\mathcal{P}(\mathcal{H})$, we may assume that there are at most~$2k$ vertices $w_{i,j}\in I$ with $\Sigma_{i,j}=\{(p,q)\}$.
    As $\abs{I_1}\geq6k(k-1)+1$, $I_1$ has a subset~$I_2$ of size~$3k-2$ such that $\Sigma_{i,j}\cap\Sigma_{i',j'}=\emptyset$ for all distinct $w_{i,j},w_{i',j'}\in I_2$.
    Let~$H$ be the graph with vertex set~$I_2$ such that~$w_{i,j}$ and~$w_{i',j'}$ with $(i,j)>_L(i',j')$ are adjacent if and only if~$P_{i',j'}$ contains a vertex in~$\Gamma_{i'',j''}$ for $(i'',j'')\in\Sigma_{i,j}$.
    Note that~$H$ is $2$-degenerate.
    Thus, $H$ has an independent set~$I_3$ of size~$k$.
    
    We show that $\mathcal{C}_1:=\{C_{i,j}:w_{i,j}\in I_3\}$ is an induced packing in~$G$.
    Towards a contradiction, suppose that~$G$ has an edge~$ab$ between distinct~$C_{i,j}$ and~$C_{i',j'}$ in~$\mathcal{C}_1$ with $(i,j)>_L(i',j')$, where $a\in V(C_{i,j})$ and $b\in V(C_{i',j'})$.
    By \cref{clm:aux2-1}, $ab$ is a chord of~$\mathcal{H}$.
    We apply \cref{cor:induced3} for~$\mathcal{H}$, $\{C_1,C_2\}:=\{C_{i,j},C_{i',j'}\}$, and~$ab$.
    By \cref{clm:aux2-1}, \ref{item:packing6} does not hold.
    We divide into five cases according to the comparison between~$\sigma(a)$ and~$\sigma(b)$ and whether $b\in V(Q_{i',j'})$.

    \medskip
    \noindent
    \textbf{Case 1.} $\sigma(a)=\sigma(b)$.

    Since~$C_{i,j}$ and~$C_{i',j'}$ are vertex-disjoint, we have $a\notin V(P_{i,j})$ and $b\notin V(P_{i',j'})$.
    Thus, $\Sigma_{i,j}=\Sigma_{i',j'}=\{\sigma(a)\}$, contradicting the construction of~$I_2$.

    \medskip
    \noindent
    \textbf{Case 2.} $\sigma(a)<_L\sigma(b)$ and $b\in V(Q_{i',j'})$.

    Let $\sigma(b):=(i'',j'')$.
    Since~$b$ is an admissible vertex of~$H_{i'',j''}$, \ref{item:packing3} does not hold.
    Suppose that~\ref{item:packing4} holds.
    Then for $\sigma(b):=(i'',1)$, $P_{i'',1}$ is a detached $\mathcal{H}$-earring, $\mu(i'')\geq2$, $\abs{E(P_{i'',2})}=1$, and $b\in\Gamma_{i'',2}$.
    This implies that $(i',j')=(i'',2)$.
    By \cref{lem:bridge}, $P_{i'',2}$ has exactly one end in~$P_{i'',1}$, contradicting that $\sigma(a_{i,j})=\sigma(b_{i,j})$.
    Thus, \ref{item:packing5} holds.
    Then $a\in\Gamma_{i'',j''}$.
    As $V(Q_{i',j'})\subseteq Y_{i',j'}$, we have $\sigma(a)\neq(i,j)$ and hence~$a$ is in~$Q_{i,j}$.
    This implies that $(i'',j'')=(i,j)$, contradicting that $(i,j)>_L(i',j')$.
    Hence, \cref{cor:induced3} does not hold, a contradiction.

    \medskip
    \noindent
    \textbf{Case 3.} $\sigma(a)<_L\sigma(b)$ and $b\notin V(Q_{i',j'})$.

    Note that $\sigma(b)=(i',j')$.
    By \cref{clm:aux2-2}, $a$ is in~$Q_{i,j}$.
    By the construction of~$I_2$, \ref{item:packing3} does not hold.
    Since~$P_{i',j'}$ is not a detached $\mathcal{H}$-earring, \ref{item:packing4} does not hold.
    Since~$C_{i,j}$ and~$C_{i',j'}$ are vertex-disjoint, \ref{item:packing5} does not hold.
    Hence, \cref{cor:induced3} does not hold, a contradiction.

    \medskip
    \noindent
    \textbf{Case 4.} $\sigma(b)<_L\sigma(a)$ and $b\in V(Q_{i',j'})$.

    As $V(Q_{i',j'})\subseteq Y_{i',j'}$, we have $\sigma(a)\neq(i,j)$ and hence~$a$ is in~$Q_{i,j}$.
    Let $\sigma(a):=(i'',j'')$.
    Since~$a$ is an admissible vertex of~$H_{i'',j''}$, \ref{item:packing3} does not hold.
    Suppose that~\ref{item:packing4} holds.
    Then~$P_{i'',j''}$ is a detached $\mathcal{H}$-earring, $\mu(i'')\geq2$, $\abs{E(P_{i'',2})}=1$, and $a\in\Gamma_{i'',2}$.
    This implies that $(i,j)=(i'',2)$.
    By \cref{lem:bridge}, $P_{i'',2}$ has exactly one end in~$P_{i'',1}$, contradicting that $\sigma(a_{i,j})=\sigma(b_{i,j})$.
    Thus, \ref{item:packing5} holds.
    Then $b\in\Gamma_{i'',j''}$.
    This implies that $(i'',j'')=(i',j')$ and hence~$P_{i',j'}$ contains a vertex in~$\Gamma_{i'',j''}$, contradicting that~$I_3$ is an independent set of~$H$.
    Hence, \cref{cor:induced3} does not hold, a contradiction.

    \medskip
    \noindent
    \textbf{Case 5.} $\sigma(b)<_L\sigma(a)$ and $b\notin V(Q_{i',j'})$.

    Note that $\sigma(b)=(i',j')$.
    By \cref{clm:aux2-2}, $a$ is in~$Q_{i,j}$.
    Let $\sigma(a):=(i'',j'')$.
    Since~$a$ is an admissible vertex of~$H_{i'',j''}$, \ref{item:packing3} does not hold.
    By the construction of~$I_2$, \ref{item:packing5} does not hold.
    Thus~\ref{item:packing4} holds.
    Then~$P_{i'',j''}$ is a detached $\mathcal{H}$-earring, $\mu(i'')\geq2$, $\abs{E(P_{i'',2})}=1$, and $a\in\Gamma_{i'',2}$.
    This implies that $(i,j)=(i'',2)$.
    By \cref{lem:bridge}, $P_{i'',2}$ has exactly one end in~$P_{i'',1}$, contradicting the fact that $\sigma(a_{i,j})=\sigma(b_{i,j})$.
    Hence, \cref{cor:induced3} does not hold, a contradiction.

    \medskip
    This completes the proof.
\end{proof}

We construct the second auxiliary graph as follows.
Let $\mathcal{P}_2(\mathcal{H})$ be the set of pairs $(i,j)\in\mathcal{P}(\mathcal{H})$ satisfying one of the following.
\begin{enumerate}[label=(\alph*)]
    \item\label{item:aux3-1} $\mathcal{H}$ has a chord~$e$ between vertices of~$P_{i,j}$ such that $P_{i,j}+e$ has an $(\ell,S)$-cycle containing~$e$.
    \item\label{item:aux3-3} $P_{i,j}$ is fragile and~$\mathcal{H}$ has distinct vertices $u$, $u'$, and~$v$ such that
    \begin{itemize}
        \item $\sigma(v):=(i',j')\neq(i,j)$ for some $j'\geq2$,
        \item $u$ and~$u'$ are in distinct components of~$P^{\operatorname{side}}_{i,j}$,
        \item $uv$ is a chord of~$\mathcal{H}$, $u'v$ is an edge of~$\mathcal{H}$, and
        \item the cycle obtained by concatenating $uvu'$ and the $(u,u')$-path of $P_{i,j}-\Gamma_{i,j}$ is an $(\ell,S)$-cycle.
    \end{itemize}
\end{enumerate}
Let~$A_2(\mathcal{H})$ be the graph with vertex set $\{w_{i,j}:(i,j)\in\mathcal{P}_2(\mathcal{H})\}$ such that~$w_{i,j}$ and~$w_{i',j'}$ with $(i,j)>_L(i',j')$ are adjacent if and only if either
\begin{itemize}
    \item $P_{i,j}$ and~$P_{i',j'}$ intersect, or
    \item $P_{i',j'}$ contains a vertex in $\Gamma_{i'',j''}$ for some $(i'',j'')\in\Sigma_{i,j}$.
\end{itemize}
For each $(i,j)\in\mathcal{P}_2(\mathcal{H})$, $P_{i,j}$ has an internal vertex and hence $\abs{E(P_{i,j})}\geq2$.
Note that~$P_{i,j}$ intersects at most two $P_{i',j'}$ with $(i',j')<_L(i,j)$ and that $\abs{\Gamma_{i'',j''}}\leq2$ for each $(i'',j'')\in\Sigma_{i,j}$.
Thus, $A_2(\mathcal{H})$ is $6$-degenerate.

We show that if $\mathcal{P}_2(\mathcal{H})$ is large, then~$G$ has an induced packing of~$k$ $(\ell,S)$-cycles.

\begin{lemma}\label{lem:aux3}
    Let $(G,S)$ be a good pair and let~$\mathcal{H}$ be a maximal $(\ell,S)$-frame in~$G$.
    If
    \[
        \abs{\mathcal{P}_2(\mathcal{H})}\geq20160000\cdot k^3(2k-1)^2
    \]
    for some positive integer~$k$, then~$G$ has an induced packing of~$k$ $(\ell,S)$-cycles.
\end{lemma}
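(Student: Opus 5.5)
Since $A_2(\mathcal{H})$ is $6$-degenerate, I would first extract an independent set $I$ of size at least $\abs{\mathcal{P}_2(\mathcal{H})}/7$. For each $w_{i,j}\in I$ I would fix a witness cycle $C_{i,j}$. In case~\ref{item:aux3-1} it is the $(\ell,S)$-cycle in $P_{i,j}+e$ through the chord $e$, which is a subpath of $P_{i,j}$ plus $e$. In case~\ref{item:aux3-3} it is the cycle formed by $uvu'$ and the $(u,u')$-path of $P_{i,j}-\Gamma_{i,j}$. Each such cycle lies in $P_{i,j}$ together with at most one extra vertex $v$, which lies on some $P_{i',j'}$ with $j'\geq2$. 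By definition of $\mathcal{P}_2(\mathcal{H})$ these are $(\ell,S)$-cycles; no passage to $C^-$ is needed, since they are built directly from $P_{i,j}$ and chords. Independence in $A_2(\mathcal{H})$ gives that distinct $P_{i,j}$ are vertex-disjoint. It also rules out the configuration where a chord-end $\Gamma$-vertex of one ear lies on another selected ear. So the selected cycles are vertex-disjoint except possibly through the extra vertices $v$.

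The first step is to control the extra vertices and obtain pairwise distance at least $\ell$ in $\mathcal{H}$. By \cref{prop:induced1,prop:induced2}, a chord with both ends on a fragile $P_{i,j}$ giving an $(\ell,S)$-cycle must be of type \cref{prop:induced1}\ref{item:prop1-1} or \ref{item:prop1-3}, so both ends lie in the two sides or near $Q_i$. In case~\ref{item:aux3-3}, $v$ is adjacent in $\mathcal{H}$ to $u'$, so $u'\in\Gamma_{i',j'}$, $(i,j)\in\Sigma_{i',j'}$, and $v$ is the neighbour of $u'$ on $P_{i',j'}$. I would group the selected pairs by the ear $(i',j')$ carrying $v$ and by the sets $\Sigma$, in the spirit of \cref{lem:jump1,lem:jump2,lem:jump3}. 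If many selected pairs share a common $\Sigma$-structure, those lemmas already produce an induced packing of $k$ $(\ell,S)$-cycles. Otherwise a further degenerate auxiliary graph, encoding ``$v$ of one witness lies on or near the ear of another,'' has bounded degeneracy. I expect the thresholds to multiply out to the stated polynomial, roughly $7\cdot(\text{jump bounds})\cdot k$, and passing to an independent set in it makes the cycles pairwise disjoint. Then \cref{lem:argue distance} together with \cref{cor:chord} upgrades disjointness to distance at least $\ell$ in $\mathcal{H}$, after discarding the $O(1)$ per-ear exceptions near $Q_p$ or near $a^*,b^*$.

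The second step is inducedness in $G$. Suppose some edge $ab$ joins two selected cycles. Either it is an edge of $\mathcal{H}$, which is excluded by the distance bound and \cref{cor:short branch}, or it is a chord. In the chord case I would run the case analysis of \cref{cor:induced3} exactly as in the proofs of \cref{lem:jump2,lem:aux2}. Item~\ref{item:packing6} is excluded by the distance bound. Items~\ref{item:packing1} and~\ref{item:packing2} are excluded because the cycles come from different ears, or from sides we have fixed by choosing one of two halves, as in \cref{lem:jump3}. Items~\ref{item:packing3}--\ref{item:packing5} are excluded using independence in $A_2(\mathcal{H})$: the $\Gamma$-condition in its definition, plus the fact that the pairs are vertex-disjoint.

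The main obstacle is the extra vertex $v$ in case~\ref{item:aux3-3}. It lies outside $P_{i,j}$, adjacent to a branch vertex $u'\in\Gamma_{i',j'}$, so the chord characterisation for $v$ involves the ear $P_{i',j'}$ rather than $P_{i,j}$. Here I would first try splitting into $\mathcal{P}_2$-pairs of type~\ref{item:aux3-1} and of type~\ref{item:aux3-3}, keeping the larger class. For type~\ref{item:aux3-3} I would additionally require that the ears $P_{i',j'}$ carrying the $v$'s are pairwise distinct and that their $\Sigma$-sets are disjoint, applying \cref{lem:jump1,lem:jump2} whenever this fails for many pairs. This accounts for the large polynomial factor $k^3(2k-1)^2$ in the bound.
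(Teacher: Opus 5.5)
Your opening move (an independent set of size $\abs{\mathcal{P}_2(\mathcal{H})}/7$ in $A_2(\mathcal{H})$) and your endgame (\cref{lem:jump1,lem:jump2} for a large class with a common $\Sigma$) match the paper. The step connecting them is wrong, and the part of the proof that actually uses $\mathcal{P}_2(\mathcal{H})$ is missing.

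\textbf{The reduction to the jump lemmas.} You propose to require that the relevant $\Sigma$-sets be pairwise disjoint, and to invoke \cref{lem:jump1,lem:jump2} ``whenever this fails for many pairs''. Those lemmas need many ears with literally the same set $\Sigma_{i,j}$. A family can fail disjointness robustly without any $\Sigma$ being repeated. The typical case is a star: many selected ears hanging off one fragile $P_{p,q}$ with pairwise distinct second attachments, or many carrying ears landing on a common ear.

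The paper handles this with a sunflower-type count. Take a maximal $M\subseteq I$ with pairwise disjoint $\Sigma$'s and, for each $(p,q)\in\Sigma_M$, a maximal star with kernel $(p,q)$. If any of these reaches size $12(2k-1)$, a separate claim finishes. Otherwise $I$ takes fewer than $288(2k-1)^2$ distinct values of $\Sigma_{i,j}$, so some $10000k^3$ members share one $\Sigma$ and \cref{lem:jump1} or \cref{lem:jump2} applies. This is where $20160000\,k^3(2k-1)^2=7\cdot288(2k-1)^2\cdot10000k^3$ comes from.

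\textbf{The sunflower claim is not supplied.} This claim is the only place the $\mathcal{P}_2(\mathcal{H})$ witnesses are used. \cref{lem:jump3} only makes the ears $P_{i,j}$ an induced packing in $G$; it produces no cycles, so it does not ``already produce'' the packing. From it, a type~\ref{item:aux3-1} witness has vertex set inside $V(P_{i,j})$ and inherits inducedness for free.

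For type~\ref{item:aux3-3}, the configuration to forbid is that the far end of the carrying ear lies on another selected ear. This far end is the vertex of $\Gamma_{r,s}\setminus\{u'_{i,j}\}$ with $(r,s)=\sigma(v_{i,j})$. \cref{prop:induced2}\ref{item:prop2-1} does not exclude a chord from $v_{i,j}$ into that ear near that end. This relation has out-degree at most one, because the other vertex of $\Gamma_{r,s}$ is $u'_{i,j}\in V(P_{i,j})$ and the selected ears are disjoint. So \cref{lem:digraph} keeps $k$ of $3k-2$ pairs, and an edge at $v$ is then excluded via \cref{prop:induced2} and \cref{lem:nearby}. Your ``further degenerate auxiliary graph'' is never defined or bounded, so this step is absent.

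\textbf{Two further problems in your inducedness step.} First, \cref{cor:induced3} is stated for an induced packing of cycles of $\mathcal{H}$, and your witness cycles contain the chord $e$ or $uv$. You cannot invoke it; you must apply \cref{prop:induced1,prop:induced2} to the offending edge directly. Second, case~\ref{item:packing3} is not excluded by independence in $A_2(\mathcal{H})$. Two ears attached to opposite sides of a common fragile parent are independent there, which is exactly why \cref{lem:jump3} splits by sides.
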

\begin{proof}
    Since~$A_2(\mathcal{H})$ is $6$-degenerate and has at least
    \[
        20160000\cdot k^3(2k-1)^2=7\cdot2880000\cdot k^3(2k-1)^2
    \]
    vertices, it has an independent set~$I$ of size $2880000\cdot k^3(2k-1)^2$.
    Let $\mathcal{C}_0:=\{P_{i,j}:w_{i,j}\in I\}$.
    By the definition of the fragility and~\ref{item:ear3}, for every $(i,j)\in\mathcal{P}_2(\mathcal{H})$, $P_{i,j}$ is not a detached $\mathcal{H}$-earring.
    By the construction of~$I$, the graphs in~$\mathcal{C}_0$ are pairwise vertex-disjoint.
    In addition, by \cref{lem:argue distance}, they are pairwise at distance at least~$\ell$ in~$\mathcal{H}$.
    This obviously implies that~$\mathcal{C}_0$ is an induced packing in~$\mathcal{H}$.
    We will use the following claim.

    \begin{claim}\label{clm:aux3-1}
        For a pair $(p,q)\in\mathcal{P}(\mathcal{H})$, if~$I$ has a subset~$I'$ of size $12(2k-1)$ such that for all distinct $w_{i,j},w_{i',j'}\in I'$, $\abs{\Sigma_{i,j}}=\abs{\Sigma_{i',j'}}=2$ and $\Sigma_{i,j}\cap\Sigma_{i',j'}\subseteq\{(p,q)\}$, then~$G$ has an induced packing of~$k$ $(\ell,S)$-cycles.
    \end{claim}
    \begin{subproof}
        By \cref{lem:jump3}, $I'$ has a subset~$I''$ of size $4k-2$ such that $\{P_{i,j}:w_{i,j}\in I''\}$ is an induced packing in~$G$.
        If there are at least~$k$ vertices $w_{i,j}\in I''$ such that $(i,j)$ satisfies~\ref{item:aux3-1}, then we are done.
        Thus, we may assume that this is not the case.
        Let~$J$ be the set of vertices $w_{i,j}\in I''$ such that $(i,j)$ satisfies~\ref{item:aux3-3}.
        Note that $\abs{J}\geq3k-2$.
        For each $w_{i,j}\in J$, we denote by~$u_{i,j}$, $u'_{i,j}$, and~$v_{i,j}$ the vertices taking the roles of~$u$, $u'$, and~$v$ in~\ref{item:aux3-3}, respectively.

        Let~$H$ be the digraph with vertex set~$J$ such that there is an arc from~$w_{i,j}$ and~$w_{i',j'}$ if and only if $(i,j)\neq(i',j')$ and~$P_{i',j'}$ contains a vertex in~$\Gamma_{r,s}$ for $(r,s):=\sigma(v_{i,j})$.
        As $\abs{J}\geq3k-2$, by \cref{lem:digraph}, $H$ has an independent set~$J'$ of size~$k$.
        For each $w_{i,j}\in J'$, let $P'_{i,j}$ be the cycle obtained by concatenating $u_{i,j}v_{i,j}u'_{i,j}$ and the $(u_{i,j},u'_{i,j})$-path of $P_{i,j}-\Gamma_{i,j}$, which is an $(\ell,S)$-cycle.

        We show that $\mathcal{C}_1:=\{P'_{i,j}:w_{i,j}\in J'\}$ is an induced packing in~$G$.
        Towards a contradiction, suppose that~$G$ has an edge~$ab$ between distinct~$P'_{i,j}$ and~$P'_{i',j'}$ in~$\mathcal{C}_1$, where $a\in V(P'_{i,j})$ and $b\in V(P'_{i',j'})$.
        As $\dist_\mathcal{H}(P_{i,j},P_{i',j'})\geq\ell$, $ab$ is not an edge of~$\mathcal{H}$ and hence is a chord of~$\mathcal{H}$.
        Without loss of generality, we may assume that $(i_1,j_1):=\sigma(a)\leq_L\sigma(b)=:(i_2,j_2)$.
        Since $\{P_{i,j}:w_{i,j}\in I''\}$ is an induced packing in~$G$, either $a=v_{i,j}$ or $b=v_{i',j'}$.
        If $a=v_{i,j}$, then $\sigma(a)=\sigma(b)$, because otherwise $b\in Z_{i_1,j_1}$.
        This contradicts that~$J'$ is an independent set of~$H$.
        Hence, $a\neq v_{i,j}$ and $b=v_{i',j'}$.
        Thus, by~\ref{item:aux3-3}, we have $u'_{i',j'}\in\Gamma_{i_2,j_2}$.
        Without loss of generality, we may assume that $u'_{i',j'}=b_{i_2,j_2}$.
        Note that $\sigma(b_{i_2,j_2})=(i',j')$.

        Suppose that $\sigma(a)=\sigma(b)$.
        Since~$P_{i,j}$ and~$P_{i',j'}$ are vertex-disjoint, we have $\sigma(a)\neq(i,j)$ and hence $a\in\Gamma_{i,j}$.
        Then~$P_{i',j'}$ contains a vertex in $\Gamma_{i_1,j_1}$, contradicting that~$I$ is an independent set of $A_2(\mathcal{H})$.
        Hence, $\sigma(a)\neq\sigma(b)$.
        Note that~$a$ is an admissible vertex of~$H_{i_2,j_2-1}$, because otherwise $b\in Z_{i_2,j_2-1}$.
        Thus, $\sigma(a)=(i,j)$.
        We apply \cref{prop:induced2} for~$\mathcal{H}$ and~$ab$.
        Since~$P_{i_2,j_2}$ is not a detached $\mathcal{H}$-earring, \cref{prop:induced2}\ref{item:prop2-2} does not hold.
        By the construction of~$J'$, \cref{prop:induced2}\ref{item:prop2-3} does not hold.
        Thus, \cref{prop:induced2}\ref{item:prop2-1} holds for some $x\in\Gamma_{i_2,j_2}$.
        As $\dist_\mathcal{H}(P_{i,j},P_{i',j'})\geq\ell$ and $\sigma(a)=(i,j)$, we have $x=a_{i_2,j_2}$.
        By the construction of~$J'$, $a_{i_2,j_2}$ is not in~$P_{i,j}$.
        Thus, both~$a$ and~$b$ are in $B_\mathcal{H}(a_{i_2,j_2},\ell-1)=B_{H_{i_2,j_2}}(a_{i_2,j_2},\ell-1)$.
        As $\sigma(a)<_L\sigma(b)$, we have $a\in B_{H_{i_2,j_2-1}}(a_{i_2,j_2},\ell-1)$.
        Since~$a$ and~$a_{i_2,j_2}$ are admissible vertices of~$H_{i_2,j_2-1}$, by \cref{lem:nearby} for $\mathcal{H}:=H_{i_2,j_2-1}$, we have $\sigma(a_{i_2,j_2})=\sigma(a)=(i,j)$, contradicting that~$J'$ is an independent set of~$H$.
    \end{subproof}
    
    Let~$M$ be a maximal subset of~$I$ such that $\Sigma_{i,j}\cap\Sigma_{i',j'}=\emptyset$ for all distinct $w_{i,j},w_{i',j'}\in M$.
    By \cref{clm:aux3-1}, we may assume that $\abs{M}\leq24k-13$.
    Let $\Sigma_M:=\bigcup_{w_{i,j}\in M}\Sigma_{i,j}$.
    Note that $\abs{\Sigma_M}\leq2\abs{M}$.
    For each $(p,q)\in\Sigma_M$, let $I_{p,q}$ be the subset of~$I$ such that for all distinct $w_{i,j},w_{i',j'}\in I_{p,q}$, $\abs{\Sigma_{i,j}}=\abs{\Sigma_{i',j'}}=2$ and $\Sigma_{i,j}\cap\Sigma_{i',j'}=\{(p,q)\}$.
    By \cref{clm:aux3-1}, for each $(p,q)\in\Sigma_M$, we may assume that $\abs{I_{p,q}}\leq24k-13$.
    Then the number of distinct $\Sigma_{i,j}$ for $w_{i,j}\in I$ is at most
    \[
        \abs{\Sigma_M}+\abs{\Sigma_M}\cdot(24k-13)\leq2(24k-13)(24k-12)<2(24k-12)^2=288(2k-1)^2.
    \]
    As $\abs{I}=2880000\cdot k^3(2k-1)^2$, $I$ has a subset~$J$ of size $10000k^3$ such that $\Sigma_{i,j}=\Sigma_{i',j'}\neq\emptyset$ for all $w_{i,j},w_{i',j'}\in J$.
    We remark that
    \[
        10000k^3\geq\max\{16(k+8)^2+16,(2k+5)(26k-19)^2\}
    \]
    for every $k\geq1$.
    Thus, by \cref{lem:jump1} or \cref{lem:jump2}, $G$ has an induced packing of~$k$ $(\ell,S)$-cycles.
\end{proof}

We now prove \cref{prop:final}.

\begin{proof}[Proof of \cref{prop:final}]
    By \cref{lem:aux2,lem:aux3}, we may assume that
    \begin{align*}
        \rho_1&:=\abs{\mathcal{P}_1(\mathcal{H})}<3k(6k-5),\\
        \rho_2&:=\abs{\mathcal{P}_2(\mathcal{H})}<20160000\cdot k^3(2k-1)^2<3\cdot10^7\cdot k^3(2k-1)^2.
    \end{align*}
    We are going to construct a subgraph~$\mathcal{H}'$ of~$\mathcal{H}^-$ such that every set of pairwise vertex-disjoint cycles of~$\mathcal{H}'$ is an induced packing in~$G$.
    Let $\mathcal{P}_3(\mathcal{H})$ be the set of pairs $(i,j)\in\mathcal{P}(\mathcal{H})\setminus\mathcal{P}_2(\mathcal{H})$ with $j\geq2$ such that $\Gamma_{i,j}\subseteq\bigcup_{(i,j)\in\mathcal{P}_2(\mathcal{H})}V(P_{i,j})$.
    If
    \[
        \rho_3:=\abs{\mathcal{P}_3(\mathcal{H})}\geq\left(\rho_2+\frac{\rho_2(\rho_2-1)}{2}\right)\cdot10000k^3,
    \]
    then $\mathcal{P}_3(\mathcal{H})$ has a set~$I$ of size $10000k^3$ such that $\Sigma_{i,j}=\Sigma_{i',j'}$ for all $(i,j),(i',j')\in I$.
    Thus, by \cref{lem:jump1} or \cref{lem:jump2}, $G$ has an induced packing of~$k$ $(\ell,S)$-cycles.
    Hence, we may assume that
    \[
        \rho_3<\left(\rho_2+\frac{\rho_2(\rho_2-1)}{2}\right)\cdot10000k^3\leq\rho_2^2\cdot10000k^3<10^{19}\cdot k^9(2k-1)^4.
    \]
    For each $(i,j)\in\mathcal{P}_2(\mathcal{H})$, we define two subpaths~$P^1_{i,j}$ and~$P^2_{i,j}$ of~$P_{i,j}$ as follows:
    \begin{itemize}
        \item if~$P_{i,j}$ is fragile, then let~$P^1_{i,j}$ and~$P^2_{i,j}$ be the two subpaths of~$P_{i,j}$ between~$\Gamma_{i,j}$ and $\{a^*_{i,j},b^*_{i,j}\}$ which are edge-disjoint from $P^*_{i,j}$, and
        \item otherwise let $P^1_{i,j}:=P_{i,j}$ and let~$P^2_{i,j}$ be a null graph.
    \end{itemize}
    Let $\mathcal{P}_4(\mathcal{H})$ be the set of pairs $(p,q)\in\mathcal{P}(\mathcal{H})\setminus(\mathcal{P}_2(\mathcal{H})\cup\mathcal{P}_3(\mathcal{H}))$ with $q\geq2$.
    For each $(i,j)\in\mathcal{P}_2(\mathcal{H})$, let~$r_{i,j}$ and~$r'_{i,j}$ be the number of pairs $(p,q)\in\mathcal{P}_4(\mathcal{H})$ such that~$P^1_{i,j}$ and~$P^2_{i,j}$ contain a vertex in $\Gamma_{p,q}$, respectively.
    Without loss of generality, for every $(i,j)\in\mathcal{P}_2(\mathcal{H})$, we may assume that $r_{i,j}\geq r'_{i,j}$.

    Let~$\mathcal{H}'$ be the graph obtained from~$\mathcal{H}^-$ by removing every vertex~$v$ such that either
    \begin{enumerate}[label=(R\arabic*)]
        \item\label{rule1} $\deg_\mathcal{H}(v)=4$, or
        \item\label{rule2} for some detached $\mathcal{H}$-earring~$P_{i,1}$, $v$ is an end of~$Q_i$ or~$P_{i,2}$, or
        \item\label{rule3} $v=a_{i,j}$ for some $(i,j)\in\mathcal{P}_1(\mathcal{H})$ with $j\geq2$, or
        \item\label{rule4} $v\in V(P^2_{i,j})$ for some $(i,j)\in\mathcal{P}_2(\mathcal{H})$,
    \end{enumerate}    
    and then recursively removing vertices of degree at most~$1$.
    Note that~$\mathcal{H}'$ is a subcubic graph without pendant vertices, unless it is a null graph.
    We will use the following claim.
    
    \begin{claim}\label{clm:final}
        If~$\mathcal{H}'$ has vertex-disjoint cycles~$C_1$ and~$C_2$, then $\{C_1,C_2\}$ is an induced packing in~$G$.
    \end{claim}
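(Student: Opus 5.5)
Suppose, towards a contradiction, that $G$ has an edge $ab$ with $a\in V(C_1)$ and $b\in V(C_2)$. I would first reduce to the case where $C_1$ and $C_2$ are viewed as cycles of~$\mathcal{H}$. Every cycle of $\mathcal{H}'\subseteq\mathcal{H}^-$ is of the form $C^-$ for a cycle $C$ of~$\mathcal{H}$, obtained by undoing the detour replacements. So by \cref{cor:induced2} it suffices to show that the corresponding cycles $\widetilde C_1,\widetilde C_2$ of~$\mathcal{H}$ form an induced packing in~$G$. Rule~\ref{rule1} removes every degree-$4$ vertex, so these cycles are vertex-disjoint in~$\mathcal{H}$. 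Next, if $ab$ were an edge of~$\mathcal{H}$, then $a$ and~$b$ would be adjacent branch vertices. By \cref{cor:short branch}, $\{a,b\}=\Gamma_{x,y}$ for a regular ear $P_{x,y}$ of length~$1$, and then \cref{cor:chord} applies. In case~\ref{item:edge1}, $(x,y)\in\mathcal{P}_1(\mathcal{H})$ and $a_{x,y}$ is deleted by Rule~\ref{rule3}. In case~\ref{item:edge2}, the end of $P_{x,2}$ is deleted by Rule~\ref{rule2}. Either way we get a contradiction, so $ab$ is a chord of~$\mathcal{H}$, and \cref{cor:induced3} applies to $\{\widetilde C_1,\widetilde C_2\}$ and~$ab$.

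I would then rule out the six outcomes in turn.
\begin{itemize}
\item Outcome~\ref{item:packing1} puts the chord within distance $\ell-1$ of the ends of~$Q_p$ on a detached earring. Both vertex-disjoint cycles must then pass through an end of~$Q_p$, or else $P_{p,1}$ would be contained in one cycle. Rule~\ref{rule2} forbids this.
\item Outcome~\ref{item:packing2} is a chord joining the two sides of a fragile $P_{p,q}$. Take the cycle of $P_{p,q}+ab$ through~$P^*_{p,q}$. Its length is at least~$\ell$, since $a$ and~$b$ are admissible and far apart, so it is an $(\ell,S)$-cycle. Hence $(p,q)$ satisfies~\ref{item:aux3-1} and lies in $\mathcal{P}_2(\mathcal{H})$. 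Rule~\ref{rule4} deletes $P^2_{p,q}$, so one of $a,b$ is missing from~$\mathcal{H}'$.
\item Outcomes~\ref{item:packing3} and~\ref{item:packing4} give a vertex $v$, namely $b$ or its neighbour on~$P_{r,s}$, joined by a chord to one side of a fragile $P_{p,q}$ and by an $\mathcal{H}$-edge to a vertex on the opposite side. The resulting cycle witnesses~\ref{item:aux3-3}, so $(p,q)\in\mathcal{P}_2(\mathcal{H})$. The normalisation $r_{i,j}\ge r'_{i,j}$ and Rule~\ref{rule4} then make one of the relevant vertices lie in the deleted side.
\item Outcome~\ref{item:packing5} is symmetric, with $P_{r,s}$ playing the role of the fragile ear. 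Its chord gives an $(\ell,S)$-cycle inside $P_{r,s}+ab$, so $(r,s)$ satisfies~\ref{item:aux3-1}, and Rule~\ref{rule4} again applies.
\item Outcome~\ref{item:packing6} yields a regular ear $P_{i,j}$ with $\dist_{H_{i,j}}(a_{i,j},b_{i,j})\le\ell-1$. Either $|E(P_{i,j})|\le\ell-1$, which is handled by \cref{cor:chord} as above, or $(i,j)\in\mathcal{P}_1(\mathcal{H})$ and $a_{i,j}$ is deleted by Rule~\ref{rule3}. In both cases one of $\widetilde C_1,\widetilde C_2$ cannot contain the required~$x_h$.
\end{itemize}

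The main obstacle is the fragile cases~\ref{item:packing3}--\ref{item:packing5}. There I must verify two things. First, the cycle built from the chord really has length at least~$\ell$; I would use admissibility of the endpoints, which places them at distance at least~$\ell$ from~$\Gamma$. Second, Rule~\ref{rule4} deletes the side that actually carries the offending endpoint, not the other one. If the choice of $P^2$ by $r\ge r'$ does not directly guarantee the second point, I would fall back on the observation that once $P^2_{i,j}$ is deleted and degree-$\le1$ vertices are pruned, any surviving cycle through $P_{i,j}$ avoids one whole side. Such a chord then has both ends on a single cycle, contradicting the fact that $C_1$ and $C_2$ are disjoint.
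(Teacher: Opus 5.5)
\paragraph{Main gap: the distance reduction is missing.} Your skeleton (apply \cref{cor:induced3} to $\{C_1,C_2\}$ and kill each outcome with \ref{rule1}--\ref{rule4}) is the paper's, but the step that makes it work is absent. For outcomes \ref{item:packing2}, \ref{item:packing3}, \ref{item:packing5}, and the fragile subcase of \ref{item:packing6}, you must show that the relevant pair lies in $\mathcal{P}_2(\mathcal{H})$. That means showing the cycle in \ref{item:aux3-1} or \ref{item:aux3-3} has length at least~$\ell$.

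That cycle runs between the two endpoints \emph{through} $P^*_{p,q}$, so its length is governed by their distance across $P^*_{p,q}$. Admissibility only says the endpoints are far from branch vertices (in particular from $\Gamma_{p,q}$); it gives no bound on that distance. If $P^*_{p,q}$ is short and $a,b$ sit just beside it, the cycle is shorter than~$\ell$, $(p,q)$ need not lie in $\mathcal{P}_2(\mathcal{H})$, and \ref{rule4} gives nothing. Your phrase ``admissible and far apart'' assumes exactly what must be proved.

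The paper first disposes of the case $\dist_\mathcal{H}(a,b)\le\ell-1$.
\begin{itemize}
\item A shortest $\mathcal{H}$-path between $C_1$ and $C_2$ then has length at most $\ell-1$ and branch vertices as ends.
\item By \cref{cor:short branch}, these ends are $a_{i,j},b_{i,j}$ with $P_{i,j}$ a regular ear; a detour is impossible.
\item Since $a_{i,j}$ survives, \ref{rule3} forces $\dist_{H_{i,j-1}}(a_{i,j},b_{i,j})\ge\ell+1$, so the path is $P_{i,j}$ itself.
\item \cref{cor:chord} together with \ref{rule2} then gives a fragile $P_{p,q}$ with $a_{i,j},b_{i,j}$ on opposite sides, which is excluded via $\mathcal{P}_2(\mathcal{H})$ and \ref{rule4}.
\end{itemize}
Only after this is $\dist_\mathcal{H}(a,b)\ge\ell$ available. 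It is this inequality that makes every witness cycle long; for instance, it gives $\dist_\mathcal{H}(a,x)\ge\ell-1$ in \ref{item:packing3}.

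The same inequality kills \ref{item:packing1} outright. By the choice in \ref{item:ear3}, the chord $ab$ closes a cycle through $Q_p$ of length at most $\ell-1$, so $\dist_\mathcal{H}(a,b)\le\ell-2$. Your argument there (``both cycles must pass through an end of $Q_p$'') is not justified.

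\paragraph{Further errors in individual steps.}
\begin{itemize}
\item In case \ref{item:edge1} of \cref{cor:chord}, nothing puts $(x,y)$ into $\mathcal{P}_1(\mathcal{H})$. The ends lie on opposite sides of a fragile ear and may be far apart in $H_{x,y-1}$. So \ref{rule3} must be used in the contrapositive way described above, with the contradiction coming from \ref{rule4}.
\item The same error invalidates ``handled by \cref{cor:chord} as above'' in \ref{item:packing6}. There both $a_{i,j}$ and $b_{i,j}$ may survive. The paper instead uses \cref{lem:nearby} and \cref{prop:induced1} to put $a,b$ on opposite sides of $P_{p,q}$, and then uses $\dist_\mathcal{H}(a,b)\ge\ell$.
\item An $\mathcal{H}$-edge $ab$ with $\{a,b\}=\Gamma_{x,y}$ need not be $P_{x,y}$; it may lie in $H_{x,y-1}$.
\item In \ref{item:packing4} you have $\sigma(b)=(r,1)$, so \ref{item:aux3-3}, which needs second coordinate at least~$2$, does not apply. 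That outcome dies directly by \ref{rule2}, since $b$ is an end of $P_{r,2}$.
\item Your reduction through \cref{cor:induced2} is unnecessary, because $\mathcal{H}^-\subseteq\mathcal{H}$ and $C_1,C_2$ are already cycles of $\mathcal{H}$. It is also harmful: $\widetilde C_h$ contains base vertices that are not in $\mathcal{H}'$, where none of your deletion arguments apply.
\item The ``which side is deleted'' worry needs no fallback, and your fallback does not follow. In \ref{item:packing2}, one of $a,b$ lies on $P^2_{p,q}$. In \ref{item:packing5}, $P^2_{r,s}$ contains $a$ or~$b$. In \ref{item:packing3}, if $x$ lies on $P^2_{p,q}$, then $C_2$ would have to contain~$x$, because $b$ has degree~$2$ in $\mathcal{H}$ by \cref{cor:short branch} and \cref{lem:degree}. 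The normalisation $r_{i,j}\ge r'_{i,j}$ plays no role here.
\end{itemize}
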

    \begin{subproof}
        Towards a contradiction, suppose that~$G$ has an edge~$ab$ between~$C_1$ and~$C_2$.
        Without loss of generality, we may assume that $a\in V(C_1)$, $b\in V(C_2)$, and $(p,q):=\sigma(a)\leq_L\sigma(b)=:(r,s)$.

        Suppose that $\dist_\mathcal{H}(a,b)\leq\ell-1$.
        Note that~$a$ and~$b$ are branch vertices of~$\mathcal{H}$.
        By \cref{cor:short branch}, there is a unique $(i,j)\in\mathcal{P}(\mathcal{H})$ such that $\{a,b\}=\{a_{i,j},b_{i,j}\}$.
        By \cref{obs:structure}\ref{item:order}, $B_{H_{i,j}}(\Gamma_{i,j},\ell-1)=B_\mathcal{H}(\Gamma_{i,j},\ell-1)$.
        Thus, $Q$ is a path of~$H_{i,j}$.
        If~$P_{i,j}$ is an $\mathcal{H}$-detour, then by \cref{lem:strict1}, $Q$ is either~$P_{i,1}$ or~$Q_i$ and hence each of~$C_1$ and~$C_2$ contains both~$a_{i,j}$ and~$b_{i,j}$, a contradiction.
        Thus, $P_{i,j}$ is a regular $\mathcal{H}$-ear.
        By \ref{rule3}, we have $\dist_{H_{i,j-1}}(a_{i,j},b_{i,j})\geq\ell+1$, and therefore $Q=P_{i,j}$.
        By \cref{cor:chord} and~\ref{rule2}, $P_{p,q}$ is fragile, and~$a_{i,j}$ and~$b_{i,j}$ are in distinct components of~$P^{\operatorname{side}}_{p,q}$.
        Thus, $ab$ is not an edge of~$\mathcal{H}$ and hence is a chord of~$\mathcal{H}$.
        As $\dist_{H_{i,j-1}}(a_{i,j},b_{i,j})\geq\ell+1$, $P_{p,q}+ab$ has an $(\ell,S)$-cycle.
        Thus, $(p,q)\in\mathcal{P}_2(\mathcal{H})$, contradicting~\ref{rule4}.
        
        Hence, $\dist_\mathcal{H}(a,b)\geq\ell$.
        Note that~$ab$ is a chord of~$\mathcal{H}$.
        We apply \cref{cor:induced3} for~$\mathcal{H}$, $\{C_1,C_2\}$, and~$ab$.
        By~\ref{rule2}, neither \ref{item:packing1} nor~\ref{item:packing4} holds.
        As $\dist_\mathcal{H}(a,b)\geq\ell$, if one of \ref{item:packing2}, \ref{item:packing3}, and \ref{item:packing5} holds, then $(p,q)$ or $(r,s)$ is in $\mathcal{P}_2(\mathcal{H})$, contradicting~\ref{rule4}.
        Thus, \ref{item:packing6} holds.
        Then there is $(i,j)\in\mathcal{P}(\mathcal{H})$ with $j\geq2$ such that~$H_{i,j}$ has an $(a_{i,j},b_{i,j})$-path~$Q$ of length at most $\ell-1$ and for each $h\in[2]$, there are $x_h\in\Gamma_{i,j}$ and $y_h\in\{a,b\}$ with $y_h\in B_{\mathcal{H}-E(Q)}(x_h,\ell-1)\subseteq V(C_h)$.
        Without loss of generality, we may assume that $x_1=a_{i,j}$, $x_2=b_{i,j}$, $y_1=a$, and $y_2=b$.
        
        Since~$a_{i,j}$ and~$a$ are admissible vertices of~$H_{i,j-1}$ with $\dist_{H_{i,j-1}}(a_{i,j},a)\leq\ell-1$, by \cref{lem:nearby}, $\sigma(a_{i,j})=\sigma(a)=(p,q)$.
        Similarly, $\sigma(b_{i,j})=\sigma(b)=(r,s)$.
        By~\ref{rule3}, we have $\dist_{H_{i,j-1}}(a_{i,j},b_{i,j})\geq\ell+1$, and therefore $\abs{E(P_{i,j})}\leq\ell-1$.
        By \cref{cor:chord} and~\ref{rule2}, $\sigma(a_{i,j})=\sigma(b_{i,j})=(p,q)$ and $P_{p,q}$ is fragile.
        By \cref{prop:induced1} for~$\mathcal{H}$ and~$ab$, $a$ and~$b$ are in distinct components of $P^{\operatorname{side}}_{p,q}$.
        As $\dist_\mathcal{H}(a,b)\geq\ell$, we have $(p,q)\in\mathcal{P}_2(\mathcal{H})$, contradicting~\ref{rule4}.
    \end{subproof}

    We show that~$\mathcal{H}'$ has more than~$s_k$ branch vertices.
    For each $i\in[4]$, let~$r_i$ be the number of branch vertices of~$\mathcal{H}^-$ removed by (R$i$).
    Note that $r_1+r_2+r_3\leq4\rho_1$.
    For each $(p,q)\in\mathcal{P}_4(\mathcal{H})$, $\Gamma_{p,q}$ contains at most one vertex in $\bigcup_{(i,j)\in\mathcal{P}_2(\mathcal{H})}V(P_{i,j})$ as $(p,q)\notin\mathcal{P}_2(\mathcal{H})$.
    Thus, we have
    \[
        \rho_4:=\abs{\mathcal{P}_4(\mathcal{H})}=\sum_{(i,j)\in\mathcal{P}_2(\mathcal{H})}(r_{i,j}+r'_{i,j})\geq\sum_{(i,j)\in\mathcal{P}_2(\mathcal{H})}2r'_{i,j},
    \]
    and therefore
    \[
        \sum_{(i,j)\in\mathcal{P}_2(\mathcal{H})}2r'_{i,j}\leq\rho_4\leq\frac{\abs{V_{\geq3}(\mathcal{H}^-)}}{2}.
    \]
    For each $(i,j)\in\mathcal{P}_2(\mathcal{H})$, let~$m_{i,j}$ be the number of vertices in $V(P^2_{i,j})\setminus\Gamma_{i,j}$ having degree~$4$ in~$\mathcal{H}$ and let~$p_{i,j}$ be the number of vertices of $\bigcup_{(i,j)\in\mathcal{P}_3(\mathcal{H})}\Gamma_{i,j}$ contained in~$P^2_{i,j}$.
    When we remove $V(P^2_{i,j})$ from~$\mathcal{H}^-$, we lose at most $3m_{i,j}+2p_{i,j}+2r'_{i,j}+5$ branch vertices of~$\mathcal{H}^-$, where~$5$ bounds the number of branch vertices of~$\mathcal{H}$ which are removed only by the removal of the ends of~$P^2_{i,j}$.
    Thus, by~\ref{rule4}, $\mathcal{H}^-$ loses at most
    \[
        \sum_{(i,j)\in\mathcal{P}_2(\mathcal{H})}(3m_{i,j}+2p_{i,j}+2r'_{i,j}+5)
        \leq3\rho_1+4\rho_3+\frac{\abs{V_{\geq3}(\mathcal{H}^-)}}{2}+5\rho_2
    \]
    branch vertices.
    If we remove a branch vertex~$v$ of~$\mathcal{H}^-$ and recursively remove vertices of degree at most~$1$, then we lose at most $\deg_{\mathcal{H}^-}(v)+1$ branch vertices of~$\mathcal{H}^-$ including~$v$.
    Thus, to construct~$\mathcal{H}'$, we have removed at most
    \begin{align*}
        &5r_1+4r_2+4r_3+\left(3\rho_1+4\rho_3+\frac{\abs{V_{\geq3}(\mathcal{H}^-)}}{2}+5\rho_2\right)\\
        &\leq23\rho_1+5\rho_2+4\rho_3+\frac{\abs{V_{\geq3}(\mathcal{H}^-)}}{2}\\
        &<69k(6k-5)+15\cdot10^7\cdot k^3(2k-1)^2+4\cdot10^{19}\cdot k^9(2k-1)^4+\frac{\abs{V_{\geq3}(\mathcal{H}^-)}}{2}\\
        &<5\cdot10^{19}\cdot k^9(2k-1)^4+\frac{\abs{V_{\geq3}(\mathcal{H}^-)}}{2}
    \end{align*}
    branch vertices of~$\mathcal{H}^-$.
    Hence, we have
    \[
        \abs{V_{\geq3}(\mathcal{H}')}>\frac{\abs{V_{\geq3}(\mathcal{H}^-)}}{2}-5\cdot10^{19}\cdot k^9(2k-1)^4\geq s_k.
    \]

    By \cref{thm:simonovitz}, $\mathcal{H}'$ has~$k$ pairwise vertex-disjoint cycles.
    By \cref{lem:all cycles}, they are $(\ell,S)$-cycles.
    Thus, by \cref{clm:final}, $G$ has an induced packing of~$k$ $(\ell,S)$-cycles.
\end{proof}

\section{\texorpdfstring{$(\ell,S)$-subframes}{(l,S)-subframes}}\label{sec:subframe}

In this section, we introduce $(\ell,S)$-subframes in graphs, which will serve as the final tool for proving \cref{thm:main1}.
Let~$G$ be a graph and let~$S$ be a subset of $V(G)$.
An \emph{$(\ell,S)$-central path} of~$G$ is a path~$F$ of~$G$ such that
\begin{itemize}
    \item $F$ is a shortest path of~$G$ between its ends,
    \item $G$ has no $(\ell,S)$-ear of~$F$, and
    \item every $(\ell,S)$-cycle of~$G$ contains at least two vertices of~$F$.
\end{itemize}
Throughout this section, we assume that~$G$ has an $(\ell,S)$-central path.
We will often use the following observation, derived from the second condition of the definition of an $(\ell,S)$-central path.

\begin{observation}\label{obs:central ears}
    Let~$G$ be a graph.
    For a set $S\subseteq V(G)$, let $F$ be an $(\ell,S)$-central path of~$G$ and let~$P$ be an $F$-path containing a vertex in~$S$ with ends~$a$ and~$b$.
    Then $\abs{E(P)}+\dist_F(a,b)\leq\ell-1$.
\end{observation}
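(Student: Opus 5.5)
This follows directly from the definitions; the only case to separate is whether the ends of $P$ coincide. Write $a$ and $b$ for the ends of $P$. Since $P$ is an $F$-path, it has length at least one, both ends lie in $F$, its edges are not in $F$, and its internal vertices avoid $F$. By hypothesis $P$ contains a vertex of $S$. So $P$ satisfies the first alternative in the definition of an $S$-ear of $F$, and $P$ is an $S$-ear of $F$.

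Suppose first that $\abs{E(P)}+\dist_F(a,b)\geq\ell$. Then, by the definition of an $(\ell,S)$-ear of $H$ applied with $H:=F$, the path $P$ is an $(\ell,S)$-ear of $F$. This contradicts the second condition in the definition of an $(\ell,S)$-central path, which says that $G$ has no $(\ell,S)$-ear of $F$. Since the quantities involved are integers, the failure of this inequality gives exactly $\abs{E(P)}+\dist_F(a,b)\leq\ell-1$.

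The one point to check is the degenerate case $a=b$. There $P$ is a closed walk meeting $F$ only at $a$, so $\dist_F(a,b)=0$. In this paper an $H$-path with equal ends is formally allowed as a $(V(H),V(H))$-path only when it is trivial, and an $F$-path has length at least one. Either way, the ear definition applies verbatim with $\dist_F(a,a)=0$, so the same contradiction gives $\abs{E(P)}\leq\ell-1$. Hence there is no real obstacle, and the argument is just this unwinding of the definitions.
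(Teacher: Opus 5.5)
Your argument is correct and is exactly the paper's: the paper states this observation as an immediate consequence of the second condition in the definition of an $(\ell,S)$-central path, since an $F$-path through a vertex of $S$ is an $S$-ear of $F$ and would be an $(\ell,S)$-ear if $\abs{E(P)}+\dist_F(a,b)\geq\ell$. Your paragraph on $a=b$ is unnecessary (and the ``closed walk'' wording is off): an $F$-path has length at least one and a path with equal ends is trivial, so that case never occurs.
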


We now define an $(\ell,S)$-subframe~$F_t$ in~$G$ as follows.
We first consider an $(\ell,S)$-central path~$F_1$ as an $(\ell,S)$-subframe in~$G$.
We denote by~$a_1$ and~$b_1$ the ends of~$F_1$.
Let $F^-_1:=F_1$.
Suppose that we have found an $(\ell,S)$-subframe~$F_i$ in~$G$ for some $i\geq1$ with~$F^-_i$.
Let
\begin{align*}
    Y_i&:=B_{F_i}(V_{\geq3}(F_i),\ell-1),\\
    Z_i&:=B_{G-(V(F_i)\setminus Y_i)}(Y_i,1),\\
    G_i&:=G-Z_i.
\end{align*}
An \emph{admissible pair} of~$F_i$ is a pair $(R,R')$ of internally disjoint paths of~$G_i$ satisfying the following conditions:
\begin{itemize}
    \item $R$ is a strict $S$-ear of~$F_i$, and
    \item $R'$ is an $(\ell,S)$-ear of $F^-_i\oplus R$ such that for the base~$Q$ of~$R$ in~$F_i$, every component of $Q\cap R'$ contains an end of~$Q$.
\end{itemize}
Note that~$R'$ has a unique subpath, denoted by $\varphi(R')$, which is an $F_i$-path.
We inductively define a larger $(\ell,S)$-subframe~$F_{i+1}$ in~$G$ together with~$F^-_{i+1}$.
We recall the definitions of $\mathcal{L}_0(H)$ and $\mathcal{L}_1(H)$ given at the beginning of~\cref{sec:frame}.
\begin{enumerate}[label=(\Alph*), start=5]
    \item\label{item:sub1} If~$G_i$ has an admissible pair of~$F_i$, then let $(R_{i+1},R'_{i+1})$ be such a pair with minimum $\abs{E(R'_{i+1})}$, let $F_{i+1} :=F_i\cup R_{i+1}\cup R'_{i+1}$, and let $F^-_{i+1}:=(F^-_i\oplus R_{i+1})\cup R'_{i+1}$.
    We denote by~$a_{i+1}$ and~$b_{i+1}$ the ends of~$R_{i+1}$, and by~$a'_{i+1}$ and~$b'_{i+1}$ the ends of $\varphi(R'_{i+1})$.
    We say that $i+1$ is \emph{pair-type}.
    \item\label{item:sub2} If~\ref{item:sub1} does not hold and~$G_i$ has a strict $S$-ear of~$F_i$ disjoint from $B_{F_i}(\mathcal{L}_1(F^-_i),\ell-1)$, then let $R_{i+1}$ be such an ear whose base in~$F_i$ is closest to one of the degree-$1$ vertices of $\mathcal{L}_0(F^-_i)$ in~$F_i$, let $F_{i+1}:=F_i\cup R_{i+1}$, and let $F^-_{i+1}:=F^-_i\oplus R_{i+1}$.
    We say that $i+1$ is \emph{path-type}.
\end{enumerate}

We say that an $(\ell,S)$-subframe~$F_t$ is \emph{maximal} if~$G_t$ has neither an admissible pair of~$F_t$ nor~$R_{t+1}$ satisfying~\ref{item:sub2}.
A vertex~$v$ of~$F_t$ is \emph{admissible} if it is not contained in~$Y_t$.
Note that for all $i,j\in[t]$ with $i\leq j$, $Y_i$ is a subset of~$Y_j$.
We denote by $\sigma_{F_t}(v)$ the smallest integer~$i$ such that~$v$ is a vertex of~$F_i$.
We may omit the subscript if it is clear from the context.

We first show that every cycle of~$F^-_t$ is an $(\ell,S)$-cycle.

\begin{lemma}\label{lem:all cycles-sub}
    Let~$G$ be a graph.
    For a set $S\subseteq V(G)$, let~$F_t$ be an $(\ell,S)$-subframe in~$G$.
    Then every cycle of~$F^-_t$ is an $(\ell,S)$-cycle.
\end{lemma}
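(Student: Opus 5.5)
We will argue by induction on~$t$, mirroring the proof of \cref{lem:all cycles}. For $t=1$, $F^-_1=F_1$ is a path and has no cycle, so there is nothing to prove. Suppose $t\geq2$ and that every cycle of~$F^-_{t-1}$ is an $(\ell,S)$-cycle. Let~$C$ be a cycle of~$F^-_t$. If~$C$ is a cycle of~$F^-_{t-1}$, we are done. Otherwise~$C$ uses some edge of~$R_t$ (in the path-type case), or of $R_t\cup R'_t$ (in the pair-type case), because $F^-_t$ is obtained from~$F^-_{t-1}$ by deleting the interior of the base~$Q$ of~$R_t$ and adding $R_t$ (and possibly~$R'_t$).

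First we would establish an analogue of \cref{lem:strict1}. Since $F_{t-1}$ contains the $(\ell,S)$-central path~$F_1$, and~$R_t$ is a strict $S$-ear lying in~$G_{t-1}$, we want to show that $R_t\cup Q$ has length at most $\ell-1$. For this, apply \cref{obs:central ears} after extending~$R_t$ along~$F^-_{t-1}$ to an $F_1$-path containing a vertex of~$S$; alternatively, use that the ends of~$R_t$ are admissible, so~$Q$ is a unique short subpath of a single component of $\mathcal{L}_0(F^-_{t-1})$. It then follows that every internal vertex of~$R_t$ has degree~$2$ in~$F^-_t$, and that the ends of~$R_t$ lie at distance at least~$\ell$ from all branch vertices of~$F_{t-1}$, because~$R_t$ avoids~$Y_{t-1}$.

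\textbf{Path-type case.} The only new edges are those of~$R_t$, and their interiors have degree~$2$, so~$R_t$ is a subpath of~$C$. Hence~$C$ contains a vertex of~$S$. Since $C-\mathrm{int}(R_t)$ is a path of~$F^-_{t-1}$ between two admissible vertices and $Q\not\subseteq F^-_t$, that path leaves the $\ell$-neighbourhood of the ends and passes a branch vertex or a far part of the component. Therefore $\abs{E(C)}\geq\ell$, exactly as in the detour case of \cref{lem:all cycles}.

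\textbf{Pair-type case.} Write $H:=F^-_{t-1}\oplus R_t$. By the path-type argument, every cycle of~$H$ is an $(\ell,S)$-cycle. A cycle of $F^-_t=H\cup R'_t$ that uses an edge of~$R'_t$ is an $(\ell,S)$-cycle because~$R'_t$ is an $(\ell,S)$-ear of~$H$, by the remark following the definition of an $(\ell,S)$-ear. We must check that~$R'_t$ is genuinely an $H$-path in $F^-_t$, meaning its interior is disjoint from~$H$. This is where the condition that every component of $Q\cap R'_t$ contains an end of~$Q$ is used: the parts of~$R'_t$ that run along~$Q$ attach to~$H$ only at the ends of~$Q$.

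The main obstacle is this bookkeeping of how~$R'_t$ overlaps~$Q$ and~$H$. We expect to handle it by noting that $\varphi(R'_t)$ together with the segments of~$Q$ forms~$R'_t$, and that the segments of~$Q$ are removed in~$H$. A secondary obstacle is the length bound in the path-type case, which needs the admissibility and distance argument of \cref{lem:all cycles} carefully adapted to subframes.
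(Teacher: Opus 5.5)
Your proposal is correct and follows the paper's proof. The paper also argues by induction on~$t$: cycles through an edge of~$R'_t$ are $(\ell,S)$-cycles because $R'_t$ is an $(\ell,S)$-ear of $F^-_{t-1}\oplus R_t$. Every other new cycle contains all of~$R_t$, hence a vertex of~$S$, and is long because $a_t,b_t$ are admissible in~$F_{t-1}$.

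The two obstacles you flag are not real. First, $R'_t$ is an $(F^-_{t-1}\oplus R_t)$-path by the very definition of an $(\ell,S)$-ear, so the condition on $Q\cap R'_t$ plays no role here. Second, no bound on $\abs{E(R_t\cup Q)}$ is needed: internal vertices of~$R_t$ have degree~$2$ simply because $R_t$ is an $F_{t-1}$-path internally disjoint from~$R'_t$.
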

\begin{proof}
    We proceed by induction on~$t$.
    The statement obviously holds for $t=1$.
    Thus, we may assume that $t\geq2$.
    Let~$C$ be a cycle of~$F^-_t$.
    Suppose that~$t$ is pair-type and~$C$ contains an edge of~$R'_t$.
    Since every internal vertex of~$R'_t$ has degree~$2$ in~$F^-_t$, $R'_t$ is a subpath of~$C$.
    Then by the definition of an $(\ell,S)$-ear, $C$ is an $(\ell,S)$-cycle.

    Hence, we may assume that either~$t$ is path-type, or~$C$ does not contain an edge of~$R'_t$.
    By the inductive hypothesis, we may assume that~$C$ is not a cycle of~$F^-_{t-1}$.
    This implies that~$C$ contains an edge of~$R_t$, because $E(F^-_t)\setminus E(F^-_{t-1})\subseteq E(R_t\cup R'_t)$.
    Since every internal vertex of~$R_t$ has degree~$2$ in~$F^-_t$, $R_t$ is a subpath of~$C$.
    Since~$R_t$ is a strict $S$-ear, $C$ contains a vertex in~$S$.
    Since both~$a_t$ and~$b_t$ are admissible vertices of~$F_{t-1}$, they are at distance at least~$\ell$ from every branch vertex of~$F_{t-1}$.
    Thus, $C$ has length at least~$2\ell$, and therefore it is an $(\ell,S)$-cycle.

    This completes the proof by induction.
\end{proof}

We show that every $F_t$-path of~$G_t$ containing a vertex in~$S$ is an $F_1$-path.

\begin{lemma}\label{lem:F1path}
    Let~$G$ be a graph.
    For a set $S\subseteq V(G)$, let~$F_t$ be an $(\ell,S)$-subframe in~$G$.
    Then every $F_t$-path of~$G_t$ containing a vertex in~$S$ is an $F_1$-path.
    Consequently, for each $i\in[t]\setminus\{1\}$, the vertex set of $R_i\cup(a_iF_1b_i)$ is a subset of~$Y_i$.
\end{lemma}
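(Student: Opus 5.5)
We argue by induction on~$t$. For $t=1$ the first statement is trivial, and the second statement is vacuous.

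Now let $t\geq2$ and let~$P$ be an $F_t$-path of~$G_t$ containing a vertex of~$S$, with ends $a$ and $b$. Since $Z_{t-1}\subseteq Z_t$, the graph $G_t$ is a subgraph of $G_{t-1}$.

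\textbf{Reducing to $F_{t-1}$.} We first show that $P$ is an $F_{t-1}$-path. Suppose an end, say~$a$, lies in $V(F_t)\setminus V(F_{t-1})$. Then $a$ is an internal vertex of $R_t$ or of $R'_t$. For~$R_t$: by the second statement applied at step~$t$ (to be proved below for this~$t$), $V(R_t)\subseteq Y_t$, so $a\in Z_t$. This is a contradiction since $P\subseteq G_t$.

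For~$R'_t$ the argument is harder. I would extend $P$ along $R'_t$ towards an end of $\varphi(R'_t)$, obtaining an $F_{t-1}$-path, or an ear of $F^-_{t-1}\oplus R_t$, that contains a vertex of~$S$. This should either contradict \cref{obs:central ears} through a long combined length, or contradict the minimality of $\abs{E(R'_t)}$ in~\ref{item:sub1}, since $P$ together with part of $R'_t$ would form a shorter admissible pair partner. With $P$ an $F_{t-1}$-path in $G_{t-1}$, induction gives that $P$ is an $F_1$-path.

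\textbf{The second statement.} Let $i\geq2$. At step $i-1$, the first statement (already known by induction) says $R_i$ is an $F_1$-path. It contains a vertex of~$S$, so \cref{obs:central ears} gives $\abs{E(R_i)}+\dist_{F_1}(a_i,b_i)\leq\ell-1$. Moreover $a_i$ and $b_i$ are admissible vertices of $F_{i-1}$ lying on~$F_1$, and the path $a_iF_1b_i$ has length at most $\ell-1$. Since admissible vertices are far from branch vertices, I expect this short subpath avoids the replaced parts and lies in $F_{i-1}$. Hence every vertex of $R_i\cup a_iF_1b_i$ is within distance $\ell-1$ in $F_i$ of $a_i$, which is a branch vertex of $F_i$. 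Therefore $V(R_i\cup a_iF_1b_i)\subseteq Y_i$.

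\textbf{Main obstacle.} The hard step is the $R'_t$ case. An internal vertex of $R'_t$ far from its ends is not in $Y_t$, so it need not lie in $Z_t$. One must therefore genuinely use the minimality of $R'_t$ and \cref{obs:central ears}, together with the structure of $F^-_{t-1}\oplus R_t$ and the condition on $Q\cap R'$, to rule out $P$ attaching there.
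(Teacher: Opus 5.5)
\textbf{Gap: the $R'_t$ case is not proved.} That case is the only substantive part of the first statement, and your proposal does not handle it. For an end $a$ of $P$ that is an internal vertex of $\varphi(R'_t)$, you only say the extended path ``should'' contradict \cref{obs:central ears} or the minimality of $\abs{E(R'_t)}$ in~\ref{item:sub1}. You then assert that minimality must genuinely be used. It need not be, and the paper's proof never invokes it.

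The missing ingredient is the length bound that admissibility gives directly. Since $a\in V(G_t)$, we have $a\notin Z_t\supseteq Y_t$, so $\dist_{F_t}(a,V_{\geq3}(F_t))\geq\ell$. You correctly observe that an internal vertex of $R'_t$ far from the ends of $\varphi(R'_t)$ need not lie in $Z_t$. That same distance is what you should exploit. Extending $P$ from $a$ along $\varphi(R'_t)$ back to $F_{t-1}$ means reaching an end of $\varphi(R'_t)$, which is a branch vertex of $F_t$. So the extension has at least $\ell$ edges, and the extended path has at least $\ell+1$ edges while still containing the vertex of $S$ on $P$.

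\textbf{How to close your induction.} Extend $a$ along $\varphi(R'_t)$; if $b$ also lies inside $\varphi(R'_t)$, extend it in the opposite direction. Because $R'_t\subseteq G_{t-1}$ and $G_t\subseteq G_{t-1}$, the result is an $F_{t-1}$-path of $G_{t-1}$ containing a vertex of $S$. By the induction hypothesis it is an $F_1$-path. Having more than $\ell-1$ edges, it contradicts \cref{obs:central ears}.

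\textbf{Comparison with the paper.} The paper uses the same idea without induction. It extends both ends of $P$ along $F_t$ all the way down to $F_1$, obtaining an $F_1$-path of length at least $\ell+1$ through $S$, i.e.\ an $(\ell,S)$-ear of $F_1$. This contradicts the centrality of $F_1$. That argument treats internal vertices of $R_t$ and of $\varphi(R'_t)$ uniformly, so your separate $Y_t$ argument for $R_t$, while correct, is not needed. Your derivation of the second statement from the first via \cref{obs:central ears} matches the paper's.
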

\begin{proof}
    Let~$P$ be an $F_t$-path of~$G_t$ containing a vertex in~$S$ with ends~$a$ and~$b$.
    Towards a contradiction, suppose that~$a$ or~$b$ is not in~$F_1$.
    By the definition of an $(\ell,S)$-subframe, extending~$P$ from each of its ends along the paths in~$F_t$ towards~$F_1$ yields an $F_1$-path~$P'$ containing~$P$ as a subpath.
    Since~$a$ and~$b$ are admissible vertices of~$F_t$, they are at distance at least~$\ell$ from every branch vertex of~$F_t$.
    Since~$a$ or~$b$ is not in~$F_1$, $P'$ has length at least $\ell+1$.
    Thus, $P'$ is an $(\ell,S)$-ear of~$F_1$, a contradiction.

    Hence, for each $i\in[t]\setminus\{1\}$, $R_i$ is an $F_1$-path.
    Since~$R_i$ is a strict $S$-ear, by \cref{obs:central ears}, $\abs{E(R_i)}+\dist_{F_1}(a_i,b_i)\leq\ell-1$.
    Thus, we have $V(R_i\cup(a_iF_1b_i))\subseteq Y_i$.
\end{proof}

We show that if~$G_t$ has a short $F^-_t$-path between vertices~$a$ and~$b$, then $\sigma(a)=\sigma(b)$.

\begin{lemma}\label{lem:short path-sub}
    Let~$G$ be a graph.
    For a set $S\subseteq V(G)$, let~$F_t$ be an $(\ell,S)$-subframe in~$G$.
    If~$G_t$ has an $F^-_t$-path of length at most $\ell-1$ between vertices~$a$ and~$b$, then $\sigma(a)=\sigma(b)$.
\end{lemma}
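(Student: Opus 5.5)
Let $P$ be an $F^-_t$-path of~$G_t$ of length at most $\ell-1$ with ends~$a$ and~$b$, and suppose towards a contradiction that $i:=\sigma(a)<\sigma(b)=:j$. We proceed by induction on~$t$, taking the trivial case $t=1$ as the base. Since~$P$ lies in~$G_t$, it avoids~$Z_t$. Hence both ends are admissible vertices of~$F_t$, each at distance at least~$\ell$ in~$F_t$ from every branch vertex of~$F_t$. The plan is to extend~$P$ along~$F_t$ from the end~$b$, which lies on a newer piece, to obtain either an admissible pair or an $(\ell,S)$-ear of~$F_1$. In the first case we contradict the minimality of~$\abs{E(R'_j)}$ in~\ref{item:sub1}; in the second we contradict the definition of an $(\ell,S)$-central path.

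\textbf{Locating~$b$.} Since $j\geq2$, the vertex~$b$ is an internal vertex of~$R_j$ or of~$R'_j$. By \cref{lem:F1path}, every vertex of~$R_j$ lies in $Y_j\subseteq Y_t$, so~$b$ is not admissible if it lies on~$R_j$. Therefore~$j$ is pair-type and~$b$ is an internal vertex of~$\varphi(R'_j)$ at distance at least~$\ell$ from both $a'_j$ and~$b'_j$.

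\textbf{Building a shorter ear.} Since~$R'_j$ is an $(\ell,S)$-ear of $F^-_{j-1}\oplus R_j$, it either contains a vertex of~$S$ or has its ends in distinct components of $(F^-_{j-1}\oplus R_j)-S$. As in Case~1 of \cref{lem:short path2}, we choose the end of~$R'_j$, say the one on the $b'_j$ side, so that the segment $R^\circ$ of~$R'_j$ from~$b$ to that end carries the $S$-witness. Concatenating~$R^\circ$ with~$P$ gives a path~$R''$. The vertex~$a$ is an admissible vertex of~$F_{j-1}$, since otherwise $b\in Z_{j-1}$. It is also not on~$R_j$, because $\sigma(a)<j$ and~$a$ is admissible. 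Thus~$R''$ is an $F_{j-1}$-path whose ends lie in $F^-_{j-1}\oplus R_j$.

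We then verify three properties of~$R''$:
\begin{itemize}
\item $(R_j,R'')$ is internally disjoint and avoids~$Z_{j-1}$; this uses that~$P$ is in~$G_t\subseteq G_{j-1}$.
\item $R''$ is an $S$-ear of $F^-_{j-1}\oplus R_j$. If the witness is instead ``distinct components'', we use~$a$ and the chosen end in place of the original two ends, as in the cited case analysis.
\item Its length plus the distance between its ends is at least~$\ell$, since $\dist(b,b'_j)\geq\ell$.
\end{itemize}
Moreover, $\abs{E(R'')}\leq\abs{E(R^\circ)}+\ell-1<\abs{E(R'_j)}$, because the discarded part of~$R'_j$ has length at least~$\ell$. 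The condition that every component of $Q\cap R''$ contains an end of~$Q$ is inherited from~$R'_j$, since we kept one of its end segments.

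\textbf{Main obstacle.} The hard part is the bookkeeping of where~$a$ sits. If~$a$ lies on~$F_1$, or on some older $R'_{i}$, then~$R''$ must be a legitimate ear of the \emph{same} auxiliary graph $F^-_{j-1}\oplus R_j$. This needs $a\notin V(Q_j)\setminus V(R_j)$, that is, $a$ not on the replaced base. For this we use that $V(a_jF_1b_j)\subseteq Y_j$ by \cref{lem:F1path}, and that the base~$Q_j$ is short. If~$R''$ turns out to contain a vertex of~$S$ with both ends on~$F_1$, we instead extend it along~$F_t$ as in the proof of \cref{lem:F1path}. This produces an $F_1$-path of length at least $\ell+1$, contradicting \cref{obs:central ears}. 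In either case we reach a contradiction, so $\sigma(a)=\sigma(b)$.
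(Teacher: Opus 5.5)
Your argument is correct and is essentially the paper's proof. You use \cref{lem:F1path} to force $j$ to be pair-type with $b$ internal to $\varphi(R'_j)$, then splice $P$ onto the end segment of $R'_j$ carrying the $S$-witness to get a strictly shorter admissible partner for $R_j$, contradicting the minimality in \ref{item:sub1}; you even check the admissible-pair conditions more explicitly than the paper does.

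Two small remarks. First, the justification ``otherwise $b\in Z_{j-1}$'' only works when $ab$ is an edge, but admissibility of $a$ in $F_{j-1}$ follows directly from $a\in V(G_t)$ and $Y_{j-1}\subseteq Y_t$. Second, the induction on $t$ and the fallback to \cref{obs:central ears} are never actually needed.
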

\begin{proof}
    Let~$P$ be an $F^-_t$-path of length at most $\ell-1$ between~$a$ and~$b$.
    Towards a contradiction, suppose that $\sigma(a)\neq\sigma(b)$.
    Without loss of generality, we may assume that $\sigma(a)<\sigma(b)=:i$.
    Since~$b$ is an admissible vertex of~$F_t$, by \cref{lem:F1path}, it is not in $R_i\cup(a_iF_1b_i)$.
    Thus, $i$ is pair-type and~$b$ is an internal vertex of $\varphi(R'_i)$.
    Let~$a''_i$ and~$b''_i$ be the ends of~$R'_i$.
    Since~$R'_i$ is an $(\ell,S)$-ear of $F^-_{i-1}\oplus R_i$, without loss of generality, we may assume the following:
    \begin{itemize}
        \item if~$R'_i$ contains a vertex in~$S$, then $a''_iR'_ib$ contains the vertex, and
        \item otherwise either $a\in S$ or~$a$ and~$a''_i$ are in distinct components of $(F^-_{i-1}\oplus R_i)-S$.
    \end{itemize}
    Let~$P'$ be the path obtained by concatenating $a''_iR'_ib$ and~$P$.
    Note that~$P'$ is an $(F^-_{i-1}\oplus R_i)$-path in~$G_{i-1}$ as~$P$ is an $F^-_t$-path in~$G_t$.
    By the assumption, $P'$ is an $S$-ear of $F^-_{i-1}\oplus R_i$.
    Since~$b$ is an admissible vertex of~$F^-_t$, we have $\dist_{R'_i}(b,\{a'_i,b'_i\})\geq\ell>\abs{E(P)}$.
    Thus, we have
    \[
        \abs{E(R'_i)}>\abs{E(P')}\geq\abs{E(P)}+\dist_{R'_i}(b,a''_i)\geq\abs{E(P)}+\dist_{R'_i}(b,\{a'_i,b'_i\})\geq\ell+1.
    \]
    Therefore, $P'$ is an $(\ell,S)$-ear of $F^-_{i-1}\oplus R_i$ in~$G_{i-1}$ shorter than~$R'_i$, contradicting~\ref{item:sub1}.
\end{proof}

In the remainder of this section, we assume that~$G$ has no $(\ell,S)$-cycle of length at most $3(\ell-1)$.
The following lemma shows that if~$i$ is pair-type, then $\varphi(R'_i)$ is disjoint from~$S$.

\begin{lemma}\label{lem:pairtype1}
    Let $(G,S)$ be a good pair and let~$F_t$ be an $(\ell,S)$-subframe in~$G$.
    If~$G_t$ has an admissible pair $(R,R')$ of~$F_t$, then $\varphi(R')$ is disjoint from~$S$.
\end{lemma}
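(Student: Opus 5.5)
We argue by contradiction: suppose that $\varphi(R')$ contains a vertex of~$S$. The plan is to build an $(\ell,S)$-cycle of length at most $3(\ell-1)$ out of $R$, $R'$ and short pieces of~$F_1$, which contradicts that $(G,S)$ is a good pair.

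\textbf{Step 1: bound $\varphi(R')$.} Since $\varphi(R')$ is an $F_t$-path of~$G_t$ containing a vertex of~$S$, \cref{lem:F1path} shows that it is an $F_1$-path. Let $a'$ and~$b'$ be its ends. \cref{obs:central ears} then gives
\[
    \abs{E(\varphi(R'))}+\dist_{F_1}(a',b')\leq\ell-1.
\]

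\textbf{Step 2: bound~$R$ and its base.} The path~$R$ is a strict $S$-ear of~$F_t$ contained in~$G_t$, so it contains a vertex of~$S$. Applying \cref{lem:F1path} and \cref{obs:central ears} again, $R$ is an $F_1$-path whose base~$Q$ is the subpath $aF_1b$, where $a$ and~$b$ are the ends of~$R$. Moreover, $\abs{E(R)}+\abs{E(Q)}\leq\ell-1$.

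\textbf{Step 3: the cycle and its length.} By definition of an admissible pair, $R'$ consists of $\varphi(R')$ together with at most two end segments lying on~$Q$, each containing an end of~$Q$. Hence the ends of~$R'$ lie in $V(Q)\cup\{a',b'\}$. Let $C$ be the cycle formed by $R'$ and a shortest path of $F^-_t\oplus R$ between the ends of~$R'$.

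All relevant vertices are admissible, so they lie at distance at least~$\ell$ from every branch vertex of~$F_t$. Consequently, within distance $\ell-1$ of them, $F^-_t\oplus R$ coincides with $(F_1-\text{internal part of }Q)\cup R$. This gives a route in $F^-_t\oplus R$ between the ends of~$R'$ that uses only~$R$ and the subpath of~$F_1$ between $a'$ and~$b'$ with~$Q$ rerouted through~$R$. Adding the bounds from Steps~1 and~2,
\[
    \abs{E(C)}\leq\abs{E(\varphi(R'))}+\dist_{F_1}(a',b')+\abs{E(R)}+\abs{E(Q)}\leq2(\ell-1).
\]

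\textbf{Step 4: $C$ is an $(\ell,S)$-cycle.} Since $R'$ is an $(\ell,S)$-ear of $F^-_t\oplus R$, every cycle of $(F^-_t\oplus R)\cup R'$ that uses an edge of~$R'$ is an $(\ell,S)$-cycle; this is the remark following the definition of $(\ell,S)$-ears. In particular $C$ is an $(\ell,S)$-cycle, and by Step~3 its length is at most $2(\ell-1)\leq3(\ell-1)$. This contradicts that $(G,S)$ is a good pair, so $\varphi(R')$ is disjoint from~$S$.

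\textbf{Main obstacle.} The delicate part is Step~3: showing that the distance in $F^-_t\oplus R$ between the ends of~$R'$ is really realised along~$F_1$ and~$R$. Two things must be checked. First, $F^-_t$ agrees with~$F_1$ near these vertices, which uses admissibility and the ball $Y_t$; this is essentially the content of \cref{lem:F1path}. Second, the relative positions of $a'$, $b'$ and~$Q$ on~$F_1$ do not force a long detour. For the second point I would split into cases according to whether the segment $a'F_1b'$ meets the interior of~$Q$. If it does not, the route is just that segment. If it does, I reroute through~$R$, which costs at most $\abs{E(R)}$ extra; this keeps the total within $2(\ell-1)$ in either case.
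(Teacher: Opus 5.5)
Your proposal is correct and is essentially the paper's argument: both use \cref{lem:F1path} and \cref{obs:central ears} to get $\abs{E(R)}+\abs{E(Q)}\leq\ell-1$ and $\abs{E(\varphi(R'))}+\dist_{F_1}(a',b')\leq\ell-1$, and then close up an $(\ell,S)$-cycle of length at most $2(\ell-1)$ through $R'$, contradicting that $(G,S)$ is a good pair. The paper splits on whether the base of $R$ meets $a'F_1b'$, using the ear inequality directly in the disjoint case. You merge both cases into the single cycle formed by $R'$ and a shortest path of $F^-_t\oplus R$, which is slightly cleaner and also accounts explicitly for the segments of $R'$ lying on $Q$. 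One small fix: phrase your case split as whether $a'F_1b'$ uses an edge of $Q$, rather than whether it meets the interior of $Q$, so that a one-edge base is also rerouted through $R$.
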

\begin{proof}
    Suppose not.
    By \cref{lem:F1path}, both~$R$ and $P':=\varphi(R')$ are $F_1$-paths.
    Let~$P$ be the base of~$R$ in~$F_1$ and let~$a'$ and~$b'$ be the ends of~$P'$.
    By \cref{obs:central ears}, both $\abs{E(P\cup R)}$ and $\abs{E(P')}+\dist_{F_1}(a',b')$ are at most $\ell-1$.
    Since~$a'$ and~$b'$ are admissible vertices of~$F_t$, they are at distance at least~$\ell$ from every branch vertex of~$F_t$.
    Thus, $a'F_1b'$ is a path of $F^-_t$.
    If~$P$ is disjoint from $a'F_1b'$, then $P'=R'$.
    Thus, by concatenating~$P'$ and $a'F_1b'$, we obtain a cycle of $(F^-_t\oplus R)\cup R'$ of length at most $\ell-1$, contradicting that~$R'$ is an $(\ell,S)$-ear of $F^-_t\oplus R$.
    Thus, we may assume that~$P$ intersects $a'F_1b'$.
    Then $F_1\cup R\cup R'$ contains an $(\ell,S)$-cycle of length at most $2(\ell-1)$, a contradiction.
\end{proof}

We show that every $(\ell,S)$-ear of~$F^-_t$ in~$G_t$ is disjoint from~$S$ and of length at least~$\ell$.

\begin{lemma}\label{lem:ear property2}
    Let $(G,S)$ be a good pair and let~$F_t$ be an $(\ell,S)$-subframe in~$G$.
    Then every $(\ell,S)$-ear of~$F^-_t$ in~$G_t$ is disjoint from~$S$ and of length at least~$\ell$.
\end{lemma}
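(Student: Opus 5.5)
Let $P$ be an $(\ell,S)$-ear of $F^-_t$ in $G_t$ with ends $a$ and $b$. The plan is to show first that $P$ is disjoint from $S$, and then that $\abs{E(P)}\geq\ell$.

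\textbf{Step 1: $P$ avoids $S$.} Suppose $P$ contains a vertex in $S$. The first step is to note that $P$ is an $F_t$-path of $G_t$. Indeed, $P$ lies in $G_t$, which avoids $Z_t\supseteq Y_t$, and the vertices of $F_t$ outside $F^-_t$ (the bases replaced in the $\oplus$ operations) lie in $Y_t$ by \cref{lem:F1path}. So $P$ cannot touch $F_t$ internally. By \cref{lem:F1path}, $P$ is then an $F_1$-path with both ends admissible. By \cref{obs:central ears}, we get $\abs{E(P)}+\dist_{F_1}(a,b)\leq\ell-1$. Since $a$ and $b$ are admissible, they are at distance at least $\ell$ from every branch vertex of $F_t$. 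Hence $aF_1b$ avoids all replaced bases and all attachment points, so $aF_1b$ is a path of $F^-_t$. Concatenating $P$ with $aF_1b$ gives a cycle of $F^-_t\cup P$ of length at most $\ell-1$ that contains an edge of $P$. This contradicts the remark that every such cycle is an $(\ell,S)$-cycle, since $P$ is an $(\ell,S)$-ear. So $P$ is disjoint from $S$.

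\textbf{Step 2: $\abs{E(P)}\geq\ell$.} Since $P$ avoids $S$ and is an $S$-ear, $a$ and $b$ lie in distinct components of $F^-_t-S$. Suppose $\abs{E(P)}\leq\ell-1$. By \cref{lem:short path-sub}, $\sigma(a)=\sigma(b)=:i$, so $a$ and $b$ lie on the same added piece: $F_1$, some $R_i$, or some $\varphi(R'_i)$. Both are admissible, so they lie outside $Y_t$.
\begin{itemize}
\item If $i=1$ or $a,b\in R_i$, then by \cref{lem:F1path} they lie on $F_1$, since each $R_i$ lies in $Y_i$. Admissibility forces $aF_1b$ to be the only $(a,b)$-path of $F^-_t$ in that region. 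As $a$ and $b$ are separated by $S$ in $F^-_t$, the path $aF_1b$ contains a vertex of $S$. The cycle $P\cup aF_1b$ then contains an $S$-vertex, and it has length $\abs{E(P)}+\dist_{F_1}(a,b)\leq(\ell-1)+\dist_{F_1}(a,b)$.
\begin{itemize}
\item If $\dist_{F_1}(a,b)\leq2(\ell-1)$, this cycle is an $(\ell,S)$-cycle of length at most $3(\ell-1)$, contradicting goodness. (Its length is at least $\ell$ because $P\cup F^-_t$ gives $\abs{E(P)}+\dist\geq\ell$.)
\item Otherwise, reroute $P$ along $F_1$ toward the $S$-vertex to obtain an $F_1$-path containing $S$ of length at least $\ell$. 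This is an $(\ell,S)$-ear of $F_1$, contradicting the central path property.
\end{itemize}
\item If $a,b\in\varphi(R'_i)$, which is $S$-free by \cref{lem:pairtype1}, then the path $a\varphi(R'_i)b$ contains no $S$-vertex. Replacing it by $P$ produces a shorter admissible pair partner whenever $\dist_{R'_i}(a,b)>\abs{E(P)}$, contradicting the minimality in \ref{item:sub1}. If instead $\dist_{R'_i}(a,b)\leq\ell-1$, then $a$ and $b$ are in the same component of $F^-_t-S$, a contradiction.
\end{itemize}

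The main obstacle is Step 2 in the first case. There one needs to choose the correct $S$-vertex on $aF_1b$ and bound the length of the resulting cycle so that it falls into one of the two forbidden alternatives. The threshold $3(\ell-1)$ in the definition of a good pair is what makes this split work.
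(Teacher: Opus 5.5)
Your Step~1 is the paper's argument. Your pair-type case in Step~2 is correct, though the minimality detour is unnecessary. The internal vertices of $\varphi(R'_i)$ are off $F_1$, so by \cref{lem:F1path} no later $\oplus$-operation removes them. Hence $a\varphi(R'_i)b$ is an $S$-free path of $F^-_t$ whatever $\dist_{R'_i}(a,b)$ is, and this already contradicts $a,b$ lying in distinct components of $F^-_t-S$. Your alternative ``$a,b\in R_i$'' is vacuous, since $V(R_i)\subseteq Y_i\subseteq Y_t$.

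The gap is in the case $\sigma(a)=\sigma(b)=1$. You assert that admissibility makes $aF_1b$ a path of $F^-_t$. From this you conclude that $aF_1b$ meets $S$ and that the resulting cycle has length at least $\ell$. Admissibility gives this only when $aF_1b$ is short, say of length at most $2\ell-1$, which does cover your first subcase. A long subpath of $F_1$ may run through branch vertices of $F_t$ and through bases that were replaced in $F^-_t$. So in your second subcase, $\dist_{F_1}(a,b)>2(\ell-1)$, you have no justification that $aF_1b$ contains a vertex of $S$. The rerouting step also fails on its own terms: a path built from $P$ together with edges of $F_1$ is not an $F_1$-path, so it cannot contradict the absence of $(\ell,S)$-ears of $F_1$.

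The missing idea is the first condition in the definition of an $(\ell,S)$-central path. $F_1$ is a shortest path of $G$ between its ends, so $\dist_{F_1}(a,b)\leq\abs{E(P)}\leq\ell-1$. This has three consequences:
\begin{itemize}
    \item your second subcase is vacuous;
    \item $aF_1b$ lies in $F^-_t$, because every vertex of it is within distance $\ell-1$ of the admissible vertex $a$, so it contains no branch vertex of $F_t$ and hence no edge of a replaced base;
    \item $P\cup aF_1b$ is a cycle of $F^-_t\cup P$ using an edge of the $(\ell,S)$-ear $P$, hence an $(\ell,S)$-cycle of length at most $2(\ell-1)$, contradicting goodness.
\end{itemize}
This is exactly how the paper argues. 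The case split, and the $3(\ell-1)$ threshold you flag as the main obstacle, are not needed.
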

\begin{proof}
    Let~$P$ be an $(\ell,S)$-ear of~$F^-_t$ in~$G_t$ with ends~$a$ and~$b$.
    We first show that~$P$ is disjoint from~$S$.
    Suppose not.
    By \cref{lem:F1path}, both~$a$ and~$b$ are in~$F_1$.
    By \cref{obs:central ears}, we have $\abs{E(P)}+\dist_{F_1}(a,b)\leq\ell-1$.
    Since~$a$ and~$b$ are admissible vertices of~$F_t$, they are at distance at least~$\ell$ from every branch vertex of~$F_t$.
    Thus, $aF_1b$ is a path of $F^-_t$.
    Hence, $\abs{E(P)}+\dist_{F^-_t}(a,b)\leq\ell-1$, contradicting that~$P$ is an $(\ell,S)$-ear of~$F^-_t$.

    We now show that~$P$ has length at least~$\ell$.
    Suppose not.
    By \cref{lem:short path-sub}, we have $i:=\sigma(a)=\sigma(b)$.
    We first consider the case that $i=1$.
    By the definition of an $(\ell,S)$-central path, we have $\dist_{F_1}(a,b)\leq\abs{E(P)}\leq\ell-1$.
    Since~$a$ and~$b$ are admissible vertices of~$F_t$, they are at distance at least~$\ell$ from every branch vertex of~$F_t$.
    Thus, $aF_1b$ is a path of~$F^-_t$.
    By concatenating~$P$ and $aF_1b$, we obtain a cycle of $F^-_t\cup P$ of length at most $2(\ell-1)$ which is an $(\ell,S)$-cycle by the definition of an $(\ell,S)$-ear, a contradiction.

    We now consider the case that $i>1$.
    Since~$a$ and~$b$ are admissible vertices of~$F_t$, by \cref{lem:F1path}, neither~$a$ nor~$b$ is in $R_i\cup(a_iF_1b_i)$.
    Thus, $i$ is pair-type and both~$a$ and~$b$ are internal vertices of $\varphi(R'_i)$.
    By \cref{lem:pairtype1}, $\varphi(R'_i)$ is disjoint from~$S$.
    Since no internal vertex of $\varphi(R'_i)$ is in~$F_1$, by \cref{lem:F1path}, $\varphi(R'_i)$ is a subpath of~$F^-_t$.
    Thus, $a$ and~$b$ are in the same component of $F^-_t-S$.
    This contradicts that~$P$ is an $(\ell,S)$-ear of $F^-_t$ as~$P$ is disjoint from~$S$.
\end{proof}

The following lemma shows that if~$i$ is pair-type, then $\varphi(R'_i)$ has length at least~$\ell$.

\begin{lemma}\label{lem:pairtype2}
    Let $(G,S)$ be a good pair and let~$F_t$ be an $(\ell,S)$-subframe in~$G$.
    If~$G_t$ has an admissible pair $(R,R')$ of~$F_t$, then $\varphi(R')$ has length at least~$\ell$.
\end{lemma}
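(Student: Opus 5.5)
The idea is to argue by contradiction, along the lines of the second half of \cref{lem:ear property2}. Suppose that $P':=\varphi(R')$ has length at most $\ell-1$, and let~$a'$ and~$b'$ be its ends. Let~$a$ and~$b$ be the ends of~$R$, and let~$Q$ be the base of~$R$ in~$F_t$.

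\emph{Preliminary facts.}
\begin{itemize}
    \item By \cref{lem:F1path}, $R$ is an $F_1$-path, $Q$ is a subpath of~$F_1$, and $\abs{E(R\cup Q)}\leq\ell-1$.
    \item By \cref{lem:pairtype1}, $P'$ is disjoint from~$S$.
    \item Since $R'$ lies in~$G_t$, the vertices~$a'$ and~$b'$ are admissible vertices of~$F_t$. They therefore lie outside~$Y_t$, and hence in~$F^-_t$ (the bases removed by~$\oplus$ lie inside~$Y_t$ by \cref{lem:F1path}).
    \item Since~$a$ and~$b$ are admissible vertices of~$F_t$, $a'$ and~$b'$ are at distance at least~$\ell$ from~$a$ and~$b$ in~$F_t$.
\end{itemize}
Thus $P'$ is an $F^-_t$-path of~$G_t$ of length at most $\ell-1$. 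By \cref{lem:short path-sub}, $\sigma(a')=\sigma(b')=:i$. I split into the cases $i=1$ and $i>1$.

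\emph{Case $i>1$.} Since~$a'$ and~$b'$ are admissible, \cref{lem:F1path} shows that they avoid $R_i\cup(a_iF_1b_i)$. Hence $i$ is pair-type and $a',b'$ are internal vertices of~$\varphi(R'_i)$.
\begin{itemize}
    \item By \cref{lem:pairtype1}, $\varphi(R'_i)$ is disjoint from~$S$.
    \item The subpath $a'\varphi(R'_i)b'$ lies in~$F^-_t$, and it is untouched by $\oplus R$ because~$Q\subseteq F_1$. So $a'$ and~$b'$ lie in the same component of $(F^-_t\oplus R)-S$.
    \item Since~$a',b'\notin F_1$ and every component of $Q\cap R'$ contains an end of~$Q$, we get $R'=P'$.
\end{itemize}
Then $R'$ is disjoint from~$S$ and joins two vertices of the same component of $(F^-_t\oplus R)-S$. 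So it is not an $S$-ear of $F^-_t\oplus R$, which is a contradiction.

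\emph{Case $i=1$.} Both ends lie on~$F_1$. Since $F_1$ is a shortest path, $\dist_{F_1}(a',b')\leq\abs{E(P')}\leq\ell-1$. As $a',b'$ are admissible, $W:=a'F_1b'$ is a path of~$F^-_t$. Because $a',b'$ are at distance at least~$\ell$ from~$a$ and~$b$, either $Q$ is disjoint from~$W$ or $Q$ lies in the interior of~$W$.
\begin{itemize}
    \item If $Q$ is disjoint from~$W$, then $R'=P'$, and $W$ is still a path of $F^-_t\oplus R$. So $R'\cup W$ is a cycle of $(F^-_t\oplus R)\cup R'$ through~$R'$ of length at most $2(\ell-1)$. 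By the definition of an $(\ell,S)$-ear this cycle is an $(\ell,S)$-cycle, contradicting that $(G,S)$ is a good pair.
    \item If $Q$ lies in the interior of~$W$, replace~$Q$ by~$R$ in the cycle $P'\cup W$. The result is a cycle containing a vertex of~$S$ (from~$R$), of length at most $(\ell-1)+(\ell-1)+(\ell-1)=3(\ell-1)$. Its length is also at least~$2\ell$, since it passes through~$a'$ and~$a$, which are at distance at least~$\ell$. This is an $(\ell,S)$-cycle of length at most $3(\ell-1)$, again a contradiction.
\end{itemize}

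\emph{Expected obstacle.} The delicate step is the bookkeeping of how $R'$ relates to $\varphi(R')$ and~$Q$, namely showing that $R'=\varphi(R')$ except in the case where $Q$ lies inside~$W$. This relies on admissibility placing~$a'$ and~$b'$ far from the ends of~$Q$, together with the condition that every component of $Q\cap R'$ contains an end of~$Q$.
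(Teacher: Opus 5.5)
Your overall plan is the paper's: assume $\abs{E(P')}\leq\ell-1$, get $\sigma(a')=\sigma(b')=:i$ from \cref{lem:short path-sub}, then split into $i>1$ and $i=1$. Your case $i>1$ is correct, and your explicit check that $R'=P'$ there is a welcome detail.

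The gap is in the case $i=1$, and it comes from your fourth preliminary fact. Admissibility of $a$ and $b$ only says they are at distance at least $\ell$ from the \emph{branch vertices} of $F_t$. Neither $a,b$ nor $a',b'$ are branch vertices of $F_t$, so nothing follows about $\dist_{F_t}(\{a',b'\},\{a,b\})$. In fact, the definition of an admissible pair deliberately allows $R'$ to start at $a$, run along $Q$ to an internal vertex of $Q$, and only then leave $F_t$. This is exactly why the condition on the components of $Q\cap R'$ and the notation $\varphi(R')$ exist. In that situation $a'$ is an internal vertex of $Q$, at distance less than $\ell$ from $a$; $a'=a$ is also possible.

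Consequently, your dichotomy ``$Q$ is disjoint from $W$ or $Q$ lies in the interior of $W$'' is unproved. The configurations with $a'$ or $b'$ on $Q$ are never treated: $W\subseteq Q$, partial overlap, or a shared end. In these configurations $R'\neq P'$, so a cycle built from $P'\cup W$ need not even lie in $(F^-_t\oplus R)\cup R'$. Your bound ``length at least $2\ell$'' in the second bullet rests on the same false claim. Even if the claim were granted, a distance in $F_t$ would only bound the part of the cycle lying in $F_1$. You flagged this step as the expected obstacle, but the justification you give for it does not work.

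To repair it, drop the distance claim and use only the ear property of $R'$, as your first bullet already does. This is what the paper's sentence ``$F_1\cup R\cup R'$ contains an $(\ell,S)$-cycle of length at most $3(\ell-1)$'' encodes.

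Since every component of $Q\cap R'$ contains an end of $Q$, $R'$ is $P'$ extended by at most two disjoint subpaths of $Q$. Hence $\abs{E(R')}\leq\abs{E(P')}+\abs{E(Q)}$, and the ends of $R'$ lie in $\{a,b,a',b'\}$.

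Now go through the relative positions of $Q$ and $W$ on $F_1$. In each one, the ends of $R'$ are joined in $F^-_t\oplus R$ by a path inside $W\cup R$ of length at most $\abs{E(W)}+\abs{E(R)}$. That path is one of:
\begin{itemize}
\item $W$ itself;
\item $W$ with $Q$ replaced by $R$;
\item a subpath of $W$ avoiding the interior of $Q$, possibly followed by $R$;
\item $R$ alone.
\end{itemize}
Together with $R'$, this path gives a cycle of $(F^-_t\oplus R)\cup R'$ through an edge of $R'$, so it is an $(\ell,S)$-cycle. Its length is at most $\abs{E(P')}+\abs{E(W)}+\abs{E(Q)}+\abs{E(R)}\leq3(\ell-1)$, which contradicts that $(G,S)$ is a good pair.
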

\begin{proof}
    Suppose that $P':=\varphi(R')$ has length at most $\ell-1$.
    By \cref{lem:F1path}, $R$ is an $F_1$-path.
    Let~$P$ be the base of~$R$ in~$F_1$ and let~$a'$ and~$b'$ be the ends of~$P'$.
    By \cref{lem:short path-sub}, we have $i:=\sigma(a')=\sigma(b')$.
    We first consider the case that $i=1$.
    Since~$a'$ and~$b'$ are admissible vertices of~$F_t$, they are at distance at least~$\ell$ from every branch vertex of~$F_t$.
    Since~$F_1$ is a shortest $(a_1,b_1)$-path of~$G$, we have $\abs{E(a'F_1b')}\leq\abs{E(P')}\leq\ell-1$.
    Thus, $a'F_1b'$ is a path of~$F^-_t$.
    If~$P$ is disjoint from $a'F_1b'$, then by concatenating~$P'$ and $a'F_1b'$, we obtain a cycle of $(F^-_t\oplus R)\cup R'$ of length at most $2(\ell-1)$ which is an $(\ell,S)$-cycle as~$R'$ is an $(\ell,S)$-ear of $F^-_t\oplus R$, a contradiction.
    Thus, we may assume that~$P$ intersects $a'F_1b'$.
    Then $F_1\cup R\cup R'$ contains an $(\ell,S)$-cycle of length at most $3(\ell-1)$, a contradiction.

    We now consider the case that $i>1$.
    Since~$a'$ and~$b'$ are admissible vertices of~$F_t$, by \cref{lem:F1path}, neither~$a'$ nor~$b'$ is in $R_i\cup(a_iF_1b_i)$.
    Thus, $i$ is pair-type and both~$a'$ and~$b'$ are internal vertices of~$\varphi(R'_i)$.
    By \cref{lem:pairtype1}, $\varphi(R'_i)$ is disjoint from~$S$.
    Since no internal vertex of $\varphi(R'_i)$ is in~$F_1$, by \cref{lem:F1path}, $\varphi(R'_i)$ is a subpath of~$F^-_t$.
    Thus, $a'$ and~$b'$ are in the same component of $F^-_t-S$.
    Since~$R$ and~$\varphi(R'_i)$ are vertex-disjoint, they are still in the same component of $(F^-_t\oplus R)-S$, contradicting that~$R'$ is an $(\ell,S)$-ear of $F^-_t\oplus R$.
\end{proof}

We now characterise all chords of an $(\ell,S)$-subframe.

\begin{proposition}\label{prop:subframe1}
    Let $(G,S)$ be a good pair and let~$F_t$ be an $(\ell,S)$-subframe in~$G$.
    If~$F_t$ has a chord~$ab$, then both~$a$ and~$b$ are in $B_{F_t}(x,\ell-1)$ for some branch vertex~$x$ of~$F_t$.
\end{proposition}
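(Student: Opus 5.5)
We argue by induction on $t$, following the pattern of \cref{prop:induced1,prop:induced2} but exploiting the much more rigid structure of subframes. For $t=1$, $F_1$ is a shortest path of~$G$ between its ends, so it has no chord at all: a chord $ab$ would shortcut $aF_1b$ unless $\dist_{F_1}(a,b)=1$, which is impossible for a non-edge of~$F_1$.

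For the inductive step, let $ab$ be a chord of~$F_t$ that is not a chord of~$F_{t-1}$. Then at least one end, say~$b$, lies in $V(F_t)\setminus V(F_{t-1})$, so~$b$ is an internal vertex of~$R_t$ or, if~$t$ is pair-type, of~$\varphi(R'_t)$. Because the new paths lie in $G_{t-1}$, the end~$a$ must be an admissible vertex of~$F_{t-1}$ whenever $a\in V(F_{t-1})$; otherwise $b\in Z_{t-1}$. The branch vertices of~$F_t$ include $a_t,b_t$ and, in the pair-type case, $a'_t,b'_t$. Also, $Y$ only grows, and \cref{obs:structure}\ref{item:order} has an analogue here: balls around old branch vertices do not change. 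Consequently, chords of~$F_{t-1}$ keep their witnesses.

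\textbf{Case $b\in R_t$.} By \cref{lem:F1path}, $V(R_t\cup a_tF_1b_t)\subseteq Y_t$, and $R_t\cup a_tF_1b_t$ has length at most $\ell-1$. Hence~$b$ lies within distance $\ell-1$ of both $a_t$ and~$b_t$. Choose $x\in\{a_t,b_t\}$ with $b$ on the side of~$R_t$ containing a vertex of~$S$; by symmetry take $x=b_t$. If $a\notin B_{F_t}(x,\ell-1)$, consider the $F_1$-path (or $F^-_{t-1}$-path) formed by $bR_tb_t$ together with~$ab$. It contains a vertex of~$S$. After extending it along~$F_t$ to~$F_1$ as in the proof of \cref{lem:F1path}, its length plus the distance of its ends along $F_1$ is at least~$\ell$, contradicting \cref{obs:central ears}. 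If $a$ lies on $\varphi(R'_t)$, argue symmetrically using the subpath of $R_t$ through~$S$.

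\textbf{Case $b$ an internal vertex of $\varphi(R'_t)$.} This mirrors Case~1 of \cref{lem:first}. Assume the $S$-side or separating side of $R'_t$ is $bR'_tb''$, where $b''$ is an end of~$R'_t$. Let $P:=ab\cup bR'_tb''$; if $a\in R_t$, handle this by the previous case. When $\dist(b,\{a'_t,b'_t\})\ge 2$, the path $P$ is an $(F^-_{t-1}\oplus R_t)$-path shorter than $R'_t$. Moreover, $(R_t,P)$ is again an admissible pair, since~$P$ lies in $G_{t-1}$ and meets the base only near its ends. By minimality in~\ref{item:sub1}, $P$ is not an $(\ell,S)$-ear. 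It is an $S$-ear by the choice of side, so its length plus the distance of its ends is at most $\ell-1$. This places $a,b\in B_{F_t}(b'_t,\ell-1)$.

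\textbf{The other subcase of $\varphi(R'_t)$.} If instead $b$ is adjacent to~$a'_t$, the three-vertex path $a\,b\,a'_t$ is a short $F^-_{t-1}$-path. Then \cref{lem:short path-sub} forces $\sigma(a)=\sigma(a'_t)$. Using \cref{lem:ear property2} and \cref{lem:pairtype2} (every $(\ell,S)$-ear avoids~$S$ and has length at least~$\ell$), together with the fact that $F_1$ is shortest, we get $\dist(a,a'_t)\le \ell-1$, which gives the claim with $x=a'_t$.

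\textbf{Main obstacle.} The step I expect to be hardest is the subcase where $b$ lies on $\varphi(R'_t)$ but the shortcut $P$ is not an $S$-ear. There the shortcut loses both the $S$-vertices and the separation, which is exactly the fragile-ear phenomenon of \cref{lem:short path1}. I would rule it out as follows. By \cref{lem:pairtype1}, $\varphi(R'_t)$ avoids~$S$, so $R'_t$ is an $(\ell,S)$-ear only because its ends are separated in $(F^-_{t-1}\oplus R_t)-S$, and that separation is caused by the $S$-vertex on~$R_t$ replacing the base. The shortcut $P$ keeps one end of $R'_t$ and ends at~$a$. By \cref{lem:short path-sub} and \cref{lem:F1path}, $a$ lies on~$F_1$ away from the base. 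I would check that $a$ and the far end of $R'_t$ remain on opposite sides of the $S$-vertex of~$R_t$, so~$P$ is still separating; otherwise $a$ is within $\ell-1$ of $a'_t$.
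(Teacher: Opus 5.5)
Your skeleton matches the paper's: you split on whether the new end of the chord lies on $R_t$ or on $\varphi(R'_t)$, and in the latter case on whether it is adjacent to an end of $\varphi(R'_t)$. However, two steps do not go through as written, and one case is missing.

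\textbf{The subcase where $b$ is adjacent to $a'_t$.} \cref{lem:short path-sub} gives $h:=\sigma(a)=\sigma(a'_t)$, and $h=1$ is fine because $F_1$ is a shortest path. For $h>1$, admissibility forces $a$ and $a'_t$ to be internal vertices of $\varphi(R'_h)$, which is $S$-free by \cref{lem:pairtype1}. So $a\,b\,a'_t$ is not even an $S$-ear of $F^-_{t-1}$, and \cref{lem:ear property2,lem:pairtype2} say nothing about $\dist(a,a'_t)$. The missing idea is to use the minimality of the earlier pair. Replace $a'_tR'_ha$ by $a'_tba$ in $R'_h$ to get a path $R''$. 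It has the same ends as $R'_h$. It inherits the $(\ell,S)$-ear property, because the replaced segment avoids $S$ and $a,a'_t$ are at distance at least $\ell$ from $\{a'_h,b'_h\}$. It lies in $G_{h-1}$. So $(R_h,R'')$ is an admissible pair of $F_{h-1}$ with $\abs{E(R'')}<\abs{E(R'_h)}$ unless $\dist_{R'_h}(a,a'_t)\le 2$, which contradicts~\ref{item:sub1}.

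\textbf{The missing case.} You never treat the case where both $a$ and $b$ are internal vertices of $\varphi(R'_t)$. It needs the same shortcut idea, applied to $R'_t$ itself and again using \cref{lem:pairtype1}. Your Case-1-of-\cref{lem:first} argument cannot cover it, since $a\notin V(F^-_{t-1}\oplus R_t)$ and so $ab\cup bR'_tb''$ is not an $(F^-_{t-1}\oplus R_t)$-path.

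\textbf{Your ``main obstacle''.} It is not a real obstacle: choose the end $b''$ of $R'_t$ only after fixing $a$, as the paper does. If $R'_t$ meets $S$, take $b''$ so that $bR'_tb''$ contains such a vertex. Otherwise the two ends of $R'_t$ lie in distinct components of $(F^-_{t-1}\oplus R_t)-S$, so either $a\in S$ or $a$ is separated from at least one of them; take that one as $b''$. Then $P=ab\cup bR'_tb''$ is an $S$-ear by construction, and no analysis of sides of the $S$-vertex on $R_t$ is needed.

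The route you sketch instead rests on the claim that $a$ lies on $F_1$ away from the base, and that claim is unjustified. \cref{lem:short path-sub} requires a short $F^-_{t-1}$-path from $a$, which you do not have when $\dist_{R'_t}(b,\{a'_t,b'_t\})\ge 2$. Moreover, an admissible vertex of $F_{t-1}$ can be an internal vertex of some earlier $\varphi(R'_h)$ and so need not lie on $F_1$ at all.
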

\begin{proof}
    Without loss of generality, we may assume that $i:=\sigma(a)\leq\sigma(b)=:j$.
    We divide into two cases according to whether $i=j$.
    
    \medskip
    \noindent
    \textbf{Case 1.} $i=j$.

    Note that $i\neq1$, because~$F_1$ is a shortest $(a_1,b_1)$-path of~$G$.
    By \cref{obs:central ears} and \cref{lem:F1path}, if both~$a$ and~$b$ are internal vertices of~$R_i$, then the statement holds.
    Thus, we may assume that~$i$ is pair-type and~$a$ or~$b$ is not an internal vertex of $R_i$.
    
    Suppose first that both~$a$ and~$b$ are internal vertices of $\varphi(R'_i)$.
    We may assume that for each $x\in\{a'_i,b'_i\}$, $a$ or~$b$ is not in $B_{R'_i}(x,\ell-1)$, because otherwise the statement holds for the vertex~$x$.
    Let~$R''$ be the path obtained from~$R'_i$ by replacing $aR'b$ with~$ab$.
    By the previous assumption, $R''$ has length at least $\ell+1$.
    By \cref{lem:pairtype1}, $\varphi(R'_i)$ is disjoint from~$S$.
    Thus, $R''$ is an $(\ell,S)$-ear of $F^-_{i-1}\oplus R_i$ shorter than~$R'_i$, contradicting~\ref{item:sub1}.

    We now suppose that exactly one of~$a$ and~$b$, say~$a$, is an internal vertex of~$R_i$.
    Since~$R_i$ is a strict $S$-ear, it has an end, say~$a_i$, such that $a_iR_ia$ together with~$ab$ is an $F_i$-path of~$G_i$ containing a vertex in~$S$.
    By \cref{lem:F1path}, the path is an $F_1$-path, contradicting that~$b$ is not a vertex of~$F_1$.

    \medskip
    \noindent
    \textbf{Case 2.} $i\neq j$.

    Suppose first that~$b$ is an internal vertex of~$R_j$.
    Since~$R_j$ is a strict $S$-ear, it has an end, say~$a_j$, such that $a_jR_jb$ together with~$ab$ is an $F_j$-path of~$G_j$ containing a vertex in~$S$.
    By \cref{lem:F1path}, the path is an $F_1$-path.
    Then by \cref{obs:central ears}, the statement holds for $x:=a_j$.

    We now suppose that~$j$ is pair-type and~$b$ is an internal vertex of $\varphi(R'_j)$.
    Note that~$a$ is an admissible vertex of $F_{j-1}$, because otherwise $b\in Z_{j-1}$.
    We divide into two subcases according to $\dist_{R'_j}(b,\{a'_j,b'_j\})$.

    \medskip
    \noindent
    \textbf{Case 2-1.} $\dist_{R'_j}(b,\{a'_j,b'_j\})=1$.

    Without loss of generality, we may assume that~$b$ is adjacent in~$R'_j$ to~$a'_j$.
    We may assume that $a\notin B_{F_t}(a'_j,\ell-1)$, because otherwise the statement holds for $x:=a'_j$.
    Since~$a'_j$ and~$a$ are admissible vertices of~$F_{j-1}$, they are vertices of~$F^-_{j-1}$ by \cref{lem:F1path}.
    Thus, $a'_jba$ is an $F^-_{j-1}$-path of~$G_{j-1}$.
    By \cref{lem:short path-sub} with~$F_{j-1}$ and $a'_jba$, we have $h:=\sigma(a'_j)=\sigma(a)$.
    Note that~$h$ is pair-type and both~$a'_j$ and~$a$ are internal vertices of $\varphi(R'_h)$.
    Let~$R''$ be the path obtained from~$R'_h$ by replacing $a'_jR'_ha$ with $a'_jba$.
    As $a\notin B_{F_t}(a'_j,\ell-1)$, $R''$ is shorter than~$R'_h$.
    Since~$a'_j$ and~$a$ are admissible vertices of~$F_{j-1}$, we have $\dist_{R'_j}(\{a'_j,a\},\{a'_h,b'_h\})\geq\ell$.
    Thus, $R''$ has length at least $2(\ell+1)$.
    By \cref{lem:pairtype1}, $\varphi(R'_h)$ is disjoint from~$S$.
    Thus, $R''$ is an $(\ell,S)$-ear of $F^-_{h-1}\oplus R_h$ shorter than~$R'_h$, contradicting~\ref{item:sub1}.

    \medskip
    \noindent
    \textbf{Case 2-2.} $\dist_{R'_j}(b,\{a'_j,b'_j\})\geq2$.

    Let~$a''_j$ and~$b''_j$ be the ends of~$R'_j$.
    Since $R'_j$ is an $(\ell,S)$-ear of $F^-_{j-1}\oplus R_j$, without loss of generality, we may assume the following:
    \begin{itemize}
        \item if $R'_j$ contains a vertex in~$S$, then~$a''_jR'_jb$ contains the vertex, and
        \item otherwise either $a\in S$ or~$a$ and~$a''_j$ are in distinct components of $(F^-_{j-1}\oplus R_j)-S$.
    \end{itemize}
    Let~$P'$ be the path obtained by concatenating $a''_jR'_jb$ and~$ab$.
    Note that~$P'$ is an $(F^-_{j-1}\oplus R_j)$-path in~$G_{j-1}$ as~$a$ is an admissible vertex of~$F_{j-1}$.
    By the assumption, $P'$ is an $S$-ear of $F^-_{j-1}\oplus R_j$.
    As $\dist_{R'_j}(b,\{a'_j,b'_j\})\geq2$, $P'$ is shorter than~$R'_j$.
    By~\ref{item:sub1}, $P'$ is not an $(\ell,S)$-ear of $F^-_{j-1}\oplus R_j$.
    Thus,
    \[
        \abs{E(a''_jR'_jb)}+\dist_{F_t}(a''_j,a)<\abs{E(P')}+\dist_{F^-_{j-1}\oplus R_j}(a''_j,a)\leq\ell-1,
    \]
    and therefore the statement holds for $x:=a''_j$.

    \medskip
    This completes the proof.
\end{proof}

In the following proposition, we show that given a maximal $(\ell,S)$-subframe, one can find a large induced packing of $(\ell,S)$-cycles or a small set of vertices whose closed neighbourhood meets all $(\ell,S)$-cycles of~$G$.

\begin{proposition}\label{prop:subframe2}
    There exists a function $g(k,\ell)=\mathcal{O}(\ell\cdot k\log k+\ell^3\cdot k)$ such that for all integers $k\geq1$ and $\ell\geq3$, every graph~$G$, and every set $S\subseteq V(G)$, if~$G$ admits an $(\ell,S)$-central path and has no $(\ell,S)$-cycle of length at most $3(\ell-1)$, then one can find, in $\abs{V(G)}^{\mathcal{O}(\ell)}$ time, either an induced packing of~$k$ $(\ell,S)$-cycles or a set~$X$ of at most $g(k,\ell)$ vertices such that $G-B_G(X,1)$ has no $(\ell,S)$-cycle.
\end{proposition}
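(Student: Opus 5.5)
We construct a maximal $(\ell,S)$-subframe $F_t$ greedily, starting from the given $(\ell,S)$-central path $F_1$ and repeatedly applying \ref{item:sub1} or \ref{item:sub2}. Each step uses a search over paths of length $\mathcal{O}(\ell)$ or shortest-path computations, so it runs in $\abs{V(G)}^{\mathcal{O}(\ell)}$ time, and at most $\abs{V(G)}$ steps occur. The argument then splits according to the number of branch vertices of $F^-_t$.

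\textbf{Many branch vertices.} Suppose $F^-_t$ has at least roughly $2s_k+c$ branch vertices. Delete the $(\ell-1)$-balls around the branch vertices of $F_t$ that are not branch vertices of $F^-_t$, namely the ends of bases of $R_i$ and the short structures from \cref{lem:F1path}. Then prune vertices of degree at most~$1$ to obtain a subcubic graph $F'$ without pendant vertices that still has at least $s_k$ branch vertices. \cref{thm:simonovitz} gives $k$ vertex-disjoint cycles in $F'$, and by \cref{lem:all cycles-sub} these are $(\ell,S)$-cycles. To make them an induced packing, invoke \cref{prop:subframe1}: every chord of $F_t$ has both ends in $B_{F_t}(x,\ell-1)$ for a branch vertex $x$.

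We therefore choose the cycles so that no branch vertex $x$ has its $(\ell-1)$-ball meeting two distinct chosen cycles. This is the step I expect to be the main obstacle. The difficulty is that a cycle through $x$ can still have a nearby chord to another cycle passing through the same ball. I would handle this as in the proof of \cref{prop:final}. First, subdivide the structure so that two cycles meeting a common ball $B(x,\ell-1)$ are excluded: either both contain $x$, which is impossible for disjoint cycles, or one avoids $x$ but passes within distance $\ell-1$, which forces a second branch vertex close to $x$. Then apply \cref{cor:short branch}-type reasoning, or delete one endpoint of each short branch-to-branch path, in the style of rule \ref{rule3}. This costs a constant factor in the number of branch vertices. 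Edges of $F_t$ between the cycles are excluded in the same way. The threshold is $\mathcal{O}(k\log k)$ branch vertices.

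\textbf{Few branch vertices.} Otherwise $F_t$ has $\mathcal{O}(k\log k)$ branch vertices. Put into $X$ the set $B_{F_t}(V_{\geq3}(F_t),\ell-1)$ together with the end segments of length $\mathcal{O}(\ell)$ of the components of $\mathcal{L}_0(F^-_t)$, plus $B_{F_t}(\mathcal{L}_1(F^-_t),\ell-1)$ restricted as needed. This has size $\mathcal{O}(\ell k\log k)$ and accounts for the first term of $g$.

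Every remaining $(\ell,S)$-cycle $C$ in $G-B_G(X,1)$ must meet $F_1$ in at least two vertices. Its $F_t$-subpaths through $S$ are short strict $S$-ears (\cref{obs:central ears}, \cref{lem:F1path}). By maximality (no move \ref{item:sub2} or \ref{item:sub1} available), such ears can only sit along the path components of $F_t - X$ in a restricted way. Consequently each surviving cycle is confined to a window of $F_1$ of length $\mathcal{O}(\ell)$ around a strict $S$-ear, since \cref{lem:ear property2} rules out long $S$-free detours escaping.

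Finally, run a greedy sweep along $F_1$. Take the surviving $(\ell,S)$-cycle whose window ends leftmost, add it to the packing, and delete an $\mathcal{O}(\ell)$-neighbourhood of its window, so that later cycles are non-adjacent to it. Repeat. If $k$ cycles are found, they form an induced packing. Otherwise fewer than $k$ windows were deleted, and adding their vertices, each of size $\mathcal{O}(\ell^3)$ after accounting for ear lengths, to $X$ yields the $\mathcal{O}(\ell^3k)$ term and hits every $(\ell,S)$-cycle.
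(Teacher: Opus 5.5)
Your argument breaks down in the case of few branch vertices, which is where the real content of the proposition lies. The many-branch case is settled in one line in the paper: it applies \cref{thm:simonovitz} to the pruned $F^-$ with threshold $s_k$ and gets inducedness directly from \cref{prop:subframe1}, so your extra deletions there are unnecessary.

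\textbf{The window claim is false.} Your claim that every surviving $(\ell,S)$-cycle is confined to a window of $F_1$ of length $\mathcal{O}(\ell)$ does not hold, and \cref{lem:ear property2} cannot give it. An $S$-free $F_1$-path whose base is $S$-free is not an $S$-ear at all, so nothing bounds its length. For example, take $\ell=13$, identify $F_1$ with a long integer interval, and let $S=\{s,0,s'\}$ with $s\ll0\ll s'$. Add two $S$-free $F_1$-paths: $E_c$ of length $6$ from $-1$ to $5$, and $E$ of length $N$ from $1$ to $N$. Then:
\begin{itemize}
\item $F_1$ is an $(\ell,S)$-central path and the pair is good.
\item There is no strict $S$-ear, so $F_t=F_1$ is maximal. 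Any bounded version of your $X$ only contains vertices near $s$, $s'$ and the ends of $F_1$.
\item The unique $(\ell,S)$-cycle $-1,0,1,E,N,N-1,\dots,5,E_c$ has length $2N+3$ and a footprint of length about $N$ on $F_1$.
\end{itemize}
Deleting a neighbourhood of its window is therefore unbounded, although $\{0\}$ suffices.

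\textbf{Two further gaps in the sweep.} Even for short windows, your sweep leaves two things unproved. First, chosen cycles with disjoint $F_1$-windows may still be adjacent through their off-$F_1$ parts. Second, an unchosen cycle may avoid every deleted vertex by jumping over a deleted window with a long $S$-free $F_1$-path. Both must be excluded by the maximality of $F_t$ with respect to admissible pairs, used quantitatively; your proposal invokes this only vaguely. Also, $B_{F_t}(\mathcal{L}_1(F^-_t),\ell-1)$ cannot go into $X$, since it is unbounded. Only balls around the ends of the components of $\mathcal{L}_1(F^-_t)$ are affordable, and then you need a separate argument that no $F_t$-path avoiding these balls leaves such a component.

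\textbf{What is missing: measure each cycle by its middle part.} The paper does not use a cycle's whole footprint. For an $(\ell,S)$-cycle $C$ it splits the base on $F_1$ into the end components $L(C)$ and $R(C)$ of $C\cap F_1$ and the middle $M(C)$. In the example, $L(C)=[-1,1]$, $M(C)=[1,5]$ and $R(C)=[5,N]$. The argument then runs as follows.
\begin{itemize}
\item It greedily chooses $C_1,\dots,C_h$ with pairwise disjoint middles, each minimising $\abs{E(C)}$ first. This minimality gives $\abs{E(M(C_i))}<(\ell-1)(\ell-2)$, while long tails and the $F_1$-paths leaving them are shown to be $S$-free.
\item Using the absence of admissible pairs, it shows that the base of any $F_1$-path of $G_t$ contains at most $2\ell$ of the middles.
\item If $h\geq(6\ell+1)k$, cycles whose indices are $6\ell+1$ apart form an induced packing.
\item Otherwise, adding $\bigcup_iB_{F_1}(M(C_i),\ell-1)$ costs only $\mathcal{O}(\ell^3k)$ vertices. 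Any $(\ell,S)$-cycle surviving the closed neighbourhood would contain an $S$-free $F_1$-path jumping over some $B_{F_1}(M(C_i),\ell-1)$. Together with a strict $S$-ear of $C_i$, this path forms an admissible pair, contradicting maximality.
\end{itemize}
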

\begin{proof}
    Let
    \[
        g(k,\ell):=(19\ell-15)s_k+\ell(\ell-1)(6\ell+1)k+29\ell-23
    \]
    Let~$F_t$ be a maximal $(\ell,S)$-subframe in~$G$ and let $F^-$ be the graph obtained from~$F^-_t$ by recursively removing degree-$1$ vertices.
    Note that~$F^-$ is a subcubic graph without pendant vertices.
    Suppose first that $F^-$ has at least $s_k$ branch vertices.
    By \cref{thm:simonovitz}, $F^-$ has~$k$ vertex-disjoint cycles.
    By \cref{lem:all cycles-sub}, they are $(\ell,S)$-cycles.
    By \cref{prop:subframe1}, they form an induced packing in~$G$.

    We now suppose that $F^-$ has less than~$s_k$ branch vertices.
    Note that~$F^-_t$ has at most two more branch vertices than~$F^-$.
    Then $\mathcal{L}_0(F^-_t)$ has at most
    \[
        \frac{3(s_k-1)}{2}+4=\frac{3s_k+5}{2}
    \]
    components.
    By~\ref{item:sub2}, each component of $\mathcal{L}_0(F^-_t)$ contains at most two $R_i$'s for $i\geq2$ as subgraphs.
    For such $R_i$, by \cref{lem:F1path}, we have $\abs{B_{F_t}(R_i,\ell-1)}\leq3(\ell-1)$.
    Since the maximum degree of~$F_t$ is at most~$4$, we have
    \begin{align*}
        \abs{Y_t}
        &\leq(4\ell-3)(s_k+1)+6(\ell-1)\cdot\frac{3s_k+5}{2}\\
        &\leq(13\ell-12)s_k+19\ell-18.
    \end{align*}
    Let $X_0$ be the set obtained from~$Y_t$ by adding $B_{F^-_t}(v,\ell-1)$ for each end~$v$ of $\mathcal{L}_1(F^-_t)$.
    Since $\mathcal{L}_1(F^-_t)$ has at most $(3s_k+5)/2$ components, we have
    \[
        \abs{X_0}\leq\abs{Y_t}+(2\ell-1)(3s_k+5)\leq(19\ell-15)s_k+29\ell-23.
    \]

    Let $G_0:=G_t-X_0$.
    As $Y_t\subseteq X_0$, by \cref{lem:F1path}, $F_t-X_0$ is an induced subgraph of~$F^-_t$ and each component of~$F^-_t$ contains exactly one component of $F_t-X_0$.
    Let~$\mathcal{C}_0$ be the set of all components of $F_t-X_0$ and let~$\mathcal{C}_1$ be the set of components of $F_t-X_0$ which are subpaths of $\mathcal{L}_1(F^-_t)$.
    By \cref{lem:F1path}, every component in~$\mathcal{C}_1$ is a subpath of~$F_1$.
    We will use the following claim.
    \begin{claim}\label{clm:disconnection}
        $G_0$ has no $F_t$-path between a component in~$\mathcal{C}_1$ and another component in~$\mathcal{C}_0$.
    \end{claim}
    \begin{subproof}
        Towards a contradiction, suppose that~$G_0$ has an $F_t$-path~$P$ between a component $C\in\mathcal{C}_1$ and another component in~$\mathcal{C}_0$.
        Let~$C'$ be the component of $\mathcal{L}_1(F^-_t)$ having~$C$ as a subpath.
        Note that both ends of~$C'$ are contained in~$S$.
        By the definition of an $(\ell,S)$-subframe, extending~$P$ from each of its ends along the paths in~$F_t$ towards~$F_1$ yields an $F_1$-path~$P'$ containing~$P$ as a subpath.
        Let~$v$ and~$v'$ be the ends of~$P'$ with $v\in V(C)$.
        As $v'\notin V(C')$, either $\{v,v'\}\cap S\neq\emptyset$, or~$v$ and~$v'$ are in distinct components of $F^-_t-S$.
        By the construction of~$X_0$, we have $\dist_{F_1}(v,v')\geq\ell-1$.
        Thus, $P'$ is an $(\ell,S)$-ear of~$F_1$, a contradiction.
    \end{subproof}

    Let~$C$ be an $(\ell,S)$-cycle of~$G_0$.
    Note that~$C$ intersects a component in~$\mathcal{C}_1$, because it has either a vertex in $V(F_1)\cap S$ or a strict $S$-ear of~$F_1$.
    By \cref{clm:disconnection}, $C$ does not intersect any other component in~$\mathcal{C}_0$.
    Thus, the base~$Q$ of~$C$ in~$F_t$ is a subpath of~$F_1$.
    Let~$a_C$ and~$b_C$ be the ends of~$Q$ such that~$a_C$ is closer to~$a_1$ than~$b_C$ in~$F_1$.
    Let $L(C)$, $M(C)$, and $R(C)$ be the subpaths of~$Q$ such that
    \begin{itemize}
        \item $L(C)$ and $R(C)$ are the components of $C\cap F_1$ containing~$a_C$ and~$b_C$, respectively, and
        \item $M(C)$ is the minimal subpath of~$Q$ containing all edges of~$Q$ not in $L(C)\cup R(C)$.
    \end{itemize}
    We denote by~$a'_C$ and~$b'_C$ the ends of~$M(C)$ such that~$a'_C$ is closer to~$a_1$ than~$b'_C$ in~$F_1$.
    We denote by $\eta(C)$ the number of subpaths of~$C$ which are $F_1$-paths.

    We show that for every $(\ell,S)$-cycle~$C$ of~$G_0$, $M(C)$ is not a null graph.
    Suppose not.
    Then $L(C)=R(C)=Q$.
    Thus, $C$ has a unique subpath~$P$ which is an $F_1$-path.
    Since~$F_1$ is a shortest $(a_1,b_1)$-path, we have $\abs{E(Q)}\leq\abs{E(P)}$.
    Since~$G$ has no $(\ell,S)$-cycle of length at most $3(\ell-1)$, we have $\abs{E(P)}>3(\ell-1)/2$.
    By \cref{lem:F1path}, $P$ is disjoint from~$S$.
    Thus, $Q$ contains a vertex in~$S$, and therefore~$P$ is an $(\ell,S)$-ear of~$F_1$, a contradiction.

    We greedily choose $(\ell,S)$-cycles $C_1,\ldots,C_h$ of~$G_0$ as follows: for each $i\in[h]$, among all $(\ell,S)$-cycles~$C$ of~$G_0$ whose $M(C)$ is vertex-disjoint from~$M(C_j)$ for every $j\in[i-1]$, $C_i$ is one with lexicographically smallest $(\abs{E(C_i)},\eta(C_i),\dist_{F_1}(a_1,a'_{C_i}),\dist_{F_1}(a_1,b'_{C_i}))$.
    We will use the following three claims.

    \begin{claim}\label{clm:medium}
        For every $i\in[h]$, $M(C_i)$ has length less than $(\ell-1)(\ell-2)$.
    \end{claim}
    \begin{subproof}
        Let~$v$ be a vertex in $V(C_i)\cap S$.
        Since $M(C_i)$ is not a null graph, it has a subpath~$P$ which is a $C_i$-path.
        Let~$a$ and~$b$ be the ends of~$P$ and let~$P'$ be a shortest $(a,b)$-subpath of~$C_i$ containing~$v$.
        Since~$F_1$ is a shortest $(a_1,b_1)$-path, we have $\abs{E(P)}\leq\abs{E(P')}$.
        By the choice of~$C_i$, $P\cup P'$ is not an $(\ell,S)$-cycle.
        Thus, $\abs{E(P\cup P')}\leq\ell-1$.
        Therefore, $P$ has length at most $(\ell-1)/2$ and both~$a$ and~$b$ are in $B_{C_i}(v,\ell-3)$.
        This implies that $M(C_i)$ has at most $2\ell-5$ subpaths which are $C_i$-paths, because each vertex of~$M(C_i)$ can be an end of at most two such $C_i$-paths.
        Therefore, we have $\abs{E(M(C_i))}\leq(2\ell-5)(\ell-1)/2<(\ell-1)(\ell-2)$.
    \end{subproof}

    For every $i\in[h]$, let $L'(C_i)$ and $R'(C_i)$ be the subpaths of~$C_i$ which are $F_1$-paths with ends~$a_{C_i}$ and~$b_{C_i}$, respectively.
    
    \begin{claim}\label{clm:middle-S}
        For every $i\in[h]$, if $L(C_i)$ has length at least $\ell/2-1$, then $L(C_i)$ and $L'(C_i)$ are disjoint from~$S$.
        The analogous statement holds for $R(C_i)$ and $R'(C_i)$.
    \end{claim}
    \begin{subproof}
        Since the proof is symmetric, we prove the claim only for $L(C_i)$ and $L'(C_i)$.
        Let~$b$ be the end of $L'(C_i)$ other than~$a_{C_i}$.
        Note that $b\neq a'_{C_i}$.
        Since~$F_1$ is a shortest $(a_1,b_1)$-path, $L'(C_i)$ is longer than $L(C_i)$.
        Then
        \[
            \abs{E(L'(C_i))}+\dist_{F_1}(a_{C_i},b)\geq\abs{E(L(C_i))}+\abs{E(L'(C_i))}+1\geq\ell.
        \]
        Since~$F_1$ has no $(\ell,S)$-ear, neither $L(C_i)$ nor $L'(C_i)$ contains a vertex in~$S$.
    \end{subproof}

    \begin{claim}\label{clm:wing}
        For an $F_1$-path~$P$ of~$G_t$, there are at most $2\ell$ integers $i\in[h]$ such that $M(C_i)$ is a subpath of the base of~$P$ in~$F_1$.
    \end{claim}
    \begin{subproof}
        Let~$Q$ be the base of~$P$ in~$F_1$.
        Towards a contradiction, suppose that there is an increasing sequence $i_1,\ldots,i_{2\ell+1}$ of integers in~$[h]$ such that for each $j\in[2\ell+1]$, $M(C_{i_j})$ is a subpath of~$Q$.
        Note that $B_{F_1}(M(C_{i_{\ell+1}}),2\ell)$ is a subpath of~$Q$.
        This implies that~$Q$ has length at least $4\ell+1$.
        Since~$P$ is not an $(\ell,S)$-ear of~$F_1$, $B_{F_1}(M(C_{i_{\ell+1}}),2\ell)$ is disjoint from~$S$.
        Then by \cref{clm:middle-S}, $C_{i_{\ell+1}}$ has an $F_1$-path~$P'$ containing a vertex in~$S$.
        Note that the ends of~$P'$ are not in~$P$.
        
        If $P$ contains an internal vertex of~$P'$, then $P\cup P'$ has an $F_1$-path~$P''$ containing a vertex in~$S$ whose base has length at least $2\ell$, contradicting \cref{obs:central ears}.
        Thus, $P$ and~$P'$ are vertex-disjoint.
        Since~$a$ and~$b$ are admissible vertices of~$F_t$, they are vertices of~$F^-_t$.
        Since~$Q$ is a subpath of a component in~$\mathcal{C}_1$, either $\{a,b\}\cap S\neq\emptyset$, or~$a$ and~$b$ are in distinct components of $F^-_t-S$.
        As $\abs{E(Q)}\geq4\ell+1$, $P$ is an $(\ell,S)$-ear of $F^-_t\oplus P'$.
        Thus, $(P',P)$ is an admissible pair of~$F_t$, contradicting the maximality of~$F_t$.
    \end{subproof}

    We now find either an induced packing of~$k$ $(\ell,S)$-cycles or a desired set~$X$.
    Suppose first that $h\geq(6\ell+1)k$.
    For each $j\in[k]$, let $i_j:=2\ell(3j-2)+j$.
    By \cref{clm:wing}, if the base of $C_{i_j}$ intersects $M(C_r)$ for some $r\in[h]$, then $i_j-2\ell\leq r\leq i_j+2\ell$, because each of $L(C_{i_j})$ and $R(C_{i_j})$ contains at most $2\ell$ such paths $M(C_r)$.
    Thus, for all $j,j'\in[k]$ with $j<j'$, the bases of~$C_{i_j}$ and~$C_{i_{j'}}$ are vertex-disjoint.
    If $\{C_{i_j},C_{i_{j'}}\}$ is not an induced packing in~$G$, then $C_{i_j}\cup C_{i_{j'}}$ has an $F_1$-path~$P$ of~$G_0$ between the bases of~$C_{i_j}$ and~$C_{i_{j'}}$.
    Thus, there are at least $2\ell$ integers $r\in[h]$ such that $M(C_r)$ is a subpath of the base of~$P$ in~$F_1$, contradicting \cref{clm:wing}.
    Therefore, $\{C_{i_j}:j\in[k]\}$ is an induced packing of $k$ $(\ell,S)$-cycles in~$G$.
    
    We now suppose that $h<(6\ell+1)k$.
    Let~$X$ be the set obtained from~$X_0$ by adding $B_{F_1}(M(C_i),\ell-1)$ for every $i\in[h]$.
    By \cref{clm:medium}, we have
    \[
        \abs{B_{F_1}(M(C_i),\ell-1)}<(\ell-1)(\ell-2)+2(\ell-1)=\ell(\ell-1)
    \]
    for each $i\in[h]$.
    Thus, we have
    \[
        \abs{X}<\abs{X_0}+\ell(\ell-1)(6\ell+1)k=g(k,\ell).
    \]
    
    Towards a contradiction, suppose that $G-B_G(X,1)$ has an $(\ell,S)$-cycle~$C$.
    Since~$C$ is disjoint from~$X$, by the greedy choice of $C_1,\ldots,C_h$, $M(C)$ contains $A:=B_{F_1}(M(C_i),\ell-1)$ for some $i\in[h]$.
    Thus, $C$ has an $F_1$-path~$P$ as a subpath whose base in~$F_1$ contains all vertices in~$A$.
    Since~$P$ is not an $(\ell,S)$-ear of~$F_1$, $A$ is disjoint from~$S$.
    By \cref{clm:middle-S}, $C_i$ has a strict $S$-ear~$P'$ of~$F_1$ as a subpath.
    Then $(P',P)$ is an admissible pair of~$F_t$, contradicting the maximality of~$F_t$.
    Hence, $G-B_G(X,1)$ has no $(\ell,S)$-cycles, as desired.
\end{proof}

\section{A proof of the main theorem}\label{sec:proof}

We now prove \cref{thm:main1}.

\mainone*

\begin{proof}
    We proceed by induction on~$k$.
    For $k=1$, the statement holds by taking~$f(1,\ell)$ as~$1$.
    Thus, we may assume that~$k\geq2$.
    If~$G$ has no $(\ell,S)$-cycle, then the statement holds by taking~$X$ as an empty set.
    Thus, we may assume that~$G$ has an $(\ell,S)$-cycle.
    Let $g(k,\ell)$ be the function of \cref{prop:subframe2} and let
    \begin{align*}
        h(k):=&10^{20}\cdot k^9(2k-1)^4+2s_k,\\
        f(k,\ell):=&(4\ell-3)(13h(k)+24k(6k-5))+(k-1)g(k,\ell).
    \end{align*}
    As $g(k,\ell)=\mathcal{O}(\ell\cdot k\log k+\ell^3\cdot k)$, we have $f(k,\ell)=\mathcal{O}(\ell\cdot k^{13}+\ell^3\cdot k^2)$.
    We remark that for every $k\geq2$, it holds that $f(k-1,\ell)+3(\ell-1)\leq f(k,\ell)$.
    
    Suppose that~$G$ has an $(\ell,S)$-cycle~$C$ of length at most $3(\ell-1)$.
    Let $G':=G-B_G(C,1)$.
    By the inductive hypothesis for $k-1$, $G'$ has either an induced packing~$\mathcal{C}$ of $k-1$ $(\ell,S)$-cycles or a set~$X'$ of at most $f(k-1,\ell)$ vertices such that $G'-B_{G'}(X',1)$ has no $(\ell,S)$-cycle.
    In the former, $\mathcal{C}\cup\{C\}$ is an induced packing in~$G$.
    Thus, we may assume that the latter holds.
    Let $X:=X'\cup V(C)$.
    Note that $G-B_G(X,1)=G'-B_{G'}(X',1)$ and
    \[
        \abs{X}\leq f(k-1,\ell)+3(\ell-1)\leq f(k,\ell).
    \]
    Thus, $X$ is a desired set, and therefore the statement holds.

    Hence, we may assume that~$G$ has no $(\ell,S)$-cycle of length at most $3(\ell-1)$, that is, $(G,S)$ is a good pair.
    Let~$\mathcal{H}$ be a maximal $(\ell,S)$-frame in~$G$ and let~$t$ be the depth of~$\mathcal{H}$.
    Let~$Y$ be the set of
    \begin{itemize}
        \item all branch vertices of~$\mathcal{H}$,
        \item the ends of every component of $\mathcal{L}_1(\mathcal{H}^-)$, and
        \item the ends of~$Q_i$ for every detached $\mathcal{H}$-earring~$P_{i,1}$.
    \end{itemize}
    
    We first bound the size of~$Y$.
    By \cref{lem:aux2}, we may assume that $\abs{\mathcal{P}_1(\mathcal{H})}<3k(6k-5)$.
    By \cref{prop:final}, we may assume that~$\mathcal{H}^-$ has less than $h(k)$ branch vertices.
    Since $\mathcal{H}^-$ has maximum degree at most~$4$, the number of components of $\mathcal{L}_0(\mathcal{H}^-)$ is at most
    \[
        \frac{4\abs{V_{\geq3}(\mathcal{H}^-)}}{2}+\abs{\mathcal{P}_1(\mathcal{H})}<2h(k)+3k(6k-5).
    \]
    By~\ref{item:ear4}, each component of $\mathcal{L}_0(\mathcal{H}^-)$ has at most two $\mathcal{H}$-detours as subpaths.
    Thus, we have
    \[
        \abs{V_{\geq3}(\mathcal{H})}\leq\abs{V_{\geq3}(\mathcal{H}^-)}+4(2h(k)+3k(6k-5))<9h(k)+12k(6k-5),
    \]
    and therefore
    \begin{align*}
        \abs{Y}
        &<9h(k)+12k(6k-5)+2(2h(k)+3k(6k-5))+2\abs{\mathcal{P}_1(\mathcal{H})}\\
        &<13h(k)+24k(6k-5).
    \end{align*}

    We now find a desired set~$X$.
    Let
    \begin{align*}
        X_0&:=B_\mathcal{H}(Y,\ell-1),\\
        Z&:=B_{G-(V(\mathcal{H})\setminus X_0)}(X_0,1).
    \end{align*}
    Since~$\mathcal{H}$ has maximum degree at most~$4$, we have $\abs{X_0}\leq(4\ell-3)\abs{Y}$.
    Note that $Y_{t,\mu(t)}\subseteq X_0$ and hence $Z_{t,\mu(t)}\subseteq Z$.
    Thus, $G^*:=G-Z$ is an induced subgraph of $G_{t,\mu(t)}$.
    In addition, by \cref{lem:strict1}, $\mathcal{H}-X_0$ is an induced subgraph of~$\mathcal{H}^-$ and each component of~$\mathcal{H}^-$ contains exactly one component of $\mathcal{H}-X_0$.
    Let~$\mathcal{C}_0$ be the set of all components of $\mathcal{H}-X_0$ and let~$\mathcal{C}_1$ be the set of components of $\mathcal{H}-X_0$ which are subpaths of $\mathcal{L}_1(\mathcal{H}^-)$.
    We will use the following claim.

    \begin{claim}\label{clm:main1}
        $G^*$ has no $\mathcal{H}$-path between a component in~$\mathcal{C}_1$ and another component in~$\mathcal{C}_0$.
    \end{claim}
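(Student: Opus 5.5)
We argue by contradiction, in the same way as \cref{clm:disconnection} in the proof of \cref{prop:subframe2}. Suppose that~$G^*$ has an $\mathcal{H}$-path~$P$ from a component $C\in\mathcal{C}_1$ to a different component $C''\in\mathcal{C}_0$. Let~$C'$ be the component of $\mathcal{L}_1(\mathcal{H}^-)$ that has~$C$ as a subpath. Recall that both ends of~$C'$ lie in~$S$. Since~$G^*$ is an induced subgraph of~$G_{t,\mu(t)}$ and every $\mathcal{H}$-path in~$G_{t,\mu(t)}$ is an $\mathcal{H}^-$-path (\cref{lem:strict1}), $P$ is an $\mathcal{H}^-$-path in~$G_{t,\mu(t)}$. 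The goal is to show that~$P$ is an $(\ell,S)$-ear of~$\mathcal{H}^-=H^-_{t,\mu(t)}$, which contradicts the maximality of~$\mathcal{H}$ through~\ref{item:ear1}.

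\emph{Step 1: $P$ is an $S$-ear.} Let $v\in V(C)$ and $v'\in V(C'')$ be the ends of~$P$. If $P$ contains a vertex of~$S$, we are done. Otherwise we show that $v$ and $v'$ lie in different components of $\mathcal{H}^--S$. The component of $\mathcal{H}^--S$ containing~$v$ lies inside~$C'$, because~$C'$ is a maximal path of degree-$2$ vertices (after removing branch vertices) whose two ends are in~$S$. It therefore suffices to show that $v'\notin V(C')$. Each component of~$\mathcal{H}^-$ contains exactly one component of $\mathcal{H}-X_0$, and~$X_0$ contains the $(\ell-1)$-balls around the ends of~$C'$. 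Hence $C'\cap(\mathcal{H}-X_0)$ is exactly~$C$, so $v'\in V(C'')$ with $C''\neq C$ cannot lie on~$C'$. A small point needs checking here: components of $\mathcal{L}_1(\mathcal{H}^-)$ inside a cycle component whose $S$-vertex was removed when forming~$\mathcal{L}_0$. I would handle this by noting that the removed vertex lies in~$S$, and the ends of~$C'$ are adjacent to removed vertices or are themselves in~$S$.

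\emph{Step 2: the length condition.} We need $\abs{E(P)}+\dist_{\mathcal{H}^-}(v,v')\geq\ell$. If $v$ and $v'$ lie in different components of~$\mathcal{H}^-$, the distance is infinite and we are done. Otherwise any $(v,v')$-path in~$\mathcal{H}^-$ must leave~$C$. Since $v\in V(C)$ and~$C$ avoids~$X_0$, which contains the $(\ell-1)$-balls around the ends of~$C'$ and around all branch vertices, such a path must travel along~$C'$ through an end of~$C'$ or a vertex of~$Y$ before reaching~$C''$. This gives $\dist_{\mathcal{H}^-}(v,v')\geq\ell$. So~$P$ is an $(\ell,S)$-ear of~$H^-_{t,\mu(t)}$ lying in~$G_{t,\mu(t)}$, contradicting~\ref{item:ear1} for a maximal frame.

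The main obstacle is Step~2, together with the bookkeeping in Step~1 relating distances in~$\mathcal{H}$ (where~$X_0$ is defined) to distances in~$\mathcal{H}^-$. Detours replace short bases, and \cref{lem:strict1} shows these bases lie in~$Y_{t,\mu(t)}\subseteq X_0$. I would use this lemma to argue that every vertex of~$\mathcal{H}^-$ at $\mathcal{H}^-$-distance less than~$\ell$ from~$v$ lies on~$C'$ and hence in~$C$ or~$X_0$. I would also treat the ends of~$Q_i$ for detached earrings, which are included in~$Y$, in the same way as the ends of $\mathcal{L}_1$-components.
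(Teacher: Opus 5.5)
Your proposal is correct and follows the paper's proof. The paper also assumes such an $\mathcal{H}$-path $P$ and uses that both ends of the $\mathcal{L}_1(\mathcal{H}^-)$-component containing $C$ lie in $S$ to make $P$ an $S$-ear of $\mathcal{H}^-$. It uses the construction of $X_0$ for the distance bound (it gets $\dist_{\mathcal{H}}(v,v')\geq 2\ell$) and concludes that $P$ is an $(\ell,S)$-ear of $\mathcal{H}^-$, contradicting maximality.

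One caution for Step~1: do not use literally the sentence that each component of $\mathcal{H}^-$ contains exactly one component of $\mathcal{H}-X_0$, since it fails in general and would make the claim trivial. What you actually need, and what the paper asserts without detail, is that any other component of $\mathcal{H}-X_0$ is separated from $v$ in $\mathcal{H}^-$ by a vertex of $S$; your remark about the ends of $Q_i$ is aimed at exactly this point.
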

    \begin{subproof}
        Towards a contradiction, suppose that~$G^*$ has an $\mathcal{H}$-path~$P$ between a component $C\in\mathcal{C}_1$ and another component $C'\in\mathcal{C}_0$.
        Let~$v$ and~$v'$ be the ends of~$P$ and let~$C''$ be the component of $\mathcal{L}_1(\mathcal{H}^-)$ having~$C$ as a subpath.
        Note that both ends of~$C''$ are contained in~$S$.
        Since~$C'$ is vertex-disjoint from~$C''$, either $\{v,v'\}\cap S\neq\emptyset$, or~$v$ and~$v'$ are in distinct components of $\mathcal{H}^--S$.
        By the construction of~$X_0$, we have $\dist_\mathcal{H}(v,v')\geq2\ell$.
        Thus, $P$ is an $(\ell,S)$-ear of~$\mathcal{H}^-$, contradicting the maximality of~$\mathcal{H}$.
    \end{subproof}

    Let~$C$ be an $(\ell,S)$-cycle of~$G^*$.
    Note that~$C$ intersects a component in~$\mathcal{C}_1$, because it has either a vertex in $V(\mathcal{H})\cap S$, or a subpath which is a strict $S$-ear of~$\mathcal{H}$.
    By \cref{clm:main1}, $C$ does not intersect any other component in~$\mathcal{C}_0$.
    Let $L_1,\ldots,L_i$ be the components of $\mathcal{H}-X_0$ intersecting $(\ell,S)$-cycles.
    For each $j\in[i]$, let $G_j$ be the component of~$G^*$ having~$L_j$ as a subpath.
    By \cref{clm:main1}, $G_j\neq G_{j'}$ for all distinct $j,j'\in[i]$.
    Thus, we may assume that $i<k$, because each~$G_j$ has an $(\ell,S)$-cycle.

    We show that for each $j\in[i]$, $L_j$ is an $(\ell,S)$-central path of~$G_j$.
    By \ref{item:ear1}--\ref{item:ear3}, $L_j$ is a shortest path between its ends.
    By the maximality of~$\mathcal{H}$, $G_j$ has no $(\ell,S)$-ear of~$L_j$, and every $(\ell,S)$-cycle of~$G_j$ contains at least two vertices of~$L_j$.
    Thus, $L_j$ is an $(\ell,S)$-central path of~$G_j$.

    For each $j\in[i]$, we now apply \cref{prop:subframe2}.
    We may assume that for every $j\in[i]$, we have found a set $X_j$ of at most $g(k,\ell)$ vertices such that $G_j-B_{G_j}(X_j,1)$ has no $(\ell,S)$-cycle.
    Let
    \[
        X:=X_0\cup\bigcup_{j=1}^iX_j.
    \]
    Then $G-B_G(X,1)$ has no $(\ell,S)$-cycle.
    Note that
    \[
        \abs{X}\leq(4\ell-3)\abs{Y}+(k-1)g(k,\ell)<f(k,\ell).
    \]
    This completes the proof.
\end{proof}

\section{Open problems}\label{sec:last}

We conclude the paper with several open problems.
The first one is about $S$-cycles.
It has been shown that the classic Erd\H{o}s--P\'{o}sa theorem can be extended to $S$-cycles with an asymptotically optimal function~\cite{PW2012}.

\begin{theorem}[Pontecorvi and Wollan~\cite{PW2012}]
    There exists a function $f(k)=\mathcal{O}(k\log k)$ such that for every positive integer~$k$, every graph~$G$, and every set $S\subseteq V(G)$, $G$ has either~$k$ pairwise vertex-disjoint $S$-cycles, or a set~$X$ of at most $f(k)$ vertices such that $G-X$ has no $S$-cycles.
\end{theorem}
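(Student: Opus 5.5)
This statement is the Pontecorvi--Wollan theorem, which the paper quotes from~\cite{PW2012} and does not prove. If I were to prove it, I would follow the standard frame-based Erd\H{o}s--P\'{o}sa approach rather than the machinery developed here.

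\emph{Setup.} First I reduce to the case where every vertex has degree at least~$3$ and $S$ is nonempty. Vertices of degree at most~$1$ lie on no cycle. A degree-$2$ vertex can be suppressed; if the suppressed vertex lies in~$S$, the merged edge is marked as an $S$-edge. I replace $G$ by a minimal subgraph~$H$ containing every $S$-cycle. Then I take a maximal \emph{$S$-frame}: starting from a shortest $S$-cycle, I repeatedly add $S$-ears in the sense of this paper, i.e.\ $H$-paths that either contain a vertex of~$S$ or join distinct components of $H-S$. This mirrors Steps~\ref{item:ear1}--\ref{item:ear3} with $\ell=3$. By the remark after the definition of an $S$-ear, every cycle of the frame is an $S$-cycle.

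\emph{Many branch vertices.} If the frame has at least $s_k$ branch vertices, \cref{thm:simonovitz} applied to the frame (after suppressing degree-$2$ vertices) yields $k$ vertex-disjoint cycles. Each of these is an $S$-cycle, so we are done.

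\emph{Few branch vertices.} Otherwise the frame has $\mathcal{O}(k\log k)$ branch vertices. I let $X_0$ be the set of these branch vertices together with the ends of the path components of $\mathcal{L}_1$. By maximality there is no $S$-ear, so every $S$-cycle of $G-X_0$ meets the frame inside a single path segment. In fact every such cycle contains a vertex of~$S$ lying on that segment, or an ear returning to the same segment.

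On each segment the structure is essentially one-dimensional. I would look at the vertices of~$S$ along the segment and choose them greedily by the leftmost right endpoint of the intervals spanned by $S$-cycles. A Menger-type argument, or interval packing on this linear structure, shows that either $k$ disjoint intervals exist, which gives $k$ disjoint $S$-cycles, or some $k-1$ vertices hit all the intervals.

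\emph{Main obstacle.} This naive counting gives $\mathcal{O}(k\log k)$ segments, each contributing up to $k$ hitting vertices, so it only yields $\mathcal{O}(k^2\log k)$. To reach $\mathcal{O}(k\log k)$ I would need a global charging argument: the cycle packings found on different segments must be combined, with the packing number summed over segments rather than taken segment by segment. This mirrors how the classic proof uses the cubic frame's structure. I expect this step to be the hard part, and I would instead rely on the cited result~\cite{PW2012}, or on the known route via Kakimura--Kawarabayashi--Marx frames plus a tight Simonovits-type count.
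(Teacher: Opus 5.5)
The paper does not prove this statement. It is quoted from \cite{PW2012} in \cref{sec:last}, and it is also the case $\ell=3$ of the cited \cref{thm:longS}. Your proposal is not a proof either: the step that should deliver the bound is left open, and you fall back on the citation.

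The underlying error is your analysis of what survives the deletion of $X_0$, and the resulting ``main obstacle'' is illusory. For plain $S$-cycles there is no length condition, so every $H$-path containing a vertex of $S$ (ends included) is an $S$-ear. Your alternative ``an ear returning to the same segment'' is therefore excluded by maximality. Such short ears only matter for $\ell>3$, which is why the paper needs Step~\ref{item:ear4} and subframes.

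In fact no $S$-cycle survives at all. Let $C$ be an $S$-cycle avoiding $X_0$, with the frame $H$ built as in the next paragraph.
\begin{enumerate}
\item By maximality, $C$ meets $H$ in at least two vertices.
\item Every subpath of $C$ that is an $H$-path has non-branch ends, so it is not an $S$-ear. Hence it avoids $S$, and its ends lie in one component of $H-S$.
\item So some $s\in V(C)\cap S$ is an interior vertex of a component $F'$ of $\mathcal{L}_1(H)$, and $C$ uses both edges of $F'$ at $s$. The same holds at every other vertex of $S$ on $F'$ that $C$ visits.
\item Each open subpath of $F'$ between consecutive vertices of $S$ is a whole component of $H-S$, because its vertices have degree~$2$ in $H$.
\item Therefore $C-s$, starting from one neighbour of $s$, stays on that side of $s$ in $F'$ (apart from vertices outside $H$). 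It cannot reach the other neighbour of $s$ without passing an end of $F'$, and those ends lie in $X_0$.
\end{enumerate}
Thus $X_0$ already meets every $S$-cycle, and your per-segment step is vacuous. As stated, the interval argument would not have worked anyway. Cycles with disjoint traces on a segment can share vertices off the frame, and a vertex inside the span of a cycle need not lie on it.

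To turn this into a proof of the $\mathcal{O}(k\log k)$ bound, you must also repair the construction in three places.
\begin{enumerate}
\item If you mirror Steps~\ref{item:ear1}--\ref{item:ear3} literally, ears are sought only in $G_{i,j}=G-Z_{i,j}$. Then maximality controls only $G-Z$, and $Z$ contains all off-frame neighbours of the $Y_{i,j}$, an unbounded set. Instead, require only that ears and earrings attach at vertices of degree~$2$ in the current frame. Maximality then holds in $G-X_0$ whenever $X_0\supseteq V_{\geq3}(H)$.
\item The frame then has vertices of degree~$4$, and suppressing degree-$2$ vertices does not make it subcubic, so \cref{thm:simonovitz} does not apply directly. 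Each attached earring meets the earlier frame in a single degree-$2$ vertex, which lies on at most one earlier earring. So the conflict digraph among attached earrings has out-degree at most~$1$, and \cref{lem:digraph} yields $k$ disjoint ones if there are at least $3k-2$. Otherwise, deleting their attachment vertices and pruning costs only $\mathcal{O}(k)$ branch vertices.
\item $X_0$ must contain the vertex of $S$ removed from each cycle component of $H$ when forming $\mathcal{L}_0(H)$. A cycle component with a single vertex of $S$ contributes nothing to $\mathcal{L}_1(H)$, so it survives your $X_0$. Cycle components are pairwise disjoint $S$-cycles, so there are fewer than $k$ of them.
\end{enumerate}
With these changes, $\abs{X_0}=\mathcal{O}(\abs{V_{\geq3}(H)}+k)=\mathcal{O}(k\log k)$ follows directly, with no global charging argument.
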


By setting $\ell=3$, \cref{thm:main1} becomes an Erd\H{o}s--P\'{o}sa theorem for induced packings of $S$-cycles.
We remark that for $\ell=3$, \cref{thm:main1} can be improved with a function~$f(k)$ of order $\mathcal{O}(k^5)$.
The main reason is that every strict $S$-ear of a $(3,S)$-frame~$\mathcal{H}$ becomes a $(3,S)$-ear of~$\mathcal{H}$, so that we can simplify a large portion of our proofs.
We conjecture that the order $\mathcal{O}(k\log k)$ works for an Erd\H{o}s--P\'{o}sa theorem for induced packings of $S$-cycles as well.

\begin{conjecture}
    There exists a function $f(k)=\mathcal{O}(k\log k)$ such that for every positive integer~$k$, every graph~$G$, and every set $S\subseteq V(G)$, one can find, in polynomial time, either an induced packing of~$k$ $S$-cycles or a set~$X$ of at most $f(k)$ vertices such that $G-B_G(X,1)$ has no $S$-cycle.
\end{conjecture}

Long cycles are extensively studied in the literature on the Erd\H{o}s–P\'{o}sa theorem.
Mousset, Noever, \v{S}kori\'{c}, and Weissenberger~\cite{MoussetNSW17} showed that an Erd\H{o}s--P\'{o}sa theorem holds for long cycles with an asymptotically optimal function.

\begin{theorem}[Mousset, Noever, \v{S}kori\'{c}, and Weissenberger~\cite{MoussetNSW17}]\label{thm:EPlong}
    There exists a function $f(k,\ell)=\mathcal{O}(\ell\cdot k+k\log k)$ such that for all integers $k\geq1$ and $\ell\geq3$, every graph~$G$ has either~$k$ pairwise vertex-disjoint cycles of length at least~$\ell$, or a set~$X$ of at most $f(k,\ell)$ vertices such that $G-X$ has no cycle of length at least~$\ell$.
\end{theorem}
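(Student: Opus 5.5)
This is the cited theorem of Mousset, Noever, Škorić and Weissenberger, and I would prove it by the classical frame-and-reduce strategy, in the same spirit as the argument for \cref{thm:main1} but without the induced condition and with $S=V(G)$. I proceed by induction on $k$; the base case $k=1$ is trivial. Throughout, the target is a bound of the form $c(\ell k+k\log k)$.

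First I remove short long cycles. If $G$ has a cycle $C$ of length between $\ell$ and $2\ell$ (say), I delete $V(C)$ and apply induction with $k-1$. This costs $O(\ell)$ per step, and summed over the induction it gives the $O(\ell k)$ term. So I may assume that every cycle of length at least $\ell$ has length greater than $2\ell$.

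Next I build a maximal frame $\mathcal{H}$. I start with a shortest long cycle and repeatedly add a shortest $\mathcal{H}$-path $P$ with ends $a,b$ such that $\abs{E(P)}+\dist_\mathcal{H}(a,b)\ge\ell$, or a vertex-disjoint shortest long cycle. By construction, every cycle of $\mathcal{H}$ has length at least $\ell$.

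If $\mathcal{H}$, after suppressing degree-$2$ vertices, has at least $s_k$ branch vertices, then \cref{thm:simonovitz} yields $k$ disjoint cycles of $\mathcal{H}$, all of which are long, and we are done. Otherwise $\mathcal{H}$ has fewer than $s_k$ branch vertices. I delete the $O(\ell)$-balls around the branch vertices, which costs $O(\ell k\log k)$ vertices. This is too many: to reach $O(\ell k+k\log k)$ one instead deletes only the branch vertices themselves together with a carefully chosen set of $O(\ell)$ vertices per frame component. By maximality, every remaining long cycle must then use many short $\mathcal{H}$-paths attaching to a single path segment of $\mathcal{H}$.

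The main obstacle is this last accounting. One must show that long cycles surviving the deletion of the branch vertices can be hit by $O(\ell)$ vertices per component, rather than $O(\ell)$ per branch vertex, so that the $\ell$ and $\log k$ contributions add rather than multiply. I would try a greedy argument along each path segment. Pick disjoint long cycles leftmost along the segment; by the shortest-ear minimality, each such cycle occupies an interval of length $O(\ell)$. Then either the greedy selection produces $k$ cycles in total, or the union of fewer than $k$ intervals, each of size $O(\ell)$, together with the at most $s_k$ branch vertices, forms the hitting set. This gives the bound $O(\ell k+k\log k)$.
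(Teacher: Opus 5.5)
\textbf{There is a genuine gap, at the step you yourself flag as the main obstacle.} Note first that the paper does not prove this theorem; it only quotes it from Mousset, Noever, \v{S}kori\'{c} and Weissenberger in \cref{sec:last}. The closest comparison is the paper's own frame technique. Specialised to $S=V(G)$, it deletes $B_\mathcal{H}(Y,\ell-1)$ around all branch vertices, so $\ell$ multiplies the number of branch vertices. Your proposal names this multiplication as the problem but gives no mechanism that removes it.

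The failing claim is that after deleting only the branch vertices (plus an unspecified set of $O(\ell)$ vertices per component), ``by maximality, every remaining long cycle must use many short $\mathcal{H}$-paths attaching to a single path segment.'' This is false. Maximality only excludes $\mathcal{H}$-paths $P$ with $\abs{E(P)}+\dist_\mathcal{H}(a,b)\geq\ell$. A short $\mathcal{H}$-path joining two \emph{different} segments at vertices near a common branch vertex $x$ has small $\dist_\mathcal{H}(a,b)$, measured through $x$, so it is not excluded. A counterexample for $\ell\geq5$:
\begin{itemize}
\item Take a theta graph with branch vertices $x,y$ and three $(x,y)$-paths of length $N\geq2\ell$.
\item Add a path of length~$2$ joining the neighbours of $x$ on the first two paths, and another joining the neighbours of $y$ on them.
\item The theta admits no further ear and no new long cycle meeting it in at most one vertex, so it is a maximal frame.
\item Yet $G-\{x,y\}$ still contains a cycle of length $2N$ running through two segments.
\end{itemize}
So long cycles can bypass every deleted branch vertex. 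Preventing this is exactly what forces the $\Theta(\ell)$-ball deletion around each of the $\Theta(k\log k)$ branch vertices.

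Your greedy argument presupposes the single-segment structure, so it does not touch this issue. Even within a genuinely isolated segment, the justification is not ``shortest-ear minimality gives intervals of length $O(\ell)$''. What makes the greedy work is that every attached path has its ends at segment-distance less than $\ell$. Hence deleting $\ell$ consecutive segment vertices separates the segment, while a shorter interval can simply be jumped over. A smaller omission is that your frame construction never adds long cycles meeting $\mathcal{H}$ in exactly one vertex. Maximality therefore does not exclude long cycles hanging off a single degree-$2$ vertex of the frame.

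The missing ingredient is a way to handle the neighbourhoods of the branch vertices at amortised cost $O(1)$ each, paying $\Theta(\ell)$ only $O(k)$ times. Without it, the sketch yields only $O(\ell k\log k)$, not $O(\ell k+k\log k)$.
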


Recently, Ahn, Gollin, Huynh, and Kwon~\cite{coarseEParxiv} showed an Erd\H{o}s--P\'{o}sa theorem for induced packings of long cycles.

\begin{theorem}[Ahn, Gollin, Huynh, and Kwon~\cite{coarseEParxiv}]\label{thm:longcoarse}
    There exist functions $f(k,\ell) = \mathcal{O}(\ell\cdot k\log k)$ and $g(k)=\mathcal{O}(k\log k)$ such that for all integers $k\geq1$ and $\ell\geq3$, every graph~$G$ has either an induced packing of~$k$ cycles of length at least~$\ell$, or two sets~$X_1$ and~$X_2$ of vertices with $\abs{X_1}\leq f(k,\ell)$ and $\abs{X_2}\leq g(k)$ such that neither ${G-B_G(X_1,1)}$ nor ${G-B_G(X_2,\ell)}$ has cycles of length at least~$\ell$.
\end{theorem}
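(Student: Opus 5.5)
The plan is to specialise the frame machinery of \cref{sec:frame,sec:chords} to the case $S=V(G)$. In that case an $(\ell,S)$-cycle is just a cycle of length at least~$\ell$. The aim is to keep every loss proportional to the number of branch vertices, so that both the packing side and the hitting side are governed by $s_k=\mathcal{O}(k\log k)$ rather than by the polynomial bounds of \cref{prop:final}. The proof is by induction on~$k$, maintaining the two sets $X_1$ and~$X_2$ in parallel.

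\textbf{Step 1 (short long cycles).} If $G$ has a cycle~$C$ of length between $\ell$ and $3(\ell-1)$, delete $B_G(C,1)$ and apply induction with $k-1$.
\begin{itemize}
    \item Add $V(C)$ to~$X_1$, at a cost of $\mathcal{O}(\ell)$ vertices.
    \item Add a single vertex $v\in V(C)$ to~$X_2$. Since $C$ has length at most $3(\ell-1)$, we get $B_G(C,1)\subseteq B_G(v,\ell)$ only when the length is at most about $2\ell$; so the threshold here has to be tuned to about $2\ell$, or $\mathcal{O}(1)$ vertices of~$C$ have to be put into~$X_2$.
\end{itemize}
This keeps $f(k,\ell)=\mathcal{O}(\ell\cdot k\log k)$ and $g(k)=\mathcal{O}(k\log k)$ provided the remaining case costs only $\mathcal{O}(k\log k)$ centres. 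So we may assume every long cycle has length greater than about $2\ell$.

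\textbf{Step 2 (frame and the dense case).} Build a maximal frame using only Steps~\ref{item:ear1}--\ref{item:ear3}. With $S=V(G)$, every $H$-path with an internal vertex is an $S$-ear. I expect Step~\ref{item:ear4} and the subframe layer of \cref{sec:subframe} to be unnecessary here, because a short ear together with its base would be a short long cycle. Redo the chord analysis of \cref{prop:induced1,prop:induced2} in this setting.
\begin{itemize}
    \item \emph{Main obstacle.} Fragile ears still exist formally, and a chord joining the two sides of a long ear $P_{i,j}$ shortcuts it. The resulting cycle is either still long, which contradicts minimality as in \cref{lem:short path1}, or short. I would try to show that the short case forces a cycle of length below the Step~1 threshold, using that the shortcut cycle has length at most $2(\ell-1)$ plus the chord.
    \item If this works, every chord has both ends within distance $\ell-1$ of a single branch vertex, exactly as in \cref{prop:subframe1}. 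Then deleting the $(\ell-1)$-balls around a set of branch vertices with bounded density yields a subcubic graph in which any vertex-disjoint cycles form an induced packing.
    \item So if $\mathcal{H}$ has at least $c\cdot s_k$ branch vertices, \cref{thm:simonovitz} together with \cref{lem:all cycles} gives the induced packing directly. This avoids \cref{lem:jump1,lem:jump2,lem:aux3}, which are what cause the polynomial blow-up.
\end{itemize}

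\textbf{Step 3 (sparse case).} Otherwise $\mathcal{H}$ has $\mathcal{O}(s_k)$ branch vertices.
\begin{itemize}
    \item Put all branch vertices into~$X_2$, and put their $(\ell-1)$-balls in~$\mathcal{H}$ into~$X_1$; the latter costs $\mathcal{O}(\ell s_k)$ vertices.
    \item As in \cref{clm:main1}, maximality shows that each remaining long cycle meets exactly one leftover path component~$L$, which behaves like a central path. There are fewer than $k$ such components, since otherwise we take one cycle from each.
    \item Inside one component, run the greedy choice of \cref{prop:subframe2}, ordered by position along~$L$. Chords are now local, so I expect \cref{clm:wing} to hold with constant multiplicity. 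The greedy then produces either $k$ pairwise far-apart cycles or fewer than $\mathcal{O}(k)$ cycles, each of whose middle parts has length $\mathcal{O}(\ell)$.
    \item Hitting each such middle part takes $\mathcal{O}(\ell)$ vertices for~$X_1$, but only $\mathcal{O}(1)$ centres for~$X_2$, because a ball of radius~$\ell$ covers a middle part of length $\mathcal{O}(\ell)$ together with its closed neighbourhood.
\end{itemize}
Summing over fewer than $k$ components in this way would, however, give $\mathcal{O}(k^2)$ rather than $\mathcal{O}(k\log k)$. To fix this I would make the greedy global across components: the $k$ far-apart cycles may be chosen from different components, since distinct components have no edges between them. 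Then the total number of greedy cycles is below about $ck$, and the bounds become $f(k,\ell)=\mathcal{O}(\ell k\log k)$ and $g(k)=\mathcal{O}(k\log k)$.
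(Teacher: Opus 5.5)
The paper does not prove this statement; it only quotes it from \cite{coarseEParxiv}, so your sketch is judged on its own. Specialising the frame to $S=V(G)$ is viable, and parts of it are easier than you expect.
There are no fragile ears at all, since every $H^-_{i,j-1}-S$ is null and every $c_i$ lies in $S$. Detached earrings have no chords in a good pair, because a chord splits the earring into a shorter long cycle. Hence, by \cref{prop:induced1,prop:induced2}, every chord has both ends in $B_\mathcal{H}(x,\ell-1)$ for one branch vertex~$x$. Step~\ref{item:ear4} never applies because $\mathcal{L}_1=\mathcal{L}_0$. Your stated reason is wrong: a strict ear that is not an $(\ell,S)$-ear closes, with its base, a cycle of length \emph{less} than~$\ell$, not a short long cycle.

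\textbf{Gap in Step~2.} Locality to~$x$ prevents a chord between disjoint cycles only when $x$ is the only branch vertex in $B_\mathcal{H}(x,\ell-1)$; a cycle through a vertex of that ball then contains~$x$. If the ball also contains $x'$ with $\{x,x'\}=\{a_{i,j},b_{i,j}\}$ (\cref{cor:short branch}), one cycle can use~$x$, another~$x'$, and a chord joins them. This is \cref{cor:induced3}\ref{item:packing6}.
These close pairs are the bridges $P_{i,2}$ of detached earrings and the long ears with feet close in $H_{i,j-1}$. Together with the degree-$4$ vertices that Simonovits forces you to delete, they are not bounded in terms of~$k$. For example, take a cycle carrying many much longer handles with adjacent feet: every branch vertex lies in such a pair. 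So deleting balls around a ``set of bounded density'' has no bounded cost, and you are missing a dichotomy.
\begin{itemize}
\item All these objects are charged to $\mathcal{P}_1(\mathcal{H})$, and each element carries the long cycle $P_{i,j}\cup Q_{i,j}$.
\item For $S=V(G)$, cycles of $\mathcal{H}$ pairwise at $\mathcal{H}$-distance at least~$\ell$ are non-adjacent in~$G$. Each cycle meeting $B_\mathcal{H}(x,\ell-1)$ contains $x$ or~$x'$, and $\dist_\mathcal{H}(x,x')\le\ell-1$.
\item Combined with \cref{clm:aux2-1} on an independent set of the $2$-degenerate graph $A_1(\mathcal{H})$, this shows that $\abs{\mathcal{P}_1(\mathcal{H})}\ge3k$ already gives the packing.
\item Otherwise, delete the $c_i$, the bridge ends and one foot of each such ear, which is $\mathcal{O}(k)$ vertices, and argue as in \cref{clm:final}.
\end{itemize}
Only this yields a threshold of $s_k+\mathcal{O}(k)$ branch vertices. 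The quadratic bound of \cref{lem:aux2} as stated would spoil $\mathcal{O}(k\log k)$.

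\textbf{Step~3.} It rests on claims you do not prove: constant multiplicity in \cref{clm:wing}, middle parts of length $\mathcal{O}(\ell)$ (whereas \cref{clm:medium} only gives $(\ell-1)(\ell-2)$), and the global greedy. None of it is needed, because for $S=V(G)$ in a good pair each leftover component $G_j$ has no long cycle. To see this, take a long cycle~$C$ of~$G_j$ of minimum length and, subject to that, with the most edges on~$L_j$.
\begin{itemize}
\item If $C$ contains the minimal subpath of $L_j$ spanning $V(C)\cap V(L_j)$, the rest of~$C$ is an $(\ell,S)$-ear of~$L_j$, which $G_j$ does not have.
\item Otherwise some subpath~$P$ of~$L_j$ is a $C$-path with ends $a,b$, and it is a shortest $(a,b)$-path. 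Then $P$ plus the longer $(a,b)$-arc of~$C$ is a long cycle, of length at least $1+\abs{E(C)}/2\ge\ell$ as $\abs{E(C)}\ge3\ell-2$. It is no longer than~$C$. In case of equality the shorter arc is a shortest $(a,b)$-path other than~$P$, so it leaves~$L_j$, and the new cycle has more edges on~$L_j$.
\end{itemize}
Hence $X_1:=B_\mathcal{H}(Y,\ell-1)$ and $X_2:=Y$ work, since $B_G(X_1,1)\subseteq B_G(Y,\ell)$. Here $Y$ must be the whole set from the proof of \cref{thm:main1}, not just the branch vertices. A detached earring with nothing attached (e.g.\ $G$ a single long cycle) has no branch vertex and survives in $G-B_G(X_2,\ell)$ for your choice of~$X_2$. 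With $\abs{Y}=\mathcal{O}(s_k+\abs{\mathcal{P}_1(\mathcal{H})})$ and the dichotomy above, this gives $f=\mathcal{O}(\ell k\log k)$ and $g=\mathcal{O}(k\log k)$.
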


We remark that the function $f(k,\ell)$ in \cref{thm:longcoarse} is asymptotically tight when~$\ell$ is a fixed constant, because \cref{thm:longcoarse} for any fixed~$\ell$ implies the classic Erd\H{o}s--P\'{o}sa theorem.

Bruhn, Joos, and Schaudt~\cite{BruhnJS18} showed an Erd\H{o}s--P\'{o}sa theorem for $(\ell,S)$-cycles as follows.

\begin{theorem}[Bruhn, Joos, and Schaudt~\cite{BruhnJS18}]\label{thm:longS}
    There exists a function $f(k,\ell)=\mathcal{O}(\ell\cdot k\log k)$ such that for all integers $k\geq1$ and $\ell\geq3$, every graph~$G$, and every set $S\subseteq V(G)$, one can find, in $\abs{V(G)}^{\mathcal{O}(\ell)}$ time, either~$k$ pairwise vertex-disjoint $(\ell,S)$-cycles, or a set~$X$ of at most $f(k,\ell)$ vertices such that $G-X$ has no $(\ell,S)$-cycle.
\end{theorem}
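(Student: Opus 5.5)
We would prove \cref{thm:longS} by rerunning the proof of \cref{thm:main1} in the ordinary (non-induced) setting. Everything that controlled chords can be dropped, and the closed neighbourhoods $B_G(\cdot,1)$ can be replaced by plain vertex deletion.

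\textbf{Step 1: reduction to good pairs.} Induct on $k$. If $G$ has an $(\ell,S)$-cycle $C$ of length at most $3(\ell-1)$, apply induction to $G-V(C)$ and add $V(C)$ to the hitting set. Over the whole induction this costs at most $3(\ell-1)k$ vertices, so we may assume that $(G,S)$ is a good pair.

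\textbf{Step 2: the frame with vertex deletion.} Build a maximal $(\ell,S)$-frame exactly as in \cref{sec:frame}, with one change: each $G_{i,j}$ is now $G-Y_{i,j}$ instead of $G-Z_{i,j}$. The arguments of \cref{lem:degree,lem:strict1,lem:all cycles} never use the extra layer $Z_{i,j}$. Hence every cycle of $\mathcal{H}^-$ is still an $(\ell,S)$-cycle.

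\textbf{Step 3: many branch vertices.} Prune pendant vertices of $\mathcal{H}^-$; since every vertex has degree between $2$ and $4$, split each degree-$4$ vertex into two adjacent degree-$3$ vertices to obtain a subcubic graph. If this graph has at least $s_k$ branch vertices, \cref{thm:simonovitz} gives $k$ vertex-disjoint cycles. By \cref{lem:all cycles} these are $(\ell,S)$-cycles, and we are done. No analogue of \cref{prop:final} is needed, because no induced condition has to be checked.

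\textbf{Step 4: few branch vertices.} Otherwise $\mathcal{H}^-$ has $\mathcal{O}(k\log k)$ branch vertices. As in the proof of \cref{thm:main1}, $\mathcal{L}_0(\mathcal{H}^-)$ then has $\mathcal{O}(k\log k)$ components, each containing at most two detours. We do not need the set $\mathcal{P}_1(\mathcal{H})$ here, since short ears are harmless for disjoint packings. Form the set $Y$ of branch vertices and path ends, and delete $X_0:=B_\mathcal{H}(Y,2\ell)$; this has size $\mathcal{O}(\ell k\log k)$. The argument of \cref{clm:main1} then shows that no $\mathcal{H}$-path in $G-X_0$ joins a relevant path component to another one, since such a path would be an $(\ell,S)$-ear contradicting maximality. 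Consequently the remaining $(\ell,S)$-cycles live in distinct components $G_1,\dots,G_i$ of $G-X_0$, each containing an $(\ell,S)$-central path $L_j$.

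\textbf{Step 5: the central-path case.} Here we need a linear bound. For a graph with an $(\ell,S)$-central path $F$ and no short $(\ell,S)$-cycle, we aim to show that if $\nu$ is the maximum number of disjoint $(\ell,S)$-cycles, then there is a hitting set of size $\mathcal{O}(\ell(\nu+1))$. Summed over components with $\sum_j\nu_j<k$, this gives $\mathcal{O}(\ell k)$ in total.

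To prove it, we would run the subframe construction of \cref{sec:subframe}, again with $G_i:=G-Y_i$. If the subframe has at least $s_k$ branch vertices, \cref{lem:all cycles-sub} and \cref{thm:simonovitz} give the cycles. Otherwise we delete $\mathcal{O}(\ell k\log k)$ further vertices and reduce to the interval picture of \cref{prop:subframe2}. There every surviving cycle $C$ has a middle segment $M(C)$ of length $\mathcal{O}(\ell^2)$ by \cref{clm:medium}, and \cref{clm:wing} limits how many of the greedily chosen $M(C_i)$ lie under a single $F_1$-path.

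\textbf{Main obstacle.} The hard part is improving the $\ell(\ell-1)$ factor coming from \cref{clm:medium} to $\mathcal{O}(\ell)$ per chosen cycle. We would replace $M(C)$ by the short subpath of $F_1$ of length at most $\ell-1$ around the $S$-vertices of $C$; \cref{obs:central ears} forces every strict $S$-ear to have a base of length less than $\ell$. We would then run interval scheduling on these windows, picking the cycle whose window ends leftmost. Two cycles whose windows are far apart along $F_1$ should be disjoint, since any common vertex would create an $F_1$-path forming an $(\ell,S)$-ear or an admissible pair. Deleting an $\mathcal{O}(\ell)$ window around each chosen window then hits every cycle. The delicate point is verifying the disjointness claim when the cycles' long $S$-free excursions cross each other.
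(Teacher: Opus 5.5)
The paper does not prove this theorem; it is quoted from Bruhn, Joos and Schaudt in \cref{sec:last}, so your argument has to deliver the $\mathcal{O}(\ell k\log k)$ bound by itself. Steps~1--4 are essentially sound for disjoint packings. Deleting $Y_{i,j}$ instead of $Z_{i,j}$ is harmless, and $G-X_0$ is a subgraph of $G_{t,\mu(t)}$, so maximality still applies. Two small repairs are needed there. First, splitting a degree-$4$ vertex $c_i$ into two adjacent degree-$3$ vertices does not preserve vertex-disjointness: the earring through one half and a frame cycle through the other half both contain $c_i$ in $\mathcal{H}^-$. Second, cycle components of $\mathcal{H}^-$ are invisible to the branch-vertex count, so once you discard $\mathcal{P}_1(\mathcal{H})$ you must bound them separately, by $k-1$.

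The genuine gap is Step~5, which is where the whole bound lives. The paper's \cref{prop:subframe2} costs $\mathcal{O}(\ell k\log k+\ell^3k)$ per central-path component, and there can be up to $k-1$ components. Your Step~5 as written is no better. You use the threshold $s_k$ and delete $\mathcal{O}(\ell k\log k)$ vertices in every component, which already sums to $\mathcal{O}(\ell k^2\log k)$; the thresholds would have to depend on $\nu_j$. Also, a Simonovits step gives $\mathcal{O}(\ell\nu\log\nu)$, not the claimed $\mathcal{O}(\ell(\nu+1))$, though that would still suffice. The interval part of \cref{prop:subframe2} loses a factor $\ell$ twice: \cref{clm:wing} forces $(6\ell+1)k$ greedy cycles before $k$ good ones can be extracted, and \cref{clm:medium} charges $\ell(\ell-1)$ vertices per chosen cycle. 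Your window idea is meant to remove both losses, but it is only sketched.

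Worse, its premise is false. \cref{obs:central ears} bounds the base of each strict $S$-ear separately. It does not place all $S$-vertices of a cycle in one window of length less than $\ell$, and far-apart windows do not force disjointness. Here is a counterexample.
\begin{itemize}
\item Index $F_1$ by integer positions, write $F_1[x,y]$ for the subpath between positions $x$ and $y$, and put $S$-vertices on $F_1$ only very far out.
\item Add strict $S$-ears $R_1,R_2$ of length $3$ with bases $F_1[-1,1]$ and $F_1[D,D+2]$, where $D\gg\ell$.
\item Add internally disjoint $S$-free $F_1$-paths: $P_a$ from $10$ to $D$ and $P_b$ from $D-4$ to $12$, both of length about $D$; $P_4$ from $D+3$ to $D-3$ of length $6$; and $P_6$ from $11$ to $-2$ of length $13$.
\end{itemize}
Take $\ell=30$ and $D$ large. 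Every $S$-cycle has length at most $27$ or about $2D$, so $(G,S)$ is a good pair, and $F_1$ is an $(\ell,S)$-central path. Every $S$-ear $P$ of $F_1\oplus R_i$ with ends $a,b$ satisfies $\abs{E(P)}+\dist_{F_1\oplus R_i}(a,b)<\ell$, so there is no admissible pair. Both bases lie inside $\mathcal{L}_1(F_1)$, so \ref{item:sub2} adds nothing. Hence $F_1$ is already a maximal subframe.

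Yet $R_1,F_1[1,10],P_a,R_2,F_1[D+2,D+3],P_4,F_1[D-4,D-3],P_b,F_1[11,12],P_6,F_1[-2,-1]$ form an $(\ell,S)$-cycle whose two $S$-vertices are about $D$ apart. It also shares $P_a\cup R_2$ with the $(\ell,S)$-cycle $P_a\cup R_2\cup F_1[D+2,D+3]\cup P_4\cup F_1[10,D-3]$, whose only $S$-vertex lies near $D$.

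The paper obtains short middle parts only for cycles chosen greedily by length; the proof of \cref{clm:medium} uses exactly that minimality. The missing idea is to reconcile such a minimality with your leftmost-window rule so that far-apart windows yield disjoint cycles. That is precisely the ``delicate point'' you leave open.
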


Inspired by \cref{thm:EPlong,thm:longcoarse,thm:longS}, we conjecture that the order $\mathcal{O}(\ell\cdot k+k\log k)$ works for an Erd\H{o}s--P\'{o}sa theorem for induced packings of $(\ell,S)$-cycles.

\begin{conjecture}
    There exists a function $f(k,\ell)=\mathcal{O}(\ell\cdot k+k\log k)$ such that for all integers $k\geq1$ and $\ell\geq3$, every graph~$G$, and every set $S\subseteq V(G)$, one can find, in $\abs{V(G)}^{\mathcal{O}(\ell)}$ time, either an induced packing of~$k$ $(\ell,S)$-cycles or a set~$X$ of at most $f(k,\ell)$ vertices such that $G-B_G(X,1)$ has no $(\ell,S)$-cycle.
\end{conjecture}

Since our proof relies on applying \cref{thm:Dilworth} several times, obtaining a significantly better bound would likely require a different approach.

We can impose constraints not only on cycles but also on packings.
For an integer $d\geq0$, a \emph{distance-$d$ packing} of cycles in~$G$ is a set of pairwise vertex-disjoint cycles such that~$G$ has no path of length at most~$d$ between distinct cycles in the packing.
Hence, distance-$0$ packings are usual packings, distance-$1$ packings are induced packings.
Recently, Dujmović, Joret, Micek, and Morin~\cite{DujmovicJMM} showed an Erd\H{o}s--P\'{o}sa theorem for distance-$d$ packings of cycles.

\begin{theorem}[Dujmović, Joret, Micek, and Morin~\cite{DujmovicJMM}]
    There is a function $f(k)=\mathcal{O}(k^{18}\log^{18}k)$ such that for all integers $k\geq1$ and $d\geq0$, every graph~$G$ has either a distance-$d$ packing of~$k$ cycles, or a set~$X$ of at most $f(k)$ vertices such that $G-B_G(X,19d)$ has no cycles.
\end{theorem}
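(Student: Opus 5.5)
The plan is to adapt the frame argument of this paper to distance-$d$ packings, with the radius-$(\ell-1)$ neighbourhoods replaced by neighbourhoods of radius $\Theta(d)$. The proof proceeds by induction on $k$, in the same way as the proof of \cref{thm:main1}.

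The first step disposes of short cycles. Suppose $G$ has a cycle $C$ of length at most $36d$, and fix a vertex $v$ of $C$. Then $V(C)\subseteq B_G(v,18d)$. Apply induction to $G':=G-B_G(v,19d)$ with $k-1$.
\begin{itemize}
\item If $G'$ has a distance-$d$ packing of $k-1$ cycles, these cycles lie at distance more than $19d$ from $v$ in $G$. Every path of length at most $d$ from $C$ stays inside $B_G(v,19d)$, so adding $C$ gives a distance-$d$ packing of $k$ cycles in $G$.
\item Otherwise we obtain a set $X'$, and we take $X:=X'\cup\{v\}$. This costs one vertex per step, which is harmless for the bound.
\end{itemize}
After this step we may assume that every cycle of $G$ has length greater than $36d$. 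This plays the role of the "good pair" condition in this paper.

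Under this girth assumption, build a maximal "$d$-frame" $\mathcal{H}$ in the spirit of Steps~\ref{item:ear1}--\ref{item:ear3}.
\begin{itemize}
\item Start with a shortest cycle.
\item Repeatedly add a shortest $\mathcal{H}$-path whose interior avoids $B_G(Y,2d)$. Here $Y$ is the set of vertices within frame-distance $cd$ of a branch vertex, for a suitable constant $c$; this is the analogue of $Y_{i,j}$ and $Z_{i,j}$.
\item If no such path exists, add a shortest cycle meeting $\mathcal{H}$ in at most one vertex.
\end{itemize}
Minimality of each added path makes the ears geodesic in the remaining graph, and in this paper's language this is what plays the part of \cref{lem:short path1,lem:short path2}. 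The key structural lemma will be a coarse analogue of \cref{prop:induced1,prop:induced2}: every path of $G$ of length at most $d$ joining two far-apart parts of the frame (frame-distance at least $cd$) must lie near a branch vertex, or it yields a shorter candidate ear and contradicts the construction. The large girth removes the fragile-ear phenomenon, because there is no set $S$ here.

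If $\mathcal{H}$ has many branch vertices, we aim to find a polynomially large subset of ears and earrings whose pairwise "short shortcuts" form a graph of bounded degeneracy. The tools are the same as in \cref{sec:large packings}:
\begin{itemize}
\item repeated Dilworth arguments via \cref{thm:Dilworth} when many ears share attachment pieces;
\item independent sets in degenerate auxiliary graphs, as in \cref{lem:digraph}.
\end{itemize}
Then prune to a subcubic subgraph $\mathcal{H}'$ in which vertex-disjoint cycles are automatically at $G$-distance greater than $d$, and apply \cref{thm:simonovitz}. The nested polynomial losses from these selection steps are where a bound like $\mathcal{O}(k^{18}\log^{18}k)$ would come from.

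If $\mathcal{H}$ has fewer than a polynomial number of branch vertices, let $X$ be the set of branch vertices together with the ends of boundedly many path pieces. The claim to prove is that $G-B_G(X,19d)$ has no cycle. Any surviving cycle avoids the neighbourhoods of the branch vertices, so it would have to return to a single long geodesic path of the frame, or connect two frame pieces. Maximality of $\mathcal{H}$ forbids the latter, in the same way as \cref{clm:main1}. For the former, a cycle hanging on one geodesic path either is short, which is excluded by the girth assumption, or yields an addable ear, which is excluded by maximality. The constant $19$ should come from summing the radii used: $2d$ for the excluded zones, $cd$ for the branch neighbourhoods, and the slack needed when a cycle is rerouted along the frame.

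I expect the main obstacle to be the coarse chord lemma, that is, ruling out short $G$-paths between far-apart frame pieces. In this paper a chord is a single edge, but here a connection is a path of length up to $d$ that may pass through other frame vertices. My first attempt would be to take such a path minimal with respect to the frame, so that it is itself a frame-path of length at most $d$, and then rerun the shortest-ear exchange arguments of \cref{lem:first} with distances scaled by $d$.
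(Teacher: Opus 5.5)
The paper does not prove this theorem; it is quoted from Dujmović, Joret, Micek, and Morin. Your plan therefore has to stand on its own, and it has two genuine gaps.

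\textbf{The reduction to large girth fails for $d\geq2$.} You check that $C$ is far from the $k-1$ cycles returned by induction in $G':=G-B_G(v,19d)$. However, those cycles are only pairwise at distance more than $d$ \emph{in $G'$}. A path of length at most $d$ in $G$ between two of them may pass through the deleted ball; both cycles merely need to lie within distance $20d$ of $v$. Deleting vertices preserves ``no edge between'', which is why the paper can recurse on $G-B_G(C,1)$ when $d=1$. It does not preserve distances. The natural repair is to prove a version for cycles of $G[U]$ that are far apart in $G$. But then the frame is built in $G[U]$ while conflicts are measured in $G$, so short connections through $V(G)\setminus U$ are never candidate ears, and no shortest-ear exchange argument says anything about them.

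\textbf{The dichotomy ``$\mathcal{H}$ has many branch vertices $\Rightarrow$ $k$ far-apart cycles'' is false for the frame you describe.} Here is a counterexample with $d\geq2$ and $k=3$.
Subdivide every edge $e$ of a large cubic graph $K_0$ into a path of length $L$ with midpoint $w_e$, where $L$ is large compared with $d$ and your constant $c$ (e.g.\ $L=100d$). Add a vertex $h$ adjacent to every $w_e$. Add a theta graph $\Theta$ whose three $(x,x')$-paths have length $37d$, and join $h$ to $x$ by a path of length $2d$. Every cycle then has length greater than $36d$.
Your construction starts with a cycle of $\Theta$, since it is shortest. It then adds the third path of $\Theta$, the only available ear because $x$ is a cut vertex. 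So $x$ becomes a branch vertex, and $h\in B_G(Y,2d)$ from then on, while each $w_e$ is at distance $2d+1$ from that part of $Y$. The zones around later branch vertices inside the subdivided $K_0$ have radius $O(cd)<L/2$, so they never reach the midpoints. The frame therefore grows inside the subdivided $K_0$. At maximality, every component of $K_0$ minus the frame's branch vertices has at most one cycle; otherwise an ear or earring avoiding all zones is available. Counting edges then gives at least $\abs{V(K_0)}/4$ branch vertices.
Yet every cycle outside $\Theta$ contains $h$ or some $w_e$, so any two such cycles are at distance at most $2$, and $G$ has no distance-$d$ packing of three cycles. The correct outcome is $X=\{x,h\}$, and $h$ is not even a frame vertex.

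The underlying obstruction is this. A vertex blocked by an exclusion zone can be adjacent to arbitrarily many ears that are far apart in the frame. Since it is never available when those ears are chosen, no exchange argument sees it. Your coarse chord lemma even holds here in the $G$-metric, since $w_ehw_{e'}$ lies within $2d+1$ of $x$, but that localisation gives no way to select cycles. For $d=1$ the phenomenon is harmless, because such an $h$ only creates connections of length $2$; that is exactly why the chord analysis of \cref{prop:induced1,prop:induced2} suffices in the paper.

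Finally, the bound $\mathcal{O}(k^{18}\log^{18}k)$ and the constant $19$ are asserted, not derived.
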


We conjecture that an analogue statement holds for $(\ell,S)$-cycles.

\begin{conjecture}
    There exist functions $f(k,\ell)=\mathcal{O}(\ell\cdot k+k\log k)$ and $C(\ell,d)=\mathcal{O}(\ell+d)$ such that for all integers $k\geq1$, $\ell\geq3$, and $d\geq0$, every graph~$G$, and every set $S\subseteq V(G)$, $G$ has either a distance-$d$ packing of~$k$ $(\ell,S)$-cycles, or a set~$X$ of at most $f(k,\ell)$ vertices such that $G-B_G(X,C(\ell,d))$ has no $(\ell,S)$-cycles.
\end{conjecture}

\providecommand{\bysame}{\leavevmode\hbox to3em{\hrulefill}\thinspace}
\providecommand{\MR}{\relax\ifhmode\unskip\space\fi MR }
\providecommand{\MRhref}[2]{%
  \href{http://www.ams.org/mathscinet-getitem?mr=#1}{#2}
}
\providecommand{\href}[2]{#2}

\end{document}